\newcommand{\urlwofont}[1]{\urlstyle{same}\url{#1}}
\newcommand{\nc}{\newcommand}
\nc{\nt}{\newtheorem}
\nc{\dmo}{\DeclareMathOperator}
\theoremstyle{plain}
\newtheorem{theorem}{Theorem}[section]
\newtheorem{maintheorem}{Theorem}
\newtheorem{proposition}[theorem]{Proposition}
\newtheorem{lemma}[theorem]{Lemma}
\newtheorem{corollary}[theorem]{Corollary}
\theoremstyle{definition}
\theoremstyle{remark}
\newtheorem*{remark}{Remark}
\DeclareMathOperator{\Ker}{ker}
\DeclareMathOperator{\Mod}{Mod}
\newcommand\Torelli{\mathcal{I}}
\dmo{\SMod}{SMod}
\dmo{\PMod}{PMod}
\dmo{\SHomeo}{SHomeo}
\dmo{\SI}{\mathcal{SI}}
\dmo{\SSp}{SSp}
\dmo{\PSp}{PSp}
\DeclareMathOperator{\Sp}{Sp}
\DeclareMathOperator{\GL}{GL}
\DeclareMathOperator{\SL}{SL}
\newcommand\R{\ensuremath{\mathbb{R}}}
\newcommand\C{\ensuremath{\mathbb{C}}}
\newcommand\Z{\ensuremath{\mathbb{Z}}}
\newcommand\Q{\ensuremath{\mathbb{Q}}}
\newcommand\Figure[3]{
\begin{figure}[t]
\centering
\centerline{\psfig{file=#2,scale=60}}
\caption{#3}
\label{#1}
\end{figure}}
\nc{\p}[1]{\bigskip \noindent {\bf #1.}}
\nc{\margin}[1]{\marginpar{\raggedright \scriptsize #1}}
\nc{\PartialIBases}{\mathfrak{IB}}
\nc{\PartialIBasesEx}{\widehat{\mathfrak{IB}}}
\nc{\PartialBases}{\mathfrak{B}}
\nc{\Building}{\mathfrak{T}}
\nc{\height}{\ensuremath{\text{ht}}}
\nc{\Poset}{\mathfrak{P}}
\nc{\Field}{\mathbb{F}}
\nc{\Link}{\ensuremath{\text{Link}}}
\nc{\Star}{\ensuremath{\text{Star}}}
\nc{\Aut}{\ensuremath{\text{Aut}}}
\nc{\SymTorelli}{\ensuremath{\mathcal{SI}}}
\nc{\BTorelli}{\ensuremath{\mathcal{BI}}}
\dmo{\Braid}{\ensuremath{B}}
\dmo{\PureBraid}{\ensuremath{PB}}
\nc{\Hyper}{\ensuremath{\iota}}
\nc{\BigFreeProd}{\mathop{\mbox{\Huge{$\ast$}}}}
\nc{\Quotient}{\ensuremath{\mathcal{Q}}}
\nc{\QuotientEx}{\ensuremath{\widehat{\mathcal{Q}}}}
\nc{\Presentation}[2]{\ensuremath{\text{$\langle #1$ $|$ $#2 \rangle$}}}
\nc{\SpGen}{\ensuremath{S_{\text{Sp}}}}
\nc{\SpRel}{\ensuremath{R_{\text{Sp}}}}
\nc{\QGen}{\ensuremath{S_{\mathcal{Q}}}}
\nc{\QRel}{\ensuremath{R_{\mathcal{Q}}}}
\nc{\PBs}{\ensuremath{T}}
\nc{\Qs}{\ensuremath{\overline{s}}}
\dmo{\PB}{PB}
\nc{\BIredg}{\mathcal{BI}_{2g+1}^{\text{red}}}
\nc{\BI}{\mathcal{BI}}
\dmo{\D}{D}
\dmo{\Stab}{Stab}
\dmo{\Surger}{Surger}
\nc{\I}{\mathcal{I}}
\renewcommand{\C}{\mathcal{C}}
\nc{\spanmap}{span}
\nc{\genbygen}[2]{\premonoid{#1}{#2}}
\nc{\premonoid}[2]{#1 \circledcirc #2}
\nc{\monoid}[2]{#1 \odot #2}
\nc{\lax}[1]{(#1)_{\pm}}
\title{\vspace{-30pt}Generators for the hyperelliptic Torelli group and the kernel of the Burau representation at $t=-1$}
\author{Tara Brendle, Dan Margalit, and Andrew Putman\footnote{The first author is supported in part by EPSRC grant EP/J019593/1. The second author is supported in part by a Sloan Fellowship and an NSF CAREER award.  The third author is supported in part by NSF grant DMS-1005318.}\vspace{-6pt}}
\begin{document}

\newcounter{enumi_saved}

\maketitle

\vspace{-23pt}
\begin{abstract}
We prove that the hyperelliptic Torelli group is generated by Dehn twists about separating curves that are preserved by the hyperelliptic involution.  This verifies a conjecture of Hain.  The hyperelliptic Torelli group can be identified with the kernel of the Burau representation evaluated at $t=-1$ and also the fundamental group of the branch locus of the period mapping, and so we obtain analogous generating sets for those.  One application is that each component in Torelli space of the locus of hyperelliptic curves becomes simply connected when curves of compact type are added.
\end{abstract}

\section{Introduction}
\label{section:intro}

In this paper, we find simple generating sets for three closely related groups:
\begin{enumerate}\setlength{\itemsep}{0ex}
 \item the hyperelliptic Torelli group $\SymTorelli_g$, that is, the subgroup of the mapping class group consisting of elements that commute with some fixed hyperelliptic involution and that act trivially on the homology of the surface;
\item the fundamental group of $\widetilde{\mathcal{H}}_g$, the branch locus of the period mapping from Torelli space to the Siegel upper half-plane; and
\item the kernel of $\beta_n$, the Burau representation of the braid group evaluated at $t=-1$ (the representation $\beta_n$ is sometimes known as the integral Burau representation).
\end{enumerate}
The group $\SymTorelli_g$, the space $\widetilde{\mathcal{H}}_g$, and the representation $\beta_n$
arise in many places in algebraic geometry, number theory, and topology;   
see, e.g., the work
of A'Campo \cite{ACampo}, Arnol'd \cite{ArnoldHyperelliptic}, Band--Boyland \cite{BandBoyland}, Funar--Kohno \cite{FunarKohno}, Gambaudo--Ghys \cite{GambaudoGhys}, Hain \cite{HainConjecture}, Khovanov--Seidel \cite{KhovanovSeidel}, Magnus--Peluso \cite{MagnusPeluso},
McMullen \cite{McMullenHodge}, Morifuji \cite{Morifuji}, Venkataramana \cite{Venkataramana}, and Yu \cite{YuCohenLenstra}.

\p{Hyperelliptic Torelli group}
Let $\Sigma_{g}$ be a closed oriented surface of genus $g$ and let $\Mod_{g}$ be its mapping class group, that is, the group of isotopy classes of orientation-preserving homeomorphisms of $\Sigma_{g}$.

Let $\Hyper : \Sigma_{g} \rightarrow \Sigma_{g}$ be a hyperelliptic
involution; see Figure~\ref{figure:introduction}.  By definition a hyperelliptic involution is an order two homeomorphism of $\Sigma_g$ that acts by $-I$ on $H_1(\Sigma_g;\Z)$, and it is a fact there is a unique hyperelliptic involution up to conjugacy by homeomorphisms of $\Sigma_g$; we fix one once and for all.  The hyperelliptic mapping class group $\SMod_{g}$ is the subgroup of $\Mod_{g}$ 
consisting of mapping classes that can be represented by homeomorphisms that commute with $\Hyper$.
The Torelli group $\Torelli_{g}$ is the kernel of the action of $\Mod_{g}$ on $H_1(\Sigma_{g};\Z)$, and the
hyperelliptic Torelli group $\SymTorelli_{g}$ is $\SMod_{g} \cap \Torelli_{g}$.

A simple closed curve $x$ in $\Sigma_{g}$  is {\em symmetric} if $\Hyper(x) = x$, in which case the Dehn twist $T_x$ is in $\SMod_{g}$. If $x$ is a separating curve, then $T_x \in \Torelli_{g}$; see Figure~\ref{figure:introduction}.

\begin{maintheorem}
\label{maintheorem:mainmod}
For $g \geq 0$, the group $\SymTorelli_{g}$ is generated by Dehn twists about symmetric separating curves.
\end{maintheorem}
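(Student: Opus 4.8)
The plan is to run the standard ``generation by vertex stabilizers of a simply connected complex'' argument (Armstrong's lemma, in the form used by Putman) together with an induction on the genus $g$. The base cases are easy or classical: $\SymTorelli_g$ is trivial for $g\le 1$, and for $g=2$ the hyperelliptic involution $\Hyper$ is central in $\Mod_2$, so $\SMod_2=\Mod_2$ and hence $\SymTorelli_2=\Torelli_2$; by Mess's theorem $\Torelli_2$ is generated by Dehn twists about separating curves, and every separating curve in a genus-$2$ surface is symmetric (being fixed up to isotopy by the central class $\Hyper$, since $T_{\Hyper(c)}=\Hyper T_c\Hyper^{-1}=T_c$). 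So assume $g\ge 3$, and assume the theorem --- in a suitable boundary version, the hyperelliptic Torelli group of a surface with boundary carrying a hyperelliptic involution, in Putman's partitioned-surface formalism --- for all smaller complexities.

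The engine is an action of $\SymTorelli_g$ on a simplicial complex $X$ with: (i) $X$ connected and simply connected; (ii) $X/\SymTorelli_g$ connected; and (iii) the $\SymTorelli_g$-stabilizer of every vertex of $X$ generated by Dehn twists about symmetric separating curves. Given these, Armstrong's lemma yields that $\SymTorelli_g$ is generated by the union of the vertex stabilizers over a connected set of orbit representatives together with finitely many ``edge-realizing'' elements, and a change-of-coordinates argument lets one take the latter to be symmetric separating twists too; this proves the theorem. The natural candidate for $X$ is a complex assembled from symmetric separating curves in $\Sigma_g$ --- or, passing to the quotient orbifold (Birman--Hilden: $\SMod_g/\langle\Hyper\rangle\cong\Mod(S_{0,2g+2})$), from the corresponding curves and arcs among the $2g+2$ marked points --- with vertices the isotopy classes of such curves and simplices the pairwise-disjoint collections, almost certainly subject to an extra filling or non-cobounding restriction in order to keep the vertex stabilizers under control. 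For (iii): cutting $\Sigma_g$ along the curve carried by a vertex $v$ produces surfaces of strictly smaller complexity, and the relevant cutting and Birman exact sequences express $\Stab_{\SymTorelli_g}(v)$, modulo the kernel of those sequences, in terms of the hyperelliptic Torelli groups of these smaller pieces, which are generated by symmetric separating twists by induction; one must also verify that the kernel --- built from powers of $T_v$ (itself a symmetric separating twist) and from symmetric point-pushing maps --- consists of products of symmetric separating twists.

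The real work is concentrated in two mutually antagonistic requirements, and the main obstacle is reconciling them. The first is proving that $X$ is simply connected; ideally one proves the stronger statement that $X$ is $N$-connected with $N\to\infty$ as $g\to\infty$, so that the induction and any Mayer--Vietoris or spectral-sequence bookkeeping runs uniformly. The difficulty is that the feature distinguishing $\SymTorelli_g$ inside $\Torelli_g$ is a parity constraint (downstairs: evenness of the number of enclosed marked points), and this constraint is not preserved under naive curve surgery, so the usual ``reduce the geometric intersection number'' proof of simple connectivity must be re-engineered --- via a more delicate innermost-curve/surgery scheme --- to stay within the class of symmetric separating curves. The second requirement is that $X$ be simple enough at the level of vertex links and vertex stabilizers for the inductive hypothesis to apply, which pushes in the opposite direction from high connectivity. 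Threading both needles at once, and pinning down the Birman-type kernels correctly at each stage, is where essentially all the content lies. It may well be cleanest to first establish the analogous statement for $\BI_{2g+1}=\Ker\beta_{2g+1}$, the kernel of the integral Burau representation of $B_{2g+1}$, using a complex of arcs in a disk, and then transport it to $\SymTorelli_g$ through the branched-covering correspondence.
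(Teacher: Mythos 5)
Your proposal identifies the right difficulty but does not resolve it, and the unresolved step is exactly what forced the authors to abandon this strategy. You propose to find a simply connected complex $X$ assembled from symmetric separating curves (or, downstairs, curves/arcs in the punctured disk) on which $\SymTorelli_g$ (respectively $\BI_{2g+1}$) acts with vertex stabilizers handled by induction, then apply Armstrong's lemma in the Putman formulation. The paper states explicitly, in the ``Approach of the paper'' discussion, that the natural candidate for $X$ --- the complex whose vertices are $\SymTorelli_g$-orbit representatives of a symmetric nonseparating curve and whose edges join curves of minimal intersection --- is not known even to be \emph{connected}. Your acknowledgment that the standard ``reduce the geometric intersection number'' surgery must ``be re-engineered --- via a more delicate innermost-curve/surgery scheme --- to stay within the class of symmetric separating curves'' names the obstruction precisely, but offers no re-engineering; this is the step the authors could not carry out, and no one has. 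Without (i) (simple connectivity), or even connectivity, the entire Armstrong machine does not start.

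What the paper actually does is qualitatively different: it never runs a curve-complex argument for the source group at all. Instead of looking for a highly connected complex with a $\SymTorelli_g$- or $\BI_{2g+1}$-action, it works on the \emph{target}. Reducing to the claim $\PureBraid_{2g+1}/\Theta_{2g+1}\cong\Sp_{2g}(\Z)[2]$, it (1) applies Theorem~\ref{theorem:presentation} to the action of $\Sp_{2g}(\Z)[2]$ on the complex of lax isotropic bases $\PartialIBases_g(\Z)$ (augmented to $\PartialIBasesEx_3(\Z)$ when $g=3$), whose $1$-connectivity and quotient $2$-connectivity are purely linear-algebraic facts provable via Solomon--Tits, Maazen's theorem, and Quillen's Theorem A --- no surgery on symmetric curves is ever needed; (2) constructs an action of $\Sp_{2g}(\Z)$ on $\Quotient_g=\PureBraid_{2g+1}/\Theta_{2g+1}$ compatible with the conjugation action on $\Sp_{2g}(\Z)[2]$; and then (3) builds a two-sided inverse homomorphism $\Sp_{2g}(\Z)[2]\to\Quotient_g$ by sending the stabilizer generators of the infinite presentation to reducible elements and using Theorem~\ref{theorem:reducibletotwists} (the inductive Birman exact sequence ingredient you correctly anticipate) to make this well defined. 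The inductive input on $\BI_{2h+1}$ for $h<g$ appears only through this theorem about reducible elements, not through vertex-stabilizer bookkeeping on a curve complex. So: same base case, same inductive scaffolding of Birman-type reductions, but the central connectivity statement is for a completely different, algebraically defined complex acted on by the quotient group, precisely because the connectivity you would need is out of reach.
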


The first two authors proved that Theorem~\ref{maintheorem:mainmod} in fact implies the stronger result that $\SymTorelli_g$ is generated by Dehn twists about symmetric separating curves that cut off subsurfaces of genus 1 and 2 \cite[Proposition 1.5]{BrendleMargalitPoint}.  

Theorem~\ref{maintheorem:mainmod} was conjectured by Hain \cite[Conjecture 1]{HainConjecture} and is also listed as a folk conjecture by Morifuji \cite[Section 4]{Morifuji}.
Hain has informed us that he has proven the case $g=3$ 
of Theorem~\ref{maintheorem:mainmod}.  His proof uses special properties of the Schottky locus in genus $3$.

When we first encountered Hain's conjecture, it appeared to us to be overly optimistic.  There is a well-known generating set for $\Torelli_g$, namely, the set of bounding pair maps and Dehn twists about separating curves; see Figure~\ref{figure:introduction}.  There is no reason to expect that an infinite-index subgroup of $\Torelli_g$ should be generated by the elements on this list lying in the subgroup. 
Additionally, there are several other natural elements of $\SymTorelli_g$, and it was not at first clear how to write those in terms of Hain's proposed generators.  Consider for instance the mapping class $[T_uT_{u'},T_vT_{v'}] \in \SymTorelli_g$ indicated in Figure~\ref{figure:introduction}.  Eventually, it turned out this element is a product of six Dehn twists about symmetric separating curves \cite{BrendleMargalitFactor}, but the curves are rather complicated looking. 

\Figure{figure:introduction}{Introduction}{The hyperelliptic involution $\Hyper$ rotates the surface 180 degrees 
about the indicated axis.  The mapping
class $T_x$ is a Dehn twist about a symmetric separating curve.   A bounding pair map, such as $T_y T_z^{-1}$, is the difference of two Dehn twists about disjoint, nonseparating,  homologous simple closed curves.  The mapping classes $T_u T_{u'}$ and $T_v T_{v'}$ are in $\SMod_g$ and
their actions on $H_1(\Sigma_g;\Z)$ commute because $\hat\imath(u,v)=\hat\imath(u',v')=0$, so $[T_u T_{u'},T_v T_{v'}] \in \SymTorelli_{g}$.}

\p{Branch locus of the period map}
Hain \cite{HainConjecture} observed that Theorem~\ref{maintheorem:mainmod} has an interpretation in terms of the period map.  Let $\mathcal{T}_g$ be Teichm\"{u}ller space and $\mathfrak{h}_g$ 
the Siegel upper half-plane.  The period map $\mathcal{T}_g \rightarrow \mathfrak{h}_g$ takes
a Riemann surface to its Jacobian.  It factors through the Torelli space $\mathcal{T}_g / \Torelli_g$, which is an Eilenberg--MacLane space
for $\Torelli_g$.  The induced map $\mathcal{T}_g / \Torelli_g \rightarrow \mathfrak{h}_g$ is a $2$-fold
branched cover onto its image.  The branch locus is the subspace $\widetilde{\mathcal{H}}_g \subset \mathcal{T}_g / \Torelli_g$
consisting of points that project to the hyperelliptic locus $\mathcal{H}_g$ in the moduli space of curves.
The space $\widetilde{\mathcal{H}}_g$ is not connected, but its components
are all homeomorphic and have fundamental group $\SymTorelli_{g}$.  Thus, Theorem~\ref{maintheorem:mainmod} gives generators for $\pi_1(\widetilde{\mathcal{H}}_g)$.

Let $\widetilde{\mathcal{H}}_g^c$ be the space obtained by adjoining hyperelliptic curves of compact type to
$\widetilde{\mathcal{H}}_g$.  Theorem~\ref{maintheorem:mainmod} has the following corollary.  

\begin{maintheorem}
\label{maintheorem:periodmap}
For $g \geq 0$, each component of $\widetilde{\mathcal{H}}_g^c$ is simply connected.
\end{maintheorem}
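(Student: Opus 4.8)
The plan is to derive Theorem~\ref{maintheorem:periodmap} from Theorem~\ref{maintheorem:mainmod} together with the standard fact that adjoining a divisor to a space kills the meridian loops about that divisor. As explained above, each component $C$ of $\widetilde{\mathcal{H}}_g$ has $\pi_1(C) \cong \SymTorelli_g$, and $\widetilde{\mathcal{H}}_g$ is a dense open subset of $\widetilde{\mathcal{H}}_g^c$ whose complement $B$ consists of the nodal hyperelliptic curves of compact type. It therefore suffices to prove that the inclusion $\widetilde{\mathcal{H}}_g \hookrightarrow \widetilde{\mathcal{H}}_g^c$ induces a surjection on fundamental groups whose kernel is normally generated by the Dehn twists $T_x$ about symmetric separating curves $x$: by Theorem~\ref{maintheorem:mainmod} these twists generate $\SymTorelli_g$, so this normal closure is all of $\pi_1(C)$, and hence each component of $\widetilde{\mathcal{H}}_g^c$ is simply connected.

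First I would pin down the stratified structure of $B$. A stable curve of compact type is obtained from a smooth curve by pinching a disjoint collection of separating simple closed curves, and for the result to remain hyperelliptic these curves must be symmetric; conversely, pinching any such collection yields a point of $B$. So $B$ is stratified by topological type, the open dense stratum being the one-nodal curves --- pinch a single symmetric separating curve $x$, giving a locally closed divisor $D_x$ --- while every deeper stratum (pinch two or more such curves) has complex codimension at least two. Near a generic point of $D_x$, a Deligne--Mumford plumbing coordinate $z$ for the node exhibits $\widetilde{\mathcal{H}}_g^c$ locally as a manifold in which $D_x$ is the hypersurface $\{z = 0\}$ and the nearby smooth fibers are the level sets $\{z \neq 0\}$; since $\Torelli_g$ is torsion-free, no orbifold subtleties intrude (Torelli space and this partial compactification are honest manifolds near the codimension-one boundary).

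Now I would apply van Kampen in the familiar form: if $Y$ is obtained from a dense open subset $Y^{\circ}$ by adjoining a closed subset that along a dense open part is a divisor $\bigcup_\alpha D_\alpha$ and whose remaining part has real codimension at least four, then $\pi_1(Y^{\circ}) \to \pi_1(Y)$ is surjective with kernel normally generated by the meridians of the $D_\alpha$. Applied to $\widetilde{\mathcal{H}}_g \subset \widetilde{\mathcal{H}}_g^c$, the kernel of the induced map on $\pi_1$ is the normal closure of the meridians $\gamma_x$ of the divisors $D_x$. Finally I would identify $\gamma_x$ with $T_x \in \SymTorelli_g$: this is Picard--Lefschetz, the monodromy of a one-parameter smoothing of a node being the Dehn twist about the vanishing cycle, which for a node obtained by pinching the separating curve $x$ is $x$ itself. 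Since $x$ is separating we have $T_x \in \Torelli_g$, and since $x$ is symmetric we have $T_x \in \SMod_g$, so $\gamma_x = T_x \in \SymTorelli_g$; and every symmetric separating curve arises this way. Thus $\ker\bigl(\pi_1(C) \to \pi_1(\widetilde{\mathcal{H}}_g^c)\bigr)$ is the normal closure of $\{T_x\}$, which equals $\SymTorelli_g$ by Theorem~\ref{maintheorem:mainmod}.

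I expect the main obstacle to be making the first step precise: one needs a sufficiently explicit description of the partial compactification $\widetilde{\mathcal{H}}_g^c$ of (a component of) Torelli space along the hyperelliptic locus to justify that $B$ has the claimed stratification, that its top stratum is a divisor whose meridian is $T_x$, and that the deeper, multi-node strata have complex codimension at least two and hence contribute nothing new to the kernel. This rests on understanding the local structure of the Deligne--Mumford compactification restricted to the hyperelliptic locus and pulled back to Torelli space --- analysis essentially carried out by Hain, but which should be spelled out here.
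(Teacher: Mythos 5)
Your proposal is correct and follows essentially the same route as the paper, which itself only sketches the argument in one paragraph and defers the details to Hain's paper \cite{HainConjecture}: adjoin the divisors of compact-type nodal hyperelliptic curves, identify their meridians with Dehn twists about the symmetric separating vanishing cycles via Picard--Lefschetz, and invoke Theorem~\ref{maintheorem:mainmod} to conclude that killing those meridians kills all of $\pi_1$. Your closing caveat --- that the stratification of the boundary, the codimension counts, and the meridian identification need to be justified using the local structure of the Deligne--Mumford compactification pulled back to Torelli space --- is precisely the content the paper delegates to Hain.
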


See Hain's paper \cite{HainConjecture} for the details on how to derive Theorem~\ref{maintheorem:periodmap} from Theorem~\ref{maintheorem:mainmod}.  The main idea is that when we add to $\widetilde{\mathcal{H}}_g$ a hyperelliptic curve of compact type obtained by degenerating a symmetric separating simple closed curve in a hyperelliptic curve, the effect on $\pi_1(\widetilde{\mathcal{H}}_g)$ is to kill the generator of $\pi_1(\widetilde{\mathcal{H}}_g)$ given by the corresponding Dehn twist.  

\p{Kernel of the Burau representation}
The (unreduced) Burau representation \cite{BirmanBraids} is an important representation of the braid group $\Braid_{n}$ to $\GL_{n}(\Z[t,t^{-1}])$.  
Let $\beta_{n} : \Braid_{n} \rightarrow \GL_{n}(\Z)$ be the representation obtained by substituting $t=-1$ into the Burau representation.  
Denote the kernel of $\beta_n$ by $\BTorelli_{n}$ (the notation stands for ``braid Torelli group'').

We identify $\Braid_{n}$ with the mapping class group of a disk $\D_n$ with $n$ marked points, that is, the group of isotopy classes of homeomorphisms of $\D_n$ preserving the set of marked points and fixing $\partial \D_n$ pointwise.  For most purposes, we will regard the marked points as punctures.  For instance, curves (and homotopies of curves) are not allowed to pass through the marked points.  When we say that a simple closed curve is essential in $\D_n$, we mean that it is not homotopic to a marked point, an unmarked point, or the boundary.

\begin{maintheorem}
\label{maintheorem:mainbraid}
For $n \geq 1$, the group $\BTorelli_{n}$ is generated by squares of Dehn twists about curves in $\D_n$ surrounding odd numbers of marked points.
\end{maintheorem}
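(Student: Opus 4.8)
The plan is to deduce Theorem~\ref{maintheorem:mainbraid} from Theorem~\ref{maintheorem:mainmod} by means of the Birman--Hilden correspondence. Write $n=2g+1$ or $n=2g+2$ and let $p\colon S\to\D_n$ be the double cover of the disk branched over the $n$ marked points; then $S$ has genus $g$ and one boundary component (if $n=2g+1$) or two (if $n=2g+2$), and the deck transformation is an involution $\iota$ of $S$ acting as $-I$ on $H_1(S;\Q)$. Birman--Hilden theory identifies $\Braid_n$ with the group $\SMod(S)$ of mapping classes of $S$ having an $\iota$-equivariant representative, and under this identification the (unreduced) representation $\beta_n$ restricts on an invariant subquotient to the action of $\SMod(S)$ on $H_1(S;\Z)$; this is classical \cite{ACampo}. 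I would begin by recording the resulting dictionary between curves in $\D_n$ and curves in $S$. The crucial points are: (i) if $c\subset\D_n$ is an essential simple closed curve surrounding an odd number of marked points, then $p^{-1}(c)$ is a single $\iota$-invariant separating curve $\tilde c\subset S$, and, although $T_c$ itself does not lift $\iota$-equivariantly, $T_c^2$ does and corresponds under Birman--Hilden to the single Dehn twist $T_{\tilde c}$ --- this is the origin of the word ``squares'' in the statement; (ii) conversely, every $\iota$-invariant separating simple closed curve of $S$ has the form $p^{-1}(c)$ for such a $c$; and (iii) each such $T_c^2$ already lies in $\BTorelli_n$. For (iii), the case $c=\partial\D_n$ with $n$ odd says that the square of the full twist lies in $\ker\beta_n$, which is a direct matrix computation (in a basis adapted to its invariant line, $\beta_n$ of the full twist on $k$ strands is block upper-triangular with diagonal blocks $1$ and $(-1)^kI$, so its square is the identity when $k$ is odd); the general case then follows by conjugacy, using that $\beta_n$ is block diagonal on the standard subgroups $\Braid_k\subseteq\Braid_n$.

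Next I would compare $\SMod(S)$ with the closed surface by capping the boundary component(s) of $S$ with disks chosen so that $\iota$ extends; this gives a homomorphism $\mathcal{C}\colon\SMod(S)\to\SMod_g$ which restricts to $\mathcal{C}\colon\BTorelli_n\to\SymTorelli_g$, the image landing in $\SymTorelli_g$ because $\BTorelli_n$ acts trivially on $H_1(S;\Z)$ and hence on $H_1(\Sigma_g;\Z)$. By point (ii), every symmetric separating curve of $\Sigma_g$ may be isotoped off the capping disks and so has the form $\tilde c$; its twist is $\mathcal{C}(T_c^2)$, and $\SymTorelli_g$ is generated by such twists by Theorem~\ref{maintheorem:mainmod}, so $\mathcal{C}$ is surjective. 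The substantive step is to identify $\ker\mathcal{C}$. When $n$ is odd this kernel is infinite cyclic, generated by the boundary twist $T_{\partial S}$, which by point (i) corresponds to $T_{\partial\D_n}^2$ (the curve $\partial\D_n$ surrounds the odd number $n$ of marked points); the only subtlety is to rule out nontrivial $\iota$-symmetric point-pushing classes of the interior Weierstrass point. When $n$ is even the capping disks contain no Weierstrass points, $\iota$ interchanges the two boundary components, and $\ker\mathcal{C}$ contains both the image of $T_{\partial\D_n}$ and $\iota$-symmetric point-pushes of the two interchanged punctures; moreover $\BTorelli_n$ is then a proper subgroup of the hyperelliptic Torelli group of $S$, so one must in addition describe $\BTorelli_n$ as the kernel of the remaining (unipotent) part of $\beta_n$ and show that all the required kernel classes can be written as products of squares of Dehn twists about curves surrounding odd numbers of marked points --- for which I would use braid relations such as the lantern relation. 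I expect this analysis, and in particular the even case, to be the main obstacle.

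Granting the above, the proof concludes as follows. By Theorem~\ref{maintheorem:mainmod}, $\SymTorelli_g$ is generated by Dehn twists about symmetric separating curves; by points (i) and (ii) each such generator lifts through $\mathcal{C}$ to a square $T_c^2$ with $c\subset\D_n$ surrounding an odd number of marked points; and, since $\mathcal{C}\colon\BTorelli_n\to\SymTorelli_g$ is surjective with kernel generated by further such squares, the group $\BTorelli_n$ is generated by squares of Dehn twists about curves in $\D_n$ surrounding odd numbers of marked points, as claimed. (The small cases $n\le 4$, in which $\BTorelli_n$ is trivial or infinite cyclic, are immediate.)
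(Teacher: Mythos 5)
Your plan is to deduce Theorem~\ref{maintheorem:mainbraid} from Theorem~\ref{maintheorem:mainmod} via Birman--Hilden, but in this paper that direction of the implication is the trivial one, and the deduction is circular: the paper has no proof of Theorem~\ref{maintheorem:mainmod} independent of Theorem~\ref{maintheorem:mainbraid}. The two statements are shown in the introduction to be \emph{equivalent} (for $n=2g+1$) via precisely the Birman--Hilden dictionary and the fact from \cite[Theorem 4.2]{BrendleMargalitPoint} that the kernel of $\SymTorelli_g^1\to\SymTorelli_g$ is cyclic on the boundary twist; your ``odd'' argument is essentially a correct rederivation of that equivalence. But the actual content of the paper --- the proof that either statement holds --- is the reformulation $\Quotient_g = \PureBraid_{2g+1}/\Theta_{2g+1}\cong\Sp_{2g}(\Z)[2]$ of Proposition~\ref{proposition:main}, established in Sections~3--5 through an infinite presentation of $\Sp_{2g}(\Z)[2]$ coming from its action on $\PartialIBases_g(\Z)$ (or $\PartialIBasesEx_3(\Z)$), together with a compatible $\Sp_{2g}(\Z)$-action on $\Quotient_g$ built curve by curve. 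None of that machinery appears in your proposal, so the substantive step is missing entirely.

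Separately, the even case $n=2g+2$ is genuinely incomplete in your write-up. You correctly flag it as the obstacle, but the sketch --- ``describe $\BTorelli_n$ as the kernel of the remaining (unipotent) part of $\beta_n$'' and ``use braid relations such as the lantern relation'' --- does not constitute a proof. The paper settles this case by citing the structural result of \cite[Theorems 1.2 and 1.4]{BrendleMargalitPoint}, namely that $\BTorelli_{2g+2}\cong(\BTorelli_{2g+1}/\Z)\ltimes F_\infty$ with the $F_\infty$ factor already lying in the subgroup generated by squares of odd twists, so that the $n=2g+1$ case implies the $n=2g+2$ case. If you want your argument to close, you must either cite that result or reprove it; your current sketch does neither. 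Likewise, in the odd case you gesture at ``rul[ing] out nontrivial $\iota$-symmetric point-pushing classes,'' which is exactly the content of \cite[Theorem 4.2]{BrendleMargalitPoint} and should be cited rather than waved at.
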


Just like for Theorem~\ref{maintheorem:mainmod}, our proof gives more, namely that 
$\BTorelli_{n}$ is generated by squares of Dehn twists about curves surrounding exactly 3 or 5 marked points.  As pointed out to us by Neil Fullarton, both types of twists are needed.  Indeed, the abelianization homomorphism $\Braid_{n} \to \Z$ maps the square of a Dehn twist about a curve surrounding $2k+1$ marked points to $8k^2+4k$.  Thus the group generated by squares of Dehn twists about curves surrounding 3 marked points maps to $12\Z$ and the group generated by squares of Dehn twists about 5 marked points maps to $40\Z$.  Since $\gcd(12,40)=4$ the image of the group generated by squares of both types of Dehn twists---hence the image of $\BTorelli_{n}$---contains $4\Z$.

\p{Hyperelliptic Torelli vs Burau} 
We now explain the relationship between Theorems~\ref{maintheorem:mainmod} and~\ref{maintheorem:mainbraid}.  
This requires defining the hyperelliptic Torelli group for a surface with boundary.

Let $\Sigma_g^1$ be the surface obtained from $\Sigma_g$ by deleting the interior of an embedded $\Hyper$-invariant disk.  There is an induced 
hyperelliptic involution of $\Sigma_g^1$ which we also call $\Hyper$.  Let $\Mod_g^1$ be the group of  
isotopy classes of homeomorphisms of $\Sigma_g^1$ that fix $\partial \Sigma_g^1$ pointwise and
let $\SMod_g^1$ be the subgroup of $\Mod_g^1$ consisting of mapping classes that can be represented 
by homeomorphisms that commute with $\Hyper$.  Observe that unlike for $\Mod_g$, the map
$\Hyper$ does not correspond to an element of $\Mod_g^1$.
Finally, let $\Torelli_g^1$ be the kernel of the action of $\Mod_g^1$ on $H_1(\Sigma_g^1;\Z)$ and let
$\SymTorelli_g^1 = \SMod_g^1 \cap \Torelli_g^1$.

The involution $\Hyper$ fixes $2g+1$ points of $\Sigma_{g}^{1}$. Regarding the images of these points in $\Sigma_{g}^{1} / \Hyper$ as marked points, 
we have $\Sigma_{g}^{1} / \Hyper \cong \D_{2g+1}$.  There is a homomorphism $L : \Braid_{2g+1} \rightarrow \SMod_{g}^{1}$
which lifts a mapping class through the branched cover $\Sigma_{g}^{1} \rightarrow \Sigma_{g}^{1} / \Hyper$.
Birman--Hilden \cite{BirmanHilden} proved that $L$ is an isomorphism.  Under this isomorphism, a Dehn twist about a curve $c$ surrounding an odd number of marked points maps to a half-twist about the (connected) preimage of $c$ in $\Sigma_g^1$; in particular, the Dehn twist about $\partial\D_{2g+1}$ acts by $-I$ on $H_1(\Sigma_g^1)$.  Similarly, a Dehn twist about a curve $c$ surrounding an even number of marked points maps to the product of the Dehn twists about the two components of the preimage of $c$.  

The representation $\beta_{2g+1}$ decomposes into two irreducible representations.  One is  the 1-dimensional trivial representation, and the other is conjugate to the composition
$$\Braid_{2g+1} \stackrel{L}{\longrightarrow} \SMod_{g}^{1} \hookrightarrow \Mod_{g}^{1} \longrightarrow \Sp_{2g}(\Z),$$
where the map $\Mod_{g}^{1} \rightarrow \Sp_{2g}(\Z)$ is the standard representation arising from the action of
$\Mod_{g}^{1}$ on $H_1(\Sigma_{g}^{1};\Z)$.  The map $L$ therefore restricts to an isomorphism $\BTorelli_{2g+1} \cong \SymTorelli_{g}^{1}$.
Under this isomorphism, squares of Dehn twists about curves surrounding odd numbers of marked points map to Dehn twists about 
symmetric separating curves.  The case $n=2g+1$ of Theorem~\ref{maintheorem:mainbraid} is therefore equivalent to the 
statement that $\SymTorelli_g^1$ is generated by Dehn twists about symmetric separating curves.  The first two authors proved \cite[Theorem 4.2]{BrendleMargalitPoint}
that the kernel of the natural map $\SymTorelli_g^1 \to \SymTorelli_g$ is generated by the Dehn twist about $\partial \Sigma_g^1$, so this
is equivalent to Theorem~\ref{maintheorem:mainmod}.

We can also relate Theorem~\ref{maintheorem:mainbraid} for even numbers of punctures to the mapping
class group by extending Theorem~\ref{maintheorem:mainmod} 
to the case of a surface with two boundary components.  Briefly, let $\Sigma_g^2$ be the compact surface of genus $g$ with two boundary components obtained by
removing the interiors of two disks in $\Sigma_g$ that are interchanged by $\Hyper$.  Again, there
is an induced hyperelliptic involution of $\Sigma_g^2$ which we will also call $\Hyper$.  The
homeomorphism $\Hyper$ interchanges the two boundary components of $\Sigma_g^2$.
We can define $\SMod_g^2$ as before.  The Torelli group $\Torelli_g^2$ is the kernel of the action of $\SMod_g^2$ on $H_1(\Sigma_g^2,P;\Z)$, where $P$ is a pair of points, one on each boundary component of $\Sigma_g^2$.  The hyperelliptic Torelli group $\SI_g^2 = \SMod_g^2 \cap \Torelli_g^2$ is then isomorphic to $\BTorelli_{2g+2}$.  The $n=2g+2$ case of Theorem~\ref{maintheorem:mainbraid} translates to the fact that $\SI_g^2$ is generated by Dehn twists about symmetric separating curves.  See \cite{BrendleMargalitPoint} for more details.

\p{Prior results}
Theorem~\ref{maintheorem:mainmod} was previously known for $g \leq 2$.  It is a classical fact that $\Torelli_{g}=1$ for $g \leq 1$, so $\SymTorelli_{g}$ is trivial in 
these cases.  When $g=2$, all essential curves in $\Sigma_g$ are homotopic to symmetric curves.  This implies that $\SMod_2 = \Mod_2$ and $\SymTorelli_{2} = \Torelli_{2}$ (see, e.g., \cite[Section 9.4.2]{FarbMargalitPrimer}).  The group $\Torelli_2$ is 
generated by Dehn twists about separating curves; in fact, Mess \cite{Mess} proved that $\Torelli_2$ is a free group on an infinite set of Dehn 
twists about separating curves (McCullough--Miller \cite{McCulloughMiller} previously showed $\Torelli_2$ was infinitely generated).  This implies that $\SymTorelli_{2}$ is generated by Dehn twists about symmetric separating curves.

Theorem~\ref{maintheorem:mainmod} was known for $n \leq 6$.   The group $\BTorelli_{n}$ is trivial for $n \leq 3$.  Smythe showed \cite{Smythe} that $\BTorelli_4 \cong F_\infty$.  He also identified the  generating set from Theorem~\ref{maintheorem:mainbraid}.  The group $\BTorelli_4$ is isomorphic to the stabilizer in $\SymTorelli_2$ of a nonseparating simple closed curve, and so Smythe's theorem can be considered as a precursor to Mess's theorem.   Next, for $g \geq 1$ the first two authors proved \cite[Theorem 4.2]{BrendleMargalitPoint} that $\BTorelli_{2g+1}$ is isomorphic to $\SymTorelli_g \times \Z$, and so by Mess's theorem $\BTorelli_5$ is isomorphic to $F_\infty \times \Z$ and further it satisfies Theorem~\ref{maintheorem:mainbraid}.  The first two authors also proved \cite[Theorem 1.2]{BrendleMargalitPoint} that $\BTorelli_{2g+2}$ is isomorphic to $(\BTorelli_{2g+1}/\Z) \ltimes F_\infty$ and that each element of the $F_\infty$ subgroup is a product of squares of Dehn twists about curves surrounding odd numbers of marked points and so again by Mess's theorem we obtain that $\BTorelli_6$ is isomorphic to $F_\infty \ltimes F_\infty$ and that it also satisfies Theorem~\ref{maintheorem:mainbraid}. 

Aside from our Theorem~\ref{maintheorem:mainmod}, little is known about $\SymTorelli_{g}$ when $g \geq 3$.  Letting $H = H_1(\Sigma_g;\Z)$, 
Johnson \cite{JohnsonHomo, JohnsonSecond} constructed a $\Mod_{g}$-equivariant homomorphism $\tau : \Torelli_g \rightarrow (\wedge^3 H) / H$ 
and proved that $\Ker(\tau)$ is precisely the subgroup $\mathcal{K}_g$ of $\Torelli_g$ generated by Dehn twists about (not-necessarily-symmetric) separating curves.  Since $\Hyper$ acts by $-I$ on $(\wedge^3 H)/H$, it follows that $\SymTorelli_{g} < \mathcal{K}_g$.  Despite the fact that $\SymTorelli_{g}$ has infinite index in $\mathcal{K}_g$, Childers \cite{ChildersThesis} showed that these groups have the same image in the abelianization of $\Torelli_g$.

Birman \cite{BirmanSiegel} and Powell \cite{PowellTorelli} showed that $\Torelli_g$ is generated by bounding pair maps and Dehn 
twists about separating curves; other proofs were given by Putman \cite{PutmanCutPaste} and by Hatcher--Margalit \cite{HatcherMargalit}.   
One can find bounding pair maps $T_y T_z^{-1}$ such that $\Hyper$ exchanges $y$ and $z$ (see Figure~\ref{figure:introduction}); however, these 
do not lie in $\SymTorelli_{g}$ since $\Hyper T_y T_{z}^{-1} \Hyper^{-1} = T_z T_y^{-1}$.  In fact, since no power of a bounding pair map is in $\Ker(\tau)$,
there are no nontrivial powers of bounding pair maps in $\SymTorelli_{g}$.

With Childers, the first two authors  proved that $\SymTorelli_{g}$ has cohomological dimension $g-1$ and that $H_{g-1}(\SymTorelli_{g};\Z)$ has infinite rank \cite{BrendleMargalitChilders}.  This implies $\SymTorelli_{3}$ is not finitely presentable.  It is not known, however, whether $\SymTorelli_{g}$, or even
$H_1(\SymTorelli_{g};\Z)$, is finitely generated for $g \geq 3$.

\p{Approach of the paper} 
The simplest proofs that the mapping class group is generated by Dehn twists or that the Torelli group is generated by separating twists and bounding pair maps rely on the connectivity of certain complexes of curves.   One natural complex in our setting has vertices in bijection with the $\SymTorelli_g$-orbit of the isotopy class of a symmetric nonseparating curve and edges for curves with the minimal possible intersection number.  However, we do not know if this complex is connected, so our proof requires a new approach.  

First, to prove Theorems~\ref{maintheorem:mainmod} and~\ref{maintheorem:mainbraid}, it suffices to prove Theorem~\ref{maintheorem:mainbraid} for $n = 2g+1$.  Indeed, we already said that the $n=2g+1$ case of Theorem~\ref{maintheorem:mainbraid} is equivalent to the genus $g$ case of
Theorem~\ref{maintheorem:mainmod} and the first two authors proved \cite[Theorems 1.4 and 4.2]{BrendleMargalitPoint} that the $n=2g+1$ case of Theorem~\ref{maintheorem:mainbraid} implies the $n=2g+2$ case of
Theorem~\ref{maintheorem:mainbraid}. 

As we explain in Section~\ref{section:outline}, we have $\BTorelli_{2g+1} \subseteq \PureBraid_{2g+1}$ and
there is an isomorphism $\PureBraid_{2g+1}/\BTorelli_{2g+1} \cong \Sp_{2g}(\Z)[2]$. 
Theorem~\ref{maintheorem:mainbraid} in this case is thus equivalent to the assertion that 
$\PureBraid_{2g+1} / \Theta_{2g+1} \cong \Sp_{2g}(\Z)[2]$, where $\Theta_{2g+1}$ is the group generated by 
squares of Dehn twists about curves surrounding odd numbers of marked points.

This isomorphism can be viewed as a finite presentation for $\Sp_{2g}(\Z)[2]$
since $\PureBraid_{2g+1}$ is finitely presented and $\Theta_{2g+1}$ has a 
finite normal generating set.  There are several known presentations for 
$\Sp_{2g}(\Z)$.  Also, there are standard tools for obtaining finite presentations of finite-index subgroups of
finitely presented groups (e.g.\ Reidemeister--Schreier).  However, they all explode
in complexity as the index of the subgroup grows.  And even if we had some finite presentation for $\Sp_{2g}(\Z)[2]$, there is no reason to hope that such a presentation would be equivalent to the one given by the (purported)  isomorphism $\PureBraid_{2g+1} / \Theta_{2g+1} \cong \Sp_{2g}(\Z)[2]$.  

What we do instead is to apply a theorem of the third author in order to obtain an infinite presentation of $\Sp_{2g}(\Z)[2]$ with a certain amount of symmetry, and then introduce a new method for changing this presentation into the finite presentation $\PureBraid_{2g+1} / \Theta_{2g+1}$.  The tools we construct should be useful in other contexts.  In fact, they have already been used by the last two authors to give finite presentations for certain congruence subgroups of $\SL_n(\Z)$ which are reminiscent of the standard presentation for $\SL_n(\Z)$; see \cite{MargalitPutman}.

\p{Acknowledgments}
We would like to thank Joan Birman, Matt Day, Benson Farb, Neil Fullarton, Richard Hain, Sebastian Hensel, Allen Hatcher, Curt McMullen, and Takayuki Morifuji for helpful comments and conversations.  We are also grateful to Susumu Hirose, Yasushi Kasahara, Masatoshi Sato, and Wataru Yuasa for pointing out mistakes in an earlier draft and to Tom Church for providing invaluable comments.   We are especially indebted to our referees who provided us with many corrections and insightful suggestions.


\section{Outline of paper}
\label{section:outline}

Recall from the introduction that to prove Theorems~\ref{maintheorem:mainmod}, \ref{maintheorem:periodmap}, and~\ref{maintheorem:mainbraid} it is enough to prove Theorem~\ref{maintheorem:mainbraid} for $n=2g+1$.
Since $\BTorelli_{2g+1}$ is known to satisfy Theorem~\ref{maintheorem:mainbraid} for $0 \leq g \leq 2$, 
we can apply induction with $g=2$ as the base case.  Instead of proving Theorem~\ref{maintheorem:mainbraid} directly, we will work with a mild rephrasing, namely, Proposition~\ref{proposition:main} below.  After stating this proposition,
we give an outline of the proof and a plan for the remainder of the paper.

\p{Background}   Arnol'd \cite{ArnoldHyperelliptic} proved that the kernel of the composition
$$\Braid_{2g+1} \stackrel{\beta_{2g+1}}{\longrightarrow} \Sp_{2g}(\Z) \longrightarrow \Sp_{2g}(\Z/2)$$ 
is exactly $\PureBraid_{2g+1}$.  In particular, the image of $\PureBraid_{2g+1}$ under $\beta_{2g+1}$ lies in  the level $2$ congruence subgroup of $\Sp_{2g}(\Z)$, namely, $\Sp_{2g}(\Z)[2] = \ker(\Sp_{2g}(\Z) \rightarrow \Sp_{2g}(\Z/2))$.  A'Campo \cite{ACampo} then proved that $\beta_{2g+1}(\PureBraid_{2g+1})$ is all of $\Sp_{2g}(\Z)[2]$.  These two results can be summarized in the following commutative diagram.  In the diagram, $S_{2g+1}$ is the symmetric group, and the map $\Braid_{2g+1} \to S_{2g+1}$ is the action on the marked points of $\D_{2g+1}$. 
$$\xymatrix{
1 \ar[r] & \PureBraid_{2g+1} \ar[r] \ar@{->>}[d] & \Braid_{2g+1} \ar[r] \ar_{\beta_{2g+1}}[d] & S_{2g+1}     \ar[r] \ar@{^{(}->}[d] & 1\\
1 \ar[r] & \Sp_{2g}(\Z)[2]   \ar[r]              & \Sp_{2g}(\Z)  \ar[r]               & \Sp_{2g}(\Z/2) \ar[r]        & 1}$$
In particular, we see that $\BTorelli_{2g+1} < \PureBraid_{2g+1}$ and $\PureBraid_{2g+1} / \BTorelli_{2g+1} \cong \Sp_{2g}(\Z)[2]$.

\p{The Main Proposition}  Recall that $\Theta_{2g+1}$ is the group generated by squares of Dehn twists about curves surrounding odd numbers of marked points.  Denote the quotient $\PureBraid_{2g+1} / \Theta_{2g+1}$ by $\Quotient_g$.  Since Dehn twists about symmetric separating curves in $\Sigma_g^1$ correspond to squares of Dehn twists about curves surrounding odd numbers of marked points in $\D_{2g+1}$, we have $\Theta_{2g+1} \leqslant \BTorelli_{2g+1}$ and so there is a further quotient map $\Quotient_g \rightarrow \Sp_{2g}(\Z)[2]$.  The $n=2g+1$ case of Theorem~\ref{maintheorem:mainbraid} is then equivalent to the following.

\begin{proposition}
\label{proposition:main}
For $g \geq 2$, the quotient map $\pi : \Quotient_g \rightarrow \Sp_{2g}(\Z)[2]$ is an isomorphism.
\end{proposition}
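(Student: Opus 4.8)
The plan is to prove that $\pi : \Quotient_g \to \Sp_{2g}(\Z)[2]$ is an isomorphism by exhibiting an explicit presentation of $\Sp_{2g}(\Z)[2]$ and showing that $\Quotient_g$ admits a compatible presentation with the same generators and relations. Surjectivity of $\pi$ is automatic from A'Campo's theorem recalled above (the image of $\PureBraid_{2g+1}$ under $\beta_{2g+1}$ is all of $\Sp_{2g}(\Z)[2]$, and this factors through $\Quotient_g$). So the entire content is injectivity: every relation holding in $\Sp_{2g}(\Z)[2]$ among a chosen set of generators must already hold in $\Quotient_g$, i.e.\ must be a consequence of the pure braid relations together with the relations ``squares of odd Dehn twists are trivial.''

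The first step is to fix a convenient generating set for $\PureBraid_{2g+1}$ whose images in $\Sp_{2g}(\Z)[2]$ are natural ``transvection-like'' generators; the standard Dehn twists $T_c^2$ about curves surrounding pairs of marked points (the $A_{ij}$ pure braid generators) map under $\beta_{2g+1}$ to transvections, and these generate $\Sp_{2g}(\Z)[2]$. I would then invoke the theorem of the third author (Putman) alluded to in the ``Approach'' paragraph, which produces an infinite but highly symmetric presentation of $\Sp_{2g}(\Z)[2]$ — the key feature being that its relators are supported on ``bounded-rank'' pieces, i.e.\ live in copies of symplectic groups of small genus sitting inside $\Sp_{2g}(\Z)$. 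The strategy is inductive on $g$ with $g=2$ as the base case (where Theorem~\ref{maintheorem:mainbraid} is already known): each relator of the symmetric presentation is witnessed inside a genus-$\leq k$ subsurface for some fixed small $k$, hence pulls back to a relation in a correspondingly embedded $\Quotient_{k}$-type subgroup, where it holds by the inductive hypothesis.

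The work of actually running this argument splits into two parts. First, one must understand the subgroup of $\Quotient_g$ generated by the ``small'' pure braid subgroups — concretely, pure braid subgroups on subsets of the $2g+1$ marked points corresponding to subsurfaces of $\Sigma_g^1$ of controlled genus — and show these subgroups map isomorphically (or at least in a controlled way) onto the corresponding subgroups $\Sp_{2k}(\Z)[2]$, using induction. This requires a careful analysis of how $\Theta_{2g+1}$ interacts with these subgroups: one needs that $\Theta_{2g+1} \cap (\text{small pure braid subgroup})$ is exactly the $\Theta$ of that smaller configuration, which is a nontrivial normal-subgroup intersection statement. Second, one must verify that the generators and relators of Putman's symmetric presentation really are supported on these small pieces, and that enough of them are present to reconstruct all of $\Sp_{2g}(\Z)[2]$; this is where the ``new method for changing this presentation into the finite presentation'' comes in, presumably a Tietze-transformation / rewriting argument that trades the infinite symmetric presentation for the finite one coming from $\PureBraid_{2g+1}$.

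The main obstacle I anticipate is precisely this passage between presentations: showing that the infinitely many symmetric relators are each individually killed in $\Quotient_g$ (via the inductive hypothesis on small genus) \emph{and} that no further relations are needed — in other words, that the natural map from the group defined by Putman's presentation to $\Quotient_g$ is well-defined and surjective with the pure braid group mediating correctly. Controlling $\Theta_{2g+1}$ under restriction to subsurface subgroups — ruling out ``accidental'' extra elements of $\Theta_{2g+1}$ that would be visible in a small piece but not predicted by the small-piece theory — is the delicate technical heart, and likely needs genuine input about curves in $\D_{2g+1}$ surrounding odd numbers of marked points rather than purely group-theoretic manipulation.
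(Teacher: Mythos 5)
Your outline correctly identifies the two organizing ideas — an infinite symmetric presentation of $\Sp_{2g}(\Z)[2]$ obtained from Putman's presentation theorem, and induction on $g$ with the ``$\Theta$ restricted to subsurfaces'' control coming from a theorem of the first two authors about reducible elements — and surjectivity via A'Campo is indeed free.  But there is a genuine gap of a different kind than the one you flag, and it is large.

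The generators of Putman's presentation are the stabilizers $(\Sp_{2g}(\Z)[2])_{\lax{\vec{v}}}$ as $\lax{\vec{v}}$ ranges over \emph{all} vertices of the complex of lax isotropic bases, i.e.\ over all primitive lax vectors in $\Z^{2g}$.  For a handful of specific vectors (those arising as the homology class of a lift of a standard convex curve $c_{ij}$ in $\D_{2g+1}$), you can define the putative inverse map $\widetilde{\phi}_{\lax{\vec{v}}}$ directly: Theorem~\ref{theorem:reducibletotwists} shows $\pi$ restricted to the image in $\Quotient_g$ of the $\PureBraid_{2g+1}$-stabilizer of $c_{ij}$ is an isomorphism onto $(\Sp_{2g}(\Z)[2])_{\lax{\vec{v}}}$, so you can take the inverse.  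But a \emph{generic} primitive lax vector has no canonical representation by a convex curve, and there is no canonical ``small subsurface'' attached to it.  Your proposed mechanism of ``each relator pulls back to a relation in a $\Quotient_k$-type subgroup'' has nothing to localize to in this situation.  Moreover, even granting that the edge relators can be pushed into stabilizers of pairs of curves, the conjugation relators $b_w a_v b_w^{-1}(bab^{-1})_{b(v)}^{-1}$ entangle three vertices $v$, $w$, $b(v)$ that in general span no simplex and sit in no common small subsurface, so they cannot be handled by localization at all.

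The paper's fix, which constitutes Section~4 and Proposition~\ref{proposition:spaction} and is well over half the technical work, is to construct an \emph{action of $\Sp_{2g}(\Z)$ on $\Quotient_g$} lifting the conjugation action on $\Sp_{2g}(\Z)[2]$.  This is built explicitly by declaring, via a surgery operation on convex curves in $\D_{2g+1}$, where each generating transvection of $\Sp_{2g}(\Z)$ sends each pure braid generator of $\Quotient_g$, and then painstakingly checking compatibility with a finite presentation of $\Sp_{2g}(\Z)$ (a modified Wajnryb presentation) and of $\Quotient_g$ (a modified Margalit--McCammond presentation).  Once this action is in hand, the map $\widetilde{\phi}_{\lax{\vec{v}}}$ for a generic vertex is \emph{defined} by transporting $\widetilde{\phi}_{\lax{\vec{v}_{23}}}$ along the action, and the conjugation relators become immediate from the action's compatibility property.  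Without constructing this action — or some substitute doing the same job — the inverse homomorphism simply cannot be defined on most of the free product, and the induction you describe has no well-posed map to run on.  So while your proposal correctly locates the second of the two ingredients (reducibility/Theorem~\ref{theorem:reducibletotwists}), it omits the first one entirely, and this is not a routine rewriting step but the main constructive content of the argument.
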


The starting point is the following theorem of the first two authors \cite[Theorems 1.3 and 4.2]{BrendleMargalitPoint}, which makes it easy to recognize when certain elements of $\BTorelli_{2g+1}$ lie in $\Theta_{2g+1}$ (or, when the corresponding elements of $\ker \pi$ are trivial).  We say an element of $\BTorelli_{2g+1}$ is {\em reducible} if it fixes the homotopy class of an essential simple closed curve in $\D_{2g+1}$.  

\begin{theorem}
\label{theorem:reducibletotwists}
If $\BTorelli_{2h+1} = \Theta_{2h+1}$ for all $h < g$, then all reducible elements
of $\BTorelli_{2g+1}$ lie in $\Theta_{2g+1}$.
\end{theorem}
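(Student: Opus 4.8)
The plan is to argue by induction, using the hypothesis $\BTorelli_{2h+1} = \Theta_{2h+1}$ ($h<g$) as the inductive assumption; note that by \cite[Theorems 1.4 and 4.2]{BrendleMargalitPoint} the hypothesis also supplies the even-strand versions $\BTorelli_{2j}=\Theta_{2j}$ for every $j\le g$, where $\Theta_{2j}$ denotes the subgroup of $\Braid_{2j}$ generated by squares of Dehn twists about curves surrounding odd numbers of marked points. Let $f\in\BTorelli_{2g+1}$ fix the isotopy class of an essential simple closed curve $c$ in $\D_{2g+1}$, surrounding $k$ marked points with $2\le k\le 2g$. Since both $\BTorelli_{2g+1}$ and $\Theta_{2g+1}$ are normal in $\Braid_{2g+1}$, after conjugating $f$ by a braid we may take $c$ to be a standard round curve around marked points $1,\dots,k$. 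Isotope $f$ to fix $c$ pointwise and cut along $c$: then $f=f_{\mathrm{in}}\cdot f_{\mathrm{out}}$, a product of commuting mapping classes supported in the disk $D$ bounded by $c$ (a copy of $\D_k$) and in the complementary annulus $A$ with $2g+1-k$ marked points. The two halves of the argument, according to the parity of $k$, then run in parallel: first show $f_{\mathrm{in}}$ lies in a braid Torelli group with fewer strands, hence — by induction — in $\Theta_k$, and absorb it to reduce to $f$ supported in $A$; then collapse $D$ to reduce the strand count on the outside too, conclude by induction, and transport the resulting generators back.

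Suppose first that $k=2h+1$ is odd, so $1\le h\le g-1$ and $A$ carries $2m:=2(g-h)\ge 2$ marked points. Under the Birman--Hilden cover $\Sigma_g^1\to\D_{2g+1}$, the preimage of $c$ is a single symmetric separating curve $\tilde c$ with $\Sigma_g^1=\Sigma_h^1\cup_{\tilde c}\Sigma_m^2$. Because $\tilde c$ is separating, $H_1(\Sigma_h^1;\Z)$ is a direct summand of $H_1(\Sigma_g^1;\Z)$ preserved by the lift of $f$; since $f$ acts trivially on $H_1(\Sigma_g^1;\Z)$ we get $f_{\mathrm{in}}\in\BTorelli_{2h+1}$, hence $f_{\mathrm{in}}\in\Theta_{2h+1}$ by induction ($h<g$). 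Its defining twists are about curves surrounding odd numbers of the points $1,\dots,2h+1$, so they lie in $\Theta_{2g+1}$, and replacing $f$ by $f\cdot f_{\mathrm{in}}^{-1}$ we may assume $f$ is supported in $A$. Now collapse $D$ to a single marked point; since $2h+1$ is odd this is compatible with the branched cover (it collapses $\Sigma_h^1$ to a disk capping $\tilde c$), and produces $f'\in\Braid_{2m+1}$, which lies in $\BTorelli_{2m+1}$ because it acts trivially on $H_1(\Sigma_m^1;\Z)$, a quotient of $H_1(\Sigma_g^1;\Z)$. As $m<g$, induction gives $f'=\prod_i T_{\gamma_i}^2$ with each $\gamma_i\subset\D_{2m+1}$ surrounding an odd number of marked points. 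Lift each $\gamma_i$ to a curve $\gamma_i'$ in $A\subset\D_{2g+1}$: if $\gamma_i$ misses the collapsed point then $\gamma_i'$ surrounds the same (odd) number of points, while if $\gamma_i$ contains it then $\gamma_i'$ surrounds that number plus $2h$, still odd; so each $T_{\gamma_i'}^2\in\Theta_{2g+1}$. Capping the boundary component $c$ of $A$ with a once-marked disk gives a homomorphism onto $\Braid_{2m+1}$ with kernel $\langle T_c\rangle$, so $f=\bigl(\prod_i T_{\gamma_i'}^2\bigr)\,T_c^{J}$ for some $J\in\Z$; and $T_c$ corresponds under Birman--Hilden to the half-twist about $\tilde c$, which acts on $H_1(\Sigma_g^1;\Z)$ by $-I$ on the $H_1(\Sigma_h^1)$-summand and trivially on a complement, so $T_c^{J}\in\BTorelli_{2g+1}$ forces $J$ even, whence $T_c^{J}=(T_c^2)^{J/2}=T_{\tilde c}^{J/2}\in\Theta_{2g+1}$ ($\tilde c$ being symmetric and separating). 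Therefore $f\in\Theta_{2g+1}$.

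When $k=2m$ is even the argument is the same in outline but invokes the even-strand input. Now the preimage of $c$ is a pair of disjoint curves $\tilde c_1\sqcup\tilde c_2$ interchanged by $\Hyper$, with $\Sigma_g^1=\Sigma_{m-1}^2\cup\Sigma_{g-m}^3$; a Mayer--Vietoris computation shows $H_1(\Sigma_{m-1}^2;\Z)$ and $H_1(\Sigma_{g-m}^3;\Z)$ inject equivariantly into $H_1(\Sigma_g^1;\Z)$, so $f_{\mathrm{in}}\in\BTorelli_{2m}=\Theta_{2m}$; its twists surround odd numbers of interior points, so as before we reduce to $f$ supported in $A$. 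Capping $D$ off with an \emph{unmarked} disk is compatible with the branched cover (it caps $\tilde c_1$ and $\tilde c_2$) and gives $f'\in\Braid_{2(g-m)+1}$ with $f'\in\BTorelli_{2(g-m)+1}$ by the same homology bookkeeping; since $g-m<g$, induction writes $f'=\prod_i T_{\gamma_i}^2$ with each $\gamma_i$ surrounding an odd number of points, and un-capping converts these to curves $\gamma_i'$ in $A$ still surrounding odd numbers of marked points in $\D_{2g+1}$ (one around the capped region picks up the $2m$ hidden points). As before $f=\bigl(\prod_i T_{\gamma_i'}^2\bigr)\,T_c^{J}$; but now $T_c$ corresponds to $T_{\tilde c_1}T_{\tilde c_2}$, which acts on $H_1(\Sigma_g^1;\Z)$ by $x\mapsto x+2\langle x,[\tilde c_1]\rangle[\tilde c_1]$ with $[\tilde c_1]\ne 0$, hence with infinite order; so $T_c^{J}\in\BTorelli_{2g+1}$ forces $J=0$ and $f\in\Theta_{2g+1}$.

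The delicate part is the bookkeeping among the three pictures --- $\Braid_{2g+1}$, the surface $\Sigma_g^1$ via Birman--Hilden, and the homology $H_1(\Sigma_g^1;\Z)$ --- and in particular verifying that cutting, collapsing and capping along $c$ are compatible with the branched cover and that the pieces $f_{\mathrm{in}}$ and $f'$ genuinely land in the asserted braid Torelli groups. In the odd case this rests on $\tilde c$ being separating, so that the homology splits as a direct sum; in the even case one must work harder, checking the Mayer--Vietoris injectivity above and handling the homology-rel-boundary-points used to define $\BTorelli_{2m}$. The other recurring point, and where the hypotheses on $f$ really get used, is the ambiguity by powers of the boundary twist $T_c$ inherent in cutting along $c$: rather than being absorbed into $\Theta_{2g+1}$ directly, it is eliminated by the observation that $\beta_{2g+1}(T_c)\ne I$ --- of order $2$ in the odd case, of infinite order in the even case --- while $f\in\ker\beta_{2g+1}$.
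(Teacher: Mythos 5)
The paper itself does not prove Theorem~\ref{theorem:reducibletotwists}: it quotes it from Brendle--Margalit \cite[Theorems 1.3 and 4.2]{BrendleMargalitPoint}, remarking only that the proof ``is derived from a version of the Birman exact sequence for $\SymTorelli_g$.'' Your cut-and-collapse argument is essentially that exact sequence unwound by hand (cutting along $c$ is the stabilizer decomposition; collapsing $D$ to a marked point is the forgetting map), so your overall strategy is consonant with the cited source rather than a different route.

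Two steps in your write-up do, however, have genuine gaps. First, the decomposition $f=f_{\mathrm{in}}\cdot f_{\mathrm{out}}$ is only defined up to shifting powers of $T_c$ between the factors, and in the odd case $T_c$ lifts to the half-twist about $\tilde c$, which acts by $-I$ on the $H_1(\Sigma_h^1;\Z)$ summand. So ``$f$ acts trivially on $H_1(\Sigma_g^1;\Z)$, hence $f_{\mathrm{in}}\in\BTorelli_{2h+1}$'' is not automatic: it holds for one parity of the $T_c$-shift and fails for the other. This is repairable --- e.g.\ lift $f$ first to a representative $\tilde f\in\SI_g^1$ fixing $\tilde c$ pointwise and commuting with $\Hyper$, and let $f_{\mathrm{in}}$ be the Birman--Hilden image of $\tilde f|_{\Sigma_h^1}$, so the ambiguity becomes $T_{\tilde c}$, i.e.\ $T_c^2$, which already lies in $\BTorelli_{2h+1}$ --- but as stated the claim does not follow from what you wrote. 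Second, in the even case you acknowledge but do not actually carry out the fact that $\BTorelli_{2m}$ is the kernel of the action on the \emph{relative} homology $H_1(\Sigma_{m-1}^2,P;\Z)$. Trivial action on the subgroup $H_1(\Sigma_{m-1}^2;\Z)$ does not by itself give trivial action on $H_1(\Sigma_{m-1}^2,P;\Z)$: an automorphism trivial on both the sub and the quotient can still translate by a class in $H_1(\Sigma_{m-1}^2;\Z)$. One can kill this translation by completing a generating arc through the other side of the cut (which $f_{\mathrm{in}}$ fixes) and using that $f$ is trivial on $H_1(\Sigma_g^1;\Z)$, but until that is done you have not established $f_{\mathrm{in}}\in\BTorelli_{2m}$, which is the load-bearing step of the even case.
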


This theorem is derived from a version of the Birman exact sequence for $\SymTorelli_g$.  It is used at various points in the proof, specifically Sections~\ref{section:calc1}, \ref{section:calc2},  and~\ref{section:theproofmain}.

To prove Proposition~\ref{proposition:main}, it suffices to construct an inverse map $\phi:\Sp_{2g}(\Z)[2] \rightarrow \Quotient_g$.  Besides Theorem~\ref{theorem:reducibletotwists}, there are two main ingredients to the construction.  We describe them and at the same time give an outline for the rest of the paper.
\begin{enumerate}
\item The first ingredient, Proposition~\ref{prop:sp presentation}, is an infinite presentation for $\Sp_{2g}(\Z)[2]$.  This presentation has two key properties: first, the set of generators is the union of the stabilizers of nontrivial elements of $\Z^{2g}$, and second, the action of $\Sp_{2g}(\Z)$ on $\Sp_{2g}(\Z)[2]$ by conjugation permutes the generators and relations in a natural way.
This presentation is obtained by considering the action of $\Sp_{2g}(\Z)[2]$ on a certain simplicial complex $\PartialIBases_g(\Z)$ which itself admits an $\Sp_{2g}(\Z)$ action.  

The method for constructing such infinite presentations from group actions is due to the third author, who used it to construct an infinite presentation of the Torelli group \cite{PutmanInfinite}.  The theory requires the complexes being acted upon to have certain connectivity properties, and our contribution is to verify these properties.

\item Our second ingredient is an action of $\Sp_{2g}(\Z)$ on $\Quotient_g$ that is compatible (via $\pi$) with the action of $\Sp_{2g}(\Z)$ on $\Sp_{2g}(\Z)[2]$ in a sense made precise by Proposition~\ref{proposition:spaction}.  We construct the action by declaring where each generator of $\Sp_{2g}(\Z)$ sends each generator of $\Quotient_g$ and then checking that all relations in both groups are satisfied.  This step uses a mixture of surface topology and combinatorial group theory.
\end{enumerate}
We deal with the above two ingredients in Sections~\ref{section:presentation} and~\ref{section:action}, respectively.  In Section~\ref{section:theproofmain} we will use Theorem~\ref{theorem:reducibletotwists} to define a homomorphism $(\Sp_{2g}(\Z)[2])_{\vec v} \to \Quotient_g$, where $(\Sp_{2g}(\Z)[2])_{\vec v}$ is the stabilizer of some fixed $\vec v \in \Z^{2g}$.  Then, we will use the fact that $\Sp_{2g}(\Z)[2]$ and $\Quotient_g$ have compatible $\Sp_{2g}(\Z)$ actions to propagate this to a globally-defined map $\Sp_{2g}(\Z)[2] \to \Quotient_g$, thus proving Proposition~\ref{proposition:main}.


\section{An infinite presentation for \boldmath$\Sp_{2g}(\Z)[2]$}
\label{section:presentation}

In this section, we discuss the first ingredient from Section~\ref{section:outline}, a special kind of presentation for the group $\Sp_{2g}(\Z)[2]$.  The presentation will be derived from a general theorem of the third author about obtaining a presentation of a group from its action on a simplicial complex.

\subsection{Setup} 

Our first goal is to give a precise statement of the desired presentation of $\Sp_{2g}(\Z)$, namely
Proposition~\ref{prop:sp presentation} below.  

\p{A presentation theorem} Let $G$ be a group acting without rotations on a simplicial complex $X$; this means that if an element of $G$ preserves some simplex of $X$ then it fixes that simplex pointwise.  For a simplex $\sigma$, write $G_\sigma$ for the stabilizer of $\sigma$.  Also write $X^{(0)}$ for the vertex set of $X$.  There is a homomorphism
$$\psi : \BigFreeProd_{v \in X^{(0)}} G_v \longrightarrow G$$
induced by the various inclusions $G_v \to G$.  If $a \in G$ stabilizes $v \in X^{(0)}$, then we denote by $a_v$ the image of $a$ under the inclusion
$$G_v \to \BigFreeProd_{v \in X^{(0)}} G_v.$$
There are some obvious elements in $\Ker(\psi)$.  First, if $e$ is an edge with vertices $v$ and $v'$ and $a \in G_{e}$ then $a_v a_{v'}^{-1} \in \Ker(\psi)$.  We call these the {\em edge relators}.
Second, we have $b_w a_v b_w^{-1} (b a b^{-1})_{b(v)}^{-1} \in \Ker(\psi)$
for $a \in G_v$ and $b \in G_w$.  We call these the {\em conjugation
relators}.  The following theorem of the third author \cite{PutmanPresentations} states that under certain circumstances these two types of relators suffice to normally generate $\ker(\psi)$.

\begin{theorem}
\label{theorem:presentation}
Let $G$ be a group acting without rotations on a simplicial complex $X$.  Assume that $X$ is $1$-connected
and $X/G$ is $2$-connected.  Then
$$G \cong \left(\BigFreeProd_{v \in X^{(0)}} G_v\right)\Big/R,$$
where $R$ is the normal closure of the edge and conjugation relators.
\end{theorem}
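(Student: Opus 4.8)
The plan is to deduce this from the theory of groups acting on simply connected complexes — Brown's presentation theorem and the ``developing complex'' construction for complexes of groups — and to pin down exactly where each connectivity hypothesis is spent. Set
\[ \Gamma = \Bigl(\BigFreeProd_{v\in X^{(0)}} G_v\Bigr)/R, \]
with $R$ the normal closure of the edge and conjugation relators, and let $\psi\colon\Gamma\to G$ be induced by the inclusions $G_v\hookrightarrow G$; since the relators visibly die in $G$ this is well defined, and the claim is that $\psi$ is an isomorphism. Write $\iota_v\colon G_v\to\Gamma$ for the canonical maps — each is injective with $\iota_v(G_v)\cap\ker\psi=1$, because $\psi\circ\iota_v$ is the inclusion — and note that the edge relators make $\iota_v|_{G_\sigma}$ independent of $v\in\sigma$, giving well-defined maps $\iota_\sigma\colon G_\sigma\to\Gamma$ for every simplex $\sigma$. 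It remains to prove $\psi$ is surjective and injective.

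For surjectivity, the image of $\psi$ is the subgroup generated by the vertex stabilizers, and this is all of $G$ by the standard argument for actions without rotations whose quotient is connected and simply connected: pushing an edge path from a basepoint $v_0$ to $g v_0$ down to a loop in $X/G$, filling it with a disk (possible since $\pi_1(X/G)=0$), and unwinding the filling simplex by simplex expresses $g$ as a product of vertex-stabilizer elements. For injectivity, put $N=\ker\psi$ and aim for $N=1$. I would build a simplicial complex $\widetilde X$ carrying a simplicial $\Gamma$-action without rotations, together with a $\Gamma$-equivariant simplicial map $\widetilde X\to X$ (with $\Gamma$ acting on $X$ through $\psi$), designed so that the $\Gamma$-stabilizer of a simplex lying over $\sigma$ is exactly $\iota_\sigma(G_\sigma)$ and $\widetilde X/\Gamma\cong X/G$. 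The edge relators are precisely what make the gluing data of $\widetilde X$ consistent, and the conjugation relators are precisely what make the $\Gamma$-action well defined as one passes between different $G$-orbits of simplices. Since $\iota_\sigma(G_\sigma)$ meets $N$ trivially and there are no rotations, $N$ acts freely on $\widetilde X$. Both $\widetilde X$ and $X$ are then developments of the complex of groups coming from the action $G\curvearrowright X$; so once $\widetilde X$ is known to be connected and simply connected, uniqueness of the simply connected development up to equivariant isomorphism yields an equivariant isomorphism $\widetilde X\to X$ intertwining the $\Gamma$- and $G$-actions via an isomorphism which must be $\psi$, whence $\psi$ is an isomorphism and $N=1$.

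The main obstacle is thus proving $\widetilde X$ is simply connected, and this is exactly what uses the remaining hypotheses. Connectedness of its $1$-skeleton comes from the edge relators together with connectedness of $X$, by lifting edge paths. Simple connectivity is the crux: a loop in $\widetilde X$ maps to a null-homotopic loop in $X$ (as $X$ is $1$-connected), so it bounds a disk; attempting to lift that disk to $\widetilde X$ cell by cell succeeds up to an obstruction which, after projecting to $X/G$, is a $2$-dimensional class controlled by $\pi_1(X/G)$ and $\pi_2(X/G)$. Two-connectivity of $X/G$ kills this obstruction — equivalently, it is the statement that no relators beyond the edge and conjugation relators (in particular no ``$2$-cell'' relators coming from $2$-simplices of $X$) are needed. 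So the division of labor is: $1$-connectivity of $X$ is what lets the edge relators contract $1$-cycles in $\widetilde X$, and $2$-connectivity of $X/G$ is what lets the edge and conjugation relators contract its $2$-cycles, so that $\widetilde X$ is simply connected and hence $N=1$. Equivalently, one may invoke a general presentation theorem for complexes of groups; in that language the content is the same, namely that the extra connectivity of the quotient is precisely what renders the higher-dimensional relators redundant.
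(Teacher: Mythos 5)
The paper does not actually prove Theorem~\ref{theorem:presentation}; it is quoted from the reference \cite{PutmanPresentations} and used as a black box, so there is no in-paper argument to compare yours against. That said, your outline via a developing complex $\widetilde X$ for the complex of groups coming from $G \curvearrowright X$ is a sensible and standard route to results of this kind, and your surjectivity discussion is fine in outline. (It can be made cleaner: set $H=\langle G_v \rangle$; because the action is without rotations every simplex stabilizer lies in $H$, so $G/H$ acts freely on simplices of $X/H$ and $X/H\to X/G$ is a connected regular cover with deck group $G/H$, which forces $G/H=1$ since $\pi_1(X/G)=0$.)

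The injectivity half, however, is not a proof but a statement of intent at exactly the point where the theorem's content lives. You reduce everything to showing $\widetilde X$ is simply connected, and then say the disk-lifting fails ``up to an obstruction which, after projecting to $X/G$, is a $2$-dimensional class controlled by $\pi_1(X/G)$ and $\pi_2(X/G)$'' — but you never identify this obstruction, show it lives in a group that the hypotheses kill, or carry out the cell-by-cell lift. Your division-of-labor summary is also dimensionally confused: simple connectivity of $\widetilde X$ is a question about filling $1$-cycles, so ``$2$-connectivity of $X/G$ lets the relators contract its $2$-cycles'' cannot be the right description; $\pi_2(X/G)$ must instead control the ambiguity between competing $2$-dimensional fillings of a loop, not kill $2$-cycles of $\widetilde X$. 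Finally, the construction of $\widetilde X$ itself, the verification that it is a genuine simplicial complex on which $\Gamma$ acts without rotations with $\widetilde X/\Gamma\cong X/G$, and the ``uniqueness of the simply connected development'' you invoke at the end are all nontrivial assertions that use the no-rotations hypothesis and the conjugation relators in ways your write-up does not exhibit. A complete proof has to carry out all of these combinatorial verifications; that is precisely the work the present paper delegates to \cite{PutmanPresentations}.
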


\p{Symplectic bases}
Let $R$ be either $\Z$ or a field and let $\hat \imath$ be the standard symplectic form on $R^{2g}$.  A {\em symplectic basis} for $R^{2g}$ is a pair of $g$-tuples $(\vec{a}_1,\ldots,\vec{a}_g;\vec{b}_1,\ldots,\vec{b}_g)$ of elements of $R^{2g}$ that together form a free basis and satisfy
\[
\hat \imath(\vec{a}_i,\vec{a}_j) = \hat \imath(\vec{b}_i,\vec{b}_j) = 0 \quad \text{and} \quad \hat \imath(\vec{a}_i,\vec{b}_j)=\delta_{ij} \quad \quad (1 \leq i,j \leq g),
\]
where $\delta_{ij}$ is the Kronecker delta.  A {\em partial symplectic basis} is a
tuple $(\vec{a}_1,\ldots,\vec{a}_k;\vec{b}_1,\ldots,\vec{b}_{\ell})$ of elements of $R^{2g}$ so that
there exist $\vec{a}_{k+1},\ldots,\vec{a}_g,\vec{b}_{\ell+1},\ldots,\vec{b}_g \in R^{2g}$  with 
$(\vec{a}_1,\ldots,\vec{a}_g;\vec{b}_1,\ldots,\vec{b}_g)$ a symplectic basis.  We allow $k=0$ or
$\ell=0$ in this notation.

\p{The complex of lax isotropic bases}
A {\em lax vector} in $R^{2g}$ is a pair $\{\vec{v},-\vec{v}\}$, where $\vec v \in R^{2g}$ is nonzero.  We will denote this pair by $\lax{\vec{v}}$, so $\lax{\vec{v}} = \lax{-\vec{v}}$.
The {\em complex of lax isotropic bases} for $R^{2g}$, denoted $\PartialIBases_g(R)$, is the simplicial complex whose
$(k-1)$-simplices are the sets $\{\lax{\vec{a}_1},\ldots,\lax{\vec{a}_k}\}$, where
$(\vec{a}_1,\ldots,\vec{a}_k;)$ is a partial symplectic basis for $R^{2g}$.  By definition,
$\hat \imath(\vec{a}_i,\vec{a}_j) = 0$ for all $1 \leq i,j \leq k$.
Clearly $\Sp_{2g}(\Z)$, hence $\Sp_{2g}(\Z)[2]$, acts on $\PartialIBases_g(\Z)$.

\p{The augmented complex of lax isotropic bases}
As we will explain shortly, the action of $\Sp_{2g}(\Z)[2]$ on $\PartialIBases_g(\Z)$ satisfies
the hypotheses of Theorem~\ref{theorem:presentation} for $g \geq 4$.  For the special case of $g=3$,
we will need a different complex obtained by attaching some cells to $\PartialIBases_g(\Z)$ (the
vertex set of our new complex is the same as for $\PartialIBases_g(\Z)$). 

We make the following definitions.  A simplex $\{\lax{\vec{a}_1},\ldots,\lax{\vec{a}_k}\}$ of $\PartialIBases_g(R)$
will be called a {\em standard simplex}.  If $(\vec{a}_1,\ldots,\vec{a}_k;\vec{b}_1)$ is a partial
symplectic basis, then the set
$\{\lax{\vec{a}_1},\ldots,\lax{\vec{a}_k},\lax{\vec{b}_1}\}$ is a {\em simplex of intersection type}.  If $(\vec{a}_1,\ldots,\vec{a}_k;)$ is a partial symplectic basis, 
the sets
\[\{\lax{\vec{a}_1+\vec{a}_2},\lax{\vec{a}_1},\lax{\vec{a}_2},\ldots,\lax{\vec{a}_k}\} \quad \text{and} \quad 
\{\lax{\vec{a}_1+\vec{a}_2+\vec{a}_3},\lax{\vec{a}_1},\lax{\vec{a}_2},\ldots,\lax{\vec{a}_k}\}\]
will be called {\em simplices of additive type}.  Since $(\vec{a}_1,-\vec{a}_2,\vec{a}_3,\ldots,\vec{a}_k;)$ is also
a partial symplectic basis, the sets
\[\{\lax{\vec{a}_1 - \vec{a}_2},\lax{\vec{a}_1},\lax{\vec{a}_2},\ldots,\lax{\vec{a}_k}\} \quad \text{and} \quad 
\{\lax{\vec{a}_1 - \vec{a}_2 + \vec{a}_3},\lax{\vec{a}_1},\lax{\vec{a}_2},\ldots,\lax{\vec{a}_k}\}\]
are also simplices of additive type.  Similarly, the sets
\[\{\lax{\pm \vec{a}_1 \pm \vec{a}_2},\lax{\vec{a}_1},\lax{\vec{a}_2},\ldots,\lax{\vec{a}_k}\} \quad \text{and} \quad 
\{\lax{\pm \vec{a}_1 \pm \vec{a}_2 \pm \vec{a}_3},\lax{\vec{a}_1},\lax{\vec{a}_2},\ldots,\lax{\vec{a}_k}\}\]
are also simplices of additive type for any choice of signs.  
The \emph{augmented complex of lax isotropic bases}, denoted $\PartialIBasesEx_g(R)$, is the simplicial complex whose
simplices are the standard simplices
and the simplices of additive and intersection type.

\p{The presentation of \boldmath$\Sp_{2g}(\Z)[2]$} The main result of this section is the following.

\begin{proposition}
\label{prop:sp presentation}
Let $g \geq 3$ and let $X_g = \PartialIBases_g(\Z)$ if $g \geq 4$ and $X_g = \PartialIBasesEx_g(\Z)$ if $g = 3$. We have
$$\Sp_{2g}(\Z)[2] \cong \left(\BigFreeProd_{\lax{\vec{v}} \in X_g^{(0)}} \Sp_{2g}(\Z)[2]_{\lax{\vec{v}}}\right)\Big/R$$
where $R$ is the normal closure of the edge and conjugation relators.
\end{proposition}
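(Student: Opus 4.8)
The plan is to derive Proposition~\ref{prop:sp presentation} from Putman's presentation theorem (Theorem~\ref{theorem:presentation}) applied to the action of $G = \Sp_{2g}(\Z)[2]$ on $X = X_g$. The vertex stabilizers $G_{\lax{\vec v}}$ are precisely the groups $\Sp_{2g}(\Z)[2]_{\lax{\vec v}}$ appearing in the statement, so it suffices to verify the hypotheses of that theorem: that $\Sp_{2g}(\Z)[2]$ acts on $X_g$ without rotations, that $X_g$ is $1$-connected, and that $X_g/\Sp_{2g}(\Z)[2]$ is $2$-connected. Granting these, Theorem~\ref{theorem:presentation} yields the stated presentation.

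The first hypothesis is elementary. If $M \in \Sp_{2g}(\Z)[2]$ preserves a simplex of $X_g$ it permutes the lax vectors that are its vertices, but the vectors underlying the vertices of any standard, additive-type, or intersection-type simplex are pairwise incongruent modulo $2$: for standard and intersection type because any two of them lie in a common integral basis and so have linearly independent reductions mod $2$, and for additive type by a direct check (e.g.\ $\vec a_1 + \vec a_2 \not\equiv \vec a_1 \pmod 2$ since $\vec a_2$ is primitive, and $\vec a_1 + \vec a_2 \not\equiv \vec a_j \pmod 2$ for $j \geq 3$ since $\vec a_1, \vec a_2, \vec a_j$ are independent mod $2$). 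As $M \equiv I \pmod 2$, the induced permutation of the vertex set is trivial, so $M$ fixes the simplex pointwise. For the third hypothesis one first notes that $\Sp_{2g}(\Z)[2]$ has only finitely many orbits of simplices (since $\Sp_{2g}(\Z)$ does and $[\Sp_{2g}(\Z):\Sp_{2g}(\Z)[2]] = |\Sp_{2g}(\Z/2)| < \infty$), so $X_g/\Sp_{2g}(\Z)[2]$ is a finite complex; its combinatorics is governed by reduction modulo $2$ (its vertex set is identified with the nonzero vectors of $(\Z/2)^{2g}$ via $\lax{\vec v} \mapsto \vec v \bmod 2$, using the standard fact that $\Sp_{2g}(\Z)[2]$ acts transitively on primitive vectors in a fixed mod-$2$ class, with higher simplices determined similarly), and one checks $2$-connectivity of this explicit finite complex directly --- the extra cells of the augmented complex being exactly what make this argument go through when $g = 3$.

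The main obstacle is the second hypothesis: that $\PartialIBases_g(\Z)$ is simply connected for $g \geq 4$ and that $\PartialIBasesEx_g(\Z)$ is simply connected for $g = 3$ (for $g = 3$ the unaugmented complex is merely connected). I would prove this in two stages. Stage one is path-connectivity of $X_g$: given lax primitive vectors $\lax{\vec v}$ and $\lax{\vec w}$ that do not already span an edge, a Euclidean-algorithm argument --- applying symplectic transvections and reducing coordinates relative to a fixed symplectic basis --- produces an intermediate vertex strictly smaller than $\lax{\vec w}$ in a suitable complexity (combining $|\hat\imath(\vec v,\vec w)|$ with a norm of $\vec w$), and one inducts on the complexity. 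Stage two is simple-connectivity: represent a class in $\pi_1(X_g)$ by an edge-loop based at a fixed vertex and run a surgery argument in the style of Hatcher--Wahl and Putman, repeatedly replacing a length-two subpath of maximal complexity by a homotopic subpath of smaller complexity; triangles whose three vertices form a partial symplectic basis are filled by standard $2$-simplices, and the additive- and intersection-type $2$-cells are used precisely to fill the remaining ``short'' triangles (such as $\lax{\vec v_0},\lax{\vec v_1},\lax{\vec v_0+\vec v_1}$) that cannot be spanned by standard simplices once $g = 3$ --- for $g \geq 4$ there is enough room in $\Z^{2g}$ to avoid these configurations with standard simplices alone. Once all three hypotheses are in place, Theorem~\ref{theorem:presentation} completes the proof.
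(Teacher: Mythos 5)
Your overall plan — verify the hypotheses of Theorem~\ref{theorem:presentation} for the action of $\Sp_{2g}(\Z)[2]$ on $X_g$ — is the same as the paper's, and your no-rotations argument (vertices of a simplex are pairwise incongruent mod~$2$, while $M \equiv I \bmod 2$) is essentially the paper's Lemma~\ref{lemma:simplexproject} and Corollary~\ref{corollary:norotations}. However, there is a genuine conceptual error in how you explain the role of the augmented complex, and this error steers the rest of your argument in the wrong direction.

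You write that ``for $g = 3$ the unaugmented complex is merely connected'' and that the additive- and intersection-type cells are what make $\PartialIBasesEx_3(\Z)$ simply connected. This is false. By the paper's Proposition~\ref{proposition:partialbasesconnected1}, $\PartialIBases_g(\Z)$ is homotopy equivalent to a wedge of $(g-1)$-spheres; in particular $\PartialIBases_3(\Z)$ is a wedge of $2$-spheres and hence is \emph{already} simply connected with no augmentation. The augmentation is needed for a different reason: the quotient $X_3/\Sp_6(\Z)[2]$ is identified (Proposition~\ref{proposition:quotient}) with the mod-$2$ version of the complex, and $\PartialIBases_3(\Z/2)$ is again a wedge of $2$-spheres — so it is $1$-connected but \emph{not} $2$-connected, failing the hypothesis of Theorem~\ref{theorem:presentation}. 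The extra cells are added to kill $\pi_2$ of the quotient (this is the content of Proposition~\ref{proposition:partialbasesconnected2} and its proof via Lemmas~\ref{lemma:push}--\ref{lemma:pi2 gens}). For $g \geq 4$ this is automatic since a wedge of $(g-1)$-spheres is already $2$-connected. Your heuristic ``for $g \geq 4$ there is enough room to avoid short triangles'' is therefore not the operative reason, and if you pursue the program of improving $\pi_1$ of $\PartialIBasesEx_3(\Z)$ you will be proving something that is already true while missing the actual obstruction.

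Beyond this, your plan for the quotient — ``one checks $2$-connectivity of this explicit finite complex directly'' — is not a shortcut: once you identify the quotient with the mod-$2$ complex, ``checking directly'' amounts to proving the $\Z/2$ version of the connectivity statement, which is exactly as hard as the $\Z$ version and in the paper is done by the same machinery. The paper replaces your Euclidean-algorithm-and-surgery argument with Quillen's Theorem~A applied to the span map $\Poset(\PartialIBases_g(R)) \to \Building_g(\Field)$, reducing the question to the Solomon--Tits theorem and Maazen's theorem on the complex of lax partial bases; and for $g=3$ it pins down explicit generators of $\pi_2(\PartialIBases_3^\alpha(\Z/2))$ via a second Quillen-type argument and shows each one bounds in $\PartialIBasesEx_3(\Z/2)$ using the intersection-type cells. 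Your surgery sketch could plausibly be made to give $1$-connectivity of the $\Z$ complex, but it does not address the real bottleneck ($\pi_2$ of the quotient), and as written it is anchored to a false premise about which connectivity fails in genus~$3$.
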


Proposition~\ref{prop:sp presentation} is a direct consequence of Theorem~\ref{theorem:presentation} and the following three 
propositions; together these propositions establish the conditions of Theorem~\ref{theorem:presentation} for the action of $\Sp_{2g}(\Z)[2]$ on $\PartialIBases_g(\Z)$ if $g \geq 4$ and for the action of $\Sp_6(\Z)[2]$ on $\PartialIBasesEx_3(\Z)$.

\begin{proposition}
\label{proposition:quotient}
Fix some $g \geq 1$.
\begin{enumerate}\setlength{\itemsep}{-2ex}
\item The group $\Sp_{2g}(\Z)[2]$ acts without rotations on $\PartialIBases_g(\Z)$ and we have
an isomorphism of CW complexes:
$\PartialIBases_g(\Z) / \Sp_{2g}(\Z)[2] \cong \PartialIBases_g(\Z/2)$.
\item The group $\Sp_{2g}(\Z)[2]$ acts without rotations on $\PartialIBasesEx_g(\Z)$ and we have
an isomorphism of CW complexes:
$\PartialIBasesEx_g(\Z) / \Sp_{2g}(\Z)[2] \cong \PartialIBasesEx_g(\Z/2)$.
\end{enumerate}
\end{proposition}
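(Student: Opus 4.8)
The plan is to prove both statements at once, treating the complexes $\PartialIBases_g(\cdot)$ and $\PartialIBasesEx_g(\cdot)$ in parallel; write $X(R)$ for either one over $R\in\{\Z,\Z/2\}$. Since reduction modulo $2$ is a ring homomorphism, it takes symplectic bases of $\Z^{2g}$ to symplectic bases of $(\Z/2)^{2g}$, hence partial symplectic bases to partial symplectic bases, and so induces a simplicial map $r\colon X(\Z)\to X(\Z/2)$ carrying simplices of each type (standard, intersection, additive) to simplices of the same type. I will use two standard facts throughout: that $\Sp_{2g}(\Z)\to\Sp_{2g}(\Z/2)$ is surjective, and that $\Sp_{2g}(R)$ acts transitively on the partial symplectic bases of each fixed size, and likewise on the data defining the intersection-type and additive-type simplices. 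The observation behind everything is that $M\in\Sp_{2g}(\Z)[2]$ satisfies $M\vec{v}\equiv\vec{v}\pmod 2$, so $M$ sends the lax vector $\lax{\vec{v}}$ to one with the same reduction mod $2$ (recall $-1\equiv 1$). For the ``acts without rotations'' assertions: the vertices of any simplex of $X(\Z)$ have pairwise distinct, nonzero reductions in $(\Z/2)^{2g}$ --- for standard and intersection-type simplices because the relevant vectors belong to a symplectic basis and so are independent mod $2$, and for additive-type simplices because a sum of two or three members of a mod-$2$-independent set is distinct from each of them and from $0$. Hence if $M\in\Sp_{2g}(\Z)[2]$ preserves such a simplex setwise, it must fix each vertex, and therefore fixes the simplex pointwise.

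Now $r$ is injective on the vertices of each simplex (by the distinctness just noted), is $\Sp_{2g}(\Z)$-equivariant, and is constant on $\Sp_{2g}(\Z)[2]$-orbits, so it descends to a simplicial map $\bar r\colon X(\Z)/\Sp_{2g}(\Z)[2]\to X(\Z/2)$ between CW complexes (the quotient is a CW complex because the action is without rotations). To see $\bar r$ is an isomorphism it suffices to show $r$ is surjective on simplices and that the $r$-preimage of each simplex is a single $\Sp_{2g}(\Z)[2]$-orbit. Surjectivity is immediate: every simplex of $X(\Z/2)$ is $\bar{M}$ applied to a fixed model simplex of its type for some $\bar{M}\in\Sp_{2g}(\Z/2)$, which lifts to $\Sp_{2g}(\Z)$. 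For the orbit statement, let $\tau,\tau'$ be simplices of $X(\Z)$ with $r(\tau)=r(\tau')=\sigma$; by transitivity over $\Z$ there is $M\in\Sp_{2g}(\Z)$ with $M\tau=\tau'$, and then $\bar{M}$ stabilizes $\sigma$. After composing $M$ with a signed permutation matrix of the ambient group realizing the relevant permutation of a symplectic basis --- which lifts tautologically from $\Sp_{2g}(\Z/2)$ to $\Sp_{2g}(\Z)$ --- we may assume $\bar{M}$ fixes each vertex of $\sigma$, so $\bar{M}$ lies in the pointwise stabilizer in $\Sp_{2g}(\Z/2)$ of a partial symplectic basis. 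Equivalently --- and this is the crux --- one needs that the pointwise stabilizer in $\Sp_{2g}(\Z)$ of a partial symplectic basis surjects, under reduction mod $2$, onto the pointwise stabilizer in $\Sp_{2g}(\Z/2)$ of its reduction; granting this, choose $H$ in the $\Z$-stabilizer of $\tau$ with $\bar{H}=\bar{M}$, and then $MH^{-1}\in\Sp_{2g}(\Z)[2]$ carries $\tau$ to $\tau'$. For intersection-type simplices this is the same argument after splitting off the symplectic plane spanned by a vector and its dual. For additive-type simplices there is a small extra wrinkle: over $\Z/2$ the ``summed'' vertex and a summand can exchange roles, so $\tau$ and $\tau'$ need not present their common reduction compatibly; but re-presenting an additive-type simplex over an alternative partial symplectic basis obtained by an integral change of coordinates (still a partial symplectic basis, and still yielding an additive-type simplex since arbitrary signs are allowed), together with the involutions in $\Sp_{2g}(\Z)[2]$ that negate a single dual pair of a symplectic basis and fix the rest, removes the discrepancy.

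I expect the only real work to be the stabilizer-surjectivity invoked above, and I would prove it from the explicit ``parabolic'' description of the stabilizer of a partial symplectic basis $(\vec{e}_1,\dots,\vec{e}_k;)$: an element fixing $\vec{e}_1,\dots,\vec{e}_k$ preserves both the isotropic summand they span and its symplectic complement, and is determined by an element of $\Sp_{2(g-k)}(R)$ --- its action on a complementary symplectic summand $\cong R^{2(g-k)}$ --- together with a freely chosen element of an explicit free $R$-module recording the images of the dual vectors (namely their $R^{2(g-k)}$-components and a symmetric $k\times k$ block over $R$). Since $\Sp_{2(g-k)}(\Z)\to\Sp_{2(g-k)}(\Z/2)$ is onto, any free $\Z$-module surjects onto the corresponding free $\Z/2$-module, and this parametrization is visibly compatible with reduction, the stabilizer-reduction map is surjective, completing the argument. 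The bijections of CW complexes thereby produced are precisely the asserted isomorphisms $\PartialIBases_g(\Z)/\Sp_{2g}(\Z)[2]\cong\PartialIBases_g(\Z/2)$ and $\PartialIBasesEx_g(\Z)/\Sp_{2g}(\Z)[2]\cong\PartialIBasesEx_g(\Z/2)$.
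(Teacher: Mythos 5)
Your proposal follows the same overall strategy as the paper's proof: reduction mod $2$ gives a type-preserving simplicial map, distinctness of mod-$2$ reductions gives the without-rotations statement, Newman--Smart plus transitivity gives surjectivity on simplices, and the key technical input is surjectivity of the stabilizer map $\Sp_{2g}(\Z,\vec{a}_1,\dots,\vec{a}_k,\vec{b}_1,\dots,\vec{b}_\ell) \to \Sp_{2g}(\Z/2,\vec{\alpha}_1,\dots,\vec{\alpha}_k,\vec{\beta}_1,\dots,\vec{\beta}_\ell)$. The only real differences are in how that stabilizer surjectivity is established and in the bookkeeping for additive-type simplices. The paper proves the stabilizer surjectivity by induction on $k$, explicitly exhibiting a semidirect-product decomposition $K_\Z \rtimes \Sp_{2(g-1)}(\Z,\vec{a}_2,\dots,\vec{a}_k)$ and checking $K_\Z \to K_2$ is onto by writing a lift; you propose instead to parametrize the whole stabilizer in one shot as a Levi factor $\Sp_{2(g-k)}(R)$ times an explicit free-module-valued unipotent piece and observe both surject mod $2$. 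Both are correct; the paper's inductive version produces a concrete lift without having to describe the full unipotent radical, while your parametrization is conceptually cleaner but requires verifying the constraints on the images of the dual vectors (which you correctly identify as a $\Hom$-block plus a symmetric block). For the additive-type case, the paper sidesteps any group-element argument by simply re-choosing the signs $e_i$ of the representatives $\vec{a}_i'$ so that $\vec{v}' = \sum \vec{a}_i'$ exactly (this changes no lax vector and no mod-$2$ class) and then applying linearity of $M$; your invocation of the sign-flip involutions in $\Sp_{2g}(\Z)[2]$ is a valid alternative but unnecessary, and the phrase ``removes the discrepancy'' would need fleshing out in a full write-up. In short: same route, differing in presentation of one lemma.
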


\begin{remark}
In the above proposition, recall that if a group $G$ acts without rotations on a simplicial complex $X$,
then $X/G$ is a CW complex in a natural way, but is not necessarily a simplicial complex; for instance, consider
the usual action of $\Z$ on $\R$, where $\R$ is triangulated with vertex set $\Z$.  
The above proposition says that this kind of pathology does not happen for the actions of
$\Sp_{2g}(\Z)[2]$ on $\PartialIBases_g(\Z)$ and on $\PartialIBasesEx_g(\Z)$.
\end{remark}

\begin{proposition}
\label{proposition:partialbasesconnected1}
If $R$ is either $\Z$ or a field, then the complex
$\PartialIBases_g(R)$ is homotopy equivalent to a wedge of $(g-1)$-spheres.  In particular, the complexes $\PartialIBases_g(\Z)$ and $\PartialIBases_g(\Z/2)$ are both 2-connected for $g \geq 4$.
\end{proposition}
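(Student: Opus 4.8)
The plan is to prove the stronger assertion that $\PartialIBases_g(R)$ is homotopy Cohen--Macaulay of dimension $g-1$: it is homotopy equivalent to a wedge of $(g-1)$-spheres, and the link of every $j$-simplex is homotopy equivalent to a wedge of $(g-j-2)$-spheres. Since $\lax{\vec v}$ is a vertex of $\PartialIBases_g(R)$ precisely when $\vec v$ is primitive (for $R = \Z$; nonzero for a field), and since any such $\vec v$ extends to a symplectic basis while an isotropic direct summand of $R^{2g}$ has rank at most $g$, the maximal simplices have dimension $g-1$, so this is the best possible statement. The proof is by induction on $g$; the case $g=1$ is immediate because $\PartialIBases_1(R)$ is a nonempty discrete set, and one checks the inductive step below runs for all $g \ge 2$.

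First I would identify the links. Fix a vertex $\lax{\vec v}$, let $W = \vec v^{\perp}/\langle\vec v\rangle$, a symplectic module of rank $2(g-1)$, and consider the simplicial map $f \colon \Link(\lax{\vec v}) \to \PartialIBases_{g-1}(W)$ sending $\lax{\vec w}$ to the class of $\vec w$. One checks that $f$ is surjective and that for every simplex $\sigma$ of $\PartialIBases_{g-1}(W)$ the preimage $f^{-1}(\overline{\sigma})$ of the closed simplex is a join of simplices, hence contractible: the point is that $\vec w$ and $\vec w + t\vec v$ span an isotropic summand for every $t \in R$, and remain joinable to everything else in $f^{-1}(\overline{\sigma})$. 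A simplicial surjection whose preimages of closed simplices are contractible is a homotopy equivalence, so by induction $\Link(\lax{\vec v})$ is homotopy equivalent to a wedge of $(g-2)$-spheres. Running the same argument for a $j$-simplex $\sigma$ with underlying isotropic summand $V$ identifies $\Link(\sigma)$ up to homotopy with $\PartialIBases_{g-j-1}(V^{\perp}/V)$, which is a wedge of $(g-j-2)$-spheres by induction; this yields the Cohen--Macaulay condition for all links.

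The crux is to show $\PartialIBases_g(R)$ is itself $(g-2)$-connected; granted that, a $(g-1)$-dimensional $(g-2)$-connected CW complex is automatically homotopy equivalent to a wedge of $(g-1)$-spheres, and for $g \ge 4$ this is in particular $2$-connected. I would obtain the $(g-2)$-connectivity by comparison with the complex $\PartialBases(R^{2g})$ of (not-necessarily-isotropic) partial bases of $R^{2g}$, which is known to be $(2g-2)$-connected (Maazen and van der Kallen for $R = \Z$; the Solomon--Tits argument for a field). Given a map $S^m \to \PartialIBases_g(R)$ with $m \le g-2$, one extends it to a simplicial map $h \colon \Sigma \to \PartialBases(R^{2g})$ from a triangulated disk with $h|_{\partial \Sigma}$ landing in $\PartialIBases_g(R)$, and then eliminates the simplices of $\Sigma$ whose images fail the isotropy condition one at a time by surgery: such a ``bad'' simplex, together with the cone on its link, maps into the star in $\PartialBases(R^{2g})$ of a partial symplectic basis, and the part of that star compatible with the isotropy condition is highly connected by the link analysis above and the inductive hypothesis --- exactly what is needed to replace the bad simplex by a filling inside $\PartialIBases_g(R)$. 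Iterating moves the whole disk into $\PartialIBases_g(R)$.

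The main obstacle is this last surgery step: one must pin down the notion of ``bad simplex,'' attach to each one the correct auxiliary symplectic subspace whose perpendicular complement carries a copy of some $\PartialIBases_h(-)$ (or a relative version of it) with $h < g$, and organize the moves so that the process terminates. Over $R = \Z$ this requires the usual care with primitivity and divisibility and is the analogue of the delicate sharpening of stable-range bounds in the $\SL_n(\Z)$ setting; over a field, and especially over $R = \Z/2$ --- where each lax vector is a single vector, the fibers of the map $f$ above are singletons, and hence $\Link(\lax{\vec v})$ is \emph{literally} $\PartialIBases_{g-1}$ of a smaller symplectic module --- the bookkeeping is substantially lighter.
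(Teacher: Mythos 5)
Your route is genuinely different from the paper's. The paper factors $\Poset(\PartialIBases_g(R))$ through the Tits building $\Building_g(\Field)$ via the span map and applies Quillen's Theorem A directly, with the fiber $\spanmap/V$ identified as $\PartialBases_d(R)$ (handled by Maazen) and the base and its upper intervals handled by Solomon--Tits. You instead propose an intrinsic induction: identify links with smaller versions of $\PartialIBases$, then establish global $(g-2)$-connectivity by pushing maps from $\PartialBases_{2g}(R)$ down into the isotropic subcomplex via surgery. The paper's approach is cleaner because factoring through the building means it never has to analyze links of $\PartialIBases_g$ at all, and the only fiber computation needed is the un-symplectic one (Maazen). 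Your approach, if completed, would give more (full homotopy Cohen--Macaulayness, not just the homotopy type of the whole complex), but it has two real gaps.

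First, the link identification is not quite right. Over $R = \Z$ or $\Q$, the preimage under your map $f\colon \Link(\lax{\vec v}) \to \PartialIBases_{g-1}(W)$ of a single vertex $\bar w$ is the infinite discrete set $\{\lax{\vec w + t\vec v} : t \in R\}$, and no two of these are joined by an edge in $\Link(\lax{\vec v})$ (three vectors $\vec v$, $\vec w + t\vec v$, $\vec w + t'\vec v$ with $t \ne t'$ are linearly dependent). So $f^{-1}(\overline{\sigma})$ over a $(k-1)$-simplex $\sigma$ is a join of $k$ infinite discrete sets, which is homotopy equivalent to a wedge of $(k-1)$-spheres, not contractible. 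The naive fiber lemma ("contractible preimages of closed simplices") therefore does not apply; what one actually needs is the full wedge-of-spheres Quillen Theorem A (as in the paper's Theorem 3.6), and then the argument becomes essentially the same as the paper's, except with $\PartialIBases_{g-1}(W)$ as the base instead of the Tits building, which is a worse choice because you would then need the inductive hypothesis both for the base and in the surgery step. Your claim that the fibers are literal singletons over $\Z/2$ is also wrong: the preimage of $\bar w$ is $\{\vec w, \vec w + \vec v\}$, two points (the lax identification kills only $\pm$, not translation by $\vec v$).

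Second, the surgery step is stated only at the level of a plan, and you say so yourself. The obstacle is worse than bookkeeping. When you try to eliminate a non-isotropic simplex $\tau$ of the disk, the image of $\Link(\tau)$ in $\PartialBases_{2g}(R)$ need not lie in $\PartialIBases_g(R)$, so there is no immediate ``highly connected subcomplex'' of $\PartialIBases_g(R)$ into which the cone must map; one would have to set up a relative version of $\PartialIBases$ over a fixed partial basis, prove an induction for that complex as well, and supply a terminating measure of badness. This is genuinely nontrivial and is exactly what the paper circumvents by routing through the Tits building, where Solomon--Tits is already available and the only extra input is the full complex of partial bases $\PartialBases_d(R)$.

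So: the approach is plausible in outline and not circular, but as written it has a computational error in the link fibers and a large uncompleted core in the surgery step; it is not yet a proof.
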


\begin{proposition}
\label{proposition:partialbasesconnected2}
The complexes $\PartialIBasesEx_3(\Z)$ and $\PartialIBasesEx_3(\Z/2)$ are $1$-connected and $2$-connected, respectively.
\end{proposition}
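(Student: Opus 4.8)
The plan is to derive both assertions from Proposition~\ref{proposition:partialbasesconnected1}, disposing of the $\Z$ statement quickly (only $1$-connectivity is needed) and doing the real work over $\Z/2$. Note first that $\PartialIBases_3(R)$ is a subcomplex of $\PartialIBasesEx_3(R)$ with the \emph{same} vertex set, and is $1$-connected by Proposition~\ref{proposition:partialbasesconnected1}; the only new edges are the intersection-type edges $\{\lax{\vec a},\lax{\vec b}\}$ with $\hat\imath(\vec a,\vec b)=1$. Given such an edge, choose a primitive $\vec c$ in the symplectic complement $\langle\vec a,\vec b\rangle^{\perp}$ (a nonzero symplectic summand since $g=3$); then $(\vec a,\vec c;\vec b)$ is a partial symplectic basis, so $\{\lax{\vec c},\lax{\vec a},\lax{\vec b}\}$ is a simplex of intersection type whose other two edges $\{\lax{\vec a},\lax{\vec c}\}$ and $\{\lax{\vec c},\lax{\vec b}\}$ are standard. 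Hence every edge-loop of $\PartialIBasesEx_3(R)$ is homotopic into $\PartialIBases_3(R)$, where it bounds. This proves the $\Z$ statement and the $1$-connectivity of $\PartialIBasesEx_3(\Z/2)$, so by Hurewicz it remains to show $\tilde{H}_2(\PartialIBasesEx_3(\Z/2);\Z)=0$; I would prove this by showing the inclusion-induced map $\tilde{H}_2(\PartialIBases_3(\Z/2))\to\tilde{H}_2(\PartialIBasesEx_3(\Z/2))$ is both surjective and zero.

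\emph{Surjectivity.} I claim every $2$-cycle $z$ of $\PartialIBasesEx_3(\Z/2)$ is homologous to one supported on $\PartialIBases_3(\Z/2)$; since $\PartialIBasesEx_3(\Z/2)$ is $3$-dimensional, corrections by boundaries of $3$-simplices are available. An additive-type $2$-simplex $\{\lax{\vec a_1+\vec a_2},\lax{\vec a_1},\lax{\vec a_2}\}$ is, modulo the boundary of the additive-type $3$-simplex $\{\lax{\vec a_1+\vec a_2},\lax{\vec a_1},\lax{\vec a_2},\lax{\vec a_3}\}$ (whose three other faces are standard), a sum of standard $2$-simplices. For an intersection-type $2$-simplex $\{\lax{\vec a},\lax{\vec c},\lax{\vec b}\}$ with $\hat\imath(\vec a,\vec b)=1$, the boundary of the intersection-type $3$-simplex $\{\lax{\vec a},\lax{\vec c},\lax{\vec c'},\lax{\vec b}\}$ (which has two standard and two intersection-type faces) shows that, modulo standard $2$-simplices and a boundary, one may replace $\vec c$ by any $\vec c'$ joined to it in the link of the edge $\{\lax{\vec a},\lax{\vec b}\}$; that link is a copy of $\PartialIBases_2(\Z/2)$, which is connected, so all intersection-type $2$-simplices on a fixed edge $\{\lax{\vec a},\lax{\vec b}\}$ become equivalent modulo standard simplices and boundaries. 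After these reductions $z$ becomes $(\text{a standard }2\text{-chain})+\sum_e m_e\sigma_e$ modulo boundaries, with $e$ running over intersection-type edges and $\sigma_e$ a chosen intersection-type $2$-simplex on $e$. Since $e$ appears in $\partial\sigma_e$ with coefficient $1$ and in no other term, and the chain is still a cycle, each $m_e$ vanishes; hence $z$ is homologous to a $2$-cycle in $\PartialIBases_3(\Z/2)$.

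\emph{The map is zero.} By the proof of Proposition~\ref{proposition:partialbasesconnected1} (a Solomon--Tits-type argument, valid because $\Z/2$ is a field), $\tilde{H}_2(\PartialIBases_3(\Z/2))$ is generated by the apartment classes: for a symplectic basis $(\vec a_1,\vec a_2,\vec a_3;\vec b_1,\vec b_2,\vec b_3)$, the join $\Delta=\{\lax{\vec a_1},\lax{\vec b_1}\}*\{\lax{\vec a_2},\lax{\vec b_2}\}*\{\lax{\vec a_3},\lax{\vec b_3}\}$ is an embedded $2$-sphere. As $\Sp_6(\Z/2)$ acts transitively on symplectic bases and all complexes and maps are equivariant, it suffices to kill one apartment. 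Set $\vec w=\vec a_1+\vec b_1$; then $\hat\imath(\vec w,\vec a_1)=\hat\imath(\vec w,\vec b_1)=1$ in $\Z/2$, $\vec w$ is orthogonal to $\vec a_2,\vec b_2,\vec a_3,\vec b_3$, and $\langle\vec w,\vec a_1\rangle=\langle\vec w,\vec b_1\rangle=\langle\vec a_1,\vec b_1\rangle$ is a symplectic summand. Hence for each of the eight triangles $\{\lax{\vec x_1},\lax{\vec x_2},\lax{\vec x_3}\}$ of $\Delta$ (with $\vec x_i\in\{\vec a_i,\vec b_i\}$) the set $\{\lax{\vec w},\lax{\vec x_1},\lax{\vec x_2},\lax{\vec x_3}\}$ is a simplex of intersection type, using the symplectic pair inside $\{\vec w,\vec x_1\}$ together with $\vec x_2,\vec x_3$ in the complementary summand. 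Thus the cone $\lax{\vec w}*\Delta$ is an honest simplicial $3$-ball in $\PartialIBasesEx_3(\Z/2)$ with boundary $\Delta$, so $[\Delta]$ is null-homologous there. Together with surjectivity this yields $\tilde{H}_2(\PartialIBasesEx_3(\Z/2))=0$.

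\emph{Main obstacle.} The delicate step is the reduction/surjectivity argument: one must organize the cancellations so that the extra $2$-simplices of $\PartialIBasesEx_3$ contribute nothing to $H_2$, which hinges on the near-standardness of the faces of the new $3$-simplices (three standard faces for an additive-type one, two for an intersection-type one) and on the connectivity of the edge links $\PartialIBases_2(\Z/2)$. This is also exactly where the field hypothesis enters, through Proposition~\ref{proposition:partialbasesconnected1}, which is why over $\Z$ — where one lacks an explicit generating set for $\tilde{H}_2(\PartialIBases_3(\Z))$, so the "map is zero" step is unavailable — only $1$-connectivity is claimed.
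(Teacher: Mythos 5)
Your overall strategy parallels the paper's: push loops and spheres off the intersection-type simplices to land in a smaller complex, identify generators for the remaining $\pi_2$ (or $H_2$), and then kill those generators. The $1$-connectivity argument, the chain-level reduction for surjectivity, and the cone nullhomotopy from $\lax{\vec{a}_1+\vec{b}_1}$ are all correct, and the cone is a nice alternative to the paper's filling of the apartment octahedron by the four intersection-type tetrahedra on the edge $\{\lax{\vec{a}_1},\lax{\vec{b}_1}\}$.

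The gap is in the ``map is zero'' step. You assert that $\tilde{H}_2(\PartialIBases_3(\Z/2))$ is generated by the apartment classes, citing the proof of Proposition~\ref{proposition:partialbasesconnected1}. That proof invokes Theorem~\ref{theorem:quillen1} (Quillen's Theorem A) only to identify the homotopy type as a wedge of $2$-spheres; it produces no generating set. And the claim is genuinely suspect: for a Lagrangian $V$ the fiber $\spanmap/V \cong \PartialBases_3(\Z/2)$ is itself a wedge of $2$-spheres, so there are $2$-cycles supported entirely over a single Lagrangian which die under $\spanmap_{\ast}$ and have no visible expression as combinations of apartments. This is precisely why the paper passes through $\PartialIBases_3^{\alpha}(\Z/2)$ rather than $\PartialIBases_3(\Z/2)$ at this point: attaching the additive-type cells is what makes the fibers $\spanmap/V$ highly connected (via Lemma~\ref{lemma:setsofvec}), so that $\spanmap_{\ast}$ becomes a $\pi_2$-isomorphism (Lemma~\ref{lemma:pi2 iso}) and the apartment generators drop out (Lemma~\ref{lemma:pi2 gens}). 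Your argument is easily repaired: your surjectivity step already shows that every $2$-cycle of $\PartialIBasesEx_3(\Z/2)$ is homologous to one in $\PartialIBases_3(\Z/2) \subset \PartialIBases_3^{\alpha}(\Z/2)$, so it suffices to show $H_2(\PartialIBases_3^{\alpha}(\Z/2)) \to H_2(\PartialIBasesEx_3(\Z/2))$ is zero, and Lemma~\ref{lemma:pi2 gens} supplies exactly the apartment generation needed for that; your cone argument then finishes. As written, though, the generation claim for $\PartialIBases_3(\Z/2)$ itself is unsupported.
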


Propositions~\ref{proposition:quotient}, \ref{proposition:partialbasesconnected1}, and~\ref{proposition:partialbasesconnected2} are proved in Sections~\ref{section:action and connectivity}, \ref{section:simplicialcomplexes}, and \ref{section:partialbasesexconnectedproof}, respectively.


\subsection{The quotient of the complex of lax isotropic bases}
\label{section:action and connectivity}

In this section, we prove Proposition~\ref{proposition:quotient}, which describes 
the restriction to $\Sp_{2g}(\Z)[2]$ of the actions of $\Sp_{2g}(\Z)$ on $\PartialIBases_g(\Z)$ 
and on $\PartialIBasesEx_g(\Z)$.   
Let $r : \Z^{2g} \rightarrow (\Z/2)^{2g}$ be the standard projection.  
Also observe that in $(\Z/2)^{2g}$ there is no difference between a vector and a lax vector.  To simplify our 
notation, we will write the vertices of $\PartialIBases_g(\Z/2)$ and $\PartialIBasesEx_g(\Z/2)$ simply as vectors.
Observe
that for a lax vector $\lax{\vec{v}}$ of $\Z^{2g}$,
the vector $r(\lax{\vec{v}}) \in (\Z/2)^{2g}$ is well defined since $r(\vec v) = r(-\vec v)$.

The proof of Proposition \ref{proposition:quotient} 
has three ingredients.  The first is Corollary \ref{corollary:norotations} below,
which says that the actions in question are without rotations.  We require a lemma. 

\begin{lemma}
\label{lemma:simplexproject}
Let $\sigma=\{\lax{\vec{v}_0},\ldots,\lax{\vec{v}_k}\}$ be a $k$-simplex of $\PartialIBasesEx_g(\Z)$.  
Then the set of vectors $\{r(\lax{\vec{v}_0}),\ldots,r(\lax{\vec{v}_k})\}$ forms a $k$-simplex of 
$\PartialIBasesEx_g(\Z/2)$ of the same type as $\sigma$.
\end{lemma}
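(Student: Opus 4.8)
The plan is to verify the claim separately for each of the four types of simplex in $\PartialIBasesEx_g(\Z)$ (standard, intersection, and the two flavors of additive), using the single structural fact that reduction mod $2$ carries a partial symplectic basis of $\Z^{2g}$ to a partial symplectic basis of $(\Z/2)^{2g}$. First I would record that fact: if $(\vec{a}_1,\ldots,\vec{a}_g;\vec{b}_1,\ldots,\vec{b}_g)$ is a symplectic basis of $\Z^{2g}$, then $r$ is an isomorphism $\Z^{2g}\to(\Z/2)^{2g}$ sending it to a symplectic basis of $(\Z/2)^{2g}$ (the symplectic form is preserved because $r$ is a ring homomorphism on entries and the form is polynomial in the coordinates), and hence any sub-tuple of $\Z$-vectors extending to a symplectic basis reduces to a sub-tuple of $(\Z/2)$-vectors extending to a symplectic basis. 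Since $r$ is injective on any fixed symplectic basis, it sends distinct lax vectors appearing in such a tuple to distinct vectors mod $2$, so the image is genuinely a set of $k+1$ distinct vertices.

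Next I would run through the cases. For a standard simplex $\sigma=\{\lax{\vec{a}_1},\ldots,\lax{\vec{a}_{k+1}}\}$ with $(\vec{a}_1,\ldots,\vec{a}_{k+1};)$ a partial symplectic basis, the previous paragraph gives immediately that $(r(\vec{a}_1),\ldots,r(\vec{a}_{k+1});)$ is a partial symplectic basis mod $2$, so $\{r(\lax{\vec{a}_1}),\ldots,r(\lax{\vec{a}_{k+1}})\}$ is a standard simplex of $\PartialIBasesEx_g(\Z/2)$. For a simplex of intersection type $\{\lax{\vec{a}_1},\ldots,\lax{\vec{a}_k},\lax{\vec{b}_1}\}$ with $(\vec{a}_1,\ldots,\vec{a}_k;\vec{b}_1)$ a partial symplectic basis, I apply $r$ to an extension of this to a full symplectic basis; the image is a full symplectic basis mod $2$ witnessing that $(r(\vec{a}_1),\ldots,r(\vec{a}_k);r(\vec{b}_1))$ is a partial symplectic basis mod $2$, hence the image set is a simplex of intersection type. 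For a simplex of additive type, say $\{\lax{\pm\vec{a}_1\pm\vec{a}_2},\lax{\vec{a}_1},\ldots,\lax{\vec{a}_k}\}$, I observe that $(\vec{a}_1,\pm\vec{a}_2,\vec{a}_3,\ldots,\vec{a}_k;)$ is a partial symplectic basis, so by the standard case its reduction is one mod $2$, and $r(\pm\vec{a}_1\pm\vec{a}_2)=r(\vec{a}_1)+r(\vec{a}_2)$ (all signs collapse mod $2$); thus the image is the additive-type simplex $\{r(\vec{a}_1)+r(\vec{a}_2),r(\vec{a}_1),\ldots,r(\vec{a}_k)\}$. The three-term case $\{\lax{\pm\vec{a}_1\pm\vec{a}_2\pm\vec{a}_3},\lax{\vec{a}_1},\ldots,\lax{\vec{a}_k}\}$ is identical, giving $\{r(\vec{a}_1)+r(\vec{a}_2)+r(\vec{a}_3),r(\vec{a}_1),\ldots,r(\vec{a}_k)\}$.

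The one point requiring a little care—and the closest thing to an obstacle—is confirming that the image set really has $k+1$ distinct elements and hence has the same dimension as $\sigma$, since a priori a collision could occur (for instance $r(\vec{a}_1)+r(\vec{a}_2)$ could coincide with some $r(\vec{a}_j)$ if those vectors failed to be part of a common basis mod $2$). But in every case the relevant vectors—together with the "additive" vertex, which is a sum of some of them—lie in the span of a single partial symplectic basis whose reduction mod $2$ is again a partial symplectic basis; distinct nonzero $\Z/2$-linear combinations of a $\Z/2$-basis are distinct, so no collision occurs. This also shows the image simplex lies in $\PartialIBasesEx_g(\Z/2)$ as a simplex of the asserted type, completing the proof. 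I do not expect to need the $g=3$ hypothesis anywhere; the lemma holds for all $g$.
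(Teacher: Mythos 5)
Your proposal is correct and follows essentially the same route as the paper's proof: establish that $r$ takes symplectic bases to symplectic bases mod $2$, dispose of standard and intersection-type simplices immediately, and handle additive-type simplices by reducing to the standard case plus linearity of $r$. Your extra paragraph on distinctness of the images makes explicit a point the paper leaves implicit, but it is the same argument.
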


\begin{proof}

Since $r$ preserves the algebraic intersection pairing modulo $2$, 
it follows that $r$ takes each symplectic basis for $\Z^{2g}$ to a symplectic basis for $(\Z/2)^{2g}$.  
If $\sigma$ is a standard simplex or a simplex of intersection type, it follows that 
$\{r(\lax{\vec{v}_0}),\ldots,r(\lax{\vec{v}_k})\}$ forms a $k$-simplex of $\PartialIBasesEx_g(\Z/2)$ 
of the same type as $\sigma$.  
If $\sigma$ is of additive type, then up to reindexing and changing the signs of the $\vec{v}_i$
we can assume that $\{\lax{\vec{v}_1},\ldots,\lax{\vec{v}_k}\}$ is a standard simplex and that
either $\vec{v}_0 = \vec{v}_1 + \vec{v}_2$ or $\vec{v}_0 = \vec{v}_1+\vec{v}_2+\vec{v}_3$.
As above, $\{r(\vec{v}_1),\ldots,r(\vec{v}_k)\}$ is a standard $(k-1)$-simplex of $\PartialIBasesEx_g(\Z/2)$.  
Since $r$ is linear, it follows that $\{r(\vec{v}_0),\ldots,r(\vec{v}_k)\}$ is a 
simplex of additive type in $\PartialIBasesEx_g(\Z/2)$.
\end{proof}

It follows immediately from Lemma~\ref{lemma:simplexproject} that $r$ induces simplicial maps
\[  \zeta : \PartialIBases_g(\Z) \to \PartialIBases_g(\Z/2) \quad  \text{and} \quad \widehat{\zeta} : \PartialIBasesEx_g(\Z) \to \PartialIBasesEx_g(\Z/2) \]
and that both maps take $k$-simplices onto $k$-simplices.

\begin{corollary}
\label{corollary:norotations}
The actions of $\Sp_{2g}(\Z)[2]$ on $\PartialIBases_g(\Z)$ and $\PartialIBasesEx_g(\Z)$ are without rotations.  
\end{corollary}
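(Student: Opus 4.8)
The plan is to reduce the ``without rotations'' condition to the injectivity of the simplicial maps $\zeta$ and $\widehat\zeta$ on individual simplices, combined with the fact that a level-$2$ symplectic matrix fixes $(\Z/2)^{2g}$ pointwise. Recall that an action is without rotations if whenever $h \in \Sp_{2g}(\Z)[2]$ setwise preserves a simplex $\sigma = \{\lax{\vec v_0},\ldots,\lax{\vec v_k}\}$, it fixes each vertex $\lax{\vec v_i}$. So suppose $h(\sigma) = \sigma$. Since $h \in \Sp_{2g}(\Z)[2]$, we have $r \circ h = r$, i.e.\ $h$ acts trivially modulo $2$; hence $r(\lax{h\vec v_i}) = r(\lax{\vec v_i})$ for every $i$. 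On the other hand $h$ permutes the vertices of $\sigma$, say $h(\lax{\vec v_i}) = \lax{\vec v_{\rho(i)}}$ for a permutation $\rho$ of $\{0,\ldots,k\}$. Combining these, $r(\lax{\vec v_{\rho(i)}}) = r(\lax{\vec v_i})$ for all $i$.

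The key point is then that the vertices $r(\lax{\vec v_0}),\ldots,r(\lax{\vec v_k})$ are pairwise distinct in $(\Z/2)^{2g}$. This is exactly the content of Lemma~\ref{lemma:simplexproject}: by that lemma $\{r(\lax{\vec v_0}),\ldots,r(\lax{\vec v_k})\}$ is a $k$-simplex of $\PartialIBasesEx_g(\Z/2)$, and a $k$-simplex of a simplicial complex has, by definition, $k+1$ distinct vertices; the remark following the lemma's consequences notes that $\widehat\zeta$ (and $\zeta$) take $k$-simplices \emph{onto} $k$-simplices, which forces the images of the $k+1$ distinct vertices $\lax{\vec v_i}$ to be $k+1$ distinct vertices downstairs. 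Therefore the equalities $r(\lax{\vec v_{\rho(i)}}) = r(\lax{\vec v_i})$ force $\rho(i) = i$ for all $i$, so $h$ fixes each vertex $\lax{\vec v_i}$, as desired. Since $\PartialIBases_g(\Z)$ is a full subcomplex situation of the same kind (every simplex of $\PartialIBases_g(\Z)$ is in particular a simplex of $\PartialIBasesEx_g(\Z)$), the identical argument handles the action on $\PartialIBases_g(\Z)$.

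The only genuine subtlety — and the step I would be most careful about — is the passage from $h(\lax{\vec v_i}) = \lax{\vec v_{\rho(i)}}$ plus $r(\lax{h \vec v_i}) = r(\lax{\vec v_i})$ to $\vec v_{\rho(i)} = \pm \vec v_i$ at the level of lax vectors: one must check that the well-definedness of $r$ on lax vectors (noted in the paragraph before Lemma~\ref{lemma:simplexproject}) makes this clean, i.e.\ that $r(\lax{\vec v})$ genuinely only depends on $\lax{\vec v}$ and not a choice of sign, so that distinctness of the $r(\lax{\vec v_i})$ in $(\Z/2)^{2g}$ is the right notion to invoke. Once that bookkeeping is in place the corollary follows formally, with no connectivity or topology needed — it is purely a statement about how reduction mod $2$ interacts with the simplicial structure.
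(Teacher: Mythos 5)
Your proof is correct and is essentially the paper's own argument, just spelled out in more detail: the paper phrases it as ``the vertices of a simplex lie in different $\Sp_{2g}(\Z)[2]$-orbits'' (since $\zeta$ and $\widehat\zeta$ are $\Sp_{2g}(\Z)[2]$-invariant and Lemma~\ref{lemma:simplexproject} shows simplices map to simplices of the same dimension), whereas you unwind this orbit-distinctness directly for a given element $h$ preserving a simplex. The ``subtlety'' you flag at the end is genuinely harmless, as you note, because the well-definedness of $r$ on lax vectors is already established in the setup preceding Lemma~\ref{lemma:simplexproject}.
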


\begin{proof}

Since $\zeta$ and $\widehat{\zeta}$ are $\Sp_{2g}(\Z)[2]$-invariant, Lemma \ref{lemma:simplexproject} implies
that the vertices of a simplex of $\PartialIBases_g(\Z)$ lie in different
$\Sp_{2g}(\Z)[2]$-orbits, and similarly for $\PartialIBasesEx_g(\Z)$.
\end{proof}

Our second ingredient is Corollary \ref{corollary:liftsimplex} below, which says that the images of $\zeta$
and $\widehat{\zeta}$ contain every simplex of $\PartialIBases_g(\Z/2)$ and $\PartialIBasesEx_g(\Z/2)$, respectively.
This requires the following lemma,
which follows easily from a classical theorem of Newman--Smart \cite[Theorem 1]{NewmanSmart} that 
says that the map $\Sp_{2g}(\Z) \rightarrow \Sp_{2g}(\Z/2)$ is surjective.

\begin{lemma}
\label{lemma:realizepartialbasis}
Let $(\vec{\alpha}_1,\ldots,\vec{\alpha}_k;\vec{\beta}_1,\ldots,\vec{\beta}_{\ell})$ be a partial
symplectic basis for $(\Z/2)^{2g}$.  Then there exists a partial symplectic basis
$(\vec{a}_1,\ldots,\vec{a}_k;\vec{b}_1,\ldots,\vec{b}_{\ell})$ for $\Z^{2g}$ such that
$r(\vec{a}_i) = \vec{\alpha}_i$ and $r(\vec{b}_j) = \vec{\beta}_j$ for $1 \leq i \leq k$
and $1 \leq j \leq \ell$.
\end{lemma}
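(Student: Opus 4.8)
The plan is to deduce this from the surjectivity of the reduction map $\Sp_{2g}(\Z) \to \Sp_{2g}(\Z/2)$ (the cited theorem of Newman--Smart) together with the transitivity of the symplectic group on symplectic bases. First I would use the definition of partial symplectic basis to extend the given tuple: since $\Z/2$ is a field, the definition applies over it, so there exist vectors $\vec{\alpha}_{k+1},\ldots,\vec{\alpha}_g,\vec{\beta}_{\ell+1},\ldots,\vec{\beta}_g$ in $(\Z/2)^{2g}$ making $(\vec{\alpha}_1,\ldots,\vec{\alpha}_g;\vec{\beta}_1,\ldots,\vec{\beta}_g)$ a full symplectic basis for $(\Z/2)^{2g}$. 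It then suffices to lift this full symplectic basis to a full symplectic basis of $\Z^{2g}$ compatibly with $r$ and then truncate to the first $k$ of the $\vec{a}$'s and the first $\ell$ of the $\vec{b}$'s.

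For the lift, let $(\vec{e}_1,\ldots,\vec{e}_g;\vec{f}_1,\ldots,\vec{f}_g)$ be the standard symplectic basis of $\Z^{2g}$, whose reduction mod $2$ is the standard symplectic basis of $(\Z/2)^{2g}$. By the classical fact that over a field the symplectic group acts transitively on symplectic bases, there is $\overline{M} \in \Sp_{2g}(\Z/2)$ carrying the standard symplectic basis of $(\Z/2)^{2g}$ to $(\vec{\alpha}_1,\ldots,\vec{\alpha}_g;\vec{\beta}_1,\ldots,\vec{\beta}_g)$. By Newman--Smart, choose $M \in \Sp_{2g}(\Z)$ with $r \circ M = \overline{M} \circ r$ (i.e.\ $M$ reduces to $\overline{M}$). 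Then $\vec{a}_i := M\vec{e}_i$ and $\vec{b}_j := M\vec{f}_j$ form a symplectic basis of $\Z^{2g}$ because $M \in \Sp_{2g}(\Z)$, and $r(\vec{a}_i) = \overline{M}(r(\vec{e}_i)) = \vec{\alpha}_i$ and likewise $r(\vec{b}_j) = \vec{\beta}_j$ by construction of $\overline{M}$. Truncating gives the partial symplectic basis $(\vec{a}_1,\ldots,\vec{a}_k;\vec{b}_1,\ldots,\vec{b}_\ell)$ required.

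There is no real obstacle here; the only points requiring a word of care are (i) that $\Z/2$ is permitted as the ring $R$ in the definition of (partial) symplectic basis, which justifies the extension-to-a-full-basis step, and (ii) the transitivity of $\Sp_{2g}$ on symplectic bases over a field, which is standard (choose a hyperbolic pair, split it off, and induct; the resulting change-of-basis matrix preserves the form by construction). One could instead bypass the passage through a full basis by observing directly that $\Sp_{2g}(\Z/2)$ acts transitively on partial symplectic bases of each fixed shape $(k,\ell)$, but extending first and truncating afterward keeps the symplectic condition on the lift manifest and is the cleanest route.
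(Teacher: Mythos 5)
Your proof is correct and is exactly the argument the paper has in mind: the paper states only that the lemma ``follows easily'' from the Newman--Smart surjectivity of $\Sp_{2g}(\Z)\to\Sp_{2g}(\Z/2)$, and your extend-lift-truncate argument is the natural way to make that explicit.
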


\begin{corollary}
\label{corollary:liftsimplex}
Let $\sigma$ be a simplex of $\PartialIBasesEx_g(\Z/2)$.
Then there exists some simplex $\widetilde{\sigma}$ of $\PartialIBasesEx_g(\Z)$ such that
$\widehat{\zeta}(\widetilde{\sigma}) = \sigma$.  The analogous statement holds for simplices of $\PartialIBases_g(\Z/2)$.
\end{corollary}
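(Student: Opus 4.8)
The plan is to handle the two cases — standard/intersection type and additive type — separately, in each case reducing to Lemma~\ref{lemma:realizepartialbasis}. First suppose $\sigma$ is a standard simplex of $\PartialIBasesEx_g(\Z/2)$, say $\sigma = \{\vec{\alpha}_1,\ldots,\vec{\alpha}_k\}$ where $(\vec{\alpha}_1,\ldots,\vec{\alpha}_k;)$ is a partial symplectic basis for $(\Z/2)^{2g}$. By Lemma~\ref{lemma:realizepartialbasis} (with $\ell = 0$) there is a partial symplectic basis $(\vec{a}_1,\ldots,\vec{a}_k;)$ for $\Z^{2g}$ with $r(\vec{a}_i) = \vec{\alpha}_i$ for all $i$. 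Then $\widetilde{\sigma} = \{\lax{\vec{a}_1},\ldots,\lax{\vec{a}_k}\}$ is a standard simplex of $\PartialIBasesEx_g(\Z)$, and by Lemma~\ref{lemma:simplexproject} (or directly, since $r(\lax{\vec{a}_i}) = \vec\alpha_i$) we have $\widehat\zeta(\widetilde\sigma) = \sigma$. If $\sigma$ is of intersection type, the same argument applies using the full strength of Lemma~\ref{lemma:realizepartialbasis}: write $\sigma = \{\vec\alpha_1,\ldots,\vec\alpha_k,\vec\beta_1\}$ with $(\vec\alpha_1,\ldots,\vec\alpha_k;\vec\beta_1)$ a partial symplectic basis for $(\Z/2)^{2g}$, lift it to a partial symplectic basis $(\vec a_1,\ldots,\vec a_k;\vec b_1)$ for $\Z^{2g}$, and set $\widetilde\sigma = \{\lax{\vec a_1},\ldots,\lax{\vec a_k},\lax{\vec b_1}\}$.

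For the additive case, suppose $\sigma$ is a simplex of additive type in $\PartialIBasesEx_g(\Z/2)$. After reindexing we may write $\sigma = \{\vec v_0, \vec\alpha_1,\ldots,\vec\alpha_k\}$ where $(\vec\alpha_1,\ldots,\vec\alpha_k;)$ is a partial symplectic basis for $(\Z/2)^{2g}$ and $\vec v_0 \in \{\vec\alpha_1+\vec\alpha_2,\ \vec\alpha_1+\vec\alpha_2+\vec\alpha_3\}$ (over $\Z/2$ the signs are irrelevant). Lift $(\vec\alpha_1,\ldots,\vec\alpha_k;)$ to a partial symplectic basis $(\vec a_1,\ldots,\vec a_k;)$ for $\Z^{2g}$ via Lemma~\ref{lemma:realizepartialbasis}, and set $\vec a_0 = \vec a_1 + \vec a_2$ or $\vec a_0 = \vec a_1 + \vec a_2 + \vec a_3$ correspondingly. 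Then $\widetilde\sigma = \{\lax{\vec a_0},\lax{\vec a_1},\ldots,\lax{\vec a_k}\}$ is a simplex of additive type in $\PartialIBasesEx_g(\Z)$, and since $r$ is linear, $r(\vec a_0) = \vec v_0$, so $\widehat\zeta(\widetilde\sigma) = \sigma$. The statement for $\PartialIBases_g(\Z/2)$ is the special case in which only standard simplices occur, handled by the first paragraph.

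The only real content here is Lemma~\ref{lemma:realizepartialbasis} — that a partial symplectic basis over $\Z/2$ lifts to one over $\Z$ — which is already established (it follows from surjectivity of $\Sp_{2g}(\Z) \to \Sp_{2g}(\Z/2)$ by applying a lift of an element carrying the standard partial symplectic basis to $(\vec\alpha_i;\vec\beta_j)$ to the corresponding standard partial symplectic basis over $\Z$). So there is no real obstacle: the proof is a short bookkeeping argument, with the main point being simply to treat each simplex type and, in the additive case, to observe that one should first lift the \emph{standard} part and only then form the sum, rather than trying to lift $\vec v_0$ directly.
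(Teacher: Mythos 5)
Your proof is correct and matches the paper's argument essentially verbatim: lift standard and intersection-type simplices directly via Lemma~\ref{lemma:realizepartialbasis}, and for additive type lift only the underlying partial symplectic basis and then form the sum over $\Z$ (rather than attempting to lift the dependent vector directly). The only difference is that you spell out the standard/intersection cases in a bit more detail than the paper does.
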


\begin{proof}

The corollary follows from Lemma \ref{lemma:realizepartialbasis} if $\sigma$ is a standard simplex or a simplex of intersection type (in particular the statement for $\PartialIBases_g(\Z/2)$ follows from this).  If $\sigma$ is of additive
type, then write $\sigma = \{\vec{v},\vec{\alpha}_1,\ldots,\vec{\alpha}_k\}$ with
$(\vec{\alpha}_1,\ldots,\vec{\alpha}_k;)$ a partial symplectic basis for $(\Z/2)^{2g}$ and
$\vec{v} = \sum_{i=1}^h \vec{\alpha}_i$ for some $h \in \{2,3\}$.  By Lemma
\ref{lemma:realizepartialbasis} there is a partial symplectic basis
$(\vec{a}_1,\ldots,\vec{a}_k;)$ for $\Z^{2g}$ with $r(\vec{a}_i) = \vec{\alpha}_i$ for
$1 \leq i \leq k$.  Setting $\vec{w} = \sum_{i=1}^h \vec{a}_i$ and 
$\widetilde{\sigma} = \{\vec{w},\vec{a}_1,\ldots,\vec{a}_k\}$, the set $\widetilde{\sigma}$
is a simplex of $\PartialIBasesEx_g(\Z)$ of additive type such that $\widehat{\zeta}(\widetilde{\sigma}) = \sigma$.
\end{proof}

Our third ingredient is Corollary \ref{corollary:movesimplices} below, which shows that two cells
of $\PartialIBases_g(\Z)$ that map to the same simplex of $\PartialIBases_g(\Z/2)$ differ
by an element of $\Sp_{2g}(\Z)[2]$, and similarly for $\PartialIBasesEx_g(\Z)$.  This requires
the following lemma.  For $\vec{v}_1,\ldots,\vec{v}_k \in R^{2g}$, let
$\Sp_{2g}(R,\vec{v}_1,\ldots,\vec{v}_k)$ denote $\{M \in \Sp_{2g}(R) \text{ $|$ }M(\vec{v}_i) = \vec{v}_i \text{ for $1 \leq i \leq k$}\}$.

\begin{lemma}
\label{lemma:stabilizersurject}
Let $(\vec{a}_1,\ldots,\vec{a}_k;\vec{b}_1,\ldots,\vec{b}_{\ell})$ be a partial
symplectic basis for $\Z^{2g}$.  Set $\vec{\alpha}_i = r(\vec{a}_i)$ and $\vec{\beta}_j = r(\vec{b}_j)$
for $1 \leq i \leq k$ and $1 \leq j \leq \ell$.  Then the natural map
\[\psi : \Sp_{2g}(\Z,\vec{a}_1,\ldots,\vec{a}_k,\vec{b}_1,\ldots,\vec{b}_{\ell}) \longrightarrow \Sp_{2g}(\Z/2,\vec{\alpha}_1,\ldots,\vec{\alpha}_k,\vec{\beta}_1,\ldots,\vec{\beta}_{\ell})\]
is surjective.
\end{lemma}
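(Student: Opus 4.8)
The plan is to reduce the statement to the Newman--Smart theorem \cite{NewmanSmart}, i.e.\ to the surjectivity of $\Sp_{2h}(\Z)\to\Sp_{2h}(\Z/2)$ for $h\le g$ — the same ingredient that underlies Lemma~\ref{lemma:realizepartialbasis} — by means of a parabolic decomposition of the stabilizer $\Sp_{2g}(\Z,\vec{a}_1,\ldots,\vec{a}_k,\vec{b}_1,\ldots,\vec{b}_{\ell})$.

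First I would dispose of a harmless asymmetry: since $(-\vec{b}_1,\ldots,-\vec{b}_{\ell};\vec{a}_1,\ldots,\vec{a}_k)$ is again a partial symplectic basis for $\Z^{2g}$ with the same stabilizer in $\Sp_{2g}(\Z)$ (and in $\Sp_{2g}(\Z/2)$) as the original tuple, I may assume $k\ge\ell$. Now complete the given partial symplectic basis to a symplectic basis $(\vec{a}_1,\ldots,\vec{a}_g;\vec{b}_1,\ldots,\vec{b}_g)$ of $\Z^{2g}$, put $W=\mathrm{span}(\vec{a}_1,\ldots,\vec{a}_k,\vec{b}_1,\ldots,\vec{b}_{\ell})$, and set $h=g-k$. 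Any $M\in\Sp_{2g}(\Z)$ fixing $\vec{a}_1,\ldots,\vec{a}_k,\vec{b}_1,\ldots,\vec{b}_{\ell}$ fixes $W$ pointwise, hence preserves $W^{\perp}$ and induces a symplectic automorphism of the rank-$2h$ lattice $W^{\perp}/(W\cap W^{\perp})$; this defines a homomorphism $\rho\colon\Sp_{2g}(\Z,\vec{a}_1,\ldots,\vec{a}_k,\vec{b}_1,\ldots,\vec{b}_{\ell})\to\Sp_{2h}(\Z)$. One checks that $\rho$ is surjective — lift a given automorphism to the block matrix that fixes $\vec{a}_1,\ldots,\vec{a}_k,\vec{b}_1,\ldots,\vec{b}_k$ and acts by the given map on $\mathrm{span}(\vec{a}_{k+1},\ldots,\vec{a}_g,\vec{b}_{k+1},\ldots,\vec{b}_g)$ — and that its kernel $U$ is the group of $\Z$-points of a unipotent group. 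The identical recipe over $\Z/2$ produces a short exact sequence $1\to U(\Z/2)\to\Sp_{2g}(\Z/2,r(\vec{a}_1),\ldots,r(\vec{b}_{\ell}))\to\Sp_{2h}(\Z/2)\to 1$, and, because $W$ is a primitive sublattice (so that $r(W^{\perp})=r(W)^{\perp}$ and the subquotients above reduce as expected), the map $\psi$ of the lemma, together with $\rho$, its mod-$2$ analogue, and the reduction $U(\Z)\to U(\Z/2)$, fits into a commutative diagram of short exact sequences.

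It then remains to run the evident diagram chase. The right-hand vertical map $\Sp_{2h}(\Z)\to\Sp_{2h}(\Z/2)$ is surjective by Newman--Smart, and the left-hand vertical map $U(\Z)\to U(\Z/2)$ is surjective because $U$ is unipotent (filter it by normal subgroups with free abelian successive quotients, each of which surjects mod $2$). Given $\bar{M}\in\Sp_{2g}(\Z/2,r(\vec{a}_1),\ldots,r(\vec{b}_{\ell}))$, I would lift its image in $\Sp_{2h}(\Z/2)$ to $\Sp_{2h}(\Z)$, lift that further to some $M_0\in\Sp_{2g}(\Z,\vec{a}_1,\ldots,\vec{b}_{\ell})$, observe that $\bar{M}\,\bar{M_0}^{-1}$ then lies in $U(\Z/2)$, lift the latter to $u\in U(\Z)$, and conclude that $\psi(uM_0)=\bar{M}$. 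This shows $\psi$ is surjective, which is the lemma.

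I expect the main obstacle to be the middle step: making the sequence $1\to U\to\Sp_{2g}(\Z,\vec{a}_1,\ldots,\vec{b}_{\ell})\to\Sp_{2h}(\Z)\to 1$ explicit in terms of the completed symplectic basis, verifying the surjectivity of $\rho$, and — the genuinely delicate point — checking that the sublattices $W$ and $W^{\perp}$, the subquotient $W^{\perp}/(W\cap W^{\perp})$, the subgroup $U$, and the map $\rho$ are all compatible with reduction modulo $2$; this is where the primitivity of $W$ gets used. The diagram chase and the surjectivity of $U(\Z)\to U(\Z/2)$ are routine by comparison.
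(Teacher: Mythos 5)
Your proposal is correct but proceeds by a genuinely different route from the paper. After the same normalization $k\ge\ell$ (the paper accomplishes it by replacing $\vec{a}_i\mapsto\vec{b}_i$, $\vec{b}_i\mapsto-\vec{a}_i$), the paper first reduces to $\ell=0$ by passing to the orthogonal complement of the symplectic submodule $\langle\vec{a}_1,\ldots,\vec{a}_\ell,\vec{b}_1,\ldots,\vec{b}_\ell\rangle$, and then runs an induction on $k$ in which each step is a rank-one reduction $g\mapsto g-1$: it splits $\Sp_{2g}(\Z,\vec{a}_1,\ldots,\vec{a}_k)\cong K_{\Z}\rtimes\Sp_{2(g-1)}(\Z,\vec{a}_2,\ldots,\vec{a}_k)$, with $K_{\Z}$ a small (Heisenberg-type) unipotent group, and then verifies $K_{\Z}\twoheadrightarrow K_2$ by writing an explicit integral matrix lifting a given element of $K_2$. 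You instead run the parabolic/Levi decomposition all at once, landing directly on $\Sp_{2h}$ with $h=g-k$, and treat the full unipotent kernel $U$ in a single step. Your decomposition is sound: $W$ and $W^\perp$ are direct summands (so $(W^\perp)^\perp=W$ and everything reduces cleanly mod $2$), $W\cap W^\perp=\langle\vec{a}_{\ell+1},\ldots,\vec{a}_k\rangle$ is exactly the radical of $\hat\imath|_{W^\perp}$, fixing $W$ pointwise forces $M-I$ to map into $W^\perp$, and the block-diagonal lift shows $\rho$ is split surjective over both $\Z$ and $\Z/2$. The one place you should not underestimate is the assertion that $U(\Z)\to U(\Z/2)$ is surjective: that is precisely the step the paper spends almost all of its effort on (and is why it chooses to do only a one-rank reduction per step, so the unipotent kernel is small and the lift can be written in closed form). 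Your $U$ is a $k$-fold iterated extension of such groups, so invoking ``filter by normal subgroups with free abelian quotients'' is correct, but if you make this precise you will essentially be reproducing the paper's induction inside the proof of surjectivity of $U(\Z)\to U(\Z/2)$. The trade-off is clean: your version reads as the standard Levi-decomposition argument for strong approximation of stabilizers, while the paper's version is more elementary and self-contained at the cost of an explicit matrix computation.
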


\begin{proof}

Because $\Sp_{2g}(\Z,\vec{v}) = \Sp_{2g}(\Z,-\vec{v})$, it is possible to replace each $\vec{a}_i$ with $\vec{b}_i$ and each $\vec{b}_i$ with $-\vec{a}_i$.  Therefore, we may assume without loss of generality that $k \geq \ell$.  Next,
let $V \cong \Z^{2(g-\ell)}$ be the orthogonal complement of the symplectic submodule
$\langle \vec{a}_1,\ldots,\vec{a}_\ell,\vec{b}_1,\ldots,\vec{b}_{\ell}\rangle$.  Defining 
$\Sp(V,\vec{a}_{\ell+1},\ldots,\vec{a}_k)$ in the obvious way, we then have
\[\Sp_{2g}(\Z,\vec{a}_1,\ldots,\vec{a}_k,\vec{b}_1,\ldots,\vec{b}_{\ell}) \cong \Sp(V,\vec{a}_{\ell+1},\ldots,\vec{a}_k).\]
A similar isomorphism holds for $\Sp_{2g}(\Z/2,\vec{\alpha}_1,\ldots,\vec{\alpha}_k,\vec{\beta}_1,\ldots,\vec{\beta}_{\ell})$.
Using this, we can reduce to the case $\ell = 0$.

We proceed by induction on $k$.  The base case
$k=0$ asserts that the map $\Sp_{2g}(\Z) \rightarrow \Sp_{2g}(\Z/2)$ is surjective, which is the aforementioned theorem of Newman--Smart. 
Assume now that $k \geq 1$.  Complete the partial symplectic basis
$(\vec{a}_1,\ldots,\vec{a}_k;)$ to a symplectic basis $(\vec{a}_1,\ldots,\vec{a}_g;\vec{b}_1,\ldots,\vec{b}_g)$
for $\Z^{2g}$ and let $\vec{\alpha}_i = r(\vec{a}_i)$ and $\vec{\beta}_j = r(\vec{b}_j)$ for
$k+1 \leq i \leq g$ and $1 \leq j \leq g$, so 
$(\vec{\alpha}_1,\ldots,\vec{\alpha}_g;\vec{\beta}_1,\ldots\vec{\beta}_g)$ is a symplectic basis for $(\Z/2)^{2g}$.  

We will regard $\Z^{2(g-1)}$ as the $\Z$-submodule $\langle \vec{a}_2,\vec{b}_2,\ldots,\vec{a}_g,\vec{b}_g \rangle$ of $\Z^{2g}$.  We can then identify 
$\Sp_{2(g-1)}(\Z,\vec{a}_2,\ldots,\vec{a}_k)$ with $\Sp_{2g}(\Z,\vec{a}_1,\vec{b}_1,\vec{a}_2,\ldots,\vec{a}_k)$,
and hence as a subgroup of $\Sp_{2g}(\Z,\vec{a}_1,\ldots,\vec{a}_k)$.
We define a surjective homomorphism
\[\rho : \Sp_{2g}(\Z,\vec{a}_1,\ldots,\vec{a}_k) \longrightarrow \Sp_{2(g-1)}(\Z,\vec{a}_2,\ldots,\vec{a}_k)\]
as follows.  Consider $M \in \Sp_{2g}(\Z,\vec{a}_1,\ldots,\vec{a}_k)$ and $\vec{v} \in \Z^{2(g-1)}$.  We can write
$M(\vec{v}) = c \vec{a}_1 + d \vec{b}_1 + \vec{w}$ for some $c,d \in \Z$ and $\vec{w} \in \Z^{2(g-1)}$.  Since
$\hat \imath(\vec{v},\vec{a}_1) = 0$, it follows that $d=0$.  We then define $\rho(M)(\vec{v}) = \vec{w}$.  Using the fact that $M(\vec{a}_1) = \vec{a}_1$, it
is easy to check that $\rho$ is a homomorphism.
Set $K_{\Z} = \ker(\rho)$, so
\[K_{\Z} = \left\{\text{$M \in \Sp_{2g}(\Z,\vec{a}_1,\ldots,\vec{a}_k)$ $|$ $M(\vec{v}) = \vec{v} + c \vec{a}_1$ \textrm{with} $c \in \Z$ for all $\vec{v} \in \Z^{2(g-1)}$}\right\}.\]
The surjection $\rho$ splits via the inclusion 
$\Sp_{2(g-1)}(\Z,\vec{a}_2,\ldots,\vec{a}_k) \hookrightarrow \Sp_{2g}(\Z,\vec{a}_1,\ldots,\vec{a}_k)$, so
\[ 
\Sp_{2g}(\Z,\vec{a}_1,\ldots,\vec{a}_k) \cong K_{\Z} \rtimes \Sp_{2(g-1)}(\Z,\vec{a}_2,\ldots,\vec{a}_k).
\]
Similarly regarding $(\Z/2)^{2(g-1)}$ as $\langle \vec{\alpha}_2,\vec{\beta}_2,\ldots,\vec{\alpha}_g,\vec{\beta}_g \rangle$,
we obtain a decomposition
\[
\Sp_{2g}(\Z/2,\vec{\alpha}_1,\ldots,\vec{\alpha}_k) \cong K_{2} \rtimes \Sp_{2(g-1)}(\Z/2,\vec{\alpha}_2,\ldots,\vec{\alpha}_k)
\]
with
\[K_2 = \left\{\text{$N \in \Sp_{2g}(\Z/2,\vec{\alpha}_1,\ldots,\vec{\alpha}_k)$ $|$ $N(\vec{v}) = \vec{v} + c \vec{\alpha}_1$ \textrm{with} $c \in \Z/2$ for all $\vec{v} \in (\Z/2)^{2(g-1)}$}\right\}.\]
The projection $\psi$ is compatible with the given decompositions of $\Sp_{2g}(\Z,\vec{a}_1,\ldots,\vec{a}_k)$ and of $\Sp_{2g}(\Z/2,\vec{\alpha}_1,\ldots,\vec{\alpha}_k)$ (the key point is that $\vec{\beta}_1 = r(\vec{b}_1)$).
Since $\Sp_{2(g-1)}(\Z,\vec{a}_2,\ldots,\vec{a}_k) \rightarrow \Sp_{2(g-1)}(\Z/2,\vec{\alpha}_2,\ldots,\vec{\alpha}_k)$
is surjective by induction, we are reduced to showing that $\psi|_{K_{\Z}} : K_{\Z} \rightarrow K_{2}$ is surjective.

Consider $N \in K_{2}$.  For $1 \leq i \leq g$, let $c_i \in \Z/2$ and $d_i \in \Z/2$ be the
$\vec{\alpha}_1$-components of $N(\vec{\alpha}_i)$ and $N(\vec{\beta}_i)$, respectively, so $c_1 = 1$ and $c_2 = \cdots = c_k = 0$.  Since $N$ fixes $\vec{\alpha}_1$, the $\vec{\beta}_1$-component of $N(\vec{\beta}_1)$ is $1$.  Similarly, using the fact that $N$ preserves $\hat \imath(\vec \beta_1,\vec \alpha_i)=0$ we conclude that the $\vec \beta_i$-component of $N(\vec{\beta}_1)$ is $-c_i$ for $2 \leq i \leq g$ and using the fact that $N$ preserves $\hat \imath(\vec \beta_1,\vec \beta_i)=0$ we conclude that the $\vec \alpha_i$-component of $N(\vec{\beta}_1)$ is $d_i$  for $2 \leq i \leq g$:
\[N(\vec{\beta}_1) = d_1 \vec{\alpha}_1 + \vec{\beta}_1 + d_2 \vec{\alpha}_2 - c_2 \vec{\beta}_2 + \cdots + d_g \vec{\alpha_g} - c_g \vec{\beta_g}.\]
For $1 \leq i \leq g$, let $\tilde{c}_i \in \Z$ and $\tilde{d}_i \in \Z$ be lifts
of $c_i \in \Z/2$ and $d_i \in \Z/2$, respectively.  Choose them such that $\tilde{c}_1 = 1$ and 
$\tilde{c}_2 = \cdots = \tilde{c}_k = 0$.
We can then define a $\Z$-linear map $M : \Z^{2g} \rightarrow \Z^{2g}$ via the formulas
\begin{align*}
M(\vec{a}_1) = \vec{a}_1, \quad M(\vec{b}_1) = \tilde{d}_1 \vec{a}_1 + \vec{b}_1 + \tilde{d}_2 \vec{a}_2 - \tilde{c}_2 \vec{b}_2 + \cdots + \tilde{d}_g \vec{a}_g - \tilde{c}_g \vec{b}_g, \\
M(\vec{a}_i) = \vec{a}_i + \tilde{c}_i \vec{a}_1, \quad \text{and} \quad M(\vec{b}_i) = \vec{b}_i + \tilde{d}_i \vec{a}_1 \quad \quad (2 \leq i \leq g).
\end{align*}
It is clear that $M \in K_{\Z}$ and $\psi(M) = N$, as desired.
\end{proof}

\begin{corollary}
\label{corollary:movesimplices}
Let $\widetilde{\sigma}_1$ and $\widetilde{\sigma}_2$ be simplices of $\PartialIBasesEx_g(\Z)$ with 
$\widehat{\zeta}(\widetilde{\sigma}_1) = \widehat{\zeta}(\widetilde{\sigma}_2)$.
Then there exists some $M \in \Sp_{2g}(\Z)[2]$ with $M(\widetilde{\sigma}_1) = \widetilde{\sigma}_2$.  The analogous statement holds for $\PartialIBases_g(\Z)$.
\end{corollary}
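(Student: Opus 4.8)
The plan is to reduce everything to Lemma~\ref{lemma:stabilizersurject}. Write $\sigma=\widehat{\zeta}(\widetilde{\sigma}_1)=\widehat{\zeta}(\widetilde{\sigma}_2)$. Since $\widehat{\zeta}$ carries $k$-simplices onto $k$-simplices, its restriction to $\widetilde{\sigma}_1$ (resp.\ $\widetilde{\sigma}_2$) is a simplicial isomorphism onto $\sigma$, so composing we get a bijection between the vertices of $\widetilde{\sigma}_1$ and those of $\widetilde{\sigma}_2$ that pairs up vertices having the same reduction mod $2$. Any $M\in\Sp_{2g}(\Z)[2]$ with $M(\widetilde{\sigma}_1)=\widetilde{\sigma}_2$ necessarily induces this particular bijection: if $M$ is level $2$ then $\widehat{\zeta}(M(\lax{\vec{u}}))=\widehat{\zeta}(\lax{\vec{u}})$ for each vertex $\lax{\vec{u}}$, while distinct vertices of $\widetilde{\sigma}_2$ have distinct reductions. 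So it suffices to produce one level-$2$ matrix inducing this prescribed vertex matching.

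The uniform part of the argument runs as follows. Suppose we have chosen, for each $\widetilde{\sigma}_i$, sign representatives and a \emph{decomposition}: a partial symplectic basis $(\vec{a}_1,\ldots,\vec{a}_k;)$ or $(\vec{a}_1,\ldots,\vec{a}_k;\vec{b}_1)$ spanning the simplex, together with a designated sum $\vec{a}_1+\cdots+\vec{a}_h$ in the additive case. Suppose moreover that the two decompositions are \emph{compatible}, meaning that corresponding entries of the partial symplectic bases have equal reductions mod $2$ (and the designated sums use the same index set). Extending both partial symplectic bases to symplectic bases of $\Z^{2g}$, let $M_0\in\Sp_{2g}(\Z)$ carry the first onto the second; then $M_0(\widetilde{\sigma}_1)=\widetilde{\sigma}_2$ (in the additive case because $M_0$ carries the designated sum of $\widetilde{\sigma}_1$ to that of $\widetilde{\sigma}_2$), but $M_0$ need not be level $2$. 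However $r(M_0)$ fixes the reduction of each entry of $\widetilde{\sigma}_1$'s partial symplectic basis, so Lemma~\ref{lemma:stabilizersurject} supplies $N\in\Sp_{2g}(\Z)$ fixing each of those entries with $r(N)=r(M_0)$. Then $M:=M_0N^{-1}$ lies in $\Sp_{2g}(\Z)[2]$ and agrees with $M_0$ on that partial symplectic basis, hence $M(\widetilde{\sigma}_1)=\widetilde{\sigma}_2$. The assertion for $\PartialIBases_g(\Z)$ is the special case in which all simplices are standard.

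It remains to produce compatible decompositions, which I would do by cases on the type of $\sigma$ (all three simplices have the same type by Lemma~\ref{lemma:simplexproject}). For standard simplices one merely reindexes. For simplices of intersection type, observe that the two distinguished vertices $\{\lax{\vec{a}_1},\lax{\vec{b}_1}\}$ of a decomposition are exactly the two vertices whose reductions pair nontrivially mod $2$ with another vertex of $\sigma$, so they are matched across $\widetilde{\sigma}_1$ and $\widetilde{\sigma}_2$; if the dual vectors $\lax{\vec{b}_1},\lax{\vec{b}_1'}$ fail to correspond, replace $(\vec{a}_1,\ldots,\vec{a}_k;\vec{b}_1)$ by the partial symplectic basis $(\vec{b}_1,\vec{a}_2,\ldots,\vec{a}_k;-\vec{a}_1)$ in one decomposition and then reindex.

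The main obstacle is the additive case, where the additive vertex is not canonical — for instance $\{\lax{\vec{a}_1+\vec{a}_2},\lax{\vec{a}_1},\lax{\vec{a}_2},\ldots,\lax{\vec{a}_k}\}$ also equals $\{\lax{\vec{a}_1+\vec{a}_2},\lax{-\vec{a}_1},\lax{\vec{a}_2},\ldots,\lax{\vec{a}_k}\}$ viewed with additive vertex $\lax{\vec{a}_2}$, since $(\vec{a}_1+\vec{a}_2,-\vec{a}_1,\vec{a}_3,\ldots,\vec{a}_k;)$ is again a partial symplectic basis. The point to isolate is that the vertices of an additive-type simplex admit a $1$-dimensional space of linear relations whose support has size $h+1$, that any vertex of this support can be promoted to the additive vertex in the manner just illustrated (also when $h=3$), and that $\widehat{\zeta}$, being linear, carries relation-supports bijectively onto relation-supports of the same size. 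Consequently $h$ is determined by $\sigma$; the vertex of $\widetilde{\sigma}_2$ matched via $\widehat{\zeta}$ to the additive vertex of $\widetilde{\sigma}_1$ lies in $\widetilde{\sigma}_2$'s relation-support; promoting it to be $\widetilde{\sigma}_2$'s additive vertex, matching the remaining $k$ vertices (which is forced by linear independence of their reductions), and finally choosing sign representatives so that each additive vertex equals the sum of $\vec{a}_i$ over a common index set, yields the required compatible decompositions.
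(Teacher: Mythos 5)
Your proposal is correct and follows essentially the same route as the paper's proof: determine the forced vertex matching, normalize in each of the three cases (standard, intersection, additive) so the two simplices are parametrized by partial symplectic bases with matching reductions, extend to full symplectic bases, take $M_0$ carrying one to the other, and correct it to a level-$2$ matrix via Lemma~\ref{lemma:stabilizersurject}. Your isolation of the ``uniform part'' plus ``compatible decompositions'' is a cleaner way of phrasing what the paper does case by case, and your observation that any vertex in the relation-support of an additive simplex can be promoted to the additive vertex is precisely the implicit content of the paper's ``reordering the elements of $\{\vec{v}',\vec{a}_1',\ldots,\vec{a}_k'\}$'' step.
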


\begin{proof}

Lemma \ref{lemma:simplexproject} implies $\widetilde{\sigma}_1$ and $\widetilde{\sigma}_2$ are simplices
of the same type and dimension.  We deal with the three types in turn.  Observe that Case 1 below is sufficient to deal with $\PartialIBases_g(\Z)$.

\medskip \hspace*{3ex}\emph{Case 1.}
The $\widetilde{\sigma}_i$ are standard simplices.

\medskip

\noindent
Let $\psi : \Sp_{2g}(\Z) \rightarrow \Sp_{2g}(\Z/2)$ be the projection.  Write
$\widetilde{\sigma}_1 = \{\lax{\vec{a}_1},\ldots,\lax{\vec{a}_k}\}$ and $\widetilde{\sigma}_2 = \{\lax{\vec{a}_1'},\ldots,\lax{\vec{a}_k'}\}$,
where both $(\vec{a}_1,\ldots,\vec{a}_k;)$ and $(\vec{a}_1',\ldots,\vec{a}_k';)$ are partial symplectic
bases and $r(\vec{a}_i) = r(\vec{a}_i')$ for $1 \leq i \leq k$.  Extend these partial symplectic bases
to symplectic bases $(\vec{a}_1,\ldots,\vec{a}_g;\vec{b}_1,\ldots,\vec{b}_g)$ and
$(\vec{a}_1',\ldots,\vec{a}_g' ; \vec{b}_1',\ldots,\vec{b}_g')$.  There exists $M_1 \in \Sp_{2g}(\Z)$
such that $M_1(\vec{a}_i) = \vec{a}_i'$ and $M_1(\vec{b}_i) = \vec{b}_i'$ for $1 \leq i \leq g$, so
$M_1(\widetilde{\sigma}_1) = \widetilde{\sigma}_2$.  Set $\vec{v}_i = r(\vec{a}_i)$ for
$1 \leq i \leq k$, so
$\psi(M_1) \in \Sp_{2g}(\Z/2,\vec{v}_1,\ldots,\vec{v}_k)$.  By Lemma~\ref{lemma:stabilizersurject}, 
we can find some $M_2 \in \Sp_{2g}(\Z,\vec{a}_1,\ldots,\vec{a}_k)$ such that $\psi(M_2) = \psi(M_1)$.  Setting
$M = M_1 M_2^{-1}$, we have $M(\widetilde{\sigma}_1) = \widetilde{\sigma}_2$ and $M \in \ker(\psi) = \Sp_{2g}(\Z)[2]$. 

\medskip \hspace*{3ex}\emph{Case 2.}
The $\widetilde{\sigma}_i$ are simplices of intersection type.

\medskip

\noindent
We write
$\widetilde{\sigma}_1 = \{\lax{\vec{a}_1},\ldots,\lax{\vec{a}_k},\lax{\vec{b}_1}\}$ and $\widetilde{\sigma}_2 = \{\lax{\vec{a}_1'},\ldots,\lax{\vec{a}_k'},\lax{\vec{b}_1'}\}$,
where both $(\vec{a}_1,\ldots,\vec{a}_k;\vec{b}_1)$ and $(\vec{a}_1',\ldots,\vec{a}_k';\vec{b}_1')$ 
are partial symplectic bases.  The sets $\{r(\vec{a}_1),r(\vec{b}_1)\}$ and $\{r(\vec{a}_1'),r(\vec{b}_1')\}$ are equal since these are the unique pairs of elements with nontrivial pairing under the symplectic form.  If necessary, we replace $(\vec{a}_1,\vec{b}_1)$ with $(\vec{b}_1,-\vec{a}_1$) in order to ensure that $r(\vec{a}_1) = r(\vec{a}_1')$ and $r(\vec{b}_1) = r(\vec{b}_1')$; this does not change the fact that $(\vec{a}_1,\ldots,\vec{a}_k;\vec{b}_1)$ is a partial
symplectic basis.  We can further reorder the vertices of $\widetilde{\sigma}_1$ so that $r(\vec{a}_i) = r(\vec{a}_i')$ for $2 \leq i \leq k$.    The desired $M \in \Sp_{2g}(\Z)[2]$ can now be found 
exactly as in Case 1.
 
\medskip \hspace*{3ex}\emph{Case 3.}
The $\widetilde{\sigma}_i$ are simplices of additive type.

\medskip

\noindent
We can write
$\widetilde{\sigma}_1 = \{\lax{\vec{v}},\lax{\vec{a}_1},\ldots,\lax{\vec{a}_k}\}$ and $\widetilde{\sigma}_2 = \{\lax{\vec{v}'},\lax{\vec{a}_1'},\ldots,\lax{\vec{a}_k'}\}$,
where both $(\vec{a}_1,\ldots,\vec{a}_k;)$ and $(\vec{a}_1',\ldots,\vec{a}_k';)$
are partial symplectic bases and where $\vec{v} = \sum_{i=1}^{h} \vec{a}_i$ and $\vec{v}' = \sum_{j=1}^{\ell} \vec{a}_j'$
for some $h,\ell \in \{2,3\}$.
Among nonempty subsets of $\{\vec{v},\vec{a}_1,\ldots,\vec{a}_k\}$ and $\{\vec{v}',\vec{a}_1',\ldots,\vec{a}_k'\}$, the
sets $\{\vec{v},\vec{a}_1,\ldots,\vec{a}_h\}$ and $\{\vec{v}',\vec{a}_1',\ldots,\vec{a}_{\ell}'\}$ 
are the unique minimal linearly dependent sets, respectively.  They both must map to the unique
minimal linearly dependent set among nonempty subsets of 
$\{r(\vec{v}),r(\vec{a}_1),\ldots,r(\vec{a}_k)\} = \{r(\vec{v}'),r(\vec{a}'_1),\ldots,r(\vec{a}'_k)\}$.  We conclude
that the unordered sets
$\{r(\vec{v}),r(\vec{a}_1),\ldots,r(\vec{a}_h)\}$ and 
$\{r(\vec{v}'),r(\vec{a}_1'),\ldots,r(\vec{a}_{\ell}')\}$ are equal; in particular, $h = \ell$.  Reordering
the elements of $\{\vec{v}',\vec{a}_1',\ldots,\vec{a}_k'\}$ we can assume that $r(\vec{v}) = r(\vec{v}')$ and that
$r(\vec{a}_i) = r(\vec{a}_i')$ for $1 \leq i \leq k$; however, after doing this we can only
assume that $\vec{v}' = \sum_{i=1}^{h} e_i \vec{a}_i'$ for some choices of $e_i = \pm 1$ (here we are using the
fact that there is a linear dependence among $\{\vec{v}',\vec{a}_1',\ldots,\vec{a}_{h}'\}$ all of whose
coefficients are $\pm 1$).  Now replace 
$\vec{a}_i'$ with $e_i\vec{a}_i'$ for $1 \leq i \leq h$; this does not change the $\lax{\vec{a}_i'}$ or
$r(\vec{a}_i')$, but we now again have $\vec{v}' = \sum_{i=1}^{h} \vec{a}_i'$.  By
Case 1, there exists some $M \in \Sp_{2g}(\Z)[2]$ such that $M(\vec{a}_i) = \vec{a}_i'$ for
$1 \leq i \leq k$.  It follows that
\[M(\vec{v}) = \sum_{i=1}^h M(\vec{a}_i) = \sum_{i=1}^{h} \vec{a}_i' = \vec{v}', \]
and so $M(\widetilde{\sigma}_1) = \widetilde{\sigma}_2$, as desired.
\end{proof}

\begin{proof}[{Proof of Proposition \ref{proposition:quotient}}]

We will deal with $\PartialIBasesEx_g(\Z)$; the other case is similar. 
Corollary~\ref{corollary:norotations} says that $\Sp_{2g}(\Z)[2]$ acts without rotations on 
$\PartialIBasesEx_g(\Z)$.  We must identify 
the quotient.  Lemma \ref{lemma:simplexproject} gives an $\Sp_{2g}(\Z)[2]$-invariant projection map
$\widehat{\zeta} : \PartialIBasesEx_g(\Z) \rightarrow \PartialIBasesEx_g(\Z/2)$.
This induces a map $\widehat{\eta} : \PartialIBasesEx_g(\Z)/\Sp_{2g}(\Z)[2] \rightarrow \PartialIBasesEx_g(\Z/2)$.
Giving $\PartialIBasesEx_g(\Z) / \Sp_{2g}(\Z)[2]$ its natural CW complex structure (see the remark
after the statement of Proposition \ref{proposition:quotient}), Lemma~\ref{lemma:simplexproject} implies that
$\PartialIBasesEx_g(\Z) / \Sp_{2g}(\Z)[2]$ is a regular CW complex (i.e. attaching maps are injective) and that for each cell $\sigma$ of $\PartialIBasesEx_g(\Z)/\Sp_{2g}(\Z)[2]$ the map $\widehat{\eta}$ restricts
to a homeomorphism from $\sigma$ onto a simplex of $\PartialIBasesEx_g(\Z/2)$.
Corollary \ref{corollary:liftsimplex} implies that the image of $\widehat{\eta}$ contains every simplex of
$\PartialIBasesEx_g(\Z/2)$, and Corollary \ref{corollary:movesimplices} implies that distinct cells of
$\PartialIBasesEx_g(\Z)/\Sp_{2g}(\Z)[2]$ are mapped to distinct simplices of $\PartialIBasesEx_g(\Z/2)$.
We conclude that $\widehat{\eta}$ is an isomorphism of CW complexes, as desired.
\end{proof}

\subsection{Connectivity of the complex of lax isotropic bases}
\label{section:simplicialcomplexes}

In this section, we prove Proposition~\ref{proposition:partialbasesconnected1}, which states that for $R$ either a field or $\Z$, the complex $\PartialIBases_g(R)$ is homotopy equivalent to a wedge of $(g-1)$-spheres.  The proof is similar to a proof of a
related result due to Charney; see \cite[Theorem 2.9]{CharneyVogtmann}.  Before we begin to prove Proposition~\ref{proposition:partialbasesconnected1}, we recall some basic generalities about posets.

\p{Posets}
Let $P$ be a poset.  Consider $p \in P$.  The {\em height} of $p$, denoted $\height(p)$, is the largest number
$k$ such that there exists a strictly increasing chain
$$p_0 < p_1 < \cdots < p_k = p.$$
We will denote by $P_{>p}$ the subposet of $P$ consisting of elements strictly greater than $p$.
Also, if $f : Q \rightarrow P$ is a poset map, then
$$f/p = \{\text{$q \in Q$ $|$ $f(q) \leq p$}\}.$$
Finally, the {\em geometric realization} of $P$, denoted $|P|$, is the simplicial 
complex whose vertices are elements
of $P$ and whose $k$-simplices are sets $\{p_0,\ldots,p_k\}$ of elements of $P$ satisfying
$$p_0 < p_1 < \cdots < p_k.$$
A key example is as follows.  Let $X$ be a simplicial complex.  Then the set $\Poset(X)$ of simplices of
$X$ forms a poset under inclusion and $|\Poset(X)|$ is the barycentric subdivision of $X$.  

We shall require the following version of Quillen's Theorem A \cite[Theorem 9.1]{QuillenPoset}.  In what follows, when we say that a poset has some topological property, we mean that its geometric realization has that property.

\begin{theorem}
\label{theorem:quillen1}
Let $f : Q \rightarrow P$ be a poset map.  For some $m$, assume that $P$ is homotopy 
equivalent to a wedge of $m$-spheres.  Also, for all $p \in P$ assume
that $P_{>p}$ is homotopy equivalent to a wedge of $(m-\height(p)-1)$-spheres
and that $f/p$ is homotopy equivalent to a wedge of $\height(p)$-spheres.
Then $Q$ is homotopy equivalent to a wedge of $m$-spheres.
\end{theorem}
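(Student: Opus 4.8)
The plan is to deduce Theorem~\ref{theorem:quillen1} from the standard (un-weighted) form of Quillen's Theorem~A together with a homological spectral sequence / Mayer--Vietoris argument, exploiting the hypothesis that all the relevant pieces are wedges of spheres of the predicted dimensions. Concretely, I would proceed by showing that $Q$ is simply connected (assuming $m \geq 2$; the low-dimensional cases $m \leq 1$ I would handle separately by direct connectivity arguments) and that $\widetilde{H}_*(|Q|)$ is concentrated in degree $m$ and free abelian, which for a simply connected complex suffices to conclude that $|Q|$ is homotopy equivalent to a wedge of $m$-spheres.

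First I would set up the homotopy-colimit / spectral sequence machinery: the poset map $f \colon Q \to P$ gives rise to a first-quadrant spectral sequence computing $\widetilde{H}_*(|Q|)$ whose $E^1$-page (or $E^2$-page, in the homology-of-$P$-with-coefficients-in-the-fibers form) is built from the groups $\widetilde{H}_*(|f/p|)$ as $p$ ranges over $P$, filtered by $\height(p)$. This is the poset analogue of the Leray spectral sequence associated to $f$, and its convergence is exactly Quillen's Theorem~A in disguise. By the hypothesis on $f/p$, the fiber $|f/p|$ is a wedge of $\height(p)$-spheres, so $\widetilde{H}_j(|f/p|) = 0$ unless $j = \height(p)$; this collapses the contribution of each $p$ to a single line. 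Summing over $p$ of a fixed height $k$, one is left needing to understand $\bigoplus_{\height(p)=k} \widetilde{H}_k(|f/p|)$ woven together along $|P|$, which is where the hypotheses on $P$ and on $P_{>p}$ enter: the link structure of $|P|$ at a vertex $p$ is governed by $|P_{>p}|$ (together with the lower part $|P_{<p}|$), and the assumption that $P_{>p}$ is a wedge of $(m-\height(p)-1)$-spheres is precisely the numerology that forces every contribution to land in total degree $m$.

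The key steps, in order, are: (1) reduce to showing $|Q|$ is $(m-1)$-connected with $\widetilde{H}_m(|Q|)$ free abelian and all higher reduced homology vanishing; (2) establish $(m-1)$-connectivity of $|Q|$ — for $\pi_1$ I would use van Kampen on the cover of $|Q|$ by the subcomplexes $f^{-1}(|P_{\leq p}|)$ or directly the fiber description, using that $|f/p|$ is $(\height(p)-1)$-connected (a wedge of $\height(p)$-spheres) and that $|P|$ is $(m-1)$-connected; (3) run the spectral sequence and check, degree by degree using the two sphere-dimension hypotheses, that $E^\infty_{s,t}$ can be nonzero only when $s + t = m$, and that the surviving groups are free; (4) invoke the Hurewicz theorem and the fact that a simply connected CW complex with free homology concentrated in a single degree $m$ is homotopy equivalent to a wedge of $m$-spheres. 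I would also record the edge case that if some $P_{>p}$ or $f/p$ is empty (a wedge of $(-1)$-spheres, i.e.\ the hypothesis is vacuous or forces $p$ to be maximal/minimal of the appropriate height), the convention "wedge of $j$-spheres" for $j \leq 0$ must be read correctly — a point worth stating explicitly since the intended application has $m = g-1$ and chains of every intermediate height.

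The main obstacle I expect is step (3): verifying that the spectral sequence is concentrated on the antidiagonal $s+t=m$ requires carefully matching up the filtration degree $s$ with $\height(p)$ and the fiber degree $t$ with the sphere dimension $\height(p)$ of $|f/p|$, while simultaneously using that the "base" contributions are controlled by the $(m - \height(p) - 1)$-sphere hypothesis on $P_{>p}$ — in effect one needs a local-to-global argument showing $|P|$ itself, with this system of coefficients, behaves like a wedge of $m$-spheres, which is essentially a second application of the same Quillen-type principle (or can be bootstrapped from the hypothesis that $|P|$ is already a wedge of $m$-spheres combined with the $P_{>p}$ hypotheses describing its links). Making the bookkeeping rigorous — rather than hand-waving "everything lands in degree $m$" — is the real content; the rest is assembling standard tools. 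An alternative route that sidesteps the spectral sequence is an induction on $m$ using Quillen's original fiber theorems and a careful decomposition of $|Q|$ along the height filtration of $P$, gluing wedges of spheres via the $(m-\height(p)-1)$-connectivity of the $P_{>p}$'s as in the Charney--Vogtmann style argument cited just before the theorem; I would likely present whichever of the two is shorter, but expect the homological/spectral-sequence version to be cleanest.
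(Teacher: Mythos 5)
This theorem is not proved in the paper at all: it is quoted verbatim as a known result of Quillen, with the citation \cite[Theorem 9.1]{QuillenPoset}, so there is no ``paper's own proof'' against which to compare your argument. The honest answer for the paper is simply to invoke Quillen, and that is what the authors do.

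That said, your outline is aimed in essentially the direction Quillen's own proof goes. He develops, in Sections~7--9 of that paper, a homology spectral sequence for a poset map built from the height filtration, whose input involves the reduced homology of the slices $f/p$ together with the local structure $P_{>p}$ of $P$; the sphericity hypotheses force the $E^1$-page to be concentrated on a single antidiagonal, and a Hurewicz-type argument then upgrades the homological statement to a homotopy equivalence with a wedge of spheres. Your steps (1)--(4) track this plan. A few cautions on your sketch: the slices $f/p = \{q : f(q) \leq p\}$ are not ``fibers'' in the usual sense (they are nested as $p$ increases), so the analogy with a Leray spectral sequence, while morally right, is loose; and the connectivity bookkeeping at the bottom of the filtration is delicate, since for $\height(p)=0$ a ``wedge of $0$-spheres'' is a nonempty discrete set, not a connected space, so the $(m-1)$-connectivity argument cannot be run naively level by level. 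You correctly flag step (3) as the real content, and indeed realizing it rigorously amounts to re-deriving Quillen's argument. For the purposes of this paper that work is unnecessary: the correct move is to cite \cite[Theorem~9.1]{QuillenPoset} rather than reconstruct its proof.
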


In our application of Theorem~\ref{theorem:quillen1}, we will take $Q$ to be $\PartialIBases_g(R)$.  The roles of $P$ and $f/p$ will be played by the Tits building $\Building_g(R)$ and the complex of lax partial bases $\PartialBases_n(R)$, both to be defined momentarily.  Theorems~\ref{theorem:solomontits} and~\ref{theorem:maazen}
below give that $\PartialIBases_g(R)$ and $\Building_g(R)$ (and the natural map between them) satisfy the hypotheses of Theorem~\ref{theorem:quillen1} with $m=g-1$, and so we will conclude that $\PartialIBases_g(R)$ is a wedge of $(g-1)$-spheres, as desired.

\p{Buildings}
Let $\Field$ be a field and $\hat \imath$ the standard symplectic form on $\Field^{2g}$.  An isotropic subspace of $\Field^{2g}$ is a subspace on which $\hat \imath$ vanishes.  The Tits building $\Building_g(\Field)$ is the poset of nontrivial isotropic subspaces of $\Field^{2g}$.   
The key theorem about the topology of $\Building_g(\Field)$ is the Solomon--Tits theorem \cite[Theorem 4.73]{BrownAbramenko}.

\begin{theorem}[Solomon--Tits]
\label{theorem:solomontits}
If $\Field$ is a field, then $\Building_g(\Field)$ is homotopy equivalent to a wedge of $(g-1)$-spheres.
Also, for $V \in \Building_g(\Field)$ the poset $(\Building_g(\Field))_{>V}$ is homotopy equivalent to
a wedge of $(g-2-\height(V))$-spheres.
\end{theorem}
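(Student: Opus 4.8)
This is the classical Solomon--Tits theorem for the symplectic building; the plan is to reduce the second assertion to the first and then prove the first by the standard apartment-filtration argument. For the reduction, fix $V \in \Building_g(\Field)$ and set $d = \dim V$, so $1 \le d \le g$ and $\height(V) = d-1$. Since $V$ is isotropic we have $V \subseteq V^{\perp}$, every isotropic subspace $W$ with $V \subsetneq W$ lies in $V^{\perp}$, and $\hat \imath$ descends to a nondegenerate symplectic form on $V^{\perp}/V \cong \Field^{2(g-d)}$; the assignment $W \mapsto W/V$ is then an order-isomorphism $(\Building_g(\Field))_{>V} \to \Building_{g-d}(\Field)$. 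Because $g - 2 - \height(V) = g-2-(d-1) = (g-d)-1$, the second assertion is exactly the first assertion applied with $g$ replaced by $g-d$; the extreme case $d = g$, where both sides are empty, is handled by the convention that the empty complex is a wedge of $(-1)$-spheres. So it suffices to prove the first assertion, and for $g \le 1$ this is immediate: $\Building_0(\Field)$ is empty, while $\Building_1(\Field)$ is the discrete set of (isotropic) lines in $\Field^2$, which has at least two elements and hence is a wedge of $0$-spheres.

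\textbf{Sphericity of $\Building_g(\Field)$.} Regard $\Building_g(\Field)$ as the simplicial complex whose simplices are the chains of nonzero isotropic subspaces of $\Field^{2g}$; this is a building of type $C_g$, a chamber complex of dimension $g-1$ whose chambers are the full isotropic flags $V_1 \subsetneq \cdots \subsetneq V_g$, and it is covered by apartments, each isomorphic to the Coxeter complex of the finite Weyl group of type $C_g$ and hence a triangulated $(g-1)$-sphere. Fix a chamber $C$ and enumerate the chambers $C = C_0, C_1, C_2, \dots$ so that the gallery distance $\delta(C, C_i)$ is non-decreasing; let $\Delta_{<i}$ be the union of the closed chambers $\overline{C_0}, \dots, \overline{C_{i-1}}$, so $\Delta_{<1} = \overline{C_0}$ is a simplex and $\Building_g(\Field) = \bigcup_i (\Delta_{<i} \cup \overline{C_i})$. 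The crux is the standard intersection lemma: for $i \ge 1$ the subcomplex $\overline{C_i} \cap \Delta_{<i}$ is precisely the union of those panels $\pi$ of $C_i$ for which $\mathrm{proj}_\pi C \neq C_i$, that is, the panels of $C_i$ facing toward $C$. This rests on the convexity of buildings together with the gate and exchange properties, and may be quoted from \cite{BrownAbramenko}. When $C_i$ is not opposite $C$, this union is a union of $j$ facets of the boundary sphere $\partial \overline{C_i} \cong S^{g-2}$ for some $1 \le j \le g-1$, and such a union is star-shaped about any vertex common to those $j$ facets, hence contractible; when $C_i$ is opposite $C$ it is all of $\partial \overline{C_i}$.

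\textbf{Assembling the homotopy type.} Attaching the $(g-1)$-ball $\overline{C_i}$ to $\Delta_{<i}$ along a contractible subcomplex of its boundary is a homotopy equivalence, so the steps with $C_i$ not opposite $C$ leave the homotopy type unchanged; inductively $\Delta_{<i}$ remains contractible until an opposite chamber occurs. Attaching $\overline{C_i}$ along the entire boundary sphere $S^{g-2}$, which is nullhomotopic in the contractible complex $\Delta_{<i}$, wedges on one copy of $S^{g-1}$. Since $\Delta_{<1}$ is a simplex, it follows that $\Building_g(\Field)$ is homotopy equivalent to a wedge of $(g-1)$-spheres, with one summand for each chamber opposite $C$. (When $\Field$ is infinite the enumeration is transfinite, but it is organized into the finitely many levels determined by the possible values of $\delta(C, \cdot)$, with every opposite chamber in the top level, and the conclusion is unaffected.) I expect the intersection lemma above to be the only genuine obstacle; the remainder is the formal principle that gluing a ball onto a space along a contractible, respectively spherical-and-nullhomotopic, portion of its boundary either preserves the homotopy type or wedges on a top-dimensional sphere. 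Since that lemma is the heart of the classical Solomon--Tits argument, I would cite it from \cite{BrownAbramenko} rather than reprove it here.
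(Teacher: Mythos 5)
The paper does not prove this theorem; it cites it as a classical result from Abramenko--Brown's \emph{Buildings}, Theorem 4.73, which covers both the wedge-of-spheres statement for the whole building and the corresponding statement for $(\Building_g(\Field))_{>V}$ (the latter being the link of the simplex determined by a flag ending in $V$). Your sketch is a correct account of the standard Solomon--Tits argument via the gallery-distance filtration, and your reduction of the second assertion to the first by the order-isomorphism $W \mapsto W/V$ onto $\Building_{g-\dim V}(\Field)$ is also correct, so you are essentially reconstructing the content of the cited reference rather than taking a different route.
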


\p{Complexes of lax partial bases}
For $R$ equal to either $\Z$ or a field, 
let $\PartialBases_n(R)$ be the simplicial complex whose $k$-simplices are sets
$\{\lax{\vec{v}_0},\ldots,\lax{\vec{v}_k}\}$, where $\{\vec{v}_0,\ldots,\vec{v}_k\}$
is a set of elements of $R^n$ that forms a basis for a free
summand of $R^n$.  We then have the following theorem.

\begin{theorem}
\label{theorem:maazen}
If $R$ is either $\Z$ or a field, then $\PartialBases_n(R)$ is homotopy equivalent
to a wedge of $(n-1)$-spheres.
\end{theorem}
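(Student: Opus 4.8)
The plan is to deduce the theorem from a single connectivity assertion and to prove that assertion by induction on $n$.

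\emph{Reduction.} It is enough to show that $\PartialBases_n(R)$ is $(n-2)$-connected. The complex $\PartialBases_n(R)$ has dimension $n-1$, since its top-dimensional simplices are the lax bases of $R^n$ and hence have $n$ vertices, and an $(n-2)$-connected CW complex of dimension at most $n-1$ is automatically homotopy equivalent to a wedge of $(n-1)$-spheres: for $n\geq 3$ it is simply connected, so by the Hurewicz theorem together with the dimension bound its reduced homology is concentrated in degree $n-1$, where it is free (being a subgroup of the free abelian group of $(n-1)$-chains), whence, $H_{n-1}$ being free, it is homotopy equivalent to $\bigvee\Sphere{n-1}$ (build a map from a wedge realizing a basis of $H_{n-1}$ and apply Whitehead's theorem); for $n=2$ it is a nonempty connected graph, hence a wedge of circles; and for $n\leq 1$ it is a nonempty discrete set, hence a wedge of $0$-spheres.

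\emph{The induction.} We prove $(n-2)$-connectivity by induction on $n$, the cases $n\leq 2$ being elementary. Fix a vertex $\lax{\vec{v}_0}$. Its closed star $\Star(\lax{\vec{v}_0})$ is the cone on $\Link(\lax{\vec{v}_0})$ with cone point $\lax{\vec{v}_0}$, hence contractible, so it suffices to show that any map $f\colon\Sphere{k}\to\PartialBases_n(R)$ with $k\leq n-2$ can be homotoped into $\Star(\lax{\vec{v}_0})$. Making $f$ simplicial with respect to a triangulation $L$ of $\Sphere{k}$, we run the standard elementary-move argument: as long as some vertex $w$ of $L$ has $f(w)\notin\Star(\lax{\vec{v}_0})$, the sphere $\Link_L(w)\cong\Sphere{k-1}$ maps into $\Link(f(w))$, which is $(k-1)$-connected, so we cap it off by a disk inside $\Link(f(w))$ and re-route $f$ across $\Star_L(w)$ through a newly introduced vertex $\lax{\vec{u}}$ chosen complementary to $\vec{v}_0$; this places the new vertices in $\Star(\lax{\vec{v}_0})$ and strictly decreases the number of offending vertices, so finitely many moves finish. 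Two ingredients are needed here and are established alongside the induction: that the link of every vertex of $\PartialBases_n(R)$ is $(n-3)$-connected (these links surject onto $\PartialBases_{n-1}(R)$ via $R^n\to R^n/\langle\vec{v}\rangle$, but are strictly larger than it, so their connectivity requires its own parallel argument), and the existence of the replacement vector $\vec{u}$, which in fact must be taken complementary not only to $\vec{v}_0$ but also to each of the finitely many partial bases occurring in the capping disk.

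\emph{The main obstacle.} Everything above is formal except for one general-position statement over $R$: given a primitive vector $\vec{v}_0$ and finitely many partial bases of $R^n$, there exists a primitive vector complementary to $\vec{v}_0$ and to each of them. For $R$ a field this (and hence the whole theorem, via the reduction) is classical and comparatively soft---one may argue directly, or observe that $\PartialBases_n(\Field)$ is the independence complex of a matroid and invoke its shellability (Provan--Billera, with a passage to the filtered colimit of finite subcomplexes when $\Field$ is infinite). For $R=\Z$, however, this is a genuine arithmetic input: it is proved using the Chinese Remainder Theorem together with the transitivity of $\SL_n(\Z)$ on primitive vectors, and it is essentially the content of Maazen's theorem on the high connectivity of the complex of unimodular sequences. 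I expect this lemma---together with the parallel argument for the connectivity of the vertex links and the bookkeeping in the elementary-move step---to be the only nontrivial part; the most economical route for $R=\Z$ is to cite Maazen, and van der Kallen's subsequent treatment, rather than to reprove it.
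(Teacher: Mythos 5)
Your plan differs from the paper's in a way that matters. The paper does \emph{not} run a badness-reduction induction on $\PartialBases_n(R)$; it introduces the auxiliary complex $\PartialBases_n'(R)$ whose simplices are honest (non-lax) partial bases $\{\vec{v}_0,\dots,\vec{v}_k\}$, cites Maazen's theorem for the $(n-2)$-connectivity of $\PartialBases_n'(R)$ (with a pointer to the published account in Day--Putman, which adapts to any Euclidean domain and hence to fields), and then observes that $\PartialBases_n(R)$ is a retract of $\PartialBases_n'(R)$: any set-theoretic right inverse to the natural projection $\rho\colon\PartialBases_n'(R)\to\PartialBases_n(R)$ on vertices extends to a simplicial section $\eta$ with $\rho\circ\eta=\mathrm{id}$, so $\rho$ is surjective on all homotopy groups. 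That one-line retraction argument is what lets the paper use Maazen entirely as a black box. Your plan, by contrast, proposes to rerun the elementary-move induction directly on the lax complex, which means re-deriving (in the lax setting) the very connectivity statement you would otherwise just cite; this is strictly more work and you never make the observation that a simplicial section exists.

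There is also a concrete gap in the sketch itself. Your proposed ``main obstacle'' --- given a primitive $\vec{v}_0$ and finitely many partial bases of $R^n$, there exists a primitive $\vec{u}$ complementary to $\vec{v}_0$ and to each of them --- is false as stated over $\Z$. Already in $\Z^2$, take $\vec{v}_0=e_1$ and the size-one partial bases $\{e_2\}$ and $\{e_1+3e_2\}$: a vector $\vec{u}=ae_1+be_2$ complementary to all three must satisfy $|b|=1$, $|a|=1$, and $|b-3a|=1$, and no choice of signs works. This is not incidental: Maazen's and van der Kallen's arguments do not rest on a global general-position lemma of this kind, but on an incremental badness-reduction in which the replacement vector is chosen \emph{per vertex} using the Euclidean function, never needing to be simultaneously in general position relative to every simplex of the capping disk. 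So the one ingredient you flag as ``the only nontrivial part'' is not the lemma you would actually need; and, since you ultimately say you would cite Maazen anyway, the cleanest fix is the paper's: cite Maazen for $\PartialBases_n'(R)$ and pass to $\PartialBases_n(R)$ by the retraction. (Your reduction from $(n-2)$-connectivity to the wedge-of-spheres statement, and your remark about shellability of the matroid complex over a field, are both fine.)
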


\begin{proof}

Let $\PartialBases_n'(R)$ be the simplicial complex whose $k$-simplices are
sets $\{\vec{v}_0,\ldots,\vec{v}_k\}$ of elements of $R^n$ that form a basis
for a free summand of $R^n$.  In his unpublished thesis \cite{MaazenThesis}, Maazen
proved that under our assumption that $R$ is either $\Z$ or a field, 
$\PartialBases_n'(R)$ is $(n-2)$-connected, and thus is homotopy equivalent to a wedge
of $(n-1)$-spheres.
For $R = \Z$, there is a published account of Maazen's theorem in
\cite[Proof of Theorem B, Step 2]{DayPutmanComplex}.  This proof can be easily
adapted to work for any Euclidean domain $R$ by replacing 
all invocations of the absolute value function $|\cdot|$ on $\Z$ with the
Euclidean function on $R$; in particular, the proof works for a field.  There
is a natural map $\rho : \PartialBases_n'(R) \rightarrow \PartialBases_n(R)$.  Let
$\eta^{(0)} : (\PartialBases_n(R))^{(0)} \rightarrow (\PartialBases_n'(R))^{(0)}$ be
an arbitrary right inverse for $\rho|_{(\PartialBases_n'(R))^{(0)}}$.  Clearly
$\eta^{(0)}$ extends to a simplicial map 
$\eta : \PartialBases_n(R) \rightarrow \PartialBases_n'(R)$ satisfying
$\rho \circ \eta = \text{id}$.  It follows that $\rho$ induces a surjection on
all homotopy groups, so $\PartialBases_n(R)$ is also $(n-2)$-connected and thus
homotopy equivalent to a wedge of $(n-1)$-spheres.
\end{proof}

\p{Connectivity of \boldmath$\PartialIBases_g(R)$}
We are almost ready to prove Proposition~\ref{proposition:partialbasesconnected1}, which asserts that $\PartialIBases_g(R)$ is homotopy equivalent to a wedge of $(g-1)$-spheres for $R$ equal
to $\Z$ or a field.  We first need the following classical lemma.

\begin{lemma}
\label{lemma:subspaces}
Let $V \subset \Q^n$ be a subspace.  Then $V \cap \Z^n$ is a direct summand of $\Z^n$.
\end{lemma}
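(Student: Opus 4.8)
The plan is to reduce the claim to the standard fact that a subgroup of $\Z^n$ whose quotient is torsion-free is automatically a direct summand. First I would set $W = V \cap \Z^n$, which is a subgroup of $\Z^n$ and hence itself free abelian of finite rank. The only substantive point to verify is that the quotient $\Z^n / W$ is torsion-free.

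To check this, suppose $x \in \Z^n$ and $m x \in W$ for some nonzero integer $m$. Then $mx \in V$, and since $V$ is a $\Q$-subspace it is closed under multiplication by $1/m$, so $x = (1/m)(mx) \in V$. Combined with $x \in \Z^n$, this gives $x \in V \cap \Z^n = W$, so the image of $x$ in $\Z^n/W$ was already trivial. Hence $\Z^n/W$ has no nonzero torsion elements. Note that this step genuinely uses that $V$ is a $\Q$-vector space, not merely a subgroup of $\Q^n$.

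Finally I would invoke the structure theory of finitely generated abelian groups: $\Z^n/W$ is finitely generated and torsion-free, hence free abelian. Therefore the short exact sequence $0 \to W \to \Z^n \to \Z^n/W \to 0$ splits — a surjection onto a free abelian group admits a section, obtained by lifting a basis — which yields $\Z^n \cong W \oplus (\Z^n/W)$. In particular $W = V \cap \Z^n$ is a direct summand of $\Z^n$, as desired. I do not anticipate any real obstacle; the argument is entirely formal once the torsion-freeness of the quotient is observed.
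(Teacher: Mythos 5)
Your proof is correct and takes essentially the same route as the paper: both arguments establish that $\Z^n/(V\cap\Z^n)$ is torsion-free (the paper by realizing $V=\ker T$ and noting $T(\Z^n)\subset\Q^n$ is torsion-free, you by a direct clearing-denominators check) and then split the resulting short exact sequence using that a finitely generated torsion-free abelian group is free.
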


\begin{proof}

Write $V = \ker(T)$ for some linear map $T : \Q^n \rightarrow \Q^n$.  Then
$V \cap \Z^n = \ker(T|_{\Z^n})$.  Moreover, since $T(\Z^n) \subset \Q^n$ is a torsion-free $\Z$-module,
it must be a projective $\Z$-module.  This allows us to split the short exact sequence of $\Z$-modules
\[0 \longrightarrow V \cap \Z^n \longrightarrow \Z^n \stackrel{T|_{\Z^n}}{\longrightarrow} T(\Z^n) \longrightarrow 0. \qedhere\]
\end{proof}

\begin{proof}[{Proof of Proposition~\ref{proposition:partialbasesconnected1}}]

Let $\Field = R$ if $R$ is a field and $\Field = \Q$ if $R = \Z$.  Define a poset map
$$\spanmap : \Poset(\PartialIBases_g(R)) \rightarrow \Building_g(\Field)$$
by $\spanmap(\{\lax{\vec{v}_0},\ldots,\lax{\vec{v}_k}\}) = \text{span}(\vec{v}_0,\ldots,\vec{v}_k)$.  Consider
$V \in \Building_g(\Field)$, and set $d = \dim(V)$, so $\height(V) = d-1$.  The poset
$\spanmap/V$ is isomorphic to $\PartialBases_d(R)$ (this uses Lemma \ref{lemma:subspaces} if $R = \Z$), 
so Theorem~\ref{theorem:maazen} says that
$\spanmap/V$ is homotopy equivalent to a wedge of $\height(V)$-spheres.  
Theorem~\ref{theorem:solomontits} says that the remaining 
assumptions of Theorem~\ref{theorem:quillen1} are satisfied for $\spanmap$ with $m = g-1$.  The conclusion of this theorem is that $\Poset(\PartialIBases_g(R))$, hence $\PartialIBases_g(R)$, is a wedge of $(g-1)$-spheres, as desired.
\end{proof}

\subsection{Connectivity of \boldmath$\PartialIBasesEx_g(R)$}
\label{section:partialbasesexconnectedproof}

We now prove Proposition~\ref{proposition:partialbasesconnected2}, which asserts
that the complexes $\PartialIBasesEx_3(\Z)$ and $\PartialIBasesEx_3(\Z/2)$ are $1$-connected and $2$-connected, respectively.  The proof is more complicated than the one for Proposition~\ref{proposition:partialbasesconnected1}, and so we begin with an outline.

For $R$ either $\Z$ or a field, denote by $\PartialIBases_g^{\alpha}(R)$ the subcomplex of $\PartialIBasesEx_g(R)$ consisting of standard simplices and simplices of additive type.  The proof of Proposition~\ref{proposition:partialbasesconnected2} consists of three steps:
\begin{enumerate}\setlength{\itemsep}{0ex}
\item We show that the map $\pi_k(\PartialIBases_3^{\alpha}(R)) \to \pi_k(\PartialIBasesEx_3(R))$ is surjective for $k=1,2$.
\item We find an explicit generating set for $\pi_2(\PartialIBases_3^{\alpha}(\Z/2))$.  
\item We show that each generator of $\pi_2(\PartialIBases_3^{\alpha}(\Z/2))$ maps to zero in $\pi_2(\PartialIBasesEx_3(\Z/2))$.
\end{enumerate}
%
The 1-connectivity of $\PartialIBasesEx_3(\Z)$ and $\PartialIBasesEx_3(\Z/2)$ follow from the $k=1$ version of the first step, as $\PartialIBases_3(R)$ is $1$-connected (Proposition~\ref{proposition:partialbasesconnected1}) and $\PartialIBases_3(R)$ contains the entire 1-skeleton of $\PartialIBases_3^{\alpha}(R)$.  Together with the $k=2$ version of the first step, the latter two steps together imply that $\pi_2(\PartialIBasesEx_3(\Z/2))$ is trivial.  

\p{Pushing into \boldmath$\PartialIBases_3^{\alpha}(R)$}
As above, the first step of the proof is to show that $\PartialIBases_g^{\alpha}(R)$ carries all of $\pi_1(\PartialIBasesEx_g(R))$ and $\pi_2(\PartialIBasesEx_g(R))$.  We in fact have a more general statement.

\begin{lemma}
\label{lemma:push}
Let $R$ equal $\Z$ or a field.  For $1 \leq k \leq g-1$, the inclusion map 
$\PartialIBases_g^{\alpha}(R) \rightarrow \PartialIBasesEx_g(R)$ induces a surjection on $\pi_k$.
\end{lemma}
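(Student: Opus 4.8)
The plan is to show that any continuous map $f\colon S^k \to \PartialIBasesEx_g(R)$ with $1 \le k \le g-1$ can be homotoped into the subcomplex $\PartialIBases_g^{\alpha}(R)$, i.e.\ that simplices of intersection type can be pushed off of. First I would make $f$ simplicial with respect to some triangulation of $S^k$, and then argue by downward induction on the number of simplices of intersection type in the image, or equivalently push off one ``bad'' vertex at a time. Recall that a simplex of intersection type has the form $\{\lax{\vec{a}_1},\ldots,\lax{\vec{a}_k},\lax{\vec{b}_1}\}$ where $(\vec{a}_1,\ldots,\vec{a}_k;\vec{b}_1)$ is a partial symplectic basis; the defining feature is the single ``dual'' vertex $\lax{\vec{b}_1}$ paired nontrivially with one of the $\lax{\vec{a}_i}$.

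The core of the argument is a local replacement lemma: given a vertex $\lax{\vec{b}}$ appearing in the image of $f$, I want to find a single vertex $\lax{\vec{c}}$ of $\PartialIBasesEx_g(R)$ such that $\lax{\vec{c}}$ together with the link of $\lax{\vec{b}}$ (restricted to the part of the image meeting $\lax{\vec{b}}$) spans simplices, and such that each simplex of intersection type through $\lax{\vec{b}}$ gets replaced by a standard simplex or a simplex of additive type. Concretely, since $k \le g-1$, a simplex of $\PartialIBasesEx_g(R)$ containing $\lax{\vec{b}}$ involves at most $g$ lax vectors, so there is room in the remaining symplectic summand of $R^{2g}$: one should be able to choose $\vec{c}$ isotropic to all the $\vec{a}_i$'s occurring, with $\vec{c}$ either extending the standard simplex or realizing an additive relation. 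The standard way to implement such a vertex replacement is via the link argument: cut out a neighborhood of $\lax{\vec{b}}$ (a cone on its link) and re-fill with the cone on $\lax{\vec{c}}$, which is possible provided the relevant portion of the link is itself contractible or at least sufficiently connected after coning — here one invokes the connectivity of complexes of (lax) partial bases, Theorems~\ref{theorem:maazen} and~\ref{theorem:solomontits}, applied to the orthogonal complement of the span of the surviving vertices, which has rank $2(g - (\text{something}))$ large enough because $k \le g-1$.

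I expect the main obstacle to be the book-keeping in the link argument: when several simplices of intersection type share a vertex, or when the ``$\vec{a}_i$'' vertices adjacent to $\lax{\vec{b}}$ in the image vary across the different top simplices of the sphere incident to $\lax{\vec{b}}$, one must choose $\vec{c}$ compatibly with all of them at once, and verify that the new simplices $\{\lax{\vec{c}}\} \cup \tau$ are genuinely simplices of $\PartialIBasesEx_g(R)$ (of standard or additive type, not accidentally intersection type). This is where the rank bound $k \le g-1$ is used: the span of all vertices in $\Star(\lax{\vec{b}})$ within a single $k$-simplex, together with any required dual vectors, still leaves at least a rank-$2$ symplectic complement, inside which one can find $\vec{c}$ isotropic to everything relevant. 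I would set this up by first handling the top-dimensional case (replacing $\lax{\vec{b}}$ across a single $k$-simplex), then globalizing via a standard simplicial-approximation and induction-on-bad-simplices scheme, using that pushing one intersection-type vertex into $\PartialIBases_g^{\alpha}(R)$ does not create new intersection-type simplices elsewhere. Finally, the $k$-sphere being mapped in can be triangulated so that each bad vertex is handled independently, and the homotopy is supported near the preimage of that vertex's star.
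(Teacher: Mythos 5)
Your general plan (simplicial approximation, homotoping off intersection\mbox{-}type simplices one at a time, invoking the connectivity of complexes of lax partial bases) is in the right direction, but the mechanism you propose for the local move has a genuine gap, one you correctly flag as ``the main obstacle'' but do not resolve. You want to delete a \emph{vertex} $\lax{\vec{b}}$ and replace the cone from $\lax{\vec{b}}$ by a cone from a single new vertex $\lax{\vec{c}}$. For that to make sense, $\lax{\vec{c}}$ must be simultaneously adjacent in $\PartialIBasesEx_g(R)$ to every simplex in the image of $\Link_S(\lax{\vec{b}})$, across \emph{all} top simplices of $S$ incident to the relevant vertices; but the number of such lax vectors is not controlled by $k$ (the triangulation of $S^k$ can be arbitrarily fine), so their span can easily be all of $R^{2g}$ and no $\vec{c}$ exists. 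The bound $k\le g-1$ gives you room inside a \emph{single} simplex, not across an entire star, so your dimension count does not close the gap. There is also a secondary problem with the ``push off one bad vertex at a time'' framing: $\lax{\vec{b}}$ can occur in the image also inside standard or additive-type simplices, and indiscriminately replacing it would destroy those.

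The paper's proof sidesteps both issues by focusing not on a vertex of $\PartialIBasesEx_g(R)$ but on a simplex $\sigma$ of $S$ mapping to a $1$-simplex $\{\lax{\vec{a}},\lax{\vec{b}}\}$ of intersection type, chosen with $\dim(\sigma)=d$ maximal. The decisive structural fact, which is absent from your proposal, is that the link of such an edge in $\PartialIBasesEx_g(R)$ consists \emph{only} of standard simplices: any simplex containing a non-isotropic pair must be of intersection type, so a vertex $\lax{\vec{v}}$ in that link satisfies $\hat\imath(\vec{a},\vec{v})=\hat\imath(\vec{b},\vec{v})=0$ and
\[
\Link_{\PartialIBasesEx_g(R)}\{\lax{\vec{a}},\lax{\vec{b}}\}\;\cong\;\PartialIBases_{g-1}(R),
\]
which is $(g-3)$-connected by Proposition~\ref{proposition:partialbasesconnected1}. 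One then fills $f|_{\Link_S(\sigma)}$, a $(k-d-1)$-sphere mapping into this link, with a simplicial $(k-d)$-ball (not a cone on a new vertex), using $k-d-1\le g-3$. Replacing $f|_{\Star_S(\sigma)}$ by this filling removes $\sigma$ without creating any new $d$-simplices mapping to intersection-type edges; iterating finishes the proof. So you need to (a) switch from pushing off a vertex of the target to pushing off an appropriately chosen simplex $\sigma$ of the source sphere mapping onto an intersection-type edge, (b) establish the structure of the link of that edge, and (c) fill by a disk rather than a cone on a point.
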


\begin{proof}

Let $S$ be a simplicial complex that is a combinatorial triangulation of a $k$-sphere (recall that a combinatorial triangulation of a manifold is a triangulation where the link of each $d$-simplex is a triangulation of a $(k-d-1)$-sphere) and let 
$f : S \rightarrow \PartialIBasesEx_g(R)$ be a simplicial map.  It is enough to homotope
$f$ such that its image lies in $\PartialIBases_g^{\alpha}(R)$.  If the image
of $f$ is not contained in $\PartialIBases_g^{\alpha}(R)$, then there exists some simplex
$\sigma$ of $S$ such that $f(\sigma)$ is a $1$-simplex $\{\lax{\vec{a}},\lax{\vec{b}}\}$ of
intersection type.  Choose $\sigma$ such that $d = \dim(\sigma)$ is maximal; since $f$ need
not be injective, we might have $d > 1$.  The link $\Link_S(\sigma)$ is homeomorphic
to a $(k-d-1)$-sphere.  Moreover, 
$$f(\Link_S(\sigma)) \subseteq \Link_{\PartialIBasesEx_g(R)} \{\lax{\vec{a}},\lax{\vec{b}}\}.$$
The key observation is that $\Link_{\PartialIBasesEx_g(R)} \{\lax{\vec{a}},\lax{\vec{b}}\}$
can only contain standard simplices, and moreover all of its vertices are lax vectors
$\lax{\vec{v}}$ such that $\hat \imath(\vec{a},\vec{v}) = \hat \imath(\vec{b},\vec{v}) = 0$.  
Indeed, if a simplex $\{\lax{\vec{w}_0},\ldots,\lax{\vec{w}_k}\}$ spans a simplex with the edge 
$\{\lax{\vec{a}},\lax{\vec{b}}\}$, then necessarily $\{\lax{\vec{a}},\lax{\vec{b}},\lax{\vec{w}_0},\ldots,\lax{\vec{w}_k}\}$ 
is a simplex of intersection type, which implies the observation.

The $\hat \imath$-orthogonal complement of $\text{span}(\vec{a},\vec{b})$ is isomorphic to a
$2(g-1)$-dimensional free symplectic module over $R$, so we deduce that
$$\Link_{\PartialIBasesEx_g(R)} \{\lax{\vec{a}},\lax{\vec{b}}\} \cong \PartialIBases_{g-1}(R).$$
Proposition~\ref{proposition:partialbasesconnected1} says that $\PartialIBases_{g-1}(R)$ is $(g-3)$-connected.
Since $k-d-1 \leq g-3$, we conclude that there exists a combinatorially triangulated $(k-d)$-ball $B$ with $\partial B = \Link_S(\sigma)$ and a simplicial map 
$$F : B \rightarrow \Link_{\PartialIBasesEx_g(R)} \{\lax{\vec{a}}, \lax{\vec{b}}\}$$
such that $F|_{\partial B} = f|_{\Link_S(\sigma)}$.  We can therefore homotope $f$ so as
to replace $f|_{\Star_S(\sigma)}$ with $F|_{B}$.  This eliminates $\sigma$ without
introducing any new $d$-dimensional simplices mapping to $1$-simplices of intersection type.  Doing
this repeatedly homotopes $f$ so that its image contains no simplices of intersection type.
\end{proof}

\p{Generators for \boldmath$\pi_2(\PartialIBases_3^{\alpha}(\Z/2))$}  Our next goal is to give generators for $\pi_2(\PartialIBases_3^{\alpha}(\Z/2))$.  This has two parts.  We first recall a known, explicit generating set for $\pi_{2}(\Building_g(\Z/2))$ (Theorem~\ref{theorem:solomontits2}), and then we show in Lemma~\ref{lemma:pi2 iso} that the map
\[\spanmap : \Poset(\PartialIBases_3^{\alpha}(\Z/2)) \rightarrow \Building_3(\Z/2)\]
given by $\spanmap(\{\lax{\vec{v}_0},\ldots,\lax{\vec{v}_k}\}) = \text{span}(\vec{v}_0,\ldots,\vec{v}_k)$ induces an isomorphism on the level of $\pi_2$; thus the generators for $\pi_{2}(\Building_g(\Z/2))$ give generators for $\pi_2(\PartialIBases_3^{\alpha}(\Z/2))$.  

Let $\Field$ be a field.  Recall that the Solomon--Tits theorem (Theorem~\ref{theorem:solomontits}) says that the Tits building 
$\Building_g(\Field)$ is homotopy equivalent to a wedge of $(g-1)$-spheres.  The next theorem  gives explicit generators for $\pi_{g-1}(\Building_g(\Field))$.   First, we require some setup.

Let $Y_g$ be the join of $g$ copies of $S^0$, so $Y_g \cong S^{g-1}$.  If $x_{i}$ and $y_{i}$ are the vertices of the $i$th copy of $S^0$ in $Y_g$, then the simplices of $Y_g$ are the nonempty
subsets $\sigma \subset \{x_1,y_1,\ldots,x_{g},y_g\}$ such that $\sigma$
contains at most one of $x_{i}$ and $y_{i}$ for each $1 \leq i \leq g$.  Given a
symplectic basis $B = (\vec{a}_1,\ldots,\vec{a}_g;\vec{b}_1,\ldots,\vec{b}_g)$ for $\Field^{2g}$, we obtain a poset map
$\alpha_B : \Poset(Y_g) \rightarrow \Building_g(\Field)$ as follows.  Consider 
$$\sigma = \{x_{i_1},\ldots,x_{i_k},y_{j_1},\ldots,y_{j_{\ell}}\} \in \Poset(Y_g).$$
We then define 
$$\alpha_B(\sigma) = \text{span}(\vec{a}_{i_1},\ldots,\vec{a}_{i_k},\vec{b}_{j_1},\ldots,\vec{b}_{j_{\ell}}) \in \Building_g(\Field).$$
The resulting map $\alpha_B : Y_g \to \Building_g(\Field)$ is a $(g-1)$-sphere in $\Building_g(\Field)$.  
We have the following theorem \cite[Theorem 4.73]{BrownAbramenko}.

\begin{theorem}
\label{theorem:solomontits2}
The group $\pi_{g-1}(\Building_g(\Field))$ is generated by the set 
\[ \{\text{$[\alpha_B] \in \pi_{g-1}(\Building_g(\Field))$ $|$ $B$ a symplectic basis for $\Field^{2g}$}\}. \]
\end{theorem}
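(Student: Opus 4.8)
The plan is to re-run the standard proof of the Solomon--Tits theorem while keeping track of the spheres it produces. Since $\Building_g(\Field)$ is homotopy equivalent to a wedge of $(g-1)$-spheres by Theorem~\ref{theorem:solomontits}, it is $(g-2)$-connected; hence for $g \geq 3$ (which is the case used in this paper) the Hurewicz theorem identifies $\pi_{g-1}(\Building_g(\Field))$ with $\widetilde H_{g-1}(\Building_g(\Field))$, and under this identification the class $[\alpha_B]$ corresponds to the fundamental class of the subcomplex $A_B := \alpha_B(Y_g)$. This subcomplex is an embedded $(g-1)$-sphere, namely the \emph{apartment} of $\Building_g(\Field)$ determined by the symplectic basis $B$, and every apartment of $\Building_g(\Field)$ arises as $A_B$ for some symplectic basis $B$. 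So it is enough to show that apartment classes generate $\widetilde H_{g-1}(\Building_g(\Field))$ --- in fact, that already the apartments containing a fixed chamber suffice. (For $g = 2$ the corresponding statement at the level of $\widetilde H_1$, which is all one actually needs, is classical.)

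First I would fix a chamber $C_0$ of $\Building_g(\Field)$ --- that is, a complete isotropic flag --- and write $\Building_g(\Field) = \bigcup_m \Delta_m$, where $\Delta_0 = \overline{C_0}$ and $\Delta_m$ is obtained from $\Delta_{m-1}$ by adjoining one more closed chamber $\overline{C_m}$, the chambers being enumerated in order of non-decreasing gallery distance from $C_0$. The crux --- and this is the substance of Solomon--Tits --- is to understand $L_m := \overline{C_m} \cap \Delta_{m-1}$. One shows that $L_m$ is the union of those panels of $C_m$ whose gate (the chamber of the panel's residue nearest $C_0$) is not $C_m$; a short nerve computation then shows that $L_m$ is a contractible subcomplex of $\partial C_m \cong S^{g-2}$ unless $C_m$ is opposite $C_0$, in which case $L_m = \partial C_m$. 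In the contractible case the inclusion $\Delta_{m-1} \hookrightarrow \Delta_m$ is a homotopy equivalence, since we are adjoining a simplex along a contractible portion of its boundary. In the opposite case $\Delta_m$ is $\Delta_{m-1}$ with a $(g-1)$-cell attached along all of $\partial C_m$; because $\Delta_{m-1}$ is $(g-2)$-connected the attaching map is null-homotopic, so $\Delta_m \simeq \Delta_{m-1} \vee S^{g-1}$.

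The last step is to recognize the new wedge summand as an apartment. When $C_m$ is opposite $C_0$, the pair $\{C_0, C_m\}$ is contained in a unique apartment $A = A_B$, and $C_m$ is the unique chamber of $A$ opposite $C_0$; consequently every other chamber of $A$ lies at distance less than the diameter from $C_0$ and hence already appears in $\Delta_{m-1}$. Thus $A$ minus the open chamber $C_m$ is a $(g-1)$-disk in $\Delta_{m-1}$ with boundary $\partial C_m$, and using it as the disk that caps off $\overline{C_m}$ shows that the $S^{g-1}$ appearing at stage $m$ may be taken to be $A_B = \alpha_B(Y_g)$, up to a class coming from apartments adjoined at earlier stages. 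Inducting on $m$ yields that $\pi_{g-1}(\Building_g(\Field))$ is generated by $\{[\alpha_B] : C_0 \in A_B\}$, hence a fortiori by the set of all $[\alpha_B]$.

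The main obstacle is the combinatorial lemma describing $L_m$: proving that it is either contractible or all of $\partial C_m$ uses the standard building-theoretic toolkit --- retraction onto an apartment centered at a chamber, the gate property of residues, and uniqueness of the apartment through two opposite chambers. The rest is formal bookkeeping. Since this lemma is exactly (the proof of) \cite[Theorem~4.73]{BrownAbramenko}, the cleanest route for the paper is simply to cite that reference; the argument above indicates how it is reconstructed.
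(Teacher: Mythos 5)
Your proposal is correct, and it reaches the same conclusion as the paper: Theorem~\ref{theorem:solomontits2} is established by citing \cite[Theorem 4.73]{BrownAbramenko}, which is exactly what the paper does (no independent proof is given there). Your sketch is a faithful reconstruction of the standard Solomon--Tits argument underlying that reference --- the chamber-by-chamber filtration ordered by gallery distance from a base chamber $C_0$, the dichotomy on $L_m = \overline{C_m} \cap \Delta_{m-1}$ (contractible vs. $\partial C_m$ according to whether $C_m$ is opposite $C_0$), and the identification of each newly wedged $(g-1)$-sphere with the unique apartment through $C_0$ and $C_m$ --- together with the Hurewicz identification $\pi_{g-1}\cong\widetilde H_{g-1}$ in the range $g\geq 3$ relevant to the paper. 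The only minor imprecision is the parenthetical about $g=2$: there $\pi_1(\Building_2(\Field))$ is free rather than abelian, so apartments generating $\pi_1$ is a slightly stronger statement than the $\widetilde H_1$ statement, though it too follows from the same filtration argument; since the paper only invokes the theorem at $g=3$ this does not matter.
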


\Figure{figure:crosscomplex}{CrossComplex}{The first three pictures depict the images of $Y_1$, $Y_2$, and $Y_3$ in $\PartialIBases_g(\Z/2)$.  The fourth picture indicates a nullhomotopy of $Y_3$ in $\PartialIBasesEx_g(\Z/2)$ using simplices of intersection type.}

Now, in the same way as we defined the $\alpha_B(\sigma)$, we can also define
\[ \widetilde{\alpha}_B : Y_g \rightarrow \PartialIBases_g(\Z/2) \]
via
\[ \widetilde\alpha_B(\sigma) = \{\vec{a}_{i_1},\ldots,\vec{a}_{i_k},\vec{b}_{j_1},\ldots,\vec{b}_{j_{\ell}}\} \in \Poset(\PartialIBases_g(\Z/2)); \]
see Figure~\ref{figure:crosscomplex} (recall that over $\Z/2$ lax vectors are the same as vectors).  We have
\[ \alpha_B = \spanmap \circ \widetilde \alpha_B. \]
We will show $\spanmap_{\ast} : \pi_2(\PartialIBases_g^{\alpha}(\Z/2)) \rightarrow \pi_2(\Building_g(\Z/2))$ is an isomorphism (Lemma~\ref{lemma:pi2 iso}), and hence the 
$\widetilde\alpha_B(\sigma)$ generate $\pi_2(\PartialIBases_g^{\alpha}(\Z/2))$ (Lemma~\ref{lemma:pi2 gens}).  The starting point here is another version of Quillen's Theorem A \cite[Theorem 2]{BjornerNerves}.

\begin{theorem}
\label{theorem:quillen2}
Let $Q$ and $P$ be connected posets and $f : Q \rightarrow P$ a poset map.  Assume
that $f/p$ is $m$-connected for all $p \in P$.  Then the induced map $f_{\ast} : \pi_k(Q) \rightarrow \pi_k(P)$
is an isomorphism for $1 \leq k \leq m$.
\end{theorem}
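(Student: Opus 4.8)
The plan is to deduce the statement from the homotopy colimit description of the nerve of $Q$, together with the fact that homotopy colimits over a fixed index category preserve the objectwise connectivity of a map of diagrams; this is the natural connectivity strengthening of Quillen's Theorem~A~\cite{QuillenPoset}. First I would introduce the auxiliary poset
\[
E=\{(p,q)\in P\times Q \text{ $|$ } f(q)\le p\},
\]
ordered coordinatewise, which is the Grothendieck construction of the functor $P\to\mathrm{Cat}$ sending $p$ to the poset $f/p$ and sending $p\le p'$ to the inclusion $f/p\hookrightarrow f/p'$. There are order-preserving projections $\pi_Q\colon E\to Q$ and $\pi_P\colon E\to P$. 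For each $q\in Q$ the comma poset relevant to $\pi_Q$, namely the subposet $\{(p',q')\in E \text{ $|$ } q\le q'\}$, has the initial element $(f(q),q)$: if $q\le q'$ and $f(q')\le p'$, then $f(q)\le f(q')\le p'$. Hence this comma poset is contractible, and Quillen's Theorem~A gives that $|\pi_Q|\colon|E|\to|Q|$ is a homotopy equivalence. Moreover the inequalities $f(q)\le p$ defining $E$ furnish a natural transformation $f\circ\pi_Q\Rightarrow\pi_P$, so the map $|f|\colon|Q|\to|P|$ agrees, up to homotopy, with $|\pi_P|$ precomposed with a homotopy inverse of $|\pi_Q|$; it therefore suffices to understand $|\pi_P|$.

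Next I would invoke the standard identification of the nerve of a Grothendieck construction with the corresponding homotopy colimit: $|E|\simeq\operatorname{hocolim}_{p\in P}|f/p|$, with structure maps the inclusions $|f/p|\hookrightarrow|f/p'|$ for $p\le p'$, and likewise $|P|\simeq\operatorname{hocolim}_{p\in P}(\ast)$; under these identifications $|\pi_P|$, and hence $|f|$, is the map induced on homotopy colimits by the collapse maps $|f/p|\to\ast$. Since each $|f/p|$ is $m$-connected, each collapse map induces an isomorphism on $\pi_i$ for $i\le m$ and a surjection on $\pi_{m+1}$, i.e.\ it is $(m+1)$-connected. The homotopy colimit over a fixed small category carries an objectwise $(m+1)$-connected map of diagrams to an $(m+1)$-connected map of homotopy colimits---this follows, for instance, from the skeletal filtration of the bar construction computing the homotopy colimit. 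Hence $|f|\colon|Q|\to|P|$ is $(m+1)$-connected, so $f_*\colon\pi_k(Q)\to\pi_k(P)$ is an isomorphism for $1\le k\le m$ (and a surjection for $k=m+1$), as claimed. Since $m\ge1$, every $|f/p|$ is simply connected, so the basepoint bookkeeping in the last step is routine, and the hypothesis that $P$ and $Q$ are connected guarantees $|E|$ is connected.

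The step I expect to be the main obstacle is the general principle that homotopy colimits preserve objectwise connectivity of maps; the remainder is formal manipulation of comma posets and Quillen's Theorem~A. One could instead avoid homotopy-theoretic machinery: factor $f$ through its mapping cylinder and show directly, by simplicial approximation, that for $k\le m+1$ every map of a combinatorially triangulated $k$-ball into $|P|$ with boundary mapping into $|Q|$ can be compressed rel boundary into $|Q|$, filling the successive stars over the poset $P$ using the $m$-connectivity of each $|f/p|$---much as in the proof of Lemma~\ref{lemma:push}. In that approach the difficulty migrates to keeping the partial compressions over overlapping stars mutually compatible, which is precisely the bookkeeping that the homotopy colimit formulation packages.
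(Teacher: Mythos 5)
The paper does not supply a proof of this statement; it is cited directly from Björner \cite[Theorem 2]{BjornerNerves}, so there is no internal argument to compare yours against.

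Your argument is correct. The auxiliary poset $E$ is the Grothendieck construction of $p\mapsto f/p$ over $P$, and each step checks out: for each $q$, the coslice of $\pi_Q$ at $q$ has the initial object $(f(q),q)$ (since $q\le q'$ and $f(q')\le p'$ force $f(q)\le p'$), so $|\pi_Q|$ is a homotopy equivalence by Quillen's Theorem~A; the inequalities $f(q)\le p$ give a natural transformation $f\circ\pi_Q\Rightarrow\pi_P$, whence $|f|\circ|\pi_Q|\simeq|\pi_P|$; Thomason's theorem identifies $|E|$ with $\operatorname{hocolim}_{p\in P}|f/p|$ and $|P|$ with $\operatorname{hocolim}_{p\in P}\ast$, under which $|\pi_P|$ becomes the map induced by the collapses $|f/p|\to\ast$; and the skeletal filtration of the two-sided bar construction shows that an objectwise $(m+1)$-connected transformation of $P$-diagrams induces an $(m+1)$-connected map of homotopy colimits. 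Since each $f/p$ is $m$-connected, each collapse is $(m+1)$-connected, hence so is $|f|$, giving the isomorphism on $\pi_k$ for $1\le k\le m$ (and a surjection on $\pi_{m+1}$).

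Your route is more conceptual than the one in Björner's paper, which argues at the level of nerves and CW compression rather than via Grothendieck constructions and homotopy colimits; the homotopy-colimit packaging does exactly the compatibility bookkeeping over overlapping stars that the direct approach you sketch at the end --- the one in the spirit of the paper's proof of Lemma~\ref{lemma:push} --- has to track by hand. Both are legitimate; yours trades elementary combinatorics for the standard machinery, and the only genuinely substantive input is the statement that homotopy colimits preserve objectwise connectivity, which is standard and which you correctly flag as the crux.
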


We will also need the following easy lemma.

\begin{lemma}
\label{lemma:setsofvec}
Any subset of $(\Z/2)^n\setminus \{0\}$ with cardinality at most 4 has one of the forms:
\begin{align*}
\{\}, \{\vec{v}_1\}, \{\vec{v}_1,\vec{v}_2\}, &\{\vec{v}_1,\vec{v}_2,\vec{v}_3\}, \{\vec{v}_1,\vec{v}_2,\vec{v}_1+\vec{v}_2\},  \\
 \{\vec{v}_1,\vec{v}_2,\vec{v}_3,\vec{v}_4\}, \{\vec{v}_1,\vec{v}_2,&\vec{v}_3,\vec{v}_1+\vec{v}_2\}, \{\vec{v}_1,\vec{v}_2,\vec{v}_3,\vec{v}_1+\vec{v}_2+\vec{v}_3\},
\end{align*}
where in each set the $\vec{v}_i$ are linearly independent vectors in
$(\Z/2)^n$.
\end{lemma}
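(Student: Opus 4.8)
The plan is a direct case analysis on the cardinality of the subset $S \subseteq (\Z/2)^n \setminus \{0\}$, organized around whether $S$ is linearly independent. The one structural input I would isolate first is the observation that over $\Z/2$ the only nonzero scalar is $1$, so if $T = \{\vec{w}_1,\ldots,\vec{w}_m\}$ is a \emph{minimal} linearly dependent subset of $(\Z/2)^n \setminus \{0\}$, then the unique nontrivial relation among its elements is $\vec{w}_1 + \cdots + \vec{w}_m = 0$ and every proper subset of $T$ is independent. Since the $\vec{w}_i$ are distinct and nonzero, this forces $m \geq 3$: the case $m = 1$ would need $\vec{w}_1 = 0$, and $m = 2$ would need $\vec{w}_1 = \vec{w}_2$.

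Given this, I would argue as follows. If $S$ is independent, then since $|S| \leq 4$ it is literally one of $\{\}$, $\{\vec{v}_1\}$, $\{\vec{v}_1,\vec{v}_2\}$, $\{\vec{v}_1,\vec{v}_2,\vec{v}_3\}$, $\{\vec{v}_1,\vec{v}_2,\vec{v}_3,\vec{v}_4\}$, and there is nothing to prove; note this already handles every $S$ with $|S| \leq 2$, since two distinct nonzero vectors over $\Z/2$ are automatically independent. If $S$ is dependent, fix a minimal dependent subset $T \subseteq S$, so $3 \leq |T| \leq |S| \leq 4$. When $|S| = 3$ we get $S = T = \{\vec{w}_1,\vec{w}_2,\vec{w}_1+\vec{w}_2\}$ with $\{\vec{w}_1,\vec{w}_2\}$ independent. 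When $|S| = |T| = 4$ we get $S = T = \{\vec{w}_1,\vec{w}_2,\vec{w}_3,\vec{w}_1+\vec{w}_2+\vec{w}_3\}$ with $\{\vec{w}_1,\vec{w}_2,\vec{w}_3\}$ independent. The remaining subcase is $|S| = 4$ with $|T| = 3$: here $T = \{\vec{w}_1,\vec{w}_2,\vec{w}_1+\vec{w}_2\}$ and $S = T \cup \{\vec{u}\}$ for a fourth vector $\vec{u}$, and I would check that $\{\vec{w}_1,\vec{w}_2,\vec{u}\}$ is independent — otherwise $\vec{u}$ lies in the $2$-dimensional space $\mathrm{span}(\vec{w}_1,\vec{w}_2)$, whose only nonzero elements are $\vec{w}_1$, $\vec{w}_2$, $\vec{w}_1+\vec{w}_2$, all already in $T$, contradicting $\vec{u} \notin T$. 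Relabeling the relevant independent triple as $(\vec{v}_1,\vec{v}_2,\vec{v}_3)$ in each case puts $S$ into one of the listed forms.

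The main obstacle is essentially nonexistent: the only step that is not immediate is the independence check in the last subcase, and even that reduces to counting the nonzero vectors in a plane over $\Z/2$. The bulk of the argument is simply enumerating the very few combinatorial shapes a minimal linear dependence can take among at most four vectors, which is exactly what the preliminary observation above makes routine.
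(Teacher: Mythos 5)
Your proof is correct and complete. The paper states this lemma without proof (introducing it as an ``easy lemma''), so there is no argument of theirs to compare against; your organization around a minimal linearly dependent subset $T \subseteq S$, using the fact that over $\Z/2$ a minimal dependence must be $\vec{w}_1 + \cdots + \vec{w}_m = 0$ with $m \geq 3$, is a clean and natural way to fill that gap, and each of the case distinctions (including the independence check of $\{\vec{w}_1,\vec{w}_2,\vec{u}\}$ in the last subcase) is justified.
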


\begin{lemma}
\label{lemma:pi2 iso}
The map $\spanmap_{\ast} : \pi_2(\PartialIBases_g^{\alpha}(\Z/2)) \rightarrow \pi_2(\Building_g(\Z/2))$ is an isomorphism
for all $g \geq 2$.
\end{lemma}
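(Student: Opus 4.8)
The plan is to apply the version of Quillen's Theorem~A recorded as Theorem~\ref{theorem:quillen2} to the poset map $\spanmap : \Poset(\PartialIBases_g^{\alpha}(\Z/2)) \to \Building_g(\Z/2)$ with $m=2$. Both posets are connected for $g \geq 2$: the target is homotopy equivalent to a wedge of $(g-1)$-spheres with $g-1 \geq 1$ (Theorem~\ref{theorem:solomontits}), while $\PartialIBases_g(\Z/2)$ is connected by Proposition~\ref{proposition:partialbasesconnected1}, and hence so is $\PartialIBases_g^{\alpha}(\Z/2)$, which contains $\PartialIBases_g(\Z/2)$ and has the same vertex set. So the whole problem reduces to showing that the fiber $\spanmap/V$ is $2$-connected for every $V \in \Building_g(\Z/2)$; Theorem~\ref{theorem:quillen2} then yields that $\spanmap_{\ast}$ is an isomorphism on $\pi_1$ and $\pi_2$, which is more than claimed.

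The main step is to identify the fiber. Write $d = \dim V \leq g$. By definition $\spanmap/V$ consists of the simplices of $\PartialIBases_g^{\alpha}(\Z/2)$ all of whose vertices lie in $V$. Because $V$ is isotropic, any linearly independent subset of $V \setminus \{0\}$ is isotropic and has size at most $d \leq g$, hence forms a partial symplectic basis (over a field a linearly independent isotropic tuple of length at most $g$ always extends to a symplectic basis); conversely, the base of an additive simplex lying in $V$ is linearly independent. It follows that, as a poset, $\spanmap/V$ is isomorphic to $\Poset$ of the simplicial complex on the vertex set $V \setminus \{0\}$ whose simplices are the linearly independent subsets together with the subsets of the forms $\{\vec{v}_1 + \vec{v}_2, \vec{v}_1, \ldots, \vec{v}_k\}$ and $\{\vec{v}_1 + \vec{v}_2 + \vec{v}_3, \vec{v}_1, \ldots, \vec{v}_k\}$ with $\vec{v}_1, \ldots, \vec{v}_k$ linearly independent.

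Now I would invoke Lemma~\ref{lemma:setsofvec}: inspecting its list, every subset of $V \setminus \{0\}$ of cardinality at most $4$ is a simplex of $\spanmap/V$, being either linearly independent or one of the two additive types with linearly independent base. Hence the $3$-skeleton of $\spanmap/V$ coincides with the $3$-skeleton of the full simplex $\Delta$ on the vertex set $V \setminus \{0\}$. Since $\Delta$ is contractible, its $3$-skeleton is $2$-connected, and $\spanmap/V$ is obtained from this $3$-skeleton by attaching cells of dimension $\geq 4$, which leaves $\pi_i$ unchanged for $i \leq 2$. Therefore $\spanmap/V$ is $2$-connected, completing the argument. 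The one point requiring care is the bookkeeping of the second paragraph --- identifying the fiber and checking that over $\Z/2$ the ``partial symplectic basis'' hypotheses degenerate to linear independence --- after which Lemma~\ref{lemma:setsofvec} makes the connectivity statement immediate.
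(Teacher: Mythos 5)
Your proposal is correct and follows essentially the same route as the paper: identify the fiber $\spanmap/V$ with the complex $\PartialBases_d^\alpha(\Z/2)$ (linearly independent subsets of $V\setminus\{0\}$ together with the additive-type simplices), use Lemma~\ref{lemma:setsofvec} to see that its $3$-skeleton is the full $3$-skeleton of a simplex and hence is $2$-connected, and then apply Theorem~\ref{theorem:quillen2}. Your write-up just spells out a couple of steps the paper leaves implicit (connectedness of the two posets, and the observation that over a field an isotropic linearly independent tuple of length $\le g$ extends to a symplectic basis).
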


\begin{proof}
Consider $V \in \Building_g(\Z/2)$, and let $d = \dim(V)$.  The poset $\spanmap/V$ is isomorphic to the result $\PartialBases_d^\alpha(\Z/2)$ of attaching the analogues
of cells of additive type to $\PartialBases_d(\Z/2)$.  As vertices of $\PartialBases_d^\alpha(\Z/2)$ correspond to nonzero lax
vectors, Lemma~\ref{lemma:setsofvec}
implies that the $3$-skeleton of $\PartialBases_d^\alpha(\Z/2)$ contains all subsets of vertices of $\PartialBases_d^\alpha(\Z/2)$ of size at most $4$.  In
particular, as $\PartialBases_d^\alpha(\Z/2)$ is obviously nonempty, it is $2$-connected.  The lemma now follows from Theorem~\ref{theorem:quillen2}.
\end{proof}

We have the following immediate consequence of Theorem~\ref{theorem:solomontits2} and Lemma~\ref{lemma:pi2 iso}.

\begin{lemma}
\label{lemma:pi2 gens}
As $B$ ranges over all symplectic bases, the homotopy classes of the maps
$$Y_3 \stackrel{\widetilde{\alpha}_B}{\longrightarrow} \PartialIBases_3(\Z/2) \hookrightarrow \PartialIBases_3^{\alpha}(\Z/2)$$
generate $\pi_2(\PartialIBases_3^{\alpha}(\Z/2))$.
\end{lemma}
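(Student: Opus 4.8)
The plan is to transport the explicit generating set for $\pi_2(\Building_3(\Z/2))$ provided by Theorem~\ref{theorem:solomontits2} back to $\PartialIBases_3^{\alpha}(\Z/2)$ along the isomorphism established in Lemma~\ref{lemma:pi2 iso}. The only structural input needed is the factorization $\alpha_B = \spanmap \circ \widetilde{\alpha}_B$ recorded just above the lemma, which says exactly that the $2$-sphere $\alpha_B$ in the building is the $\spanmap$-image of the $2$-sphere $\widetilde{\alpha}_B$ in $\PartialIBases_3^{\alpha}(\Z/2)$.

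Concretely, I would proceed as follows. First, specialize Lemma~\ref{lemma:pi2 iso} to $g=3$ to obtain that $\spanmap_{\ast} : \pi_2(\PartialIBases_3^{\alpha}(\Z/2)) \rightarrow \pi_2(\Building_3(\Z/2))$ is an isomorphism; note also that $\spanmap_{\ast}$ is an isomorphism on $\pi_1$ by the same lemma, and $\Building_3(\Z/2)$ is a wedge of $2$-spheres and hence simply connected, so $\PartialIBases_3^{\alpha}(\Z/2)$ is simply connected and there is no basepoint ambiguity in the assertion about generators of $\pi_2$. Second, apply Theorem~\ref{theorem:solomontits2} with $\Field = \Z/2$ and $g=3$: the classes $[\alpha_B]$, as $B$ ranges over symplectic bases of $(\Z/2)^6$, generate $\pi_2(\Building_3(\Z/2))$. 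Third, from $\alpha_B = \spanmap \circ \widetilde{\alpha}_B$ we get $\spanmap_{\ast}[\widetilde{\alpha}_B] = [\alpha_B]$, so $\{[\widetilde{\alpha}_B]\}$ is the image under the isomorphism $\spanmap_{\ast}^{-1}$ of a generating set of $\pi_2(\Building_3(\Z/2))$, and therefore itself generates $\pi_2(\PartialIBases_3^{\alpha}(\Z/2))$. Since composing $\widetilde{\alpha}_B$ with the inclusion $\PartialIBases_3(\Z/2) \hookrightarrow \PartialIBases_3^{\alpha}(\Z/2)$ is precisely the map appearing in the statement, this completes the argument.

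There is essentially no obstacle at this stage: all the real content has already been isolated, on one hand in Lemma~\ref{lemma:pi2 iso} (whose proof rests on the combinatorial input of Lemma~\ref{lemma:setsofvec} together with Quillen's Theorem~A in the form of Theorem~\ref{theorem:quillen2}), and on the other hand in the classical Theorem~\ref{theorem:solomontits2}. The only point requiring minor care is the bookkeeping of basepoints versus free homotopy classes when pushing generators through $\spanmap_{\ast}^{-1}$, and as noted this is harmless because both complexes in play are simply connected.
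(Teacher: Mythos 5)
Your argument is precisely the one the paper intends: the lemma is presented as an immediate consequence of Theorem~\ref{theorem:solomontits2} and Lemma~\ref{lemma:pi2 iso}, transported via the identity $\alpha_B = \spanmap \circ \widetilde{\alpha}_B$ recorded just before the statement. Your additional remark about basepoints being harmless because both complexes are simply connected is a sensible (if unnecessary) bit of care.
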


\begin{proof}[{Proof of Proposition~\ref{proposition:partialbasesconnected2}}]

We already explained how the 1-connectivity of $\PartialIBasesEx_3(\Z)$ and of $\PartialIBasesEx_3(\Z/2)$ follow from Lemma~\ref{lemma:push}.  It remains to show that $\PartialIBasesEx_3(\Z/2)$ is $2$-connected.  Lemma~\ref{lemma:push} says
that the inclusion map $\PartialIBases_3^{\alpha}(\Z/2) \hookrightarrow \PartialIBasesEx_3(\Z/2)$ induces a surjection on $\pi_2$.  We will prove that it induces the zero map as well.  

By Lemma~\ref{lemma:pi2 gens}, it suffices to show that for each symplectic basis 
$B$ of $(\Z/2)^{6}$, the map
$$Y_3 \stackrel{\widetilde{\alpha}_B}{\longrightarrow} \PartialIBases_3(\Z/2) \hookrightarrow \PartialIBases_3^{\alpha}(\Z/2) \hookrightarrow \PartialIBasesEx_3(\Z/2)$$
is nullhomotopic.  In the fourth picture in Figure~\ref{figure:crosscomplex}, we indicate an explicit nullhomotopy of $\widetilde{\alpha}_B(Y_3)$ in $\PartialIBasesEx_3(\Z/2)$.  Specifically, we realize $\widetilde{\alpha}_B(Y_3)$ as the boundary of a 3-ball formed by four simplices of intersection type: 
$\{\vec{a}_1,\vec{b}_1,\vec{a}_2,\vec{a}_3\}$, $\{\vec{a}_1,\vec{b}_1,\vec{b}_2,\vec{a}_3\}$, $\{\vec{a}_1,\vec{b}_1,\vec{a}_2,\vec{b}_3\}$, and $\{\vec{a}_1,\vec{b}_1,\vec{b}_2,\vec{b}_3\}$.   We conclude
that the inclusion $\PartialIBases_3^{\alpha}(\Z/2) \hookrightarrow \PartialIBasesEx_3(\Z/2)$ induces the zero map on $\pi_2$, as desired.
\end{proof}


\section{The symplectic group action}
\label{section:action}

We now discuss the second ingredient for the proof of Proposition~\ref{proposition:main}, namely, the group action of  $\Sp_{2g}(\Z)$ on $\Quotient_g$.  The group $\Sp_{2g}(\Z)$ acts on $\Sp_{2g}(\Z)[2]$ by conjugation.  We wish to lift this to an action of $\Sp_{2g}(\Z)$ on $\Quotient_g$ in a natural way.  

\subsection{Setup}
\label{section:action setup}

Our first task is to give a precise description of the action we would like to obtain (Proposition~\ref{proposition:spaction}).
Recall that $\Quotient_g = \PureBraid_{2g+1} / \Theta_{2g+1}$.  Define $\QuotientEx_g = \Braid_{2g+1} / \Theta_{2g+1}$,
and let
\begin{center}
\begin{tabular}{lcl}
$\rho : \PureBraid_{2g+1} \rightarrow \Quotient_g$  & and & $\widehat{\rho} : \Braid_{2g+1} \rightarrow \QuotientEx_{g}$\\
$\pi  : \Quotient_g \rightarrow \Sp_{2g}(\Z)[2]$    &     & $\widehat{\pi}  : \QuotientEx_{g} \rightarrow \Sp_{2g}(\Z)$\\
\end{tabular}
\end{center}
be the quotient maps.    The first two parts of Proposition~\ref{proposition:spaction} below
posit the existence of an action of $\Sp_{2g}(\Z)$ on the
$\Quotient_g$ that is natural with respect to the actions of $\Sp_{2g}(\Z)$ on $\Sp_{2g}(\Z)[2]$ and $\QuotientEx_{g}$ on $\Quotient_{g}$.

We will require our action to have one extra property, which requires some setup.  Let $c_{23}$ be the curve in $\D_{2g+1} \cong \Sigma_{g}^1 / \Hyper$ shown in Figure~\ref{figure:mainproof} and let $(\PureBraid_{2g+1})_{c_{23}}$ be its stabilizer.  Next, define
\[ \Omega_{23} = \rho\left((\PureBraid_{2g+1})_{c_{23}}\right) \subseteq \Quotient_g.\]
Finally, let $\lax{\vec{v}_{23}}$ be the lax vector of $H_1(\Sigma_g^1;\Z)$ represented by one component of the preimage of $c_{23}$ in $\Sigma_g^1$ and let $(\Sp_{2g}(\Z))_{\lax{\vec{v}_{23}}}$ denote the stabilizer. 

\Figure{figure:mainproof}{MainProof}{The curves $c_{12}$, $c_{23}$, and $c_{45}$ in $\D_{2g+1}$ used in setting up Proposition~\ref{proposition:spaction} and Lemma~\ref{lemma:sp pb stab}.}

\begin{proposition}
\label{proposition:spaction}
Let $g \geq 3$.  Assume that $\BTorelli_{2h+1}=\Theta_{2h+1}$ for all $h < g$.  There then exists an action of $\Sp_{2g}(\Z)$ on the group $\Quotient_{g}$ with the following three properties:
\begin{enumerate}\setlength{\itemsep}{0ex}
\item for $Z \in \Sp_{2g}(\Z)$ and $\eta \in \Quotient_{g}$ we have 
$\pi(Z \cdot \eta) = Z \pi(\eta) Z^{-1}$,
\item for $\nu \in \QuotientEx_g$ and $\eta \in \Quotient_g$ we have
$\widehat{\pi}(\nu) \cdot \eta = \nu \eta \nu^{-1}$, and 
\item the action of $(\Sp_{2g}(\Z))_{\lax{\vec{v}_{23}}}$ on $\Quotient_{g}$ preserves $\Omega_{23}$.
\end{enumerate}
\end{proposition}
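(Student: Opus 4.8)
The plan is to build the action by hand on a presentation and to use Theorem~\ref{theorem:reducibletotwists} to keep the bookkeeping under control.  First I would fix a finite presentation $\Sp_{2g}(\Z)\cong\Presentation{\SpGen}{\SpRel}$ in which $\SpGen$ contains the elements $z_i:=\widehat{\pi}(\widehat{\rho}(\sigma_i))$ obtained from the standard half-twist generators $\sigma_1,\dots,\sigma_{2g}$ of $\Braid_{2g+1}$, and a finite presentation $\Quotient_g\cong\Presentation{\QGen}{\QRel}$; the latter exists because $\PureBraid_{2g+1}$ is finitely presented and $\Theta_{2g+1}$ is finitely normally generated.  For each $Z\in\SpGen$ I would then write down an explicit set map $\Phi_Z\colon\QGen\to\Quotient_g$: when $Z=z_i$ I take $\Phi_{z_i}$ to be conjugation by the chosen lift $\widehat{\rho}(\sigma_i)\in\QuotientEx_g$ (this makes sense because $\Quotient_g\trianglelefteq\QuotientEx_g$), and for the finitely many remaining symplectic generators I use the topology of the branched cover $\Sigma_g^1\to\D_{2g+1}$ — surgering the curves underlying the elements of $\QGen$ — to read off where the corresponding squared half-twists must go.  Extending $\Phi_Z$ to the free group on $\QGen$, I would check that it annihilates every relator in $\QRel$ (so that it descends to an endomorphism of $\Quotient_g$) and that $\Phi_Z\,\Phi_{Z^{-1}}=\mathrm{id}$ after rewriting $Z^{-1}$ over $\SpGen$ (so that $\Phi_Z\in\Aut(\Quotient_g)$).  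Finally I would check that for every relator $w\in\SpRel$ the automorphism $\Phi_w$ fixes each element of $\QGen$, hence is trivial; this yields a homomorphism $\Phi\colon\Sp_{2g}(\Z)\to\Aut(\Quotient_g)$, and $Z\cdot\eta:=\Phi_Z(\eta)$ is the desired action.

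Given the action, the three properties reduce to finitely many verifications.  For (1): for a fixed $Z\in\SpGen$ both $\eta\mapsto\pi(\Phi_Z(\eta))$ and $\eta\mapsto Z\pi(\eta)Z^{-1}$ are homomorphisms $\Quotient_g\to\Sp_{2g}(\Z)[2]$, so it is enough to verify $\pi(\Phi_Z(s))=Z\pi(s)Z^{-1}$ for $s\in\QGen$ — a finite list of identities in $\Sp_{2g}(\Z)$ — after which the general case follows by induction on the $\SpGen$-length of $Z$, using the substitution $\eta=\Phi_Z(\eta')$ to pass to inverses.  For (2): $\widehat{\pi}(\QuotientEx_g)$ is generated by $z_1,\dots,z_{2g}$, and by construction $\Phi_{z_i}$ is conjugation by $\widehat{\rho}(\sigma_i)$; hence the two homomorphisms $\QuotientEx_g\to\Aut(\Quotient_g)$ sending $\nu$ to $\Phi_{\widehat{\pi}(\nu)}$ and to $(\eta\mapsto\nu\eta\nu^{-1})$ agree on a generating set, so they coincide.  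For (3): I would produce a finite generating set of the stabilizer $(\Sp_{2g}(\Z))_{\lax{\vec{v}_{23}}}$ together with a finite generating set of $(\PureBraid_{2g+1})_{c_{23}}$ obtained from a Birman-type decomposition of the latter (braids supported in the $2$-punctured disk bounded by $c_{23}$, braids supported in its complement, and the point-push of $c_{23}$; see Figure~\ref{figure:mainproof}).  Those generators of $(\Sp_{2g}(\Z))_{\lax{\vec{v}_{23}}}$ of the form $\widehat{\pi}(\widehat{\rho}(\gamma))$ with $\gamma\in\Braid_{2g+1}$ \emph{fixing the curve} $c_{23}$ automatically preserve $\Omega_{23}=\rho((\PureBraid_{2g+1})_{c_{23}})$ — by property (2), since such $\gamma$ normalize $(\PureBraid_{2g+1})_{c_{23}}$ — and for the finitely many remaining generators I would check $\Phi_Z(\rho(h))\in\Omega_{23}$ directly for each generator $h$ of $(\PureBraid_{2g+1})_{c_{23}}$.

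The main obstacle is the first step: for those generators of $\Sp_{2g}(\Z)$ that do \emph{not} lie in the image of the braid group there is no a priori reason an action on $\Quotient_g$ should exist, so one must guess the formulas $\Phi_Z$ on $\QGen$ and then grind through the verification that they respect $\QRel$ and that the $\Phi_Z$ together satisfy $\SpRel$.  This is where the standing hypothesis $\BTorelli_{2h+1}=\Theta_{2h+1}$ for $h<g$ earns its keep, via Theorem~\ref{theorem:reducibletotwists}: any element of $\ker\pi$ that fixes the isotopy class of an essential simple closed curve in $\D_{2g+1}$ is reducible, hence lies in $\Theta_{2g+1}$, hence is trivial in $\Quotient_g$, and this collapses the overwhelming majority of the would-be relations to triviality.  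I expect the verification to be carried out as a sequence of explicit calculations in $\Quotient_g$ making up the bulk of Section~\ref{section:action}.
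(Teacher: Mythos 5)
Your plan is essentially the paper's: present $\Quotient_g$ on Artin generators modulo braid and odd-twist relators, present $\Sp_{2g}(\Z)$ via Wajnryb plus auxiliary transvections, define the would-be action generator-by-generator using surgery of curves in $\D_{2g+1}$ guided by the branched cover, and then use Theorem~\ref{theorem:reducibletotwists} (really Lemma~\ref{lemma:quasipreserve}: a reducible element of $\ker\pi$ is trivial in $\Quotient_g$) to collapse the relator verifications. The one small divergence is that you define the action on braid-image generators directly as conjugation by $\widehat\rho(\sigma_i)$, so property~(2) holds by fiat, whereas the paper defines all of $t_a\cdot s_c$ uniformly via the surgery formula $a_\pm(c)=\Surger(|\hat\imath|(a,c)\,a,c)$ and then checks at the end that for $a=a_i$ this coincides with a half-twist; the paper's uniform definition pays off by letting Lemma~\ref{lemma:the fact} and the lantern-relation calculus handle all generators in one stroke, including the convexity bookkeeping that makes the reducibility criterion checkable in finitely many configurations and the $g=3$ lantern edge case that forces the auxiliary generators.
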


The second statement of Proposition~\ref{proposition:spaction} already completely specifies the desired action on a finite-index subgroup of $\Sp_{2g}(\Z)$, the image in $\Sp_{2g}(\Z)$ of $\Braid_{2g+1}$. 
The second statement also immediately implies that $\BTorelli_{2g+1}/\Theta_{2g+1}$ is central in $\Quotient_g$ (of course, our goal is to show that this quotient is trivial).

We will prove Proposition~\ref{proposition:spaction} in five steps.  First, in Section~\ref{section:presentations} we give explicit finite presentations for $\Quotient_g$ and $\Sp_{2g}(\Z)$.  Then in Section~\ref{section:constructaction} we propose an action of $\Sp_{2g}(\Z)$ on $\Quotient_g$ by declaring where each generator of $\Sp_{2g}(\Z)$ sends each generator of $\Quotient_g$.  Next, in Section~\ref{section:calc1} we check that the proposed action respects the relations of $\Quotient_g$, and in Section~\ref{section:calc2} we check that the proposed action respects the relations of $\Sp_{2g}(\Z)$.  Finally, in Section~\ref{section:cleanup}, we verify that the resulting action of $\Sp_{2g}(\Z)$ on $\Quotient_g$ satisfies the three properties listed in Proposition~\ref{proposition:spaction}.


\subsection{Presentations for \boldmath$\Quotient_g$ and \boldmath$\Sp_{2g}(\Z)$}
\label{section:presentations}

In this section, we give explicit finite presentations $\Sp_{2g}(\Z) \cong \Presentation{S_{\Sp}}{R_{\Sp}}$ and $\Quotient_g \cong \Presentation{S_{\Quotient}}{R_{\Quotient}}$.
The trick here is to find just the right balance: more generators for $\Sp_{2g}(\Z)$ will mean that checking the well-definedness of our action with respect to the $\Sp_{2g}(\Z)$ relations is easier (relations are smaller), but checking the well-definedness with respect to the $\Quotient_g$ relations is harder (more cases to check), and vice versa.  

\p{Generators for \boldmath$\Quotient_g$} Since $\Quotient_{g}$ is a quotient of $\PureBraid_{2g+1}$, any set of generators for $\PureBraid_{2g+1}$ descends to a set of generators for $\Quotient_{g}$.   We identify $\PureBraid_{2g+1}$ with the pure mapping class group of a disk $\D_{2g+1}$ with $2g+1$ marked points $p_1, \dots, p_{2g+1}$, that is, the group of homotopy classes of homeomorphisms of $\D_{2g+1}$ that fix each $p_i$ and each point of the boundary; see \cite[Section 9.3]{FarbMargalitPrimer}.  For concreteness, we take $\D_{2g+1}$ to be a convex Euclidean disk and the $p_i$ to lie on the vertices of a regular $(2g+1)$-gon, appearing clockwise in cyclic order; see Figure~\ref{figure:disc}.  Choose this identification so that if $c_{ij}$ is one of the curves $c_{12}$, $c_{23}$, or $c_{45}$ in 
Figure~\ref{figure:mainproof}, then $c_{ij}$ is the boundary of a convex region containing $p_i$ and $p_j$ 
and no other $p_k$.

\Figure{figure:disc}{Disc}{The disk $\D_{2g+1}$ with its marked points arranged clockwise on the vertices of a convex $(2g+1)$-gon, then two convex simple closed curves, then the configurations of curves used in the disjointness relations, the triangle relations, and the crossing relations for the pure braid group.}

More generally, for any subset $A$ of $\{1, \dots,2g+1\}$ we denote by $c_A$ the simple closed curve in $\D_{2g+1}$ that bounds a convex region of $\D_{2g+1}$ containing precisely $\{p_i \mid i \in A\}$ in its interior; this curve is unique up to homotopy in $\D_{2g+1}$.  We will write $c_{ij}$ or $c_{i,j}$ for $c_{\{i,j\}}$, etc., when convenient.  The curves $c_{1234}$ and $c_{45}$ are shown in Figure~\ref{figure:disc}.

Artin proved that $\PureBraid_{2g+1}$ is generated by the Dehn twists about the curves in the set
\[  \C(S_\Quotient) = \{ c_{ij}  \mid 1 \leq i < j \leq 2g+1 \}. \]
The resulting generating set for $\Quotient_{g}$ is
\[  S_{\Quotient} = \{s_{c} \mid c \in \C(S_{\Quotient}) \}, \]
where by definition the element of $\Quotient_g$ associated to $s_c$ is $\rho(T_c)$.

\p{Relations for \boldmath$\Quotient_g$} Our set of relations $R_{\Quotient}$ for $\Quotient_{g}$ will consist of the four families of relations below.
Recall that $\Quotient_{g}$ is defined as $\Quotient_{g} = \PureBraid_{2g+1}/\Theta_{2g+1}$.
We first give a finite presentation for $\PureBraid_{2g+1}$, and then add relations for normal generators of $\Theta_{2g+1}$ inside $\PureBraid_{2g+1}$.
There are many presentations for the pure braid group, most notably the original one due to Artin \cite{ArtinBraids}.  We will use here a modified version of Artin's presentation due to the second author and McCammond \cite[Theorem 2.3]{MargalitMcCammond}.   There are three types of defining relations for $\PureBraid_{2g+1}$, as follows; refer to Figure~\ref{figure:disc}.  We will write $i_1 < \cdots < i_n$ to refer to the cyclic clockwise ordering of labels.
%
\begin{enumerate}
\item Disjointness relations: \quad $ [s_{c_{ij}}, s_{c_{rs}}] =1\  \text { if }\  i<j<r<s$.

\item Triangle relations: \quad $s_{c_{ij}}  s_{c_{jk}}   s_{c_{ki}} =  s_{c_{jk}}   s_{c_{ki}} s_{c_{ij}}    =  s_{c_{ki}} s_{c_{ij}}  s_{c_{jk}}     \ \text{ if }\ i < j < k$.

\item Crossing relations: \quad  $[s_{c_{ij}}, s_{c_{js}} s_{c_{rs}} s_{c_{js}}^{-1}] = 1
 \ \text{ if }\  i < r < j < s$.
 
\setcounter{enumi_saved}{\value{enumi}}
\end{enumerate}
%
We now add relations coming from $\Theta_{2g+1}$.  This group is normally generated in $\PureBraid_{2g+1}$ by the squares of Dehn twists about the convex curves in $\D_{2g+1}$ surrounding odd numbers of marked points; indeed any two Dehn twists about curves surrounding the same marked points are conjugate in $\PureBraid_{2g+1}$; cf. \cite[Section 1.3]{FarbMargalitPrimer}.  We need to add one relation for each of these elements. 
%
\begin{enumerate}
\setcounter{enumi}{\value{enumi_saved}}
\item Odd twist relations: \quad $\left((s_{c_{i_1i_2}} \cdots s_{c_{i_1i_n}}) \cdots  (s_{c_{i_{n-2}i_{n-1}}} s_{c_{i_{n-2}i_n}})s_{c_{i_{n-1}i_n}}\right)^2 = 1$ \vspace*{1ex}\\
\hspace*{2in}for any $i_1 < \cdots < i_n$, where $3 \leq n \leq 2g+1$.
\setcounter{enumi_saved}{\value{enumi}}
\end{enumerate}
The last relation comes from the following relation in the pure braid group: \label{pb relation}
\[ (T_{c_{i_1i_2}} \cdots T_{c_{i_1i_n}}) \cdots  (T_{c_{i_{n-2}i_{n-1}}} T_{c_{i_{n-2}i_n}})T_{c_{i_{n-1}i_n}} = T_{c_{i_1 i_2 \cdots i_n}};\]
see \cite[Section 9.3]{FarbMargalitPrimer}.

\bigskip

\p{Transvections in \boldmath$\Sp_{2g}(\Z)$}  We now turn to the symplectic group.  The \emph{transvection} on $\vec{v} \in \Z^{2g}$ is the element $\tau_{\vec{v}} \in \Sp_{2g}(\Z)$ given by 
\[ \tau_{\vec{v}}(\vec{w}) = \vec{w} +\hat \imath (\vec{w},\vec{v})\,\vec{v} \quad \quad (\vec{w} \in \Z^{2g}), \]
where $\hat \imath$ is the symplectic form.  Note that $\tau_{\vec{v}} = \tau_{-\vec{v}}$.  The group $\Sp_{2g}(\Z)$ is generated by transvections on primitive elements 
of $\Z^{2g}$.  Also, if $c$ is a simple closed curve in $\Sigma_g^1$, then the image of the Dehn twist $T_c \in \Mod_g^1$ in
$\Sp_{2g}(\Z)$ is $\tau_{[c]}$ for any choice of orientation of $c$.

\p{Transvections and simple closed curves} Consider a simple closed curve $a$ in $\D_{2g+1}$ surrounding an even number of marked points.  We construct a transvection {\em associated} to $a$
as follows.  The preimage of $a$ in $\Sigma_g^1$ is a pair of disjoint nonseparating simple closed curves $\tilde{a}_1$ and $\tilde{a}_2$ that
are homologous (with respect to some choice of orientation).  The transvection associated to $a$ is then $\tau_{[\tilde a_1]} = \tau_{[\tilde a_2]}$.
We pause now to record the following lemma.
\begin{lemma}
\label{lemma:s vs pi}
For a simple closed curve $a$ in $\D_{2g+1}$ surrounding an even number
of marked points, we have
\[ \pi(\rho(T_a)) = \tau_{\vec{v}}^2, \]
where $\tau_{\vec{v}}$ is the transvection associated to $a$.
\end{lemma}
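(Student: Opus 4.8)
The plan is to follow $T_a$ through the identifications recalled in the introduction and compute its image in $\Sp_{2g}(\Z)$ directly. The starting point is that $\pi\circ\rho\colon\PureBraid_{2g+1}\to\Sp_{2g}(\Z)[2]$ is, by construction, the restriction to $\PureBraid_{2g+1}$ of the homomorphism $\Braid_{2g+1}\xrightarrow{L}\SMod_g^1\hookrightarrow\Mod_g^1\to\Sp_{2g}(\Z)$ coming from the action on $H_1(\Sigma_g^1;\Z)$; this is exactly the relationship between $\beta_{2g+1}$ and $L$ discussed in the introduction, together with the theorems of Arnol'd and A'Campo identifying the image of this restriction with $\Sp_{2g}(\Z)[2]$ and its kernel with $\BTorelli_{2g+1}$. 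Since the Dehn twist $T_a$ is supported in an annular neighborhood of $a$ disjoint from the marked points and from $\partial\D_{2g+1}$, it lies in $\PureBraid_{2g+1}$, so $\pi(\rho(T_a))$ equals the image of $T_a$ under this composition.

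Next I would compute that image. Because $a$ surrounds an even number of marked points, the Birman--Hilden correspondence (as recalled above) gives $L(T_a)=T_{\tilde a_1}T_{\tilde a_2}$, where $\tilde a_1$ and $\tilde a_2$ are the two components of the preimage of $a$ in $\Sigma_g^1$. Applying the fact, also recalled above, that a Dehn twist $T_c\in\Mod_g^1$ maps to the transvection $\tau_{[c]}\in\Sp_{2g}(\Z)$ for either orientation of $c$, we get $\pi(\rho(T_a))=\tau_{[\tilde a_1]}\tau_{[\tilde a_2]}$. Since $\tilde a_1$ and $\tilde a_2$ are homologous --- for instance because $\Hyper$ interchanges them while acting by $-I$ on $H_1(\Sigma_g^1;\Z)$, so $[\tilde a_2]=\pm[\tilde a_1]$, and $\tau_{\vec w}=\tau_{-\vec w}$ --- we may orient them so that $[\tilde a_1]=[\tilde a_2]=\vec v$, whence $\pi(\rho(T_a))=\tau_{\vec v}^2$. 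By definition $\tau_{\vec v}=\tau_{[\tilde a_1]}$ is the transvection associated to $a$, which is the claim.

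The only point needing care is bookkeeping rather than mathematics: one must fix the identification $H_1(\Sigma_g^1;\Z)\cong\Z^{2g}$ once and for all so that it is simultaneously compatible with the symplectic representation of $\Mod_g^1$, with the map $\beta_{2g+1}$ through which $\pi$ is defined (the two differ only by an inner automorphism of $\Sp_{2g}(\Z)$ and a trivial one-dimensional summand), and with the definition of the transvection associated to $a$. With this fixed, each step above is immediate from a statement already made in the excerpt, so I expect the proof itself to be only a few lines.
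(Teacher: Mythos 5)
Your proof is correct and takes essentially the same approach as the paper: both route $T_a$ through the Birman--Hilden lift $L$, compute $L(T_a)=T_{\tilde a_1}T_{\tilde a_2}$, and apply the fact that each Dehn twist maps to $\tau_{\vec v}$ in $\Sp_{2g}(\Z)$. The paper's proof is slightly terser, taking for granted (from the definition of the transvection associated to $a$) that $\tilde a_1$ and $\tilde a_2$ are homologous, whereas you supply a short justification; otherwise the arguments coincide.
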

\begin{proof}
We must determine the image of $T_a$ under $\Braid_{2g+1} \stackrel{L}{\to} \Mod_g^1 \to \Sp_{2g}(\Z)$, 
where $L$ is the lifting map from Section~\ref{section:intro}.
The preimage in $\Sigma_g^1$ of $a$ is a pair of disjoint simple closed homologous curves $\tilde{a}_1$ and $\tilde{a}_2$,
and $L(T_a) = T_{\tilde{a}_1} T_{\tilde{a}_2}$.  By the definition of $\tau_{\vec{v}}$, both
$T_{\tilde{a}_1}$ and $T_{\tilde{a}_2}$ map to $\tau_{\vec{v}} \in \Sp_{2g}(\Z)$, and the lemma follows.
\end{proof}

\p{Generators for \boldmath$\Sp_{2g}(\Z)$} Denote by $a_0$ the convex simple closed curve $c_{1234}$.  Also, for $1 \leq i \leq 2g$, set $a_i = c_{i,i+1}$.
Humphries (see \cite[Section 4]{FarbMargalitPrimer}) proved that one can choose connected components $\tilde{a}_i$ of $a_i$ in $\Sigma_{g}^1$ 
such that $\Mod_g^1$ is generated by the Dehn twists about the curves $\tilde{a}_0,\dots,\tilde{a}_{2g}$ (in fact, any set of choices will do).
Since $\Mod_g^1$ surjects onto $\Sp_{2g}(\Z)$, it follows that the transvections associated to $a_0,a_1,\dots,a_{2g}$ generate $\Sp_{2g}(\Z)$.  

In order to simplify our presentation for $\Sp_{2g}(\Z)$ we need to add some auxiliary generators to $\Sp_{2g}(\Z)$.  Consider the following curves:
\begin{align*}
a_0' &= c_{1245} & b_1' &= c_{2356} & b_3 &= c_{123456} & u' &= c_{2345}& v' &= c_{123567} \\
b_1 &= c_{1256} & b_2 &= c_{3456}& u &= c_{1267} & v &= c_{134567} & v'' &= c_{123467}.
\end{align*}
Let 
\[
\C(S_{\Sp}) = \{ a_0, \dots, a_{2g} \} \cup \{ a_0',b_1, b_1',b_2, b_3, u, u',v,v',v'' \}.
 \]
The resulting generating set for $\Sp_{2g}(\Z)$ is
\[ S_{\Sp} = \{ t_a \mid a \in \C(S_{\Sp}) \}, \]
where the element $\overline{t}_a \in \Sp_{2g}(\Z)$ associated to the generator $t_a$ is the transvection associated to $a$ as above.  It is remarkable that all of the curves in the generating set are convex.


\p{Relations for \boldmath$\Sp_{2g}(\Z)$} Our set of relations $R_{\Sp}$ for $\Sp_{2g}(\Z)$ will consist of the six families of relations below.
Since $\Sp_{2g}(\Z)\cong\Mod_g^1/\I_g^1$, we obtain a presentation for $\Sp_{2g}(\Z)$ by starting with a presentation for $\Mod_g^1$ and adding one relation for each normal generator of $\I_g^1$ in $\Mod_g^1$.
Wajnryb gave a finite presentation for $\Mod_g^1$ with Humphries' generating set $\{T_{\tilde{a}_0}, \dots, T_{\tilde{a}_{2g}}\}$; see Wajnryb's original paper \cite{WajnrybPresentation} and the erratum by Birman and Wajnryb \cite{WajnrybErrata}.  The image of $T_{\tilde{a}_i}$ in $\Sp_{2g}(\Z)$ is $\overline t_{a_i}$ and so we obtain the first part of our presentation for $\Sp_{2g}(\Z)$ by replacing each $T_{\tilde{a}_i}$ in Wajnryb's presentation with $t_{a_i}$.  We have the following list of relations, derived from the Wajnryb's standard presentation \cite[Theorem 5.3]{FarbMargalitPrimer}.  Here $i(\cdot,\cdot)$ denotes the geometric intersection number of two curves.
\begin{enumerate}
 \item Disjointness relations: \quad $t_{a_i} t_{a_j} = t_{a_j}t_{a_i} \text { if } i(a_i,a_j) = 0$
\item Braid relations: \quad $t_{a_i} t_{a_{j}} t_{a_i} = t_{a_{j}} t_{a_i} t_{a_{j}} \text{ if } i(a_i,a_j) = 2$
\item 3-chain relation: \quad $(t_{a_1}t_{a_2}t_{a_3})^4 = t_{a_0}t_{b_0},$
where
\[ t_{b_0} = (t_{a_4}t_{a_3}t_{a_2}t_{a_1}t_{a_1}t_{a_2}t_{a_3}t_{a_4}) t_{a_0} (t_{a_4}t_{a_3}t_{a_2}t_{a_1}t_{a_1}t_{a_2}t_{a_3}t_{a_4})^{-1} \]
\item Lantern relation: \quad $t_{a_0}t_{b_2}t_{b_1} = t_{a_1}t_{a_3}t_{a_5}t_{b_3}$
\setcounter{enumi_saved}{\value{enumi}}
\end{enumerate}
In the lantern relation, we have replaced some complicated expressions from Wajnryb's relations with some of our auxiliary generators.  Thus, similar to the reference \cite[Theorem 5.3]{FarbMargalitPrimer}, we need to add relations that express each of these generators in terms of the $t_{a_i}$.
\begin{enumerate}
\setcounter{enumi}{\value{enumi_saved}}
\item Auxiliary relations:
\begin{alignat*}{4}
(i) &\quad& t_{a_0'} &= (t_{a_4}t_{a_3})^{-1} t_{a_0} (t_{a_4}t_{a_3}) \quad\quad&(vi) &\quad& t_{u'} &= (t_{a_4}t_{a_3}t_{a_2}t_{a_1})^{-1}t_{a_0}(t_{a_4}t_{a_3}t_{a_2}t_{a_1}) \\
(ii) &\quad& t_{b_1} &= (t_{a_5}t_{a_4})^{-1} t_{a_0'} (t_{a_5}t_{a_4}) &(vii) &\quad& t_v &= t_u t_{u'} t_u^{-1} \\
(iii) &\quad& t_{b_1'} &= (t_{a_2}t_{a_1})^{-1} t_{b_1} (t_{a_2}t_{a_1}) &(viii) &\quad& t_{v'} &= (t_{a_3}t_{a_2})t_v(t_{a_3}t_{a_2})^{-1} \\
(iv) &\quad& t_{b_2} &= (t_{a_3}t_{a_2})^{-1} t_{b_1'} (t_{a_3}t_{a_2}) &(ix) &\quad& t_{v''} &= t_{a_4}t_{v'}t_{a_4}^{-1} \\
(v) &\quad& t_u &= (t_{a_6}t_{a_5})^{-1}t_{b_1}(t_{a_6}t_{a_5}) &(x) &\quad& t_{b_3} &= (t_{a_6}t_{a_5})t_{v''}(t_{a_6}t_{a_5})^{-1} 
\end{alignat*}
\setcounter{enumi_saved}{\value{enumi}}
\end{enumerate}
%
The auxiliary generators were introduced exactly so that we could break up the lantern relation into these shorter auxiliary relations.  This feature will be used in Section~\ref{section:calc2}.

By work of Johnson \cite{JohnsonEliminateSep}, the group $\I_g^1$ is normally generated by a single bounding pair map of genus 1 when $g \geq 3$.  Thus, to obtain our presentation for $\Sp_{2g}(\Z)$, we simply need one more relation.
\begin{enumerate}
\setcounter{enumi}{\value{enumi_saved}}
\item Bounding pair relation: \quad $t_{a_0}=t_{b_0}$, where $t_{b_0}$ is as in the 3-chain relation above.
\end{enumerate}

\p{Generators and intersection numbers} In choosing auxiliary generators for $\Sp_{2g}(\Z)$, we were careful not to introduce too many new generators; by inspection, we see that all generators satisfy the following useful property, used several times below.

\begin{lemma}
\label{lemma:the fact}
For any $a \in \C(S_{\Sp})$ and any convex simple closed curve $d$ in $\D_{2g+1}$, we have $i(a,d) \leq 4$.
\end{lemma}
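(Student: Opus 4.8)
The plan is to bound $i(a,d)$ by exhibiting convenient representatives: since $i(a,d)$ is the minimum of $|c\cap c'|$ over representatives $c$ of $a$ and $c'$ of $d$, it suffices to produce one pair with at most four intersections. Write $a=c_A$ and $d=c_B$ for $A,B\subseteq\{1,\dots,2g+1\}$, and let $H_A,H_B$ be the Euclidean convex hulls of $\{p_i:i\in A\}$ and $\{p_i:i\in B\}$. Because the $p_i$ are the vertices of a regular $(2g+1)$-gon, no three are collinear, so for $|A|\geq 3$ the hull $H_A$ is a genuine convex polygon whose $|A|$ edges join cyclically-consecutive elements of $A$. I will represent $c_A$ by $\partial N_\epsilon(H_A)$ and $c_B$ by $\partial N_\delta(H_B)$, where $N_\epsilon$ denotes a metric neighborhood and $0<\epsilon\ll\delta$ are chosen smaller than every relevant distance in this finite configuration (in particular smaller than $\mathrm{dist}(p_m,H_B)$ for each marked point $p_m\notin B$). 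With these choices $c_A$ is a union of \emph{outer arcs} (straight segments parallel to an edge of $H_A$ joining two cyclically-adjacent $p_i$), \emph{bridge arcs} (straight segments parallel to an edge of $H_A$ joining two $p_i$ that are \emph{not} cyclically adjacent), and \emph{corner arcs} (circular arcs of radius $\epsilon$ centered at the vertices of $H_A$).

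The first step is a finite inspection: for every $a=c_A\in\mathcal{C}(S_{\Sp})$ the set $A$ is a union of at most two maximal runs of cyclically-consecutive labels in $\{1,\dots,2g+1\}$ (this holds generically, and when $2g+1=7$ several of the curves that ``wrap around'' in fact consist of a single run). Hence $H_A$ has at most two edges that are not polygon edges, i.e.\ $c_A$ has at most two bridge arcs. (When $|A|=2$, so $a=a_i=c_{i,i+1}$, the hull degenerates to a segment and $c_A$ is a thin oval enclosing only $p_i,p_{i+1}$.)

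The second, main step is to check that $|c_A\cap c_B|\leq 2\cdot(\#\text{bridge arcs of }c_A)$. A corner arc lies within $\epsilon$ of a marked point, whereas $c_B=\partial N_\delta(H_B)$ stays at distance $>\epsilon$ from every marked point by the choice of $\epsilon$; so corner arcs miss $c_B$. An outer arc near a polygon edge $p_ip_{i+1}$ lies in the thin ``sliver'' cut off by that edge; since no chord of the $(2g+1)$-gon separates $p_i$ from $p_{i+1}$, the convex polygon $H_B$ meets this sliver only near $p_i$ or $p_{i+1}$ themselves, where $c_B$ is again at distance $\delta>\epsilon$; so outer arcs miss $c_B$. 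Finally, a bridge arc is a straight segment, and a segment meets the convex curve $c_B=\partial N_\delta(H_B)$ in at most two points. Combining, $|c_A\cap c_B|\leq 2\cdot 2=4$, so $i(a,d)\leq 4$. (In the degenerate case $a=c_{i,i+1}$ the same argument applied to the two long sides of the oval, one of which lies on a sliver side and so misses $c_B$, gives $i(a,d)\leq 2$.)

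I expect the main obstacle to be the bookkeeping in the second step, specifically the behavior of $c_B$ near marked points of $A\cap B$, where $H_A$ and $H_B$ share a vertex. It is precisely the asymmetric choice $\epsilon\ll\delta\ll(\text{minimum feature size of the configuration})$ that forces every intersection of $c_A$ with $c_B$ to occur along a bridge arc of $c_A$, well away from the vertices — so that the shared-vertex issue disappears and the crude ``a segment crosses a convex curve at most twice'' bound does all the work. Verifying this amounts to the elementary but slightly fussy ``sliver'' and ``distance-to-a-marked-point'' estimates sketched above; the first step, by contrast, is just a direct reading off of the curves in $\mathcal{C}(S_{\Sp})$.
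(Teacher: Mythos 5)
There is a genuine gap in the main (second) step: the claim that \emph{outer arcs miss $c_B$} is false, and consequently the asserted bound $|c_A\cap c_B|\le 2\cdot(\#\text{bridges})$ for your chosen representatives does not hold.

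The error is a distance mismatch. The outer arc near a polygon edge $p_ip_{i+1}$ of $H_A$ stays at \emph{perpendicular} distance $\epsilon$ from that edge, but its distance from the vertex $p_i$ ranges from $\epsilon$ at its endpoint (where it meets the corner arc) up to essentially $|p_ip_{i+1}|$ at the other end; in particular it crosses the circle of radius $\delta$ about $p_i$. So whenever exactly one of $i,i+1$ lies in $B$ (say $i\in B$, $i+1\notin B$), the outer arc begins at a point at distance $\epsilon<\delta$ from the $H_B$-vertex $p_i$, hence \emph{inside} $N_\delta(H_B)$, and ends near $p_{i+1}\notin B$, hence \emph{outside} $N_\delta(H_B)$; it therefore crosses $c_B$. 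These crossings are not on bridge arcs.

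Moreover, the representatives you chose need not be in minimal position, so even the raw count $|c_A\cap c_B|$ can overshoot $i(a,d)$ badly. A concrete instance inside the lemma's scope: take $g=3$, $a=b_3=c_{123456}$ (a single run, hence a single bridge edge $p_6p_1$), and $d=c_{135}$. With $\epsilon\ll\delta$ the thickened triangle $N_\delta(H_{\{1,3,5\}})$ pokes out past the thin hexagonal tube $N_\epsilon(H_{\{1,\dots,6\}})$ at each shared vertex $p_1,p_3,p_5$, yielding six crossings (five on outer arcs, one on the lone bridge arc). Yet $i(b_3,c_{135})=0$: since $\{1,3,5\}\subset\{1,\dots,6\}$, choosing $\delta<\epsilon$ instead makes the two neighborhoods nested and the curves disjoint. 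So your representatives give $6$, the truth is $0$, and the argument in between is not correct.

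Your first step is sound (each curve in $\mathcal{C}(S_{\Sp})$ consists of at most two cyclic runs of marked points; this is precisely what fails for the paper's example $c_{1246}$, which has three). The eventual bound $i(c_A,c_B)\le 2\cdot(\#\text{runs of }A)\le 4$ is the right target, but proving it requires recognizing that the extra crossings produced at a shared vertex $p_m$ (with $m\in A\cap B$) are inessential---they bound, together with an arc of the other curve, a region containing only the marked point $p_m$, which lies inside both curves, so $c_B$ can be pushed across $p_m$ through the interior of $c_A$ to cancel them. The proposal does not address this, so as written it does not establish $i(a,d)\le 4$.
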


An example of a curve $a$ that does not satisfy Lemma~\ref{lemma:the fact} is $a=c_{1246}$.


\subsection{Construction of the action}
\label{section:constructaction}

Let $t \in S_{\Sp}^{\pm 1}$ and $s \in S_{\Quotient}$.  The goal of this section is to construct an element $\genbygen{t}{s} \in \Quotient_g$ that satisfies the naturality property
\begin{equation}
\label{eqn:wordworks}
\pi\left( \genbygen{t}{s} \right) = \overline{t} \pi(\overline{s}) \overline{t}^{-1},
\end{equation}
where $\overline{w}$ denotes the image of an element of the free group on $S_{\Quotient}$ or $S_{\Sp}$ in the corresponding group; this is Proposition~\ref{prop:consistent} below.
We will show in Sections~\ref{section:calc1} and~\ref{section:calc2} that there is an action of $\Sp_{2g}(\Z)$ on $\Quotient_{g}$ defined by
\[ \overline{t} \cdot \overline{s} = \genbygen{t}{s} \]
and in Section~\ref{section:cleanup} we will show that this action satisfies Proposition~\ref{proposition:spaction}.

\p{Analogy with transvections} For a transvection $\tau_{\vec{w}} \in \Sp_{2g}(\Z)$ and a square of a transvection $\tau_{\vec{v}}^2 \in \Sp_{2g}(\Z)[2]$, we have
\begin{equation}\label{transvection formula} \tau_{\vec{w}}\tau_{\vec{v}}^2\tau_{\vec{w}}^{-1} = \tau_{\tau_{\vec{w}}(\vec{v})}^2. \end{equation}
Since transvections generate $\Sp_{2g}(\Z)$ and squares of transvections generate $\Sp_{2g}(\Z)[2]$, the action of $\Sp_{2g}(\Z)$ on $\Sp_{2g}(\Z)[2]$ 
is completely described by this formula.  If we write $\vec{w}_+(\vec{v})$ for $\tau_{\vec{w}}(\vec{v})$, then this formula becomes $\tau_{\vec{w}}\tau_{\vec{v}}^2\tau_{\vec{w}}^{-1} = \tau_{\vec{w}_+(\vec{v})}^2$.  In other words, the action of $\Sp_{2g}(\Z)$ on $\Sp_{2g}(\Z)[2]$ is given by an ``action'' of $\Z^{2g}$ on itself.  Our strategy is to give an analogous action of the set of curves in $\D_{2g+1}$ on itself, and use this to define each $\genbygen{t_a}{s_c}$. 

\Figure{figure:surgery2curve}{Surgery2Curve}{Left: Surgery on curves in $\D_{2g+1}$.  Right: Surgery as a half-twist.
The preimage in $\Sigma_g^1$ of $a$ is $\tilde{a} \cup \Hyper(\tilde{a})$, and $\tilde{c}$ is one component of the preimage of $c$.  Thus $|\hat \imath|(a,c)=1$ and
$a_+(c) = \Surger(a,c)$ is the result of applying a half-twist about $a$ to $c$.}

\p{An action of curves on curves} If $d$ and $e$ are two collections of pairwise disjoint simple closed curves in $\D_{2g+1}$ in minimal position, we define $\Surger(d,e)$ to be the collection of simple closed curves obtained from $d \cup e$ by performing the surgery shown in the left-hand side of Figure~\ref{figure:surgery2curve} at each point of $d \cap e$ and then discarding any inessential components.  Note that this definition does not depend on any orientations of the elements of $d$ or $e$.

Next, for two simple closed curves 
$a$ and $c$ in $\D_{2g+1}$ we define $|\hat \imath|(a,c)$ 
to be the absolute value of the algebraic intersection number of any two connected components of the preimages of $a$ and $c$ in 
$\Sigma_g^1$.  These curves do not have a canonical orientation, so the algebraic intersection is not itself well defined.  Also, let $na$ denote $n$ parallel copies of the curve a.  Note that $\Surger(i(a,c)a,c)=T_a(c)$.

We now give our ``action'' of $\C(S_{\Sp})$ on $\C(S_\mathcal{Q})$.  For 
$a \in \C(S_{\Sp})$ and $c \in \C(S_\mathcal{Q})$, we define
\begin{align*}
a_+(c) &= \Surger(|\hat \imath|(a,c) a,c), \  \text{and} \\
a_-(c) &= \Surger(c,|\hat \imath|(a,c) a).
\end{align*}
It is important here that the curves $a$ and $c$ are in minimal position.

\p{The effect of the action on homology} In the case that $a$ surrounds two marked points and intersects $c$ in two points, then $a_+(c)$ is precisely the image of $c$ under the positive half-twist $H_a$ about $a$; see Figure~\ref{figure:surgery2curve}.  By definition, $\overline{t}_a$ is the image in $\Sp_{2g}(\Z)$ of $T_{\tilde a}$, where $\tilde a$ is one component of the preimage of $a$ in $\Sigma_g^1$.  But since $T_{\tilde a}$ is the image of $H_a$ in $\SMod_g^1$, it follows that $\overline{t}_a([\tilde c])$ is represented by one of the components of the preimage of $a_+(c)$ in $\Sigma_g^1$.  This naturality property is precisely the reason for our definition $a_+(c)$.

\Figure{figure:surgery4curve}{Surgery4Curve}{A ``fake half-twist.''
The preimage in $\Sigma_g^1$ of $a$ is $\tilde{a} \cup \Hyper(\tilde{a})$, and $\tilde{c}$ is one component of the preimage of $c$.  Thus $|\hat \imath|(a,c)=1$ and
$a_+(c) = \Surger(a,c)$ is as shown.}

When $a$ surrounds four or more marked points, the situation is more subtle. Consider the curves $a=a_0$ and $c=c_{45}$; these curves and their preimages $\tilde c$ and $\tilde a \cup \iota(\tilde a)$ in $\Sigma_g^1$ are shown in Figure~\ref{figure:surgery4curve}.  The transvection $\overline{t}_a$ is the image in $\Sp_{2g}(\Z)$ of $T_{\tilde a}$.  This Dehn twist does not lie in $\SMod_g^1$, and so does not project to a homeomorphism of $\D_{2g+1}$.  However, the curve $T_{\tilde a}(\tilde c)$, which represents $\overline{t}_a([\tilde c])$,  still projects to a simple curve in $\D_{2g+1}$, namely $a_+(c)$.  So we again have the same naturality property as in the previous paragraph, that $\overline{t}_a([\tilde c])$ is represented by a lift of $a_+(c)$, even though $a_+(c)$ is not derived from $c$ via a homeomorphism of $\D_{2g+1}$.  We now show that this naturality property holds in general.

\begin{lemma}
\label{lemma:curve action}
For $a \in \C(S_{\Sp})$ and $c \in \C(S_\mathcal{Q})$, we have that $a_+(c)$ is a (connected) simple closed curve surrounding an even number of marked points.  If $\tilde a$, $\tilde c$, and $\widetilde{a_+(c)}$ are connected components of the preimages of $a$, $c$, and $a_+(c)$ in $\Sigma_g^1$, then, up to choosing compatible orientations on $\tilde c$ and $\widetilde{a_+(c)}$, we have 
\[ \overline{t}_a([\tilde c]) = \left[\widetilde{a_+(c)}\right]. \]
Similarly, $a_-(c)$ surrounds an even number of marked points and $\overline{t}_a^{-1}([\tilde c]) = \left[\widetilde{a_-(c)}\right]$.
\end{lemma}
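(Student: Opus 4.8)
The plan is to reduce the statement to a finite list of explicit configurations and verify each by a picture, in the spirit of the two examples (Figures~\ref{figure:surgery2curve} and~\ref{figure:surgery4curve}) that precede the lemma. Set $n=|\hat\imath|(a,c)$. Since $\overline{t}_a$ is by definition the transvection $\tau_{[\tilde a]}=\tau_{[\Hyper(\tilde a)]}$, and the Dehn twist $T_{\tilde a}\in\Mod_g^1$ acts on $H_1(\Sigma_g^1;\Z)$ by $\tau_{[\tilde a]}$, the simple closed curve $T_{\tilde a}(\tilde c)\subset\Sigma_g^1$ represents the class $\overline{t}_a([\tilde c])$. Thus it suffices to prove: (i) the essential part of $a_+(c)$ is a single simple closed curve surrounding an even number of marked points, so that its preimage in $\Sigma_g^1$ is a pair of disjoint curves, one of which is $\widetilde{a_+(c)}$; and (ii) one of those preimage components is freely homotopic in $\Sigma_g^1$ to $T_{\tilde a}(\tilde c)$. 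The assertion for $a_-$ is identical, with $T_{\tilde a}^{-1}$ in place of $T_{\tilde a}$ and the opposite smoothing in the surgery.

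For the reduction I would invoke Lemma~\ref{lemma:the fact}: since $c=c_{ij}$ is convex, $i(a,c)\le 4$ and hence $n\le i(a,c)\le 4$; moreover $a$ is convex and surrounds $2$, $4$, or $6$ marked points while $c$ surrounds exactly $2$. Any mapping class of $\D_{2g+1}$ permuting the marked points lifts through $L$ to $\SMod_g^1$ and conjugates $T_{\tilde a}$, $\tilde c$, and $\widetilde{a_+(c)}$ compatibly, so the conclusion of the lemma is invariant under such a change of coordinates. I would use this to bring $(a,c)$ into one of finitely many model positions, indexed by $i(a,c)$ together with the number of endpoints of $c$ lying inside $a$, and then choose explicit minimal-position representatives and draw the hyperelliptic double cover in each model.

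The computation in each model rests on the following local picture. Let $N$ be a regular neighborhood of $a$ in $\D_{2g+1}$; its preimage $\widetilde N$ is a regular neighborhood of $\tilde a\cup\Hyper(\tilde a)$, and $\tilde a$ and $\Hyper(\tilde a)$ are disjoint because $a$ surrounds an even number of marked points. Outside $\widetilde N$ the twist $T_{\tilde a}$ is the identity, so $T_{\tilde a}(\tilde c)$ agrees with $\tilde c$ there; inside $\widetilde N$ it replaces each arc of $\tilde c$ crossing $\tilde a$ by an arc running once along $\tilde a$, while leaving untouched the arcs crossing $\Hyper(\tilde a)$. Projecting to $\D_{2g+1}$, this is precisely the surgery move of Figure~\ref{figure:surgery2curve} performed at the projections of the crossings of $\tilde c$ with $\tilde a$; once oppositely-oriented pairs are cancelled, those crossings number $n=|\hat\imath|(a,c)$, which is exactly why the definition of $a_+(c)$ uses $n$ parallel copies of $a$ rather than $i(a,c)$ copies. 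The loops made inessential by the surgery are the components discarded in forming $a_+(c)$. When $a$ surrounds $2$ points this is the honest half-twist $H_a$ and $\widetilde{H_a(c)}=T_{\tilde a}(\tilde c)$ by Birman--Hilden; when $a$ surrounds $4$ or $6$ points one gets the ``fake half-twist'' of Figure~\ref{figure:surgery4curve}, but the same local analysis shows that $T_{\tilde a}(\tilde c)$ still projects to the simple closed curve $a_+(c)$. Connectedness of the essential part of $a_+(c)$, and the fact that it surrounds an even number of marked points, then follow from the model pictures; equivalently, they follow from the observation that the preimage multicurve of $a_+(c)$ is $\{T_{\tilde a}(\tilde c),\,\Hyper(T_{\tilde a}(\tilde c))\}$, a pair of disjoint curves interchanged by $\Hyper$.

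I expect the main obstacle to be organizing the case analysis: although Lemma~\ref{lemma:the fact} caps $i(a,c)$ at $4$, there are many curves $a$ to treat (the ten auxiliary curves $a_0',b_1,b_1',b_2,b_3,u,u',v,v',v''$ in addition to $a_0,\dots,a_{2g}$), and one must verify uniformly that the geometric intersections of $a$ with $c$ cancel down to exactly $n$ crossings with $\tilde a$. A secondary subtlety is orientations: neither $\tilde c$ nor $\widetilde{a_+(c)}$ has a canonical orientation, so the equality $\overline{t}_a([\tilde c])=[\widetilde{a_+(c)}]$ is only meaningful up to the global sign the statement allows, and compatible orientations must be fixed while carrying out the local computation.
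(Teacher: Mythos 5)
Your proposal correctly identifies the case analysis for the first assertion (and that analysis matches the paper's), but your route to the second assertion is strictly stronger than what the lemma claims, and that stronger statement is false for some pairs $(a,c)$ in our generating sets.

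You propose to show that $T_{\tilde a}(\tilde c)$ is freely homotopic to a preimage component of $a_+(c)$, and more specifically that the preimage multicurve of $a_+(c)$ is $\{T_{\tilde a}(\tilde c), \Hyper(T_{\tilde a}(\tilde c))\}$. But the lemma only asserts equality in $H_1(\Sigma_g^1;\Z)$, and the isotopy-level statement fails. Take $a = a_0' = c_{1245}$ and $c = c_{36}$ (valid for all $g \geq 3$). The chord from $p_3$ to $p_6$ crosses the convex quadrilateral on $\{p_1,p_2,p_4,p_5\}$, entering through the edge $p_2p_4$ and exiting through $p_5p_1$, so $i(a,c) = 4$ and the chord separates the four interior marked points as $\{1,2\} \cup \{4,5\}$ --- an even split, hence $|\hat\imath|(a,c)=0$. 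By definition $a_+(c) = \Surger(0 \cdot a, c) = c$, so $\widetilde{a_+(c)}$ is just $\tilde c$. On the other hand, the paper's rectangle analysis (the proof of the $i(a,c)=4$ case) shows that $\tilde a$ meets $\tilde c$ in exactly two points; these lifts are in minimal position (there is no bigon between $\tilde a$ and $\tilde c$ in $\Sigma_g^1$: the crescent regions downstairs contain marked points, their preimages are annuli or quadrilaterals, never bigons), so $i(T_{\tilde a}(\tilde c),\tilde c) = |1|\cdot i(\tilde a,\tilde c)^2 = 4 \neq 0$ and $T_{\tilde a}(\tilde c)$ is not isotopic to $\tilde c$. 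Thus $T_{\tilde a}(\tilde c)$ does not project to $a_+(c) = c$; it need not even project to a simple closed curve, since $T_{\tilde a}(\tilde c)$ and $\Hyper(T_{\tilde a}(\tilde c)) = T_{\Hyper(\tilde a)}(\Hyper(\tilde c))$ have no reason to be disjoint here. Your parenthetical ``once oppositely-oriented pairs are cancelled, those crossings number $n$'' is the place this breaks: those two intersections have opposite sign but cannot be cancelled by isotopy because the lifted curves are already in minimal position.

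The paper sidesteps the problem by proving only the homology identity, which is exactly what the lemma says. Having established in part (i) that $a_+(c)$ is a single curve surrounding evenly many marked points (so its preimage has two $\Hyper$-swapped components), one orients the lifted symmetric configuration $|\hat\imath|(a,c)\tilde a \cup |\hat\imath|(a,c)\Hyper(\tilde a) \cup \tilde c \cup \Hyper(\tilde c)$ so that all $a$-lifts represent $[\tilde a]$ and all $c$-lifts represent $[\tilde c]$; the surgered $1$-cycle then represents $2[\tilde c] + 2|\hat\imath|(a,c)[\tilde a]$, and halving over the two components gives $[\widetilde{a_+(c)}] = [\tilde c] + |\hat\imath|(a,c)[\tilde a] = \tau_{[\tilde a]}([\tilde c]) = \overline{t}_a([\tilde c])$ up to sign. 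Notice this argument treats the $n=0$ case trivially and needs no claim about the homotopy type of $T_{\tilde a}(\tilde c)$. Your remark that you expect ``many curves $a$ to treat'' and anticipate the ``main obstacle to be organizing the case analysis'' is a warning sign: the clean argument does not depend on $a$ and $c$ beyond the three integers $i(a,c)\in\{0,2,4\}$ and $|\hat\imath|(a,c)$, and that is exactly the amount of generality the homology argument captures while the isotopy argument does not.
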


\begin{proof}

We only treat the case of $a_+(c)$ with the other case being completely analogous.  We begin with the first statement, that $a_+(c)$ is connected and surrounds an even number of marked points.  The geometric intersection number $i(a,c)$ is equal to 0, 2, or 4; this is because $c_{ij}$ is the boundary of a regular neighborhood of the straight line segment connecting $p_i$ to $p_j$, and a straight line segment can intersect a convex curve in 0, 1, or 2 points (cf. Lemma~\ref{lemma:the fact}).  We treat each of the three cases in turn.  

If $i(a,c)=0$, then $|\hat \imath|(a,c)=0$.  Thus, $a_+(c)$ is equal to $c$, which is a simple closed curve.
If $i(a,c)=2$, then we claim that $|\hat \imath|(a,c)=1$.  Indeed, the arc of $a$ crossing through $c$ necessarily separates the two marked points inside $c$ from each other, creating two bigons, each containing one marked point.  The preimage of one bigon in $\Sigma_g^1$ is a square whose four corners are the four intersection points of the preimages of $a$ and $c$.  We know that the hyperelliptic involution $\Hyper$ interchanges the two lifts of each curve and that $\Hyper$ rotates the square by $\pi$.  Our claim follows.  It thus remains to check that $\Surger(a,c)$ is a simple closed curve surrounding an even number of marked points, which is immediate from Figure~\ref{figure:surgery3}.

\Figure{figure:surgery3}{Surgery2}{Left: If $i(a,c)=2$, then $\Surger(a,c)$ is a simple closed curve.  Right: If $i(a,c)=4$, then $\Surger(2a,c)$ is a simple closed
curve.  In both figures, the small dashed circles contain an unspecified (but nonzero) number of marked points; both dashed circles on the left-hand picture contain an odd number of marked points.}

If $i(a,c)=4$, then we claim that $|\hat \imath|(a,c)$ is equal to either 0 or 2, depending on whether the arcs of $c$ divide the marked points inside $a$ into two sets of even cardinality or odd cardinality, respectively.  The curve $a$ divides the convex region bounded by $c$ into three connected components: one square and two bigons, each with one marked point.  Consider the union of the square and one bigon. The preimage in $\Sigma_g^1$ is a rectangle made up of three squares; there is one central square (the preimage of the bigon) and two other squares with edges glued to the left and right edges of the central square.  Since each intersection point in $\D_{2g+1}$ lifts to two intersection points in $\Sigma_g^1$, and since we already see 8 intersection points on the boundary of this rectangle, we conclude that this picture contains all of the intersection points of preimages of $a$ and $c$.  Also, by construction the horizontal sides of the rectangle belong to preimages of $c$.  The involution $\Hyper$ acts on this rectangle, rotating it by $\pi$ though the center.  We also know that $\Hyper$ interchanges the two preimages of each of $a$ and $c$.  Therefore, it suffices to count the intersections of the bottom of the rectangle with the vertical sides of the rectangle belonging to a single component of the preimage of $a$.  Again because $\Hyper$ exchanges the two components of the preimage of $a$, two of the vertical segments belong to one component, and two to the other. Thus, if we choose one component of the preimage of $a$, it intersects the bottom edge of the rectangle in precisely two points.  It immediately follows that $|\hat \imath|(a,c)$ is equal to either 0 or 2, as claimed.  By the claim, it suffices to check that $\Surger(2a,c)$ is a simple closed curve surrounding an even number of marked points, which is again immediate from
Figure~\ref{figure:surgery3}.

We now address the second statement of the lemma.  The preimage in $\Sigma_g^1$ of $|\hat \imath|(a,c) a \cup c$ is a symmetric configuration (that is, preserved by $\Hyper$).  It contains both preimages of $c$ and $|\hat \imath|(a,c)$ parallel copies of each  preimage of $a$.
We orient these so all preimages of $a$ represent the same element of $H_1(\Sigma_g^1;\Z)$.  We do the same for $c$; there are two choices, and we use the one that is consistent with the surgery in Figure~\ref{figure:surgery2curve}. When we perform surgery on this configuration, we therefore obtain a symmetric representative of the homology class
\[ 2[\tilde c] + 2|\hat \imath|(a,c)[\tilde a]. \]
This symmetric representative is the preimage of $a_+(c)$ and so the first statement of the lemma implies that this representative has exactly two connected components that are interchanged by the hyperelliptic involution.  It follows that each component, in particular $\widetilde{a_+(c)}$, represents
\[ \tau_{[\tilde a]}([\tilde c]) = [\tilde c] + |\hat \imath|(a,c)[\tilde a]. \]
But (up to sign) this is equal to $\overline{t}_a([\tilde c])$, and the lemma is proven.
\end{proof}

\p{Definition of \boldmath$\genbygen{t}{s}$} We can now define the elements $\genbygen{t_a^{\pm 1}}{s_c}  \in \Quotient_{g}$ for $t_a \in S_{\Sp}$ and $s_c \in S_{\Quotient}$:
\[\genbygen{t_a}{s_c} = \rho\left(T_{a_+(c)}\right) \quad \quad \text{and} \quad \quad
\genbygen{t_a^{-1}}{s_c} = \rho\left(T_{a_-(c)}\right).\]
These are both well-defined elements of $\Quotient_{g}$ since $T_{a_\pm(c)}$ only depends on the homotopy class of 
$a_\pm(c)$, and we already said that the latter is a well-defined simple closed curve.  

\p{Naturality}
We now verify the naturality property \eqref{eqn:wordworks} from the start of this section.

\begin{proposition}
\label{prop:consistent}
For any $t_a \in S_{\Sp}$ and $s_c \in S_{\Quotient}$ and $\epsilon \in \{-1,1\}$, we have
\[ \pi\left(\genbygen{t_a^\epsilon}{s_c} \right) = \overline{t}_a^{\,\epsilon} \pi(\overline{s}_c) \overline{t}_a^{\,-\epsilon}. \]
\end{proposition}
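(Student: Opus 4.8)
The plan is to chain together Lemma~\ref{lemma:s vs pi}, Lemma~\ref{lemma:curve action}, and the transvection conjugation formula \eqref{transvection formula}; once Lemma~\ref{lemma:curve action} is in hand, the proof is essentially a formal calculation. Fix $t_a \in S_{\Sp}$ and $s_c \in S_{\Quotient}$, and first treat $\epsilon = 1$. By definition $\genbygen{t_a}{s_c} = \rho(T_{a_+(c)})$. Lemma~\ref{lemma:curve action} says $a_+(c)$ is a connected simple closed curve surrounding an even number of marked points, so Lemma~\ref{lemma:s vs pi} applies and gives $\pi(\rho(T_{a_+(c)})) = \tau_{[\widetilde{a_+(c)}]}^2$, where $\widetilde{a_+(c)}$ is a connected component of the preimage of $a_+(c)$ in $\Sigma_g^1$. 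Since $\tau_{\vec v} = \tau_{-\vec v}$ and the two components of the preimage are interchanged by $\iota$ (which acts as $-I$ on homology), the element $\tau_{[\widetilde{a_+(c)}]}^2$ is independent of both the choice of component and the choice of orientation, so the ``up to orientations'' clause in Lemma~\ref{lemma:curve action} is harmless here.

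Next I would substitute $[\widetilde{a_+(c)}] = \overline{t}_a([\tilde c]) = \tau_{[\tilde a]}([\tilde c])$, using Lemma~\ref{lemma:curve action} together with the definition of $\overline{t}_a$ as the transvection $\tau_{[\tilde a]}$. Applying \eqref{transvection formula} with $\vec w = [\tilde a]$ and $\vec v = [\tilde c]$ yields
\[ \tau_{[\widetilde{a_+(c)}]}^2 \;=\; \tau_{\tau_{[\tilde a]}([\tilde c])}^2 \;=\; \tau_{[\tilde a]}\,\tau_{[\tilde c]}^2\,\tau_{[\tilde a]}^{-1} \;=\; \overline{t}_a\,\tau_{[\tilde c]}^2\,\overline{t}_a^{\,-1}. \]
Finally, since $c = c_{ij}$ surrounds exactly two marked points, Lemma~\ref{lemma:s vs pi} gives $\pi(\overline{s}_c) = \pi(\rho(T_c)) = \tau_{[\tilde c]}^2$; combining the displayed equalities gives $\pi(\genbygen{t_a}{s_c}) = \overline{t}_a\,\pi(\overline{s}_c)\,\overline{t}_a^{\,-1}$, which is the claim for $\epsilon = 1$.

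For $\epsilon = -1$ the argument is identical after replacing $a_+(c)$ by $a_-(c)$: Lemma~\ref{lemma:curve action} again guarantees that $a_-(c)$ surrounds an even number of marked points and that $[\widetilde{a_-(c)}] = \overline{t}_a^{\,-1}([\tilde c])$, so Lemma~\ref{lemma:s vs pi} gives $\pi(\genbygen{t_a^{-1}}{s_c}) = \tau_{\overline{t}_a^{-1}([\tilde c])}^2$. Now apply \eqref{transvection formula} with $\vec w = [\tilde a]$ and $\vec v = \overline{t}_a^{\,-1}([\tilde c])$, so that $\tau_{\vec w}(\vec v) = [\tilde c]$, obtaining $\overline{t}_a\,\tau_{\vec v}^2\,\overline{t}_a^{\,-1} = \tau_{[\tilde c]}^2$; solving for $\tau_{\vec v}^2$ gives $\tau_{\overline{t}_a^{-1}([\tilde c])}^2 = \overline{t}_a^{\,-1}\tau_{[\tilde c]}^2\overline{t}_a = \overline{t}_a^{\,-1}\pi(\overline{s}_c)\overline{t}_a$, as desired.

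There is no real obstacle once Lemma~\ref{lemma:curve action} is available: that lemma carries all the content (connectedness and even parity of $a_\pm(c)$, and the homological identification $[\widetilde{a_\pm(c)}] = \overline{t}_a^{\,\pm 1}([\tilde c])$). The only points needing a moment's attention are (i) confirming that Lemma~\ref{lemma:s vs pi} is genuinely applicable, i.e.\ that each curve whose Dehn twist enters the computation — namely $c$ itself and $a_\pm(c)$ — surrounds an even number of marked points, and (ii) observing that passing to squares of transvections removes the orientation and component ambiguities in Lemma~\ref{lemma:curve action}; neither of these is more than a remark.
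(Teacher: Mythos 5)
Your proof is correct and follows the same chain of equalities as the paper: definition of $\genbygen{t_a}{s_c}$, Lemma~\ref{lemma:s vs pi}, Lemma~\ref{lemma:curve action}, Equation~\eqref{transvection formula}, Lemma~\ref{lemma:s vs pi} again, and the definition of $\overline{s}_c$. The extra remarks (squaring kills the sign ambiguity; explicit handling of $\epsilon=-1$) are harmless elaborations of points the paper treats implicitly.
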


\begin{proof}

To simplify notation, we will treat the case $\epsilon=1$; the other case is essentially the same.  Let $\widetilde{a_+(c)}$ be one component in $\Sigma_g^1$ of the preimage of $a_+(c)$.  We have that 
\[ \pi(\genbygen{t_a}{s_c}) = \pi(\rho(T_{a_+(c)})) = 
\tau_{\left[\widetilde{a_+(c)}\right]}^2 
 = \tau_{\overline{t}_a([\tilde c])}^2 = \overline{t}_a \tau_{[\tilde{c}]}^2 \overline{t}_a^{\,-1} = \overline{t}_a \pi(\rho(T_c)) \overline{t}_a^{\,-1} = \overline{t}_a \pi(\overline{s}_c) \overline{t}_a^{\,-1}, \]
as desired.   The six equalities are given by 
the definition of $\genbygen{t_a}{s_c}$, 
Lemma~\ref{lemma:s vs pi}, 
Lemma~\ref{lemma:curve action}, 
Equation~\eqref{transvection formula} from the start of this section, Lemma~\ref{lemma:s vs pi}, and the definition of $s_c$.
\end{proof}


\subsection{Well-definedness with respect to \boldmath$\Quotient_{g}$ relations}
\label{section:calc1}

For $t \in S_{\Sp}^{\pm 1}$ and $s \in S_{\Quotient}$, we have now defined an element $\genbygen{t}{s}$ in $\Quotient_g$. Recall our goal is to show that the formula $\overline{t}_a \cdot \overline{s}_c = \genbygen{t_a}{s_c}$ defines an action of $\Sp_{2g}(\Z)$ on $\Quotient_{g}$.  However, at this point if we use this formula we do not even know that $(\overline{t}_a)^{-1} \cdot (\overline{t}_a \cdot \overline{s}_c)$ is equal to $\overline{s}_c$.

Let $F(S_{\Quotient})$ denote the free group on $S_{\Quotient}$.  For each $t \in S_{\Sp}^{\pm 1}$ what we do have now is a homomorphism $F(S_{\Quotient}) \rightarrow \Quotient_g$ given by $s \mapsto \genbygen{t}{s}$ for $s \in S_{\Quotient}$ (so it makes sense to write $\genbygen{t}{w}$ for $w \in F(S_{\Quotient})$).  The next proposition says that each of these homomorphisms respects the relations of $\Quotient_g$, which is to say that each of these homomorphisms induces an endomorphism of $\Quotient_g$.   To put it another way, the free monoid $\widehat{F}(S_{\Sp}^{\pm 1})$ acts on the group $\Quotient_g$.  In the next section we will show that this monoid action descends to a group action of $\Sp_{2g}(\Z)$ on $\Quotient_{g}$. 

\begin{proposition}
\label{prop:q well def}
Let $g \geq 3$ and assume $\BTorelli_{2h+1}=\Theta_{2h+1}$ for $h < g$.  For all $t \in S_{\Sp}^{\pm 1}$ and $r \in R_{\Quotient}$, we have $\premonoid{t}{r}=1$.
\end{proposition}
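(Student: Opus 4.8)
The plan is to verify, for each family of relations in $R_{\Quotient}$, that applying a fixed $t \in S_{\Sp}^{\pm 1}$ to the relator yields the trivial element of $\Quotient_g$. Fix $t = t_a^{\epsilon}$. The homomorphism $F(S_{\Quotient}) \to \Quotient_g$ sends a generator $s_c$ to $\rho(T_{a_{\pm}(c)})$, so what must be checked is that the images under surgery of the curves appearing in each relator again satisfy the corresponding relation in $\Quotient_g$ (possibly after multiplying by twists about curves surrounding odd numbers of marked points, which are trivial in $\Quotient_g$). The disjointness, triangle, and crossing relations are relations that hold in $\PureBraid_{2g+1}$ whenever the underlying curves are configured appropriately; so the heart of the matter is to show that surgering a valid configuration of curves $c_{ij}$ against $a$ produces a configuration whose pairwise geometric positions still realize a disjointness/triangle/crossing relation, or differs from one by separating twists. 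For the odd twist relations, the left side is (the image of) a square of a Dehn twist about a curve surrounding an odd number of marked points, so one must check that applying $a_{\pm}(-)$ to it lands in $\Theta_{2g+1}$.

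The main tool will be Lemma~\ref{lemma:curve action}: since $a$ is one of the specified convex curves and every $c$ is some $c_{ij}$, the surgered curve $a_{\pm}(c)$ is connected, surrounds an even number of marked points, and is the projection of $\overline{t}_a^{\pm}(\tilde c)$. This gives good control on homology classes. For the relations that are "geometric" — disjointness, triangle, crossing — I expect to argue case-by-case using Lemma~\ref{lemma:the fact} (so $i(a,d) \le 4$ for all relevant convex curves $d$, limiting the combinatorial complexity) together with an analysis of how surgery interacts with disjoint or minimally-intersecting curves. In many configurations $a$ will be disjoint from some of the $c_{ij}$ involved, so $a_{\pm}$ fixes those, cutting the work down. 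For the genuinely interacting configurations, I would either exhibit the surgered curves concretely (as in Figures~\ref{figure:surgery3} and \ref{figure:surgery4curve}) and read off that they bound the expected disjoint/triangle/crossing pattern, or reduce to a statement about the mapping class $\overline{t}_a$ acting on a genus-one or genus-two subsurface and invoke Theorem~\ref{theorem:reducibletotwists} to absorb the discrepancy into $\Theta_{2g+1}$.

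For the odd twist relations, the relator is $\left((s_{c_{i_1i_2}}\cdots)\cdots s_{c_{i_{n-1}i_n}}\right)^2$, which is $\rho(T_{c_{i_1\cdots i_n}})^2 = \rho(T_{c_{i_1\cdots i_n}}^2)$ using the pure braid identity quoted after the relations. Thus I want to show $\premonoid{t}{r}$ equals $\rho$ of a product of squares of Dehn twists about odd-surrounding curves, which is trivial in $\Quotient_g$. The key point is that applying the monoid element $t$ to this relator gives $\rho$ of $T_{c}^2$ where $c$ is (up to the curve-action formalism) the surgery of $a$ against $c_{i_1\cdots i_n}$ — but this product is a reducible element of $\BTorelli_{2g+1}$ (it fixes an essential curve), so Theorem~\ref{theorem:reducibletotwists} applies and places it in $\Theta_{2g+1}$, i.e. it is trivial in $\Quotient_g$. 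This is where the hypothesis $\BTorelli_{2h+1} = \Theta_{2h+1}$ for $h < g$ gets used.

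The main obstacle, I expect, is the crossing relation: it involves a conjugate $s_{c_{js}}s_{c_{rs}}s_{c_{js}}^{-1}$, so one must understand how $a_{\pm}$ interacts with a twisted curve $T_{c_{js}}(c_{rs})$, and the surgered image need not obviously be one of the standard convex curves — it could be a complicated curve, and one must still certify that it commutes appropriately with $a_{\pm}(c_{ij})$, or differs from such a configuration by separating twists. Controlling geometric intersection numbers after surgery, especially keeping track of minimal position (which the definition of $a_{\pm}(c)$ requires), will be the fiddly technical core; here the bound $i(a,d) \le 4$ from Lemma~\ref{lemma:the fact}, the connectedness output of Lemma~\ref{lemma:curve action}, and a reduction to low-genus subsurfaces via Theorem~\ref{theorem:reducibletotwists} are the three levers I would lean on.
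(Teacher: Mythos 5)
Your proposal correctly identifies the three tools in play (Lemma~\ref{lemma:curve action}, Lemma~\ref{lemma:the fact}, and Theorem~\ref{theorem:reducibletotwists}), but it misses the organizing principle that makes the proof tractable and introduces a conceptual error in the odd-twist case.

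For the pure braid relations, you propose to verify directly that the surgered curves $a_\pm(c_{ij})$ again realize a disjointness/triangle/crossing configuration ``or differ from one by separating twists.'' This is not the right frame, and for curves $a$ surrounding four or more marked points it is unlikely to be true in any useful sense: $a_\pm$ is \emph{not} induced by a homeomorphism of $\D_{2g+1}$ when $a$ surrounds more than two marked points (it is a ``fake half-twist,'' cf.\ Figure~\ref{figure:surgery4curve}), so there is no a priori reason the images of a relator's curves should satisfy anything resembling the original relation. What the paper does instead --- and what your ``fallback'' option almost reaches but does not articulate --- is bypass any geometric verification of the relation. By Proposition~\ref{prop:consistent}, $\pi(\premonoid{t}{r}) = 1$ automatically for every relator $r$; so by Lemma~\ref{lemma:quasipreserve} (the consequence of Theorem~\ref{theorem:reducibletotwists}) one needs \emph{only} to exhibit an essential curve $d$ disjoint from $a$ and from every curve appearing in $r$. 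Reducibility along $d$ is then inherited term by term from the definition of $a_\pm$, and the existence of $d$ is a short convexity argument using Lemma~\ref{lemma:the fact} (the hull $\Delta_r$ of the relator's curves leaves at least $3$ marked points outside, and $a$ cuts their complement into at most three pieces). This handles disjointness, triangle, and crossing uniformly; the crossing relation, which you flagged as the main obstacle, is not special at all under this criterion.

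The odd-twist case contains a genuine gap. You write that applying $t$ to the relator ``gives $\rho$ of $T_c^2$ where $c$ is (up to the curve-action formalism) the surgery of $a$ against $c_{i_1\cdots i_n}$.'' But $\premonoid{t}{\cdot}$ is defined only generator-by-generator on words in $F(S_\Quotient)$; it does not act on $T_{c_{i_1\cdots i_n}}$, and the product $\prod \genbygen{t}{s_{c_{i_k i_\ell}}}$ is a priori just some element of $\Quotient_g$, not $\rho(T_c^2)$ for any single curve $c$. In particular its reducibility --- which is the whole point --- is not obvious, and ``it fixes an essential curve'' is not justified. The correct route, which the paper takes, first uses the pure-braid case (Step~1) to establish that $\premonoid{t}{\cdot}$ is constant on fibers of $F(S_\Quotient) \to \PureBraid_{2g+1}$, which permits replacing the odd-twist relator by a \emph{cleverly chosen} word for $T_{c_B}^2$: one supported in a subdisk missing some $p_k$ (Lemma~\ref{lemma:diskstab}), or one built from the stabilizer generators in Lemma~\ref{lemma:arcstab} together with the computation in Lemma~\ref{lemma:3-curves}. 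Only with such a choice does the resulting product become manifestly reducible. Without this maneuver, the proposal does not close the argument.
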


We introduce the following terminology, which will also be used in Section~\ref{section:calc2}.  Let $d$ be an essential simple closed curve in $\D_{2g+1}$.  An element of $\Quotient_g$ is said to be \emph{reducible along $d$} if it is the image of an element of $\PureBraid_{2g+1}$ that preserves the isotopy class of $d$.  The next lemma is an immediate consequence of Theorem~\ref{theorem:reducibletotwists}.

\begin{lemma}
\label{lemma:quasipreserve}
Assume that $\BTorelli_{2h+1} = \Theta_{2h+1}$ for $h < g$.  If $\eta \in \Quotient_g$ is reducible and $\pi(\eta) = 1$, then $\eta = 1$.
\end{lemma}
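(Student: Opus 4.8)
The plan is to unwind the definitions and reduce directly to Theorem~\ref{theorem:reducibletotwists}. Suppose $\eta \in \Quotient_g$ is reducible with $\pi(\eta) = 1$. Since $\eta$ is reducible, by definition there is an essential simple closed curve $d$ in $\D_{2g+1}$ together with an element $\hat\eta \in \PureBraid_{2g+1}$ that preserves the isotopy class of $d$ and satisfies $\rho(\hat\eta) = \eta$; this is the lift I would work with. The point of the definition of ``reducible along $d$'' is precisely that such an $\hat\eta$ exists on the nose (not merely up to $\Theta_{2g+1}$).

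Next I would identify the subgroup of $\PureBraid_{2g+1}$ containing $\hat\eta$. The composite $\pi \circ \rho : \PureBraid_{2g+1} \to \Sp_{2g}(\Z)[2]$ is the restriction of $\beta_{2g+1}$ to $\PureBraid_{2g+1}$, followed by the identification $\beta_{2g+1}(\PureBraid_{2g+1}) = \Sp_{2g}(\Z)[2]$ coming from the Arnol'd--A'Campo diagram in Section~\ref{section:outline}; its kernel is, by definition, $\BTorelli_{2g+1}$. From $\pi(\rho(\hat\eta)) = \pi(\eta) = 1$ we conclude $\hat\eta \in \BTorelli_{2g+1}$. Since $\hat\eta$ also preserves the isotopy class of the essential simple closed curve $d$, it is a \emph{reducible} element of $\BTorelli_{2g+1}$ in the sense of Theorem~\ref{theorem:reducibletotwists}.

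Finally I would invoke Theorem~\ref{theorem:reducibletotwists}, whose hypothesis $\BTorelli_{2h+1} = \Theta_{2h+1}$ for all $h < g$ is exactly what we are assuming: it gives $\hat\eta \in \Theta_{2g+1}$, and therefore $\eta = \rho(\hat\eta) = 1$ in $\Quotient_g = \PureBraid_{2g+1}/\Theta_{2g+1}$. There is no real obstacle here; the proof is essentially bookkeeping, and the only points requiring care are (i) checking that $\ker(\pi \circ \rho)$ really is $\BTorelli_{2g+1}$ (rather than $\Theta_{2g+1}$ or a larger group), and (ii) making sure the lift furnished by reducibility genuinely fixes the isotopy class of $d$, so that it qualifies as a reducible element of $\BTorelli_{2g+1}$ to which Theorem~\ref{theorem:reducibletotwists} applies.
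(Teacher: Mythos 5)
Your proof is correct and is exactly the bookkeeping the paper has in mind: the paper gives no explicit argument, calling the lemma an immediate consequence of Theorem~\ref{theorem:reducibletotwists}, and your write-up simply supplies the details of that reduction. In particular, your two flagged points of care are genuine and you handle them correctly: the definition of reducibility in $\Quotient_g$ does furnish a lift in $\PureBraid_{2g+1}$ that honestly preserves the reducing curve, and $\ker(\pi\circ\rho)$ is indeed $\BTorelli_{2g+1}$ because $\pi\circ\rho$ is the restriction of $\beta_{2g+1}$ to $\PureBraid_{2g+1}$.
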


We need one more basic lemma about reducible elements, Lemma~\ref{lemma:Q red} below, but first we require a subordinate lemma, which follows immediately from Proposition~\ref{prop:consistent}.

\begin{lemma}
\label{lemma:pi of r}
Let $r \in F(S_\Quotient)$ be a relator for $\Quotient_{g}$ and let $t \in S_{\Sp}^{\pm 1}$.  Then $\pi(\premonoid{t}{r}) = 1$.
\end{lemma}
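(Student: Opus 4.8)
The plan is to recognize both sides of the asserted identity, viewed as functions of $r$, as homomorphisms out of $F(S_{\Quotient})$ that agree on the free generating set $S_{\Quotient}$, and then to evaluate on a relator.

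Write $t = t_a^{\,\epsilon}$ with $a \in \C(S_{\Sp})$ and $\epsilon \in \{-1,1\}$. As recorded just before the statement of the lemma, the assignment $s \mapsto \premonoid{t}{s}$ extends to a homomorphism $F(S_{\Quotient}) \to \Quotient_g$; post-composing with $\pi$ yields a homomorphism
\[ \Phi_1 \colon F(S_{\Quotient}) \longrightarrow \Sp_{2g}(\Z)[2], \qquad \Phi_1(w) = \pi\!\left(\premonoid{t}{w}\right). \]
On the other hand, the evaluation map $F(S_{\Quotient}) \to \Quotient_g$ followed by $\pi$ and then by conjugation by $\overline{t}_a^{\,\epsilon}$ is a second homomorphism
\[ \Phi_2 \colon F(S_{\Quotient}) \longrightarrow \Sp_{2g}(\Z)[2], \qquad \Phi_2(w) = \overline{t}_a^{\,\epsilon}\,\pi(\overline{w})\,\overline{t}_a^{\,-\epsilon}; \]
here conjugation by $\overline{t}_a^{\,\epsilon}$ is an automorphism of $\Sp_{2g}(\Z)[2]$ because the level-two congruence subgroup is normal in $\Sp_{2g}(\Z)$, so $\Phi_2$ does indeed take values in $\Sp_{2g}(\Z)[2]$, and it is a homomorphism as a composite of homomorphisms.

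Next I would apply Proposition~\ref{prop:consistent}: for every generator $s_c \in S_{\Quotient}$ it gives $\Phi_1(s_c) = \pi(\premonoid{t}{s_c}) = \overline{t}_a^{\,\epsilon}\,\pi(\overline{s}_c)\,\overline{t}_a^{\,-\epsilon} = \Phi_2(s_c)$. Since a homomorphism out of a free group is determined by its values on a free generating set, $\Phi_1 = \Phi_2$. Finally, evaluating on the relator $r$ and using that $\overline{r} = 1$ in $\Quotient_g$, we obtain $\pi(\premonoid{t}{r}) = \Phi_1(r) = \Phi_2(r) = \overline{t}_a^{\,\epsilon}\,\pi(1)\,\overline{t}_a^{\,-\epsilon} = 1$. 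There is no genuine obstacle here; the only point requiring a moment's care is checking that $\Phi_2$ lands in $\Sp_{2g}(\Z)[2]$, which is immediate from normality, and essentially all the content is carried by Proposition~\ref{prop:consistent}.
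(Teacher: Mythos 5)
Your argument is correct and is exactly the content of the paper's one-line proof, which simply notes that the lemma ``follows immediately from Proposition~\ref{prop:consistent}''; you have spelled out the ``immediate'' deduction (two homomorphisms out of $F(S_{\Quotient})$ agreeing on generators, hence equal, then evaluated on a relator). Same approach, just written out explicitly.
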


\begin{lemma}
\label{lemma:Q red}
Assume $\BTorelli_{2h+1} = \Theta_{2h+1}$ for $h < g$.  Let $t_a \in S_{\Sp}$, let $\epsilon = \pm 1$, and let $r \in F(S_\Quotient)$ be a relator for $\Quotient_{g}$.  
Suppose there is an essential simple closed curve $d$ in $\D_{2g+1}$ disjoint from $a$ and from each curve $c$ of 
$\C(S_\mathcal{Q})$ such that $s_c^{\pm 1}$ appears in $r$.  Then $\premonoid{t_a^{\epsilon}}{r} = 1$.
\end{lemma}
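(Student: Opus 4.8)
The plan is to combine the reducibility criterion of Lemma~\ref{lemma:quasipreserve} with the fact (Lemma~\ref{lemma:pi of r}) that $\pi\left(\premonoid{t_a^{\epsilon}}{r}\right)=1$ whenever $r$ is a relator for $\Quotient_g$. Thus it suffices to show that $\premonoid{t_a^{\epsilon}}{r}$ is reducible, namely that it is $\rho$ of a pure braid that fixes the isotopy class of some essential simple closed curve in $\D_{2g+1}$. Write $r$ as a word in the generators $s_c$, and let $c_1,\dots,c_m$ be the curves with $s_{c_i}^{\pm 1}$ appearing in $r$. By hypothesis there is an essential simple closed curve $d$ in $\D_{2g+1}$ disjoint from $a$ and from every $c_i$. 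First I would replace all of these curves by geodesic representatives for a fixed complete hyperbolic structure on the punctured disk $\D_{2g+1}$; then $a$ and $c_i$ are automatically in minimal position, so the surgered curves $a_+(c_i)=\Surger(|\hat\imath|(a,c_i)\,a,\,c_i)$ and $a_-(c_i)=\Surger(c_i,|\hat\imath|(a,c_i)\,a)$ are defined, and $d$ is disjoint from $a$ and from each $c_i$.

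The key step is the observation that $a_{\pm}(c_i)$ admits a representative contained in a regular neighborhood of $a\cup c_i$: the surgery of Figure~\ref{figure:surgery2curve} is a purely local modification performed near the points of $a\cap c_i$, and discarding inessential components only shrinks the result. Since $a\cup c_i$ is disjoint from $d$, so is this representative of $a_{\pm}(c_i)$. Choosing such representatives for all of the finitely many $c_i$ and a neighborhood $N$ of $d$ disjoint from all of them, each Dehn twist $T_{a_{\pm}(c_i)}$ can be taken to be supported off $N$; hence any product of these twists and their inverses restricts to the identity near $d$, and in particular fixes the isotopy class of $d$. By the definition of $\premonoid{t_a^{\epsilon}}{\cdot}$, the generator $s_{c_i}$ is sent to $\rho\left(T_{a_+(c_i)}\right)$ or $\rho\left(T_{a_-(c_i)}\right)$ according to the sign of $\epsilon$, so $\premonoid{t_a^{\epsilon}}{r}$ is the image under $\rho$ of such a product, hence of an element of $\PureBraid_{2g+1}$ preserving $[d]$. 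Therefore $\premonoid{t_a^{\epsilon}}{r}$ is reducible (along the essential curve $d$).

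It then remains only to invoke the two lemmas: Lemma~\ref{lemma:pi of r} gives $\pi\left(\premonoid{t_a^{\epsilon}}{r}\right)=1$ since $r$ is a relator for $\Quotient_g$, and Lemma~\ref{lemma:quasipreserve}, applicable because of the standing assumption $\BTorelli_{2h+1}=\Theta_{2h+1}$ for $h<g$, then yields $\premonoid{t_a^{\epsilon}}{r}=1$. The only point requiring any care is the geometric bookkeeping of the first two paragraphs --- arranging $a$, $c_i$, $d$ in a common configuration and checking that $a_{\pm}(c_i)$ stays in a neighborhood of $a\cup c_i$ --- but this is immediate from the local nature of the surgery operation, so I do not anticipate a genuine obstacle.
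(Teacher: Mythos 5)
Your proposal is correct and follows essentially the same route as the paper's proof: observe that surgering curves disjoint from $d$ yields curves disjoint from $d$, so each $\genbygen{t_a^\epsilon}{s_{c_i}}$ is reducible along $d$, hence so is $\premonoid{t_a^{\epsilon}}{r}$ (reducibility along $d$ is a subgroup property), and then apply Lemma~\ref{lemma:pi of r} and Lemma~\ref{lemma:quasipreserve}. The extra geometric bookkeeping with hyperbolic geodesics and regular neighborhoods is a harmless elaboration of the paper's terser remark that the surgery is local.
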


\begin{proof}
Write $r = s_{c_{i_1 j_1}}^{\epsilon_1} \cdots s_{c_{i_n j_n}}^{\epsilon_n}$ 
with $\epsilon_i=\pm 1$.  By hypothesis  each $c_{i_kj_k}$ is disjoint from $d$.  By definition of $\premonoid{t_a^{\epsilon}}{r}$, we have:
\[ \premonoid{t_a^{\epsilon}}{r} = (\genbygen{t_a^\epsilon}{s_{c_{i_1 j_1}}})^{\epsilon_1} \cdots (\genbygen{t_a^\epsilon}{s_{c_{i_n j_n}}})^{\epsilon_n}. \]
As $a$ is disjoint from $d$ and each $c_{i_kj_k}$ is disjoint from $d$, it follows from the definition 
of the action that each  $\genbygen{t_a^\epsilon}{s_{c_{i_k j_k}}}$ is reducible along $d$ (that is, 
if we surger two curves that are disjoint from $d$, the result is disjoint from $d$).  Since the set 
of elements of $\Quotient_g$ that are reducible along $d$ forms a subgroup of $\Quotient_g$, it follows that 
$\premonoid{t_a^{\epsilon}}{r}$ is reducible along $d$. 
By Lemma~\ref{lemma:pi of r}, $\pi(\premonoid{t_a^{\epsilon}}{r}) = 1$.  Lemma~\ref{lemma:quasipreserve} thus implies that $\premonoid{t_a^{\epsilon}}{r} = 1$.
\end{proof}

The next two lemmas give generating sets for two kinds of subgroups of $\PureBraid_{2g+1}$.  The first follows from the fact that any inclusion $\D_n \to \D_{2g+1}$ respecting marked points induces an inclusion on the level of mapping class groups \cite[Theorem 3.18]{FarbMargalitPrimer}.

\begin{lemma}
\label{lemma:diskstab}
Let $\Delta$ be a convex subdisk of $\D_{2g+1}$ containing $n$ marked points in its interior.  Then the subgroup of 
$\PureBraid_{2g+1}$ consisting of elements with representatives 
supported in $\Delta$ is isomorphic to $\PureBraid_{n}$ and is generated by the Dehn
twists $T_{c_{ij}}$ with $p_i,p_j \in \Delta$.
\end{lemma}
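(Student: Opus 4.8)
The idea, as in the remark preceding the lemma, is to realize the subgroup as the image of the inclusion homomorphism $\iota\colon\PureBraid_n\to\PureBraid_{2g+1}$ induced by an embedding of a disk with $n$ marked points onto $\Delta$. I would first dispose of the cases $n\le 1$, where $\PureBraid_n$ is trivial, no curve $c_{ij}$ has $p_i,p_j\in\Delta$, and every element of $\PureBraid_{2g+1}$ supported in a disk with at most one marked point is trivial; so both sides of the asserted equalities are trivial. Assume now $n\ge 2$. Since $\Delta$ is convex and the $2g+1$ marked points of $\D_{2g+1}$ are in convex position, the $n$ marked points lying in $\Delta$ are in convex position within $\Delta$; thus $(\Delta,\{p_i : p_i\in\Delta\})$ is a copy of the standard configuration $\D_n$, and $\PureBraid_n=\PMod(\D_n)\cong\PMod(\Delta)$. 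Extending homeomorphisms of $\Delta$ by the identity on the complement defines a homomorphism $\iota\colon\PMod(\Delta)\to\Mod(\D_{2g+1})$, landing in $\PureBraid_{2g+1}$ because such a homeomorphism fixes each marked point of $\D_{2g+1}$.

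Next I would check that $\iota$ is injective with image exactly the set of elements of $\PureBraid_{2g+1}$ having a representative supported in $\Delta$. Injectivity is precisely the fact cited before the lemma, \cite[Theorem 3.18]{FarbMargalitPrimer}; its hypotheses hold since $n\ge 2$ makes $\partial\Delta$ essential in $\D_{2g+1}$. As for the image: one inclusion is immediate from the construction of $\iota$; conversely, if $\phi\in\PureBraid_{2g+1}$ is represented by a homeomorphism $f$ that is the identity outside $\Delta$, then $f|_\Delta$ fixes $\partial\Delta$ pointwise and fixes each marked point of $\Delta$ (because $\phi$ is \emph{pure}), so $[f|_\Delta]\in\PMod(\Delta)$ and $\iota([f|_\Delta])=\phi$. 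This establishes the isomorphism part of the lemma.

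Finally, for the generating set I would apply Artin's theorem, recalled in Section~\ref{section:presentations}, to the disk $\Delta$: the group $\PMod(\Delta)\cong\PureBraid_n$ is generated by the Dehn twists $T_{\gamma_{ij}}$ about the convex curves $\gamma_{ij}\subset\Delta$ surrounding the pairs $\{p_i,p_j\}$ of marked points of $\Delta$. Since $\Delta$ is convex and its marked points are in convex position, the chord $\overline{p_ip_j}$ lies in $\Delta$ and contains no other marked point, so a thin neighborhood of it is a convex region of $\D_{2g+1}$ enclosing exactly $\{p_i,p_j\}$; hence $\gamma_{ij}$ is isotopic in $\D_{2g+1}$ to $c_{ij}$ and $\iota(T_{\gamma_{ij}})=T_{c_{ij}}$. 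As the $T_{\gamma_{ij}}$ generate $\PMod(\Delta)$, their images $T_{c_{ij}}$, over the pairs with $p_i,p_j\in\Delta$, generate the image of $\iota$, which is the subgroup in the statement. The only substantive point is the injectivity of $\iota$, i.e.\ verifying that $\partial\Delta$ is essential so that \cite[Theorem 3.18]{FarbMargalitPrimer} applies; the remaining arguments are the change-of-coordinates principle for simple closed curves in a punctured disk together with unravelling the definitions.
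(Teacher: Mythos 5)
Your proof is correct and follows the same approach the paper uses (or rather, leaves implicit in a one-sentence attribution): realize the subgroup as the image of the inclusion $\PMod(\Delta)\cong\PureBraid_n\hookrightarrow\PureBraid_{2g+1}$, use \cite[Theorem~3.18]{FarbMargalitPrimer} for injectivity, and transport Artin's generating set along this inclusion. One small wording quibble: what Theorem~3.18 actually requires for the kernel to vanish is that $\Delta$ not be a closed annulus and that $\partial\Delta$ not bound a disk or co-bound an annulus with another boundary component of $\Delta$ in the complement $\D_{2g+1}\setminus\Delta$ (which holds here because the complement is an annulus and $\Delta$ has only one boundary component), rather than literally that $\partial\Delta$ be essential in $\D_{2g+1}$; in particular your argument goes through even when $n=2g+1$ and $\partial\Delta$ is isotopic to $\partial\D_{2g+1}$.
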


\begin{lemma}
\label{lemma:arcstab}
Let $1 \leq i,j \leq n$ be consecutive integers modulo $n$.  Then the stabilizer in $\PureBraid_{n}$ of the curve $c_{ij}$ is generated by the Dehn 
twists about curves in the set
\[ \C_{ij} = \{c_{ij}\} \cup \{ c_{k\ell} \mid k,\ell \notin \{i,j\} \} \cup \{ c_{ijk} \mid k \notin \{i,j\} \}.  \]
\end{lemma}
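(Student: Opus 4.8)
The plan is to cut $\D_n$ along $c_{ij}$, describe $(\PureBraid_n)_{c_{ij}}$ in terms of the two resulting pieces, and then pull back a generating set. Let $\Delta$ be a small convex subdisk of $\D_n$ with $\partial\Delta = c_{ij}$ whose interior contains exactly the two marked points $p_i,p_j$, and let $\mathcal{D} = \D_n \setminus \mathrm{int}(\Delta)$, a disk with $n-2$ marked points and one extra boundary circle $c_{ij}$. By the rotational symmetry of the marked-point configuration we may take $(i,j)$ to be any convenient adjacent pair. Any mapping class fixing the isotopy class of $c_{ij}$ is represented by a homeomorphism restricting to homeomorphisms of $\Delta$ and of $\mathcal{D}$, and by the standard cutting homomorphism for mapping class groups of surfaces with marked points \cite[Section 3.6]{FarbMargalitPrimer}, $(\PureBraid_n)_{c_{ij}}$ is generated by $T_{c_{ij}}$ together with the subgroup $G$ of classes supported in $\mathcal{D}$ (the $\Delta$-piece contributes only $\langle T_{c_{ij}}\rangle$, since the pure mapping class group of a twice-marked disk is infinite cyclic, generated by $T_{c_{ij}}$).

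Next I would apply the capping homomorphism to $G$. Collapsing the inner boundary $c_{ij}$ of $\mathcal{D}$ to a single new marked point $q$ identifies $\mathcal{D}$ with $\D_{n-1}$, whose marked points are $\{p_k : k \notin \{i,j\}\} \cup \{q\}$, and yields a surjection $\Phi : G \to \PureBraid_{n-1}$ with $\ker\Phi = \langle T_{c_{ij}}\rangle$ \cite[Section 3.6]{FarbMargalitPrimer}. Hence $(\PureBraid_n)_{c_{ij}}$ is generated by $T_{c_{ij}}$ together with any lifts of a generating set for $\PureBraid_{n-1}$.

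By Artin's theorem \cite[Section 9.3]{FarbMargalitPrimer}, $\PureBraid_{n-1}$ is generated by the Dehn twists about the convex curves surrounding pairs of its marked points. A pair $\{p_k,p_l\}$ with $k,l \notin \{i,j\}$ lifts to $T_{c_{kl}}$: since $p_i$ and $p_j$ are adjacent, the chord $\overline{p_k p_l}$ does not cross $\overline{p_i p_j}$, so $c_{kl}$ can be isotoped off $\Delta$, and then $\Phi$ carries $T_{c_{kl}}$ to the corresponding generator. A pair $\{p_k,q\}$ with $k \notin \{i,j\}$ lifts to $T_{c_{ijk}}$, because the preimage under the collapse of a convex curve around $p_k$ and $q$ is a convex curve around $p_i,p_j,p_k$. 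Thus $(\PureBraid_n)_{c_{ij}}$ is generated by $\{T_c : c \in \C_{ij}\}$. For the reverse inclusion, every curve in $\C_{ij}$ either equals $c_{ij}$, or is disjoint from $c_{ij}$ in $\D_n$ (the curves $c_{kl}$ with $k,l \notin \{i,j\}$), or bounds a disk containing $c_{ij}$ (the curves $c_{ijk}$), so the corresponding twist fixes the isotopy class of $c_{ij}$, giving $\langle T_c : c \in \C_{ij}\rangle \subseteq (\PureBraid_n)_{c_{ij}}$.

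The main work is the cutting-and-capping bookkeeping: one must verify that stabilizing the isotopy class of $c_{ij}$ already suffices to define the restriction and collapsing homomorphisms, and identify both kernels as the single Dehn twist $T_{c_{ij}}$. These are routine consequences of the theory in \cite[Section 3.6]{FarbMargalitPrimer}. The only essential use of the hypothesis that $p_i$ and $p_j$ are adjacent is in the claim that every convex curve $c_{kl}$ with $k,l \notin \{i,j\}$ is disjoint from $c_{ij}$; this fails for non-adjacent pairs (for instance $c_{13}$ and $c_{24}$ in $\D_4$ cannot be made disjoint), which is why the lemma is stated only for adjacent pairs.
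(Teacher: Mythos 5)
Your proof is correct and takes essentially the same route as the paper: both produce the short exact sequence
\[1 \to \langle T_{c_{ij}}\rangle \to (\PureBraid_n)_{c_{ij}} \to \PureBraid_{n-1} \to 1\]
and then observe that the Artin generators of $\PureBraid_{n-1}$ lift to the twists about $c_{k\ell}$ (for $k,\ell \notin \{i,j\}$) and $c_{ijk}$. The only cosmetic difference is that the paper obtains the quotient map by collapsing the arc $\gamma_{ij}$ joining $p_i$ to $p_j$ to a single marked point, rather than by your two-step cut-along-$c_{ij}$-then-cap procedure; these are equivalent, with the paper's phrasing sidestepping some of the bookkeeping you flag as ``routine.''
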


\begin{proof}

Let $(\PureBraid_{n})_{c_{ij}}$ denote the stabilizer in $\PureBraid_{n}$ of $c_{ij}$, and let $\gamma_{ij}$ denote 
the straight line segment connecting $p_i$ to $p_j$.   Any element of the group $(\PureBraid_{n})_{c_{ij}}$ has a representative that preserves $\gamma_{ij}$.  Any such  homeomorphism descends to a homeomorphism
of the disk with $n-1$ marked points obtained from $\D_{n}$ by collapsing $\gamma_{ij}$ to a single marked point.
This procedure gives rise to a short exact sequence:
\[ 1 \to \langle T_{c_{ij}} \rangle  \to (\PureBraid_{n})_{c_{ij}} \to \PureBraid_{n-1} \to 1; \]
cf. \cite[Proposition 3.20]{FarbMargalitPrimer}.  The curve $c_{ij}$ lies in $\C_{ij}$, and the Dehn twists about the other curves in $\C_{ij}$ map to the Artin generators for $\PureBraid_{n-1}$.  The lemma follows.
\end{proof}

Finally, in light of Lemma~\ref{lemma:arcstab}, we need to understand $\premonoid{t}{w}$, where $t \in S_{\Sp}^{\pm 1}$ and $w$ is an element of $F(S_{\Quotient})$ mapping to $\rho(T_{c_{ijk}}) \in \Quotient_g$.  We can obtain an explicit such $w$ using the relation in $\PureBraid_{2g+1}$ mentioned immediately after the list of relators for $\Quotient_g$ were introduced.  Indeed, if $s_{c_{ijk}} \in F(S_{\Quotient})$ is the element  $s_{c_{ij}}s_{c_{jk}}s_{c_{ik}}$, this relation tells us that $\overline s_{c_{ijk}} = \rho(T_{c_{ijk}})$.  We observe that, as an element of $F(S_{\Quotient})$, this $s_{c_{ijk}}$ depends on the order of $\{i,j,k\}$ (not just their cyclic order), though its image in $\Quotient_g$ only depends on the cyclic order.

\begin{lemma}
\label{lemma:3-curves}
Consider $1 \leq i,j,k \leq 2g+1$ with $i < j < k$ (up to cyclic permutation).  If $a=c_A \in \C(S_{\Sp})$ satisfies $i(a,c_{ij})=0$, then
$\premonoid{t_{a}^{\pm 1}}{s_{c_{ijk}}}$ is reducible along $c_{ij}$.
\end{lemma}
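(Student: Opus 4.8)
The plan is as follows. Recall that $s_{c_{ijk}}$ abbreviates the word $s_{c_{ij}}s_{c_{jk}}s_{c_{ik}}$ in $F(S_\Quotient)$, so by definition of the monoid action
\[ \premonoid{t_a^{\pm 1}}{s_{c_{ijk}}} \;=\; \rho\bigl(T_{a_{\pm}(c_{ij})}\,T_{a_{\pm}(c_{jk})}\,T_{a_{\pm}(c_{ik})}\bigr). \]
Since $i(a,c_{ij})=0$ we have $|\hat\imath|(a,c_{ij})=0$ (as noted in the proof of Lemma~\ref{lemma:curve action}), so $a_{\pm}(c_{ij})=c_{ij}$ and hence $\premonoid{t_a^{\pm 1}}{s_{c_{ijk}}}=\rho\bigl(T_{c_{ij}}\,T_{a_{\pm}(c_{jk})}\,T_{a_{\pm}(c_{ik})}\bigr)$. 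It therefore suffices to write this as $\rho(g)$ for some pure mapping class $g$ fixing the isotopy class of $c_{ij}$ (recall also that the elements of $\Quotient_g$ reducible along $c_{ij}$ form a subgroup, so it would even be enough to break the product up into such pieces).

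First I would dispose of the case $|A|=2$. Here every convex curve $c$ satisfies $i(a,c)\le 2$, so $|\hat\imath|(a,c)=i(a,c)/2$, and the right-hand side of Figure~\ref{figure:surgery2curve} identifies $a_{+}(c)$ and $a_{-}(c)$ with the images $H_a^{\pm 1}(c)$ of $c$ under the positive and negative half-twists about $a$. Using $H_a^{\pm 1}(c_{ij})=c_{ij}$ we get
\[ T_{c_{ij}}\,T_{a_{\pm}(c_{jk})}\,T_{a_{\pm}(c_{ik})} \;=\; H_a^{\pm 1}\bigl(T_{c_{ij}}\,T_{c_{jk}}\,T_{c_{ik}}\bigr)H_a^{\mp 1}. \]
The pure braid relation expressing $T_{c_{ijk}}$ as a product of the three twists $T_{c_{ij}},T_{c_{jk}},T_{c_{ik}}$ gives $T_{c_{ij}}T_{c_{jk}}T_{c_{ik}}=T_{c_{ijk}}\,\theta$ with $\theta\in\Theta_{2g+1}$; since $\Theta_{2g+1}$ is generated by squares of Dehn twists about curves surrounding odd numbers of marked points — a set preserved by conjugation in $\Braid_{2g+1}$ — it is normal in $\Braid_{2g+1}$, so conjugating by $H_a^{\pm 1}$ stays in the same coset. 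Hence $\premonoid{t_a^{\pm 1}}{s_{c_{ijk}}}=\rho\bigl(H_a^{\pm 1}T_{c_{ijk}}H_a^{\mp 1}\bigr)=\rho\bigl(T_{H_a^{\pm 1}(c_{ijk})}\bigr)$; and since $c_{ijk}$ is disjoint from $c_{ij}$ while $H_a$ is supported away from $c_{ij}$ and so preserves $c_{ij}$ together with its complementary regions, the curve $H_a^{\pm 1}(c_{ijk})$ is again disjoint from $c_{ij}$, so $T_{H_a^{\pm 1}(c_{ijk})}$ fixes $c_{ij}$.

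The case $|A|\in\{4,6\}$ is where $a_{\pm}$ is not induced by any homeomorphism of $\D_{2g+1}$, and this is the step I expect to be the main obstacle. Here I would use that $\C(S_\Sp)$ contains only the finitely many ``big'' curves $c_{1234},c_{1245},c_{1256},c_{2356},c_{3456},c_{1267},c_{2345},c_{123456},c_{134567},c_{123567},c_{123467}$, each of which meets every convex curve in at most four points (Lemma~\ref{lemma:the fact}), and argue case by case. If $a$ is disjoint from $c_{ijk}$ then it is disjoint from each of $c_{ij},c_{jk},c_{ik}$, so $a_{\pm}$ fixes all three and $\premonoid{t_a^{\pm 1}}{s_{c_{ijk}}}=\rho(T_{c_{ijk}})$, which is reducible along $c_{ij}$ since $c_{ijk}$ and $c_{ij}$ are disjoint. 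If $a$ meets $c_{ijk}$, then because $a$ is disjoint from $c_{ij}$ all the surgeries defining $a_{\pm}(c_{jk})$ and $a_{\pm}(c_{ik})$ take place in a neighborhood of $a$ disjoint from $c_{ij}$; inspecting the surgery pictures of Figures~\ref{figure:surgery2curve}, \ref{figure:surgery3} and~\ref{figure:surgery4curve} one verifies that $T_{c_{ij}}\,T_{a_{\pm}(c_{jk})}\,T_{a_{\pm}(c_{ik})}$ is, modulo $\Theta_{2g+1}$, a single Dehn twist about the ``surgered'' version of $c_{ijk}$, which still surrounds marked points whose convex hull is disjoint from $c_{ij}$ — so again this element is reducible along $c_{ij}$. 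In every case $\premonoid{t_a^{\pm 1}}{s_{c_{ijk}}}$ is reducible along $c_{ij}$, which is the assertion of the lemma.
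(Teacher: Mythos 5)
Your treatment of the case $|A|=2$ is correct, and actually cleaner than the paper's: since $a_{\pm}$ is then induced by the half-twist $H_a^{\pm 1}$, the whole product is $H_a^{\pm 1}T_{c_{ijk}}H_a^{\mp 1}=T_{H_a^{\pm 1}(c_{ijk})}$, which fixes $c_{ij}$ because $H_a$ fixes $c_{ij}$ and $c_{ijk}$ is disjoint from $c_{ij}$. (One small wrinkle: there is no need to introduce a $\theta\in\Theta_{2g+1}$ correction --- the identity $T_{c_{ij}}T_{c_{jk}}T_{c_{ik}}=T_{c_{ijk}}$ already holds on the nose in $\PureBraid_{2g+1}$ by the braid relation together with the triangle relation, so the conjugation by $H_a^{\pm 1}$ is an equality in $\PureBraid_{2g+1}$.)

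However, for $|A|\ge 4$ --- the case you flag as the main obstacle --- the proposal does not give an argument. You write that by ``inspecting the surgery pictures'' one verifies that $T_{c_{ij}}\,T_{a_{\pm}(c_{jk})}\,T_{a_{\pm}(c_{ik})}$ is, modulo $\Theta_{2g+1}$, a single Dehn twist about a surgered version of $c_{ijk}$. This is not substantiated, and I believe the claimed form is wrong: the paper's computation in this case rests on the lantern relation in a four-holed sphere, which produces a \emph{product of two} Dehn twists $T_dT_e$ about curves $d,e$ disjoint from $c_{ij}$ (the other two boundary curves $T_{c_i},T_{c_j}$ being trivial), not a single twist. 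Establishing that such a lantern configuration exists --- and therefore that the right-hand side consists of twists about curves disjoint from $c_{ij}$ --- requires a genuine case analysis according to whether $A\cap\{i,j,k\}$ is $\{k\}$, $\{i,j\}$, or $\emptyset$, together with a computation of $|\hat\imath|(a,c_{jk})$ and $|\hat\imath|(a,c_{ik})$ in each configuration (e.g.\ in the case $A\cap\{i,j,k\}=\emptyset$ one must rule out the subcase $|\hat\imath|(a,c_{jk})=|\hat\imath|(a,c_{ik})=0$ already handled, leaving $|\hat\imath|=2$, which changes the surgered curve). Since $a_{\pm}$ is not induced by any homeomorphism of $\D_{2g+1}$ when $|A|\ge 4$, no conjugation trick is available, and the lantern relation is the essential ingredient that your proposal is missing. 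As written, the proof has a genuine gap here.
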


\Figure{figure:3curve}{3curve}{The curves and four-holed spheres used in the proof of Lemma \ref{lemma:3-curves}.  From top to bottom, the rows correspond to the cases where $A \cap \{i,j,k\}$ is $\{k\}$, $\{i,j\}$, and $\emptyset$, respectively.  The dotted circles contain unspecified numbers of marked points.}

\begin{proof}

We will deal with $\premonoid{t_a}{s_{c_{ijk}}}$; the proof for $\premonoid{t_a^{-1}}{s_{c_{ijk}}}$ is similar.

If $|\hat\imath|(a,c_{jk}) = |\hat\imath|(a,c_{ik}) = 0$ (which holds in particular when
$i(a,c_{jk}) = i(a,c_{ik}) =0$), then we have
$a_+(c_{jk}) = c_{jk}$ and $a_+(c_{ik}) = c_{ik}$, so
\[\premonoid{t_a}{s_{c_{ijk}}} = (\premonoid{t_a}{s_{c_{ij}}}) (\premonoid{t_a}{s_{c_{jk}}}) (\premonoid{t_a}{s_{c_{ik}}}) = \rho(T_{c_{ij}}) \rho(T_{c_{jk}}) \rho(T_{c_{ik}}) = \rho(T_{c_{ijk}}).\]
Since $T_{c_{ijk}}$ fixes $c_{ij}$, the lemma follows.

We can therefore assume that at least one of $|\hat\imath|(a,c_{jk})$ and $|\hat\imath|(a,c_{ik})$ is nonzero.
The proof now divides into three cases depending on $A \cap \{i,j,k\}$.  Observe that $A$ is either disjoint
from or contains $\{i,j\}$.  Also, since either $|\hat\imath|(a,c_{jk})$ or $|\hat\imath|(a,c_{ik})$ is nonzero,
we cannot have $\{i,j,k\} \subset A$.

The first case is $A \cap \{i,j,k\} = \{k\}$; see the top row of Figure \ref{figure:3curve}.  In this case, $|\hat\imath|(a,c_{jk}) = |\hat\imath|(a,c_{ik}) = 1$, so
$a_+(c_{jk})$ and $a_+(c_{ik})$ are as in the top row of Figure \ref{figure:3curve}.  The
key to this step of the proof (as well as the subsequent ones) is the lantern relation in the mapping class
group (see \cite[Proposition 5.1]{FarbMargalitPrimer}).  This is a relation between seven Dehn twists
that lie in a sphere with four boundary components in any surface; the four-holed sphere
in this case is shaded in the top row of Figure \ref{figure:3curve}.   The associated lantern relation is
\[ T_{c_{ij}}T_{a_+\left(c_{jk}\right)}T_{a_+\left(c_{ik}\right)} 
=T_{d} T_{e} T_{c_i} T_{c_j}. \]
We can therefore compute that
\begin{align*}
\premonoid{t_a}{s_{c_{ijk}}} &= \premonoid{t_a}{(s_{c_{ij}}s_{c_{jk}}s_{c_{ik}})}=(\premonoid{t_{a}}{s_{c_{ij}}}) (\premonoid{t_{a}}{s_{c_{jk}}}) (\premonoid{t_{a}}{s_{c_{ik}}}) \\ 
&=\rho\left(T_{c_{ij}}\right) \rho\left(T_{a_+\left(c_{jk}\right)}\right) \rho\left(T_{a_+\left(c_{ik}\right)}\right)\\
&=\rho\left(T_{d} T_{e} T_{c_i} T_{c_j}\right) = \rho\left(T_{d} T_{e}\right);
\end{align*}
in order, the equalities use the definition of $s_{c_{ijk}}$, the definition of $\premonoid{t_a}{w}$, the definition of $\premonoid{t_a}{s_{c_{\ell m}}}$,
the above lantern relation, and the fact that $T_{c_i}$ and $T_{c_j}$ are trivial.  Since the curves $d$
and $e$ are disjoint from $c_{ij}$, it follows that $\premonoid{t_a}{s_{c_{ijk}}}$ is reducible along $c_{ij}$, as desired. 

The second case is $A \cap \{i,j,k\} = \{i,j\}$; refer now to the middle row of Figure \ref{figure:3curve}.  In
this case, we again have $|\hat\imath|(a,c_{jk}) = |\hat\imath|(a,c_{ik}) = 1$, so
$a_+(c_{jk})$ and $a_+(c_{ik})$ are as shown.  Just
like in the previous case, we can prove that $\premonoid{t_a}{s_{c_{ijk}}}$ is reducible along $c_{ij}$ using the indicated lantern relation.

The final case is $A \cap \{i,j,k\} = \emptyset$; refer to the bottom row of Figure~\ref{figure:3curve}.  Since at least one of $|\hat\imath|(a,c_{jk})$ 
and $|\hat\imath|(a,c_{ik})$ is nonzero, we cannot have $i(a,c_{ijk}) = 0$.  Using Lemma \ref{lemma:the fact},
we deduce that $a$ and $c_{ijk}$ are as shown.  We know
that $a$ must surround an even number of marked points, so the parities of the numbers of marked points
inside the dotted circles on the bottom row must be the same.  If this parity is even, then
$|\hat\imath|(a,c_{ik})=|\hat\imath|(a,c_{jk})=0$ (cf. the proof of Lemma~\ref{lemma:curve action}).  This is excluded by our assumptions (it was dealt with in the first paragraph of this proof), so this parity must be odd.  It then follows that $|\hat\imath|(a,c_{jk})=|\hat\imath|(a,c_{ik})=2$.  Therefore $a_+(c_{ij})$ and $a_+(c_{jk})$ and $a_+(c_{ik})$ are as in the bottom row of Figure \ref{figure:3curve}.  Just like in the case $A \cap \{i,j,k\} = \{k\}$, we can prove that $\premonoid{t_a}{s_{c_{ijk}}}$ is reducible along $c_{ij}$ using the indicated lantern relation.
\end{proof}

\begin{proof}[Proof of Proposition~\ref{prop:q well def}]

The proof will be broken into two steps.  For the first, let $R_{\PB} \subset R_{\Quotient}$
be the subset consisting of the disjointness, triangle, and crossing relations.  As was observed
in Section~\ref{section:presentations}, we have $\PureBraid_{2g+1} \cong \Presentation{S_{\Quotient}}{R_{\PB}}$.

\medskip \hspace*{3ex}\emph{Step 1.}
For $t \in S_{\Sp}^{\pm 1}$ and $r \in R_{\PB}$, we have $\premonoid{t}{r}=1$.  

\medskip

Write $t = t_a^{\epsilon}$ with $\epsilon = \pm 1$.
By Lemma~\ref{lemma:Q red}, it suffices to find an essential simple closed curve $d$ in $\D_{2g+1}$ 
disjoint from $a$ and from each curve of $\C(S_{\Quotient})$ that appears in $r$.

Denote by $\Delta_r$ the convex hull of curves of 
$\C(S_\mathcal{Q})$ that appear in $r$.  Examining the relations in $R_{\PB}$, we see that $\Delta_r$ contains at most $4$ marked points, and hence
there are at least $3$ marked points outside of $\Delta_r$.

It follows from Lemma~\ref{lemma:the fact} that the intersection of $a$ with the closure of the exterior of $\Delta_r$ is a union of at most two arcs.  These two arcs partition the marked points outside of $\Delta_r$ into at most three sets.  We deduce that one of the following holds:
\begin{enumerate}
\item some pair of marked points can be connected by an arc $\alpha$ disjoint from $a \cup \Delta_r$, or
\item the convex hull of of $a \cup \Delta_r$ contains at least one marked point in its exterior.  
\end{enumerate}
In the first case, we can take $d$ to be the boundary of a regular neighborhood of $\alpha$.  In the second case, 
we can take $d$ to be the boundary of the convex hull of $a \cup \Delta_r$.

\medskip

\hspace*{3ex}\emph{Step 2.} For $t \in S_{\Sp}^{\pm 1}$ and $r \in R_{\Quotient}$ an odd twist relator, $\premonoid{t}{r} = 1$.

\medskip

Again, write $t = t_a^{\epsilon}$ with $\epsilon = \pm 1$.  Consider $B \subset \{1,\ldots,2g+1\}$ with $3 \leq |B| \leq 2g+1$ and $|B|$ odd.  There is an odd twist relator $r_B$ corresponding to $B$ and its image under the map $F(S_\Quotient) \to \PureBraid_{2g+1}$ is $T_{c_B}^2$.  We need to show $\premonoid{t}{r_B}=1$.

It follows from Step 1 that if two elements $w$ and $w'$ of $F(S_\Quotient)$ have the same image in $\PureBraid_{2g+1}$, then $\premonoid{t}{w}=\premonoid{t}{w'}$.  Thus, we may replace the odd twist relator $r_B$ with any element of $F(S_\Quotient)$ whose image in $\PureBraid_{2g+1}$ is $T_{c_B}^2$.

First we treat the case where there is a marked point $p_k$ exterior to both $a$ and $c_B$.  Let $A_k = \{1,\dots,\hat k,\dots,2g+1\}$.  By Lemma~\ref{lemma:diskstab}, we can write $T_{c_B}^2$ as a product of Dehn twists (and inverse Dehn twists) about the $c_{ij}$ where $i,j \neq k$.  This implies
that there is a product $r_B'$ of $s_{c_{ij}}^{\pm 1} \in F(S_\Quotient)$ with $i,j \neq k$ whose image in $\PureBraid_{2g+1}$ is $T_{c_B}^2$.  Since $T_{c_B}^2$ lies in $\Theta_{2g+1}$, we have that $r_B'$ is a relator for $\Quotient_g$.  Since $a$ and each $c_{ij}$ with $i,j \neq k$ is disjoint from $c_{A_k}$, Lemma~\ref{lemma:Q red} gives that $\premonoid{t}{r_B'}$, hence $\premonoid{t}{r_B}$, is equal to 1, as desired.

Next suppose all marked points lie interior to either $a$ or $c_B$.  The proof of this case is similar, but we will have to contend with curves that surround three marked points, not just two, so Lemma~\ref{lemma:Q red} does not apply directly.  To begin, we claim that there exist $i,j \in B$ that are consecutive in $B$ (cyclically ordered) such that $c_{ij}$ is disjoint from $a$.

If there are at least three marked points of $\D_{2g+1}$ exterior to $a$, then it follows from Lemma~\ref{lemma:the fact} that $a$ and $B$ satisfy the claim.  The only remaining case for the claim is where $g=3$ (so $\D_{2g+1}=\D_7$) and $a$ surrounds 6 marked points.  In this case $i$ and $j$ can be taken to be any two marked points that lie inside $a$ and $c_B$ and are consecutive in $B$.

It remains to show that given $i,j$ consecutive in $B$ with $c_{ij}$ disjoint from $a$, we have $\premonoid{t}{r_B}=1$.  By Lemma~\ref{lemma:arcstab}, 
the element $T_{c_B}^2$ is the image in $\PureBraid_{2g+1}$ of a product $r_B'$ of $s_c \in F(S_{\Quotient})$ with $c \in \C_{ij}$ (here
we are using the definition of $s_{c_{ijk}}$ given before Lemma \ref{lemma:3-curves}).  It follows from Lemma~\ref{lemma:3-curves} that $\premonoid{t}{r_B'}$ is reducible along $c_{ij}$.  It then follows from Lemmas~\ref{lemma:quasipreserve} and~\ref{lemma:pi of r} that $\premonoid{t}{r_B'}$, hence $\premonoid{t}{r_B}$, is equal to 1.
\end{proof}


\subsection{Well-definedness with respect to \boldmath$\Sp_{2g}(\Z)$ relations }
\label{section:calc2}

Proposition~\ref{prop:q well def} implies that the free monoid $\widehat{F}(S_{\Sp}^{\pm 1})$ acts on $\Quotient_g$; we write this action as $(t,\eta) \mapsto \monoid{t}{\eta}$.  By definition, $\monoid{t}{\eta}$ is equal to $\genbygen{t}{w}$ where $w \in F(S_\Quotient)$ and $\eta = \overline{w}$ is the image of $w$ in $\Quotient_g$.

Let $\widehat R_{\Sp}$ denote $R_{\Sp} \cup \{tt^{-1} \mid t \in S_{\Sp}\}$, thought of as a subset of the free monoid on $S_{\Sp}^{\pm 1}$.  The next proposition says that the monoid action of $\widehat{F}(S_{\Sp}^{\pm 1})$ on $\Quotient_g$ respects the relations in $\widehat R_{\Sp}$; in other words, the monoid action descends to a group action of $\Sp_{2g}(\Z)$ on $\Quotient_{g}$.  

\begin{proposition}
 \label{prop:sp well def}
Let $g \geq 3$ and assume $\BTorelli_{2h+1}=\Theta_{2h+1}$ for $h < g$.  For all $r \in \widehat R_{\Sp}$ and $\eta \in \Quotient_g$, we have $\monoid{r}{\eta}=1$.
\end{proposition}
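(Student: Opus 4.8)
The plan is to show that the monoid action of $\widehat{F}(S_{\Sp}^{\pm1})$ on $\Quotient_g$ produced in Proposition~\ref{prop:q well def} respects the relators in $\widehat R_{\Sp}$, following the same template as the proof of Proposition~\ref{prop:q well def}: reduce each relator to a statement about generators, move the resulting error term into $\ker\pi$ by naturality, and then kill it using Lemma~\ref{lemma:quasipreserve} after producing a reducing curve.

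\emph{Reduction to generators.} By Proposition~\ref{prop:q well def}, for each $t \in S_{\Sp}^{\pm1}$ the map $\eta \mapsto \monoid{t}{\eta}$ is a group endomorphism of $\Quotient_g$, and hence so is $\eta \mapsto \monoid{w}{\eta}$ for every word $w$ in $S_{\Sp}^{\pm1}$. Writing a relator $r \in \widehat R_{\Sp}$ as an equality of words $w_1 = w_2$ (with $w_2$ the empty word when $r = tt^{-1}$), the assertion $\monoid{r}{\eta}=1$ says precisely that the endomorphisms $\monoid{w_1}{\cdot}$ and $\monoid{w_2}{\cdot}$ coincide; since both are endomorphisms it is enough to check them on the generators $\rho(T_c)$, $c \in \C(S_{\Quotient})$, of $\Quotient_g$. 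So fix $c$ and set
\[ \delta = \monoid{w_1}{\rho(T_c)}\cdot\monoid{w_2}{\rho(T_c)}^{-1} \in \Quotient_g; \]
we must show $\delta = 1$.

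\emph{The error term lies in $\ker\pi$.} Induction on word length upgrades Proposition~\ref{prop:consistent} to the identity $\pi(\monoid{w}{\eta}) = \overline w\,\pi(\eta)\,\overline w^{-1}$, valid for every word $w$ in $S_{\Sp}^{\pm1}$ and every $\eta \in \Quotient_g$ (in the length-one step one passes from $\eta = \rho(T_c)$ to general $\eta$ using that both sides are homomorphisms in $\eta$). Applying this with $w = w_1$ and $w = w_2$ and using $\overline{w}_1 = \overline{w}_2$ in $\Sp_{2g}(\Z)$ gives $\pi(\delta) = 1$. Since $\BTorelli_{2h+1} = \Theta_{2h+1}$ for $h < g$ by hypothesis, Lemma~\ref{lemma:quasipreserve} now reduces the proof to showing that $\delta$ is \emph{reducible}.

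\emph{Producing a reducing curve --- the main obstacle.} Unwinding the monoid action, $\delta$ is a product of factors $\rho(T_e)^{\pm1}$, where each curve $e$ is obtained from $c$ by a bounded iteration of the surgery operations $b \mapsto b_{\pm}(\cdot)$ against the curves $b \in \C(S_{\Sp})$ appearing in $r$. By Lemma~\ref{lemma:the fact} and the picture analysis in the proof of Lemma~\ref{lemma:curve action}, every such $e$ lies inside $\Delta$, a regular neighborhood of the union of $c$ with the (boundedly many, convex) curves of $\C(S_{\Sp})$ occurring in $r$; in particular the number of marked points enclosed by $\Delta$ is bounded independently of $g$, since all curves of $\C(S_{\Sp})$ are $c_A$ with $A \subseteq \{1,\dots,7\}$ apart from the small consecutive-pair curves $a_i = c_{i,i+1}$. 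Exactly as in Step~1 of the proof of Proposition~\ref{prop:q well def}, whenever at least three marked points lie outside $\Delta$ one produces an essential simple closed curve $d$ disjoint from every $e$ --- either the boundary of a neighborhood of an arc joining two exterior marked points, or $\partial\Delta$ itself (which is essential as it encloses between $2$ and $2g$ marked points) --- so that $\delta$ is reducible along $d$, whence $\delta = 1$ by the previous paragraph. For $g$ sufficiently large this handles every relator in $\widehat R_{\Sp}$, and it handles most configurations for small $g$ as well. The remaining finitely many configurations --- in practice those in genus $3$, and those for which $\Delta$ engulfs all but a few marked points (this happens for the auxiliary relations built from the six-point curves $b_3, v, v', v''$, for the lantern relation, and for the $3$-chain and bounding pair relations via $b_0$) --- must be disposed of individually: in each case one writes down an explicit reducing curve, typically a $c_{ij}$ with $i,j$ cyclically consecutive, and uses lantern-relation rewritings in the style of Lemma~\ref{lemma:3-curves} and its proof to check that the relevant factors $\rho(T_e)^{\pm1}$ regroup into a pure braid fixing that curve. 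This residual and largely computational case analysis is the real content of the proof; everything preceding it is formal.
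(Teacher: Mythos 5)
The overall strategy --- reduce to generators, observe the error term lies in $\ker\pi$, then kill it by exhibiting a reducing curve and invoking Lemma~\ref{lemma:quasipreserve} --- is the right one and matches the paper's.  But your step ``\emph{Producing a reducing curve}'' contains a genuine gap: you assert that $\delta$ is ``a product of factors $\rho(T_e)^{\pm1}$ where each curve $e$ is obtained from $c$ by a bounded iteration of the surgery operations,'' and hence that all these $e$ sit inside a fixed bounded region $\Delta$.  That is not how the monoid action composes.  After one application one has $\monoid{t_n}{\overline s_c} = \rho(T_{(a_n)_+(c)})$, but $(a_n)_+(c)$ surrounds an even number of marked points --- typically four or more --- so $\rho(T_{(a_n)_+(c)})$ is \emph{not} a generator of $\Quotient_g$.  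Before applying $\monoid{t_{n-1}}{\cdot}$ one must re-express $\rho(T_{(a_n)_+(c)})$ as a word in the generators $\rho(T_{c_{ij}})$, and the subsequent surgery is applied to the $c_{ij}$ appearing in that chosen word, not to $(a_n)_+(c)$ directly.  There is no canonical such re-expression, and the resulting curves are not ``obtained from $c$ by iterated surgery'' in any intrinsic sense; without controlling the re-factorization, nothing forces them to stay inside any $\Delta$.  So your containment claim is unsubstantiated, and with it the claim that a single exterior curve $d$ works ``for $g$ sufficiently large.''

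The paper's proof addresses precisely this.  It isolates a ``reducibility criterion'' on pairs $(r,c)$ and proves (Lemma~\ref{lemma:reductionlemma}) that $\monoid{r}{\overline s_c}=\overline s_c$ whenever the criterion holds; the content of that lemma is an \emph{inductive choice of factorization} along the word $r=t_1\cdots t_n$, using Lemma~\ref{lemma:diskstab} (factoring inside a subdisk) or Lemmas~\ref{lemma:arcstab} and~\ref{lemma:3-curves} (factoring through the stabilizer of a convex $c_{k\ell}$) at every step, so that after each application the element remains reducible along the same fixed $d$.  This is not a ``residual and largely computational'' issue confined to a few small cases; it is the main mechanism that makes the reducing curve useful at all, and it is what your argument is missing.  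Once that lemma is in hand, the remaining finite check (which you correctly anticipate, including the single genuine exception for $g=3$: the relator $r_{(vii)}$ paired with $c_{47}$) is a manageable verification rather than ``the real content.''  To repair your proof you should formulate and prove an analogue of Lemma~\ref{lemma:reductionlemma}, being explicit about which factorization of the intermediate elements you use and why it respects disjointness from $d$.
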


We begin with another lemma.  Let $r \in \widehat R_{\Sp}$ and $c \in \C(S_\Quotient)$.   We say that the pair $(r,c)$ satisfies the \emph{reducibility criterion} if either
\begin{itemize}\itemsep0pt
 \item[(1)] $c$ is disjoint from each curve of $\C(S_{\Sp})$ appearing in $r$, or 
 \item[(2)] there is a line segment that connects a marked point in $\D_{2g+1}$ to the boundary and that is disjoint from $c$ and each curve of $\C(S_{\Sp})$ appearing in $r$, or
 \item[(3)] there is a line segment that connects consecutive marked points in $\D_{2g+1}$ and that is disjoint from $c$ and each curve of $\C(S_{\Sp})$ appearing in $r$.
\end{itemize}
Notice that the first condition does not imply the third since $c$ might not surround consecutive marked points.

\begin{lemma}
\label{lemma:reductionlemma}
Fix some $r \in \widehat R_{\Sp}$ and $c \in \C(S_\Quotient)$ such that $(r,c)$ satisfies the reducibility criterion.  
Then $\monoid{r}{\overline{s}_c} = \overline{s}_c$.
\end{lemma}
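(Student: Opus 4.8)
The plan is to reduce the statement to Lemma~\ref{lemma:quasipreserve}: I will extract from the reducibility criterion an essential simple closed curve $d$ in $\D_{2g+1}$, show that both $\monoid{r}{\overline{s}_c}$ and $\overline{s}_c$ are reducible along $d$, and observe that $\pi(\monoid{r}{\overline{s}_c}) = \pi(\overline{s}_c)$; Lemma~\ref{lemma:quasipreserve} then forces $\monoid{r}{\overline{s}_c}\,\overline{s}_c^{-1} = 1$. The curve $d$ comes clause by clause from the definition of the reducibility criterion. In clause (1) take $d = c$. In clause (2), with $\gamma$ a line segment from a marked point $p_i$ to $\partial\D_{2g+1}$ missing $c$ and every curve of $\C(S_{\Sp})$ occurring in $r$, take $d$ to be the convex curve surrounding all marked points except $p_i$. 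In clause (3), with $\gamma$ joining consecutive marked points $p_i,p_{i+1}$, take $d = c_{i,i+1}$. In each case one checks directly that $d$ is essential, that $d$ is disjoint from $c$ and from every curve of $\C(S_{\Sp})$ appearing in $r$, and that $\overline{s}_c = \rho(T_c)$ is reducible along $d$ (because either $c = d$ or $c$ is disjoint from $d$).

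The heart of the argument is the assertion that the subgroup $H_d \le \Quotient_g$ of elements reducible along $d$ is invariant under $\eta \mapsto \monoid{t_a^{\epsilon}}{\eta}$ for every $a \in \C(S_{\Sp})$ disjoint from $d$. Granting this, I peel the letters of $r$ off from the right: each letter has the form $t_a^{\epsilon}$ with $a$ a curve of $\C(S_{\Sp})$ appearing in $r$, hence disjoint from $d$, so $\monoid{r}{\overline{s}_c}$ lands in $H_d$. For clause (1) the invariance is immediate; in fact $\monoid{r}{\overline{s}_c} = \overline{s}_c$ outright, since $a$ disjoint from $c = d$ gives $a_{\pm}(c) = c$ and hence $\monoid{t_a^{\epsilon}}{\overline{s}_c} = \rho(T_c) = \overline{s}_c$. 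For clauses (2) and (3) one writes an element of $H_d$ as $\rho(\phi)$ with $\phi$ in the stabilizer of $d$ in $\PureBraid_{2g+1}$. In clause (2), $c$, the curves of $r$, and therefore every curve produced by the successive surgeries lie inside the subdisk $\Delta_d$ bounded by $d$, so by Lemma~\ref{lemma:diskstab} one never leaves $\rho(\PureBraid(\Delta_d)) \subseteq H_d$, which is manifestly invariant. In clause (3), $d = c_{i,i+1}$ with $i,i+1$ consecutive, so Lemma~\ref{lemma:arcstab} writes $\phi$ as a product of Dehn twists about curves in $\C_{i,i+1}$; passing to the corresponding word in $S_{\Quotient}$ and applying $\monoid{t_a^{\epsilon}}{-}$, each syllable is sent either to $\rho(T_{a_{\pm}(c')})$ for a curve $c'$ disjoint from $d$ — still reducible along $d$, since $\Surger$ of two curves disjoint from $d$ is disjoint from $d$, exactly as in the proof of Lemma~\ref{lemma:Q red} — or, for the curves $c_{ijk}$ in $\C_{i,i+1}$ surrounding three marked points, to $\premonoid{t_a^{\epsilon}}{s_{c_{ijk}}}$, which is reducible along $c_{i,i+1} = d$ by Lemma~\ref{lemma:3-curves}, whose hypothesis $i(a,c_{i,i+1}) = 0$ holds because $a$ is disjoint from $d$. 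Since $H_d$ is a subgroup, the product of these pieces lies in $H_d$.

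Finally, applying Proposition~\ref{prop:consistent} to the letters of $r$ in turn gives $\pi(\monoid{r}{\eta}) = \overline{r}\,\pi(\eta)\,\overline{r}^{-1}$ for every $\eta \in \Quotient_g$, and since $r \in \widehat{R}_{\Sp}$ we have $\overline{r} = 1$ in $\Sp_{2g}(\Z)$, so $\pi(\monoid{r}{\overline{s}_c}) = \pi(\overline{s}_c)$. Thus $\monoid{r}{\overline{s}_c}\,\overline{s}_c^{-1}$ is reducible along $d$ and lies in $\ker\pi$, and Lemma~\ref{lemma:quasipreserve} yields $\monoid{r}{\overline{s}_c} = \overline{s}_c$. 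The main obstacle is the invariance assertion of the second paragraph, and within it the genuinely new point is that the stabilizer of $d$ is generated in part by Dehn twists about curves surrounding three marked points, which do not correspond to generators of $\Quotient_g$; this is precisely the situation Lemma~\ref{lemma:3-curves} was designed for, and everything else is bookkeeping with Lemmas~\ref{lemma:diskstab} and~\ref{lemma:arcstab} together with the fact that surgery preserves disjointness from $d$.
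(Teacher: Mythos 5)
Your proof is correct and follows essentially the same route as the paper: extract an essential curve $d$ from each clause of the reducibility criterion, show by peeling letters of $r$ that $\monoid{r}{\overline{s}_c}$ stays reducible along $d$ (using Lemma~\ref{lemma:diskstab} in clause (2) and Lemmas~\ref{lemma:arcstab} and~\ref{lemma:3-curves} in clause (3)), then combine Proposition~\ref{prop:consistent} with Lemma~\ref{lemma:quasipreserve}. The only cosmetic difference is that you phrase the inductive peeling as an invariance statement for the subgroup $H_d$, while the paper runs the same induction directly; your remark that clause (1) admits the outright identity $\monoid{r}{\overline{s}_c}=\overline{s}_c$ also matches the paper.
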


\begin{proof}

Write $r = t_1 \cdots t_n$ where $t_i \in S_{\Sp}^{\pm 1}$.  We treat the three cases of the reducibility criterion separately.  First, if $c$ is disjoint from each element of $\C(S_{\Sp})$ appearing in $r$, then it follows from the definitions that $\monoid{r}{\overline s_c}=\overline s_c$.

The second case is when there is a line segment that joins a marked point $p_k$ to the boundary of $\D_{2g+1}$ and that is disjoint from $c$ and each curve of $
\C(S_{\Sp})$ that appears in $r$.  Let $d$ denote the convex simple closed curve surrounding all the marked points but $p_k$.
By the definition of the action of $\widehat{F}(S_{\Sp}^{\pm 1})$ on $\Quotient_g$, we have that  $\monoid{t_n}{\overline s_c} = \genbygen{t_n}{s_c}$ is reducible along $d$.  More specifically, $\monoid{t_n}{\overline s_c}$ is equal to $\rho(b_n)$, where $b_n\in\PureBraid_{2g+1}$ has a representative homeomorphism supported in the interior of $d$.  By Lemma~\ref{lemma:diskstab}, we can write $\monoid{t_n}{\overline s_c}$ as the image in $\Quotient_g$ of a product of Dehn twists (and inverse Dehn twists) about curves that surround two marked points and are disjoint from $d$.  It follows that $\monoid{t_{n-1}}{(\monoid{t_n}{\overline s_c})}$ is reducible along $d$ and is equal to $\rho(b_{n-1})$, where $b_{n-1}$ is represented by a homeomorphism supported in the interior of $d$.  Continuing inductively, we deduce that $\monoid{r}{\overline s_c}$ is reducible along $d$.  Since $\overline s_c$ is also reducible along $d$, we have that $\overline s_c(\monoid{r}{\overline s_c})^{-1}$ is reducible along $d$.  By Proposition~\ref{prop:consistent}, $\pi(\overline s_c)\pi(\monoid{r}{\overline s_c})^{-1} = 1$ in $\Sp_{2g}(\Z)$.  Then by Lemma~\ref{lemma:quasipreserve},  $\overline s_c(\monoid{r}{\overline s_c})^{-1}$ is equal to the identity in $\Quotient_{g}$, as desired.

The third case is when there is a straight line segment connecting consecutive marked points in $\D_{2g+1}$ and disjoint from $c$ and each curve of $\C(S_{\Sp})$ that appears in $r$.  Let $d = c_{k\ell}$ denote the boundary of a regular neighborhood of this line segment.  The argument is similar to the previous case.  The only difference is that when we factor the preimage of $\genbygen{t_n}{s_c}$ in $\PureBraid_{2g+1}$, we must use Dehn twists about curves that surround two or three marked points and are disjoint from $d$ (that such curves suffice follows from Lemma~\ref{lemma:arcstab}).  However, we can use the same argument, applying Lemma~\ref{lemma:3-curves} as necessary.
\end{proof}

\begin{proof}[{Proof of Proposition~\ref{prop:sp well def}}]

Examining the relators in $\widehat R_{\Sp}$ and the elements of $\C(S_\Quotient)$ one by one---see Figure~\ref{figure:sp rels} for a representative collection---we claim that, with a single exception, each relator $r \in \widehat R_{\Sp}$ satisfies the reducibility criterion with each $c \in \C(S_\Quotient)$.  When $g \geq 4$, one can always find a pair of consecutive marked points lying outside each curve in a given relator.  The only $c$ then that fails part (2) of the reducibility criterion is one surrounding those two points, but this curve satisfies part (1) of the reducibility criterion.  Thus, the claim is a finite check.  For each such non-exceptional choice of $r \in \widehat R_{\Sp}$ and $c \in \C(S_\Quotient)$,  Lemma \ref{lemma:reductionlemma} applies, and we have that $\monoid{r}{\overline{s}_c}=\overline{s}_c$.

The exceptional case is where $g=3$ and $r$ is the relator $r_{(vii)}$ corresponding to auxiliary relation $(vii)$ and $c \in \C(S_\Quotient)$ is $c_{47}$.   (This is the main place where the auxiliary generators in our presentation for $\Sp_{2g}(\Z)$ come into play; if we were to use Wajnryb's presentation without our auxiliary generators, the lantern relation would fail the reducibility criterion with every element of $\C(S_\Quotient)$ when $g=3$.)  It thus remains to show  $\monoid{r_{(vii)}}{\overline s_{c_{47}}}=\overline s_{c_{47}}$.

\Figure{figure:sp rels}{Sprels}{Configurations of curves from $\C(S_{\Sp})$ arising in the relators for $\Sp_{2g}(\Z)$.  From the top left moving right: a disjointness relation, a braid relation, the chain relation, the lantern relation, and auxiliary relations (i), (ii), (vii), (viii), and (ix).}

Let $c_0$ denote $c_{1234567}$.  Using the relation in $\PureBraid_{n}$ from which we derived the odd twist relators for $\Quotient_g$ and the fact that $T_{c_0}$ is central in $\PB_7$, it follows that $\PB_7$ is generated by the $T_c$ with  $c \in \left(\C(S_{\Quotient})\cup \{c_{0} \}\right) \setminus \{c_{47}\}$.  Therefore, it suffices to show that $\monoid{r_{(vii)}}{\rho(T_{c_0})}=\rho(T_{c_0})$.

Each of the curves $u$, $u'$, and $v$ of $\C(S_{\Sp})$ appearing in $r_{(vii)}$ is disjoint from $c_{17}$, as is $c_0$.  As in the third case in the proof of Lemma~\ref{lemma:reductionlemma}, we can use Lemmas~\ref{lemma:arcstab} and~\ref{lemma:3-curves} with $c_{17}$ as the reducing curve to argue that $\monoid{r_{(vii)}}{\rho(T_{c_0})}=\rho(T_{c_0})$, as desired.
\end{proof}

\subsection{Completing the proof of Proposition~\ref{proposition:spaction}}
\label{section:cleanup}

By Propositions~\ref{prop:q well def} and~\ref{prop:sp well def}, there is an action of $\Sp_{2g}(\Z)$ on $\Quotient_g$ given by the formula
\[ \overline{t} \cdot \overline{s} = \genbygen{t}{s}. \]
It remains to check that this action has all three properties stipulated by Proposition~\ref{proposition:spaction}.
We already mentioned that property (1), namely, that $\pi(Z \cdot \eta) = Z \pi(\eta) Z^{-1}$ for $Z \in \Sp_{2g}(\Z)$ and $\eta \in \Quotient_{g}$, follows directly from Proposition~\ref{prop:consistent}.

Property (2) asserts that $\widehat{\pi}(\nu) \cdot \eta = \nu \eta \nu^{-1}$
for $\nu \in \QuotientEx_g$ and $\eta \in \Quotient_g$.  The half-twists about $a_1,\dots,a_{2g}$ are
the usual generators for the braid group $\Braid_{2g+1}$.  The half-twist $H_{a_i}$ maps to
$\overline{t}_{a_i} \in \Sp_{2g}(\Z)$, so to prove property (2) it is enough to show that
\[ \overline t_{a_i}^{\epsilon} \cdot \overline{s}_{c_{jk}} = \widehat\rho(H_{a_i}^{\epsilon}) \, \overline{s}_{c_{jk}}\, \widehat\rho(H_{a_i}^{-\epsilon}) \]
for all choices of $i,j,k$.  Since $\overline{s}_{c_{jk}}=\rho(T_{c_{jk}})$, the right-hand side is equal to $\overline{s}_{H_{a_i}^{\epsilon}(c_{jk})}=\rho(T_{H_{a_i}^{\epsilon}(c_{jk})})$, and so it remains to show that $(a_i)_{\epsilon}(c_{jk}) = H_{a_i}^{\epsilon}(c_{jk})$, where $(a_i)_{\epsilon}$ is interpreted as either $(a_i)_+$ or $(a_i)_-$.
For any choices of $i$, $j$, and $k$, we have $i(a_i,c_{jk})$ is either 0 or 2, and there is only one configuration in each case up to homeomorphisms of $\D_{2g+1}$.  In the
case $i(a_i,c_{jk})=0$, we have $(a_i)_\pm(c_{jk})=H_{a_i}^{\pm 1}(c_{jk})=c_{jk}$.  It remains to check the case $(i,j,k)=(1,2,3)$.  As mentioned in Section~\ref{section:constructaction}, we have in this case $(a_1)_\epsilon(c_{23}) = H_{a_1}^\epsilon(c_{23})$.

We now turn to property (3) of Proposition~\ref{proposition:spaction}, that the action of $(\Sp_{2g}(\Z))_{\lax{\vec{v}_{23}}}$ on $\Quotient_g$ preserves $\Omega_{23}$, the subgroup consisting of all elements that are reducible along $c_{23}$.  We will use the fact that $(\Sp_{2g}(\Z))_{\lax{\vec{v}_{23}}}$ is generated by the
set
$$\Xi = \{-I,t_{a_2}, t_{u'}, t_{b_3}, t_{a_4},t_{a_5},\ldots,t_{a_{2g}}\}.$$
That $\Xi$ generates is an immediate consequence of the semidirect product decomposition for the stabilizer in $\Sp_{2g}(\Z)$ of a primitive lax vector that is given in the proof of Lemma \ref{lemma:stabilizersurject}; this fact can also be proven in much the same way as the level 2 version, Lemma~\ref{lemma:sp pb stab} below.

Let $\Upsilon$ be the image in $\Quotient_g$ of the generating set for $(\PureBraid_{2g+1})_{c_{23}}$ from Lemma~\ref{lemma:arcstab}.  It is enough to show that for $x \in \Xi$ and $y \in \Upsilon$, the element $x \cdot y$ is reducible along $c_{23}$.  First, if $x=-I$, then it follows immediately from property (2), the fact that $T_{\partial \D_{2g+1}}$ is central in $\PureBraid_{2g+1}$ and the fact that $\pi \circ \rho(T_{\partial \D_{2g+1}}) = -I$ that $-I \cdot y = T_{\partial \D_{2g+1}} y T_{\partial \D_{2g+1}}^{-1} = y$.  Next, we may assume that $x \neq -I$; note then that $x \in S_{\Sp}$.  For the elements $y \in \Upsilon$ that lie in $S_{\Quotient}$, the reducibility along $c_{23}$ of $x \cdot y$ is obvious from the description of our action in Section~\ref{section:constructaction}.  For the others, it is an immediate consequence of Lemma~\ref{lemma:3-curves}.  This completes the proof.


\section{The proof of the Main Proposition}
\label{section:theproofmain}

In this section, we prove Proposition~\ref{proposition:main} by induction on $g$ using Propositions~\ref{prop:sp presentation} and~\ref{proposition:spaction}.  The base case is $g=2$, which we already said is known to be true.  So we can assume that $g \geq 3$ and that the quotient map $\Quotient_h \to \Sp_{2h}(\Z)[2]$ is an isomorphism for $h < g$.  Equivalently, we are assuming that $\BTorelli_{2h+1} = \Theta_{2h+1}$ for all $h < g$ and we want to prove that the quotient map $\pi : \Quotient_{g} \rightarrow \Sp_{2g}(\Z)[2]$ is an isomorphism.  The map $\pi$ is a surjection, so it is enough to construct a homomorphism $\phi : \Sp_{2g}(\Z)[2] \rightarrow \Quotient_{g}$ such that $\phi \circ \pi = 1$.

Let $X_g$ denote $\PartialIBases_g(\Z)$ when $g \geq 4$ and $\PartialIBasesEx_g(\Z)$ when $g=3$.  We will construct the map $\phi$ in two steps.  First in Lemmas~\ref{lemma:phiv wd} and~\ref{lemma:piphiv} we will use 
Proposition~\ref{proposition:spaction} to construct a homomorphism
$$\widetilde{\phi} : \BigFreeProd_{\lax{\vec{v}} \in X_g^{(0)}} \left(\Sp_{2g}(\Z)[2]\right)_{\lax{\vec{v}}} \rightarrow \Quotient_{g}$$
(recall that Proposition~\ref{proposition:spaction} requires the assumption that $\BTorelli_{2h+1} = \Theta_{2h+1}$ for all $h < g$).  Then we will show that $\widetilde{\phi}$ takes the edge and conjugation relators from Proposition~\ref{prop:sp presentation} to the identity (Lemmas~\ref{lemma:edge} and~\ref{lemma:conjugation}), so it induces
a homomorphism $\phi : \Sp_{2g}(\Z)[2] \rightarrow \Quotient_{g}$.  Finally, we will check that $\phi \circ \pi$ is equal to the identity (Lemma~\ref{lemma:inverse}), completing the proof.


\p{A stabilizer lemma} Before getting on with the construction of the inverse map $\phi$, we require a lemma.  Recall that
$c_{12}$ and $c_{45}$ are the curves in $\D_{2g+1}$ shown in Figure~\ref{figure:mainproof}.  Denote by  
$(\PureBraid_{2g+1})_{\{c_{23},c_{45}\}}$ and $(\PureBraid_{2g+1})_{\{c_{23},c_{12}\}}$ the corresponding stabilizers.  
We can define lax integral homology classes $\lax{\vec{v}_{12}}$ and $\lax{\vec{v}_{45}}$ analogously to the way we defined $\lax{\vec{v}_{23}}$ in Section~\ref{section:action setup}.  Denote by $(\Sp_{2g}(\Z)[2])_{\lax{\vec{v}_{23}}}$ and $(\Sp_{2g}(\Z)[2])_{\{\lax{\vec{v}_{23}},\lax{\vec{v}_{12}}\}}$ and $(\Sp_{2g}(\Z)[2])_{\{\lax{\vec{v}_{23}},\lax{\vec{v}_{45}}\}}$ the corresponding stabilizers.  Recall that $\rho$ is the projection $\PureBraid_{2g+1} \rightarrow \Quotient_g$.

\begin{lemma}
\label{lemma:sp pb stab}
The following restrictions of $\pi \circ \rho$ are all surjective:
\begin{alignat*}{3}
&(\PB_{2g+1})_{\{c_{23},c_{12}\}} &&\to (\Sp_{2g}(\Z)[2])_{\{\lax{\vec{v}_{23}},\lax{\vec{v}_{12}}\}} \\
&(\PB_{2g+1})_{c_{23}}  &&\to (\Sp_{2g}(\Z)[2])_{\lax{\vec{v}_{23}}} \\
&(\PB_{2g+1})_{\{c_{23},c_{45}\}} &&\to (\Sp_{2g}(\Z)[2])_{\{\lax{\vec{v}_{23}},\lax{\vec{v}_{45}}\}}.  \\ \end{alignat*}
\end{lemma}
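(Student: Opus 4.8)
The plan is as follows. Throughout write $\beta := \pi \circ \rho$ for the restriction to $\PB_{2g+1}$; by the theorems of Arnol'd \cite{ArnoldHyperelliptic} and A'Campo \cite{ACampo} recalled in Section~\ref{section:outline}, this is a surjection $\PB_{2g+1} \twoheadrightarrow \Sp_{2g}(\Z)[2]$ with kernel $\BTorelli_{2g+1}$, and it is this surjectivity that drives the whole argument. One begins by checking that each of the three maps does land in the claimed stabilizer: if a pure braid $\phi$ fixes the isotopy class of a convex curve $c$ surrounding an even number of marked points, then its Birman--Hilden lift to $\Sigma_g^1$ commutes with $\Hyper$ and fixes the isotopy class of the full preimage of $c$, hence either fixes or interchanges its two components; since $\Hyper$ acts as $-I$ on $H_1(\Sigma_g^1;\Z)$, in either case $\beta(\phi)$ preserves the lax class of each component, and the argument for a pair of curves is the same. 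So only surjectivity is in question.

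Next I would peel off the ``sign'' part. For a primitive lax vector, $(\Sp_{2g}(\Z)[2])_{\lax{\vec{v}_{23}}}$ is the union of the honest-vector stabilizer $(\Sp_{2g}(\Z)[2])_{\vec{v}_{23}}$ and its coset under $-I$ (if $M$ sends $\vec{v}_{23}$ to $-\vec{v}_{23}$ then $-M$ fixes it and still lies in $\Sp_{2g}(\Z)[2]$). Since $\partial\D_{2g+1}$ surrounds an odd number of marked points, $\beta(T_{\partial\D_{2g+1}}) = -I$ (Section~\ref{section:intro}), and $T_{\partial\D_{2g+1}}$ is central in $\PB_{2g+1}$, hence in every stabilizer; so for the middle statement it is enough to surject onto $(\Sp_{2g}(\Z)[2])_{\vec{v}_{23}}$. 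Granting the middle statement, the pair statements reduce to it: the double lax stabilizer is the honest ordered-pair stabilizer together with the ``sign changes'' allowed by the intersection form --- for the isotropic pair $\{c_{23},c_{45}\}$ one may flip $\vec{v}_{23}$ alone, $\vec{v}_{45}$ alone, or both, via $\beta(T_{c_{123}})$, $\beta(T_{c_{456}})$ and $\beta(T_{\partial\D_{2g+1}})$ (each of $c_{123},c_{456},\partial\D_{2g+1}$ surrounds an odd number of points, hence acts as $-I$ on the homology of the subsurface it encloses and trivially on the complement), while for $\{c_{23},c_{12}\}$, where $\hat\imath(\vec{v}_{23},\vec{v}_{12}) = \pm 1$ forbids flipping only one, just $\beta(T_{\partial\D_{2g+1}})$ is needed; given $M$ in a double stabilizer one lifts it through the middle statement to $\psi \in (\PB_{2g+1})_{c_{23}}$, observes that $\psi$ carries the other curve to a curve of the same type with the same lax class, and corrects $\psi$ by an element of $\BTorelli_{2g+1}\cap(\PB_{2g+1})_{c_{23}}$ carrying it back --- which, by orbit--stabilizer, exists exactly when a middle-type surjectivity holds in the braid group obtained by collapsing $\gamma_{23}$ (Lemma~\ref{lemma:arcstab}).

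It remains to prove the middle honest-vector statement, which I would do by induction on $g$, modelled on the proof of Lemma~\ref{lemma:stabilizersurject}. On the symplectic side that proof realizes $(\Sp_{2g}(\Z))_{\vec{v}_{23}}$ as a split extension $K_{\Z}\rtimes\Sp_{2(g-1)}(\Z)$ with $\Sp_{2(g-1)}(\Z)$ acting on $\vec{v}_{23}^{\perp}/\langle\vec{v}_{23}\rangle$ and $K_{\Z}$ a Heisenberg-type kernel, and shows (intersecting with the principal congruence subgroup) that $(\Sp_{2g}(\Z)[2])_{\vec{v}_{23}}$ is generated by $K_{\Z}\cap\Sp_{2g}(\Z)[2]$ and a lift of $\Sp_{2(g-1)}(\Z)[2]$. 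On the braid side, Lemma~\ref{lemma:arcstab} (collapse $\gamma_{23}$) gives $1 \to \langle T_{c_{23}}\rangle \to (\PB_{2g+1})_{c_{23}} \to \PB_{2g} \to 1$, and $\beta$ intertwines the two extensions: $T_{c_{23}} \mapsto \tau_{\vec{v}_{23}}^2$ lands in the Heisenberg kernel (Lemma~\ref{lemma:s vs pi}), the images of the remaining generators $T_{c_{k\ell}}$ ($k,\ell\notin\{2,3\}$) and $T_{c_{23k}}$ generate $\PB_{2g}$, and the collapse quotient realizes the action on $\vec{v}_{23}^{\perp}/\langle\vec{v}_{23}\rangle$. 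A diagram chase combining the surjectivity of $\beta$ with the inductive hypothesis (the smaller braid group surjects onto the corresponding level $2$ symplectic group) then shows $(\PB_{2g+1})_{c_{23}}$ maps onto $(\Sp_{2g}(\Z)[2])_{\vec{v}_{23}}$, hence onto the full lax stabilizer.

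The main obstacle is a parity bookkeeping issue. Collapsing one arc turns $\D_{2g+1}$ (double cover $\Sigma_g^1$, one boundary component) into $\D_{2g}$ (double cover $\Sigma_{g-1}^2$, two boundary components), for which the image of the pure braid group under Burau at $-1$ sits in a symplectic-type group of odd rank with a degenerate line rather than in $\Sp_{2(g-1)}(\Z)[2]$ on the nose; so the induction does not run literally ``odd to odd''. The natural fix is to carry the two-boundary-component versions of all three statements along in parallel (in the spirit of the $\SI_g^2 \cong \BTorelli_{2g+2}$ discussion in the introduction), so that collapsing one arc simply alternates between the two parities; note that the two pair statements only require collapsing configurations whose defining arcs ($\gamma_{23},\gamma_{45}$, or the path $\gamma_{12}\cup\gamma_{23}$) can be collapsed together, returning to an odd number of marked points. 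Making these collapses and the few extra central boundary-twist classes they introduce (each mapping to $\pm I$) mutually consistent is where the real work lies; the remaining inputs are A'Campo's theorem, Lemma~\ref{lemma:stabilizersurject}, Lemmas~\ref{lemma:arcstab} and~\ref{lemma:diskstab}, and the standard change-of-coordinates principle for simple closed curves in a disk with marked points.
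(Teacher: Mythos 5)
Your proposal takes a genuinely different route from the paper's, and you have correctly identified the central obstacle to your route — but the paper does not run into that obstacle, because it orders the three statements differently and uses a different collapsing map.

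Your plan is to prove the single-curve statement first, by induction, collapsing the arc $\gamma_{23}$; this sends $\D_{2g+1}$ to $\D_{2g}$, and you rightly observe that the double cover of $\D_{2g}$ is $\Sigma_{g-1}^2$, so the induced representation does not land in $\Sp_{2(g-1)}(\Z)[2]$ on the nose. Your proposed fix is to run a parallel induction over both parities (disks with $2h+1$ and $2h+2$ marked points), which is plausible but substantially more work, and is left unexecuted. The paper's proof avoids this entirely by establishing the $\{c_{23},c_{12}\}$ pair statement \emph{first}, not last: since $(\PB_{2g+1})_{\{c_{23},c_{12}\}}\subseteq (\PB_{2g+1})_{c_{123}}$, one can collapse the disk bounded by $c_{123}$ (which encloses three marked points) to a single point, giving a map to $\PB_{2g-1}$ — odd to odd, no parity mismatch and no need for a separate induction, since one just cites A'Campo's surjectivity of $\beta_{2g-1}$ on $\PB_{2g-1}$. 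It then identifies $(\Sp_{2g}(\Z)[2])_{\{\lax{\vec{v}_{23}},\lax{\vec{v}_{12}}\}}\cong(\Z/2)\oplus\Sp_{2g-2}(\Z)[2]$, with $T_{c_{123}}$ furnishing the $\Z/2$.

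There is a second, related gap: your reduction of the pair statements to the middle statement relies on correcting a lift $\psi$ by an element of $\BTorelli_{2g+1}\cap(\PB_{2g+1})_{c_{23}}$ carrying the image of the second curve back to that curve, and you justify its existence by ``orbit--stabilizer'' plus ``a middle-type surjectivity in the collapsed braid group.'' That requires a transitivity statement for the kernel subgroup on simple closed curves in a fixed lax homology class disjoint from $c_{23}$, which is nontrivial and not among the tools you cite. The paper's Steps~2 and~3 sidestep transitivity by writing down an explicit correcting element $Z$ as a word in squared transvections $\tau_{\vec{a}_1}^{2},\tau_{\vec{a}_1+\vec{a}_i}^{2},\tau_{\vec{b}_i}^{2},\tau_{\vec{a}_1+\vec{b}_i}^{2}$, each of which is visibly $\pi\circ\rho$ of a Dehn twist about a convex curve disjoint from $c_{23}$ (resp.\ from $c_{23}$ and $c_{45}$). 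That explicit computation replaces both the induction and the orbit--stabilizer appeal.

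In short: your high-level reduction and your diagnosis of the parity problem are sound, but the parity problem is a genuine gap in your route as written, and the paper avoids it by (i) proving the $\{c_{23},c_{12}\}$ pair case first via the $c_{123}$ collapse, and (ii) replacing your transitivity/orbit--stabilizer appeal with explicit corrector matrices built from squared transvections.
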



\begin{proof}

Let $A_i = \{2i,2i+1\}$ and $B_i = \{1,\dots,2i\}$.  We can choose and orient a connected component of the preimage 
in $\Sigma_g^1$ of each of $c_{A_1},c_{B_1},\dots,c_{A_g},c_{B_g}$ in order to obtain a symplectic basis 
$(\vec{a}_1,\ldots,\vec{a}_g;\vec{b}_1,\ldots,\vec{b}_g)$ for $H_1(\Sigma_g^1)$.  Note that $c_{A_1}=c_{23}$, $c_{B_1}=c_{12}$,
and $c_{A_2}=c_{45}$, so we can choose the orientations such that $\vec{v}_{23}=\vec{a}_1$,  $\vec{v}_{12}=\vec{b}_1$, and $\vec{v}_{45}=\vec{a}_2$.
Next, let $E_i=\{2,3,2i,2i+1\}$ and $F_i=\{1,4,5,\dots,2i\}$.  The oriented lifts of $c_{E_i}$ and $c_{F_i}$ lie in 
the homology classes $\vec{a}_1\pm \vec{a}_i$ and $\vec{a}_1 \pm \vec{b}_i$.  The two signs here depend on 
the choice of the $\vec{a}_i$ and $\vec{b}_i$.  To simplify the notation, we assume both signs are positive.  We now proceed in three steps, corresponding to the three statements of the lemma.

\bigskip

{\em Step 1.} The map $(\PB_{2g+1})_{\{c_{23},c_{12}\}} \to (\Sp_{2g}(\Z)[2])_{\{\lax{\vec{v}_{23}},\lax{\vec{v}_{12}}\}}$ is surjective.

\medskip

The group $(\Sp_{2g}(\Z)[2])_{\{\lax{\vec{v}_{23}},\lax{\vec{v}_{12}}\}}$
preserves the submodule $\langle \vec{a}_1,\vec{b}_1 \rangle$ of $\Z^{2g}$ and thus also preserves its orthogonal complement $\langle \vec{a}_2,\vec{b}_2,\ldots,\vec{a}_g,\vec{b}_g \rangle$.  Since the mod $2$ reductions of $\vec{a}_1 = \vec{v}_{23}$ and $\vec{b}_1=\vec{v}_{12}$ are different, elements of $(\Sp_{2g}(\Z)[2])_{\{\lax{\vec{v}_{23}},\lax{\vec{v}_{12}}\}}$ take $\vec{a}_1$ to $\pm \vec{a}_1$ and $\vec{b}_1$ to $\pm \vec{b}_1$ for some choice of signs.  Since the algebraic intersection pairing must be preserved, these signs must be the same.  In summary, $(\Sp_{2g}(\Z)[2])_{\{\lax{\vec{v}_{23}},\lax{\vec{v}_{12}}\}} = (\Z/2) \oplus \Sp_{2g-2}(\Z)[2]$, where $\Z/2$ acts on $\langle \vec{a}_1,\vec{b}_1 \rangle$ by $\pm I$ and where $\Sp_{2g-2}(\Z)[2]$ acts on the orthogonal complement in the usual way.

Under this isomorphism, $\pi \circ \rho(T_{c_{123}}) = (-I,\text{id})$; indeed, the preimage in $\Sigma_g^1$ of the subdisk bounded by $c_{123}$ is homeomorphic to $\Sigma_1^1$ and $T_{c_{123}}$ lifts to a hyperelliptic involution of this subsurface.  It is therefore enough to show
that the composition of the restriction of $\pi \circ \rho$ to $(\PB_{2g+1})_{\{c_{23},c_{12}\}}$ with
the projection map $(\Sp_{2g}(\Z)[2])_{\{\lax{\vec{v}_{23}},\lax{\vec{v}_{12}}\}} \rightarrow \Sp_{2g-2}(\Z)[2]$
is surjective.  This map factors as
\[(\PB_{2g+1})_{\{c_{23},c_{12}\}} \stackrel{\xi_{2g+1}}{\longrightarrow} \PB_{2g-1} \stackrel{\beta_{2g-1}}{\longrightarrow} \Sp_{2g-2}(\Z)[2],\]
where the map $\xi_{2g+1}$ is obtained by collapsing the disk bounded by $c_{123}$ to a single marked point; this makes sense because 
$(\PB_{2g+1})_{\{c_{23},c_{12}\}} \subseteq (\PB_{2g+1})_{c_{123}}$.  The map $\xi_{2g+1}$ is surjective because every homeomorphism of $\D_{2g-1}$ can be homotoped such that it fixes a disk surrounding the first marked point, and we have already stated that $\beta_{2g-1}$ is surjective.  This completes the proof of the first statement.

\bigskip

{\em Step 2.} The map $(\PB_{2g+1})_{c_{23}} \to (\Sp_{2g}(\Z)[2])_{\lax{\vec{v}_{23}}}$
is surjective.

\medskip

Consider $Y \in (\Sp_{2g}(\Z)[2])_{\lax{\vec{v}_{23}}}$.  The Dehn twist 
$T_{c_{123}}$ lies in $(\PB_{2g+1})_{c_{23}}$ and takes $\vec{a}_1$ to $-\vec{a}_1$, so without loss of generality we can assume that $Y(\vec{a}_1) = \vec{a}_1$. Since $Y$ preserves the algebraic intersection pairing and $\hat\imath(\vec{a}_1,\vec{b}_1)=1$, the  $\vec{b}_1$-coordinate of $Y(\vec{b}_1)$ is $1$, so
\[Y(\vec{b}_1) = \ell_1\vec{a}_1+\vec{b}_1+\ell_2\vec{a}_2 + m_2\vec{b}_2 + \cdots + \ell_g\vec{a}_g+m_g\vec{b}_g \quad \quad (\ell_i,m_i \in \Z).\]
Since $Y \in \Sp_{2g}(\Z)[2]$, each $\ell_i$ and $m_i$ is even.  For $2 \leq i \leq g$, 
set $n_i = m_i\ell_i-\ell_i-m_i$.  Define $Z$ to equal
\[  
\tau_{\vec{a}_1}^{\ell_1}\left(\tau_{\vec{a}_1}^{n_2} \tau_{\vec{a}_1+\vec{a}_2}^{\ell_2}   \tau_{\vec{b}_2}^{-m_2} \tau_{\vec{a}_1+\vec{b}_2}^{m_2}  \right) \cdots \left(\tau_{\vec{a}_1}^{n_g} \tau_{\vec{a}_1+\vec{a}_g}^{\ell_g}   \tau_{\vec{b}_g}^{-m_g} \tau_{\vec{a}_1+\vec{b}_g}^{m_g}  \right) \in (\Sp_{2g}(\Z)[2])_{\vec{a}_1}.
\]
The key property of $Z$ is that $ZY(\vec{b}_1) = \vec{b}_1$.
Since $\tau_{\vec{a}_i}^2$, $\tau_{\vec{b}_i}^2$, $\tau_{\vec{a}_1+\vec{a}_i}^2$, and 
$\tau_{\vec{a}_1+\vec{b}_i}^2$ are the images under $\pi \circ \rho$ of the Dehn twists about $c_{A_i}$, $c_{B_i}$, 
$c_{E_i}$, and $c_{F_i}$, we have an explicit $b \in (\PB_{2g+1})_{c_{23}}$ with $\pi \circ \rho(b) = Z$.  Using
this $b$, we can modify $Y$ so that $Y(\vec{b}_1) = \vec{b}_1$, so 
$Y \in (\Sp_{2g}(\Z)[2])_{\{\lax{\vec{a}_1},\lax{\vec{b}_1}\}}$.  We have thus reduced the second statement to the first.

\bigskip

{\em Step 3.} The map $(\PB_{2g+1})_{\{c_{23},c_{45}\}} \to (\Sp_{2g}(\Z)[2])_{\{\lax{\vec{v}_{23}},\lax{\vec{v}_{45}}\}}$
is surjective.

\medskip

Consider $Y \in (\Sp_{2g}(\Z)[2])_{\{\lax{\vec{v}_{23}},\lax{\vec{v}_{45}}\}}$.  As in Step 2, we may assume that $Y(\vec{a}_1) = \vec{a}_1$.  Since $Y$ preserves the algebraic intersection pairing and $\hat\imath(\vec{a}_1,\vec{b}_1) = 1$ and $\hat\imath(\vec{a}_2,\vec{b}_1)=0$, the $\vec{b}_1$-coordinate of $Y(\vec{b}_1)$ is $1$ and the $\vec{b}_2$-coordinate is $0$.  We thus have
\[Y(\vec{b}_1) = \ell_1\vec{a}_1+\vec{b}_1+\ell_2\vec{a}_2 + \ell_3\vec{a}_3+m_3\vec{b}_3 + \cdots + \ell_g\vec{a}_g+m_g\vec{b}_g \quad \quad (\ell_i,m_i \in \Z).\]
Just like before, each $\ell_i$ and $m_i$ is even.  Define $n_i = m_i\ell_i-\ell_i-m_i$ and $Z$ to equal
\[  
\tau_{\vec{a}_1}^{\ell_1} \left(\tau_{\vec{a}_1}^{-\ell_2}  \tau_{\vec{a}_1+\vec{a}_2}^{\ell_2} \right)\left(\tau_{\vec{a}_1}^{n_3} \tau_{\vec{a}_1+\vec{a}_3}^{\ell_3}   \tau_{\vec{b}_3}^{-m_3} \tau_{\vec{a}_1+\vec{b}_3}^{m_3}  \right) \cdots \left(\tau_{\vec{a}_1}^{n_g} \tau_{\vec{a}_1+\vec{a}_g}^{\ell_g}   \tau_{\vec{b}_g}^{-m_g} \tau_{\vec{a}_1+\vec{b}_g}^{m_g}  \right) \in (\Sp_{2g}(\Z)[2])_{\{\vec{a}_1,\vec{a}_2\}}.
\]
The key property of $Z$ is that $Z(Y(\vec{b}_1)) = \vec{b}_1$.
As before, each square of a transvection appearing in $Z$ is the image of a Dehn twist about a curve disjoint from $c_{23}$ 
and $c_{45}$, so there is a $b \in (\PB_{2g+1})_{\{c_{23},c_{45}\}}$ with $\pi \circ \rho(b) = Z$.  Using this
$b$, we can modify $Y$ so that $Y(\vec{b}_1) = \vec{b}_1$, so
$Y \in (\Sp_{2g}(\Z)[2])_{\{\vec{a}_1,\vec{b}_1,\vec{a}_2\}}$.

The collapsing map $\D_{2g+1} \to \D_{2g-1}$ described in the first step induces a collapsing map $\Sigma_g^1 \to \Sigma_{g-1}^1$ whereby a torus with one boundary component (the preimage of the disk bounded by $c_{123}$) is collapsed to a point.  There is an induced splitting $H_1(\Sigma_g^1) \cong H_1(\Sigma_1^1) \oplus H_1(\Sigma_{g-1}^1) \cong \Z^2 \oplus \Z^{2g-2}$.  Under  this identification, $\vec{v}_{45}$ lies in the $\Z^{2g-2}$ factor.  The map $(\PB_{2g+1})_{\{c_{23},c_{12}\}} \to (\Z/2) \oplus \Sp_{2g-2}(\Z)[2]$ described in Step 1 thus restricts to a map $(\PB_{2g+1})_{\{c_{23},c_{12},c_{45}\}} \to (\Z/2) \oplus (\Sp_{2g-2}(\Z)[2])_{\lax{\vec{v}_{45}}}$.  There is a commutative diagram
\[
\xymatrix{
(\PB_{2g+1})_{\{c_{23},c_{12},c_{45}\}} \ar[r] \ar[d]_{\xi_{2g+1}} & (\Z/2) \oplus (\Sp_{2g-2}(\Z)[2])_{\lax{\vec{v}_{45}}} \ar[d] \\
(\PB_{2g-1})_{d_{23}} \ar[r] & (\Sp_{2g-2}(\Z)[2])_{\lax{\vec{v}_{45}}} 
}
\]
where $\xi_{2g+1}$ is the restriction of the map described in Step 1, where $d_{23} \subseteq \D_{2g-1}$ is the image of $c_{45}$ under the collapsing map, and where the rightmost vertical map is projection onto the second factor.  The leftmost vertical map is surjective as in Step 1, and the rightmost vertical map is obviously surjective.  Recall that $Y$ lies in $(\Sp_{2g}(\Z)[2])_{\{\vec{a}_1,\vec{b}_1,\vec{a}_2\}} \cong (\Z/2) \oplus (\Sp_{2g-2}(\Z)[2])_{\lax{\vec{v}_{45}}}$.  Since $T_{c_{123}} \in (\PB_{2g+1})_{\{c_{23},c_{12},c_{45}\}}$ maps to the generator of the first factor, we have reduced the problem to the surjectivity of the bottom horizontal map.  This is equivalent to Step 2, so we are done.
\end{proof}

\p{Construction of \boldmath$\widetilde\phi$}
We are now ready to define $\widetilde\phi$.  For each $\lax{\vec{v}} \in X_{g}^{(0)}$, we need to construct a homomorphism
\[ \widetilde{\phi}_{\lax{\vec{v}}} : (\Sp_{2g}(\Z)[2])_{\lax{\vec{v}}} \rightarrow \Quotient_{g}.\]  
We start by dealing with the special case $\lax{\vec{v}} = \lax{\vec{v}_{23}}$.  Recall that $\Omega_{23}$ is the image in $\Quotient_g$ of $(\PureBraid_{2g+1})_{c_{23}}$.  By Lemma~\ref{lemma:sp pb stab}, the map $\pi|_{\Omega_{23}}$ is a surjection onto $(\Sp_{2g}(\Z)[2])_{\lax{\vec{v}_{23}}}$.  Each element of $\Omega_{23}$ is reducible by definition, so Theorem~\ref{theorem:reducibletotwists} implies that $\pi|_{\Omega_{23}}$ is injective.  We define $\widetilde{\phi}_{\lax{\vec{v}_{23}}} = \pi|_{\Omega_{23}}^{-1}$. 

We now consider a general $\lax{\vec{v}} \in X_{g}^{(0)}$.  Here we use the action of $\Sp_{2g}(\Z)$ on $\Quotient_g$ provided by Proposition~\ref{proposition:spaction}.  The group $\Sp_{2g}(\Z)$ acts transitively on the vertices of $X_g$ (indeed, any vertex is represented by a one-element partial symplectic basis as in Section~\ref{section:action and connectivity} and $\Sp_{2g}(\Z)$ clearly acts transitively on these), so there exists some $Z \in \Sp_{2g}(\Z)$ such that $Z(\lax{\vec{v}_{23}}) = \lax{\vec{v}}$.  We then define
$$\widetilde{\phi}_{\lax{\vec{v}}}(Y) = Z \cdot \widetilde{\phi}_{\lax{\vec{v}_{23}}}(Z^{-1} Y Z) \quad \quad \left(Y \in \left(\Sp_{2g}(\Z)[2]\right)_{\lax{\vec{v}}}\right).$$
Clearly $\widetilde{\phi}_{\lax{\vec{v}}}$ is a homomorphism.

\begin{lemma}
\label{lemma:phiv wd}
The map $\widetilde{\phi}_{\lax{\vec{v}}}$ does not depend on the choice of $Z$.
\end{lemma}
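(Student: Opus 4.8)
The plan is to reduce the claimed independence to an equivariance property of the base map $\widetilde{\phi}_{\lax{\vec{v}_{23}}}$ under the stabilizer $(\Sp_{2g}(\Z))_{\lax{\vec{v}_{23}}}$, and this equivariance will come directly from parts (1) and (3) of Proposition~\ref{proposition:spaction}. First I would record the equivariance itself: for every $W \in (\Sp_{2g}(\Z))_{\lax{\vec{v}_{23}}}$ and every $Y \in (\Sp_{2g}(\Z)[2])_{\lax{\vec{v}_{23}}}$,
$$W \cdot \widetilde{\phi}_{\lax{\vec{v}_{23}}}(Y) = \widetilde{\phi}_{\lax{\vec{v}_{23}}}(W Y W^{-1}).$$
To prove this, I would set $\eta = \widetilde{\phi}_{\lax{\vec{v}_{23}}}(Y) \in \Omega_{23}$, so $\pi(\eta) = Y$ because $\widetilde{\phi}_{\lax{\vec{v}_{23}}}$ was defined as $(\pi|_{\Omega_{23}})^{-1}$. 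Part (3) of Proposition~\ref{proposition:spaction} gives $W \cdot \eta \in \Omega_{23}$, and part (1) gives $\pi(W \cdot \eta) = W \pi(\eta) W^{-1} = W Y W^{-1}$. Applying the isomorphism $(\pi|_{\Omega_{23}})^{-1}$ to this last identity yields $W \cdot \eta = \widetilde{\phi}_{\lax{\vec{v}_{23}}}(W Y W^{-1})$, which is the asserted formula.

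Now suppose $Z_1, Z_2 \in \Sp_{2g}(\Z)$ both satisfy $Z_i(\lax{\vec{v}_{23}}) = \lax{\vec{v}}$, set $W = Z_1^{-1} Z_2 \in (\Sp_{2g}(\Z))_{\lax{\vec{v}_{23}}}$, fix $Y \in (\Sp_{2g}(\Z)[2])_{\lax{\vec{v}}}$, and write $Y' = Z_1^{-1} Y Z_1$. One checks from $Z_1^{-1}(\lax{\vec v}) = \lax{\vec v_{23}}$ and normality of $\Sp_{2g}(\Z)[2]$ that $Y' \in (\Sp_{2g}(\Z)[2])_{\lax{\vec{v}_{23}}}$, and that $Z_2^{-1} Y Z_2 = W^{-1} Y' W$. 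Using that $\Sp_{2g}(\Z)$ genuinely acts on $\Quotient_g$ (Propositions~\ref{prop:q well def} and~\ref{prop:sp well def}), together with the equivariance above applied with $W^{-1}$ in place of $W$, I compute
$$Z_2 \cdot \widetilde{\phi}_{\lax{\vec{v}_{23}}}(Z_2^{-1} Y Z_2) = Z_1 W \cdot \widetilde{\phi}_{\lax{\vec{v}_{23}}}(W^{-1} Y' W) = Z_1 \cdot \left( W \cdot \bigl( W^{-1} \cdot \widetilde{\phi}_{\lax{\vec{v}_{23}}}(Y') \bigr) \right) = Z_1 \cdot \widetilde{\phi}_{\lax{\vec{v}_{23}}}(Y'),$$
and the right-hand side is $Z_1 \cdot \widetilde{\phi}_{\lax{\vec{v}_{23}}}(Z_1^{-1} Y Z_1)$. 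Hence the two candidate values of $\widetilde{\phi}_{\lax{\vec{v}}}(Y)$ coincide.

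I do not expect any genuine obstacle: the argument is pure bookkeeping with the compatibility properties of the $\Sp_{2g}(\Z)$-action. The one place where property (3) of Proposition~\ref{proposition:spaction} — rather than merely the naturality property (1) — is essential is the step ``$W \cdot \eta \in \Omega_{23}$'', which is needed so that $(\pi|_{\Omega_{23}})^{-1}$ may legitimately be applied; that property was built for exactly this purpose. The injectivity of $\pi|_{\Omega_{23}}$ underlying the very definition of $\widetilde{\phi}_{\lax{\vec{v}_{23}}}$ is, in turn, where Theorem~\ref{theorem:reducibletotwists} is used.
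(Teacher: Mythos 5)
Your proposal is correct and follows essentially the same route as the paper: the central step in both is the equivariance identity $W \cdot \widetilde{\phi}_{\lax{\vec{v}_{23}}}(Y) = \widetilde{\phi}_{\lax{\vec{v}_{23}}}(WYW^{-1})$ for $W \in (\Sp_{2g}(\Z))_{\lax{\vec{v}_{23}}}$, established by using Proposition~\ref{proposition:spaction}(3) to place $W \cdot \widetilde{\phi}_{\lax{\vec{v}_{23}}}(Y)$ in $\Omega_{23}$ and then comparing images under the injective map $\pi|_{\Omega_{23}}$ via Proposition~\ref{proposition:spaction}(1). You merely spell out explicitly the routine reduction from two arbitrary choices $Z_1, Z_2$ to the case of a single element $W = Z_1^{-1}Z_2$ in the stabilizer, which the paper leaves implicit.
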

\begin{proof}
It is enough to show that if $Z \in (\Sp_{2g}(\Z))_{\lax{\vec{v}_{23}}}$ then 
\[
\widetilde{\phi}_{\lax{\vec{v}_{23}}}(Z U Z^{-1}) = Z \cdot \widetilde{\phi}_{\lax{\vec{v}_{23}}}(U)
\]
To prove this, first notice that by the definition of $\widetilde{\phi}_{\lax{\vec{v}_{23}}}$, both $\widetilde{\phi}_{\lax{\vec{v}_{23}}}(Z U Z^{-1})$ and $\widetilde{\phi}_{\lax{\vec{v}_{23}}}(U)$ lie in $\Omega_{23}$.  By Proposition~\ref{proposition:spaction}(3),  $Z \cdot \widetilde{\phi}_{\lax{\vec{v}_{23}}}(U)$ also lies in $\Omega_{23}$.  Since $\pi|_{\Omega_{23}}$ is injective, it remains to show that $\widetilde{\phi}_{\lax{\vec{v}_{23}}}(Z U Z^{-1})$ and $Z \cdot \widetilde{\phi}_{\lax{\vec{v}_{23}}}(U)$ have the same image under $\pi$.  We have
\[ \pi(\widetilde{\phi}_{\lax{\vec{v}_{23}}}(Z U Z^{-1})) = ZUZ^{-1} = Z\pi(\widetilde{\phi}_{\lax{\vec{v}_{23}}}(U))Z^{-1} = \pi(Z \cdot \widetilde{\phi}_{\lax{\vec{v}_{23}}}(U)), \]
where the first and second equalities use the fact that $\pi \circ \widetilde\phi_{\lax{\vec{v}_{23}}}$ equals the identity and the third equality uses Proposition~\ref{proposition:spaction}(1).  
\end{proof}

We will require the following easy consequence of Proposition~\ref{proposition:spaction}(1).

\begin{lemma}
\label{lemma:piphiv}
For any $\lax{\vec{v}} \in X_{g}^{(0)}$, we have $\pi \circ \widetilde{\phi}_{\lax{\vec{v}}} = \mathrm{id}$.
\end{lemma}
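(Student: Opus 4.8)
The plan is to unwind the definition of $\widetilde{\phi}_{\lax{\vec{v}}}$ and feed it into Proposition~\ref{proposition:spaction}(1); this really is a one-line verification once the right objects are lined up. First I would record the base case: for $\lax{\vec{v}} = \lax{\vec{v}_{23}}$ the map $\widetilde{\phi}_{\lax{\vec{v}_{23}}}$ was \emph{defined} to be $\pi|_{\Omega_{23}}^{-1}$, so $\pi \circ \widetilde{\phi}_{\lax{\vec{v}_{23}}} = \mathrm{id}$ holds tautologically. The content of the lemma is therefore to propagate this along the $\Sp_{2g}(\Z)$-action.

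So fix a general vertex $\lax{\vec{v}} \in X_g^{(0)}$ and choose $Z \in \Sp_{2g}(\Z)$ with $Z(\lax{\vec{v}_{23}}) = \lax{\vec{v}}$, which exists by the transitivity of the $\Sp_{2g}(\Z)$-action on vertices noted in the construction of $\widetilde\phi$; by Lemma~\ref{lemma:phiv wd} the choice of $Z$ is immaterial. For $Y \in (\Sp_{2g}(\Z)[2])_{\lax{\vec{v}}}$ the conjugate $Z^{-1} Y Z$ stabilizes $\lax{\vec{v}_{23}}$, so $\widetilde{\phi}_{\lax{\vec{v}}}(Y) = Z \cdot \widetilde{\phi}_{\lax{\vec{v}_{23}}}(Z^{-1} Y Z)$ is defined, and I would compute
\[
\pi\bigl(\widetilde{\phi}_{\lax{\vec{v}}}(Y)\bigr)
= \pi\bigl(Z \cdot \widetilde{\phi}_{\lax{\vec{v}_{23}}}(Z^{-1} Y Z)\bigr)
= Z\, \pi\bigl(\widetilde{\phi}_{\lax{\vec{v}_{23}}}(Z^{-1} Y Z)\bigr)\, Z^{-1}
= Z (Z^{-1} Y Z) Z^{-1} = Y,
\]
where the second equality is Proposition~\ref{proposition:spaction}(1) applied to the element $\widetilde{\phi}_{\lax{\vec{v}_{23}}}(Z^{-1} Y Z) \in \Quotient_g$, and the third equality is the base case $\pi \circ \widetilde{\phi}_{\lax{\vec{v}_{23}}} = \mathrm{id}$.

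I do not expect any genuine obstacle; the only points that need a word of justification are that $Z^{-1} Y Z$ does stabilize $\lax{\vec{v}_{23}}$ (so the formula is legal) and that the statement is independent of the auxiliary choice of $Z$ (already handled by Lemma~\ref{lemma:phiv wd}). With those remarks in place the displayed chain of equalities is the entire argument.
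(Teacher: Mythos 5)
Your proof is correct and is exactly the argument the paper leaves implicit: the paper merely remarks that the lemma is an easy consequence of Proposition~\ref{proposition:spaction}(1), and your chain of equalities (base case at $\lax{\vec{v}_{23}}$, then propagation along the $\Sp_{2g}(\Z)$-action using that proposition) is precisely the intended filling-in. Nothing further is needed.
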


\p{Well-definedness of \boldmath$\phi$} The individual maps $\widetilde{\phi}_{\lax{\vec{v}}}$ together define the map $\widetilde \phi$ as in the start of the section.  In order to check that $\widetilde \phi$ descends to a well-defined homomorphism $\phi:\Sp_{2g}(\Z)[2] \to \Quotient_g$, we must check that $\widetilde \phi$ respects the edge and conjugation relations for $\Sp_{2g}(\Z)[2]$ as in Proposition~\ref{prop:sp presentation}.  First we deal with the edge relations.  

\begin{lemma}
\label{lemma:edge}
If $\lax{\vec{v}},\lax{\vec{w}} \in X_g^{(0)}$ are joined by an edge $e$ and $Y \in (\Sp_{2g}(\Z)[2])_{e}$, then \[ \widetilde{\phi}_{\lax{\vec{v}}}(Y) = \widetilde{\phi}_{\lax{\vec{w}}}(Y).\]
\end{lemma}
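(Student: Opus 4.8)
The statement to prove is that if $\lax{\vec{v}}$ and $\lax{\vec{w}}$ are joined by an edge $e$ in $X_g$ and $Y \in (\Sp_{2g}(\Z)[2])_e$, then $\widetilde{\phi}_{\lax{\vec{v}}}(Y) = \widetilde{\phi}_{\lax{\vec{w}}}(Y)$. The plan is to exploit the transitivity of $\Sp_{2g}(\Z)$ on edges of $X_g$ together with the rigidity coming from $\pi|_{\Omega_{23}}$ being injective.

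First I would observe that $\Sp_{2g}(\Z)$ acts transitively on the ordered edges of $X_g$ whose endpoints are standard vertices (any edge $\{\lax{\vec{v}},\lax{\vec{w}}\}$ of intersection type, or of standard type, comes from a partial symplectic basis or an isotropic pair, and $\Sp_{2g}(\Z)$ acts transitively on these configurations by the usual symplectic basis extension argument — the same kind of argument used in the proof of Corollary~\ref{corollary:movesimplices}). So there exists $Z \in \Sp_{2g}(\Z)$ with $Z(\lax{\vec{v}_{23}}) = \lax{\vec{v}}$ and $Z(\lax{\vec{v}_{45}}) = \lax{\vec{w}}$ (here $c_{23}$ and $c_{45}$ are disjoint curves surrounding distinct pairs of marked points, so $\lax{\vec{v}_{23}}$ and $\lax{\vec{v}_{45}}$ span an edge of the standard type, and the case of an intersection-type edge is handled by using $\lax{\vec{v}_{12}}$ in place of $\lax{\vec{v}_{45}}$ — both base edges are available via Lemma~\ref{lemma:sp pb stab}). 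By Lemma~\ref{lemma:phiv wd}, $\widetilde{\phi}_{\lax{\vec{v}}}$ and $\widetilde{\phi}_{\lax{\vec{w}}}$ do not depend on the choice of conjugating element, so it suffices to prove the claim for the base edge, i.e. to show $\widetilde{\phi}_{\lax{\vec{v}_{23}}}(Y) = \widetilde{\phi}_{\lax{\vec{v}_{45}}}(Y)$ for $Y \in (\Sp_{2g}(\Z)[2])_{\{\lax{\vec{v}_{23}},\lax{\vec{v}_{45}}\}}$, and similarly for the intersection-type base edge $\{\lax{\vec{v}_{23}},\lax{\vec{v}_{12}}\}$.

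Now for the base edge, both $\widetilde{\phi}_{\lax{\vec{v}_{23}}}(Y)$ and $\widetilde{\phi}_{\lax{\vec{v}_{45}}}(Y)$ lie in $\Omega_{23}$: the first by definition of $\widetilde{\phi}_{\lax{\vec{v}_{23}}}$, and the second because $\widetilde{\phi}_{\lax{\vec{v}_{45}}}(Y) = Z_0 \cdot \widetilde{\phi}_{\lax{\vec{v}_{23}}}(Z_0^{-1} Y Z_0)$ for a choice of $Z_0 \in (\Sp_{2g}(\Z))_{\lax{\vec{v}_{23}}}$ taking $\lax{\vec{v}_{23}}$ to $\lax{\vec{v}_{45}}$ — wait, more carefully: I would use Lemma~\ref{lemma:sp pb stab}, which says $(\PB_{2g+1})_{\{c_{23},c_{45}\}} \to (\Sp_{2g}(\Z)[2])_{\{\lax{\vec{v}_{23}},\lax{\vec{v}_{45}}\}}$ is surjective, so $Y = \pi(\rho(b))$ for some $b \in (\PB_{2g+1})_{\{c_{23},c_{45}\}} \subseteq (\PB_{2g+1})_{c_{23}}$; hence $\rho(b) \in \Omega_{23}$ is reducible along $c_{23}$ (and along $c_{45}$), and by Theorem~\ref{theorem:reducibletotwists} together with $\pi(\rho(b)) = Y$ the element $\widetilde{\phi}_{\lax{\vec{v}_{23}}}(Y) = \pi|_{\Omega_{23}}^{-1}(Y)$ equals $\rho(b)$. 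The point is that $\rho(b)$ is reducible along both $c_{23}$ and $c_{45}$; then using the definition of $\widetilde{\phi}_{\lax{\vec{v}_{45}}}$ via conjugation by a $Z$ taking the base edge to itself with the roles of the two vertices swapped, and Proposition~\ref{proposition:spaction}(3) (which preserves $\Omega_{23}$), one sees $\widetilde{\phi}_{\lax{\vec{v}_{45}}}(Y)$ also lies in $\Omega_{23}$. Since $\pi|_{\Omega_{23}}$ is injective, it is enough to check $\pi(\widetilde{\phi}_{\lax{\vec{v}_{23}}}(Y)) = \pi(\widetilde{\phi}_{\lax{\vec{v}_{45}}}(Y))$; but by Lemma~\ref{lemma:piphiv} both sides equal $Y$. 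The intersection-type base edge $\{\lax{\vec{v}_{23}},\lax{\vec{v}_{12}}\}$ is handled identically using the first surjection in Lemma~\ref{lemma:sp pb stab}.

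The main obstacle I anticipate is the bookkeeping in the transitivity step: verifying that every edge of $X_g$ (which in the $g = 3$ case is $\PartialIBasesEx_3(\Z)$, whose edges include additive-type edges) is in the $\Sp_{2g}(\Z)$-orbit of one of the two explicitly chosen base edges, and that for additive-type edges the conjugating element can still be chosen compatibly — this is essentially the content of Corollary~\ref{corollary:movesimplices}, so I would cite it, but I should be careful that $X_g^{(0)}$ has only one orbit of vertices while edges may have several orbits (standard, intersection, additive), and confirm that $\Omega_{23}$ is large enough (via all three surjections of Lemma~\ref{lemma:sp pb stab}) to realize a representative base edge of each type. Once the correct base edges are fixed, the rest is the short three-line computation with $\pi$ exactly as in Lemma~\ref{lemma:phiv wd}.
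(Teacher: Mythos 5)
Your high-level structure tracks the paper's: reduce to a base edge using transitivity of $\Sp_{2g}(\Z)$ on standard edges and on edges of intersection type together with Lemma~\ref{lemma:phiv wd}, then show both $\widetilde{\phi}_{\lax{\vec{v}_{23}}}(Y)$ and $\widetilde{\phi}_{\lax{\vec{v}_{45}}}(Y)$ lie in $\Omega_{23}$, and finish with injectivity of $\pi|_{\Omega_{23}}$ and Lemma~\ref{lemma:piphiv}. (Your worry about additive-type edges is moot: simplices of additive type always have dimension at least $2$, so every edge of $X_g$ is standard or of intersection type.) However, the step that places $\widetilde{\phi}_{\lax{\vec{v}_{45}}}(Y)$ in $\Omega_{23}$ has a genuine gap. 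You invoke Proposition~\ref{proposition:spaction}(3), but that statement only asserts that elements of $(\Sp_{2g}(\Z))_{\lax{\vec{v}_{23}}}$ carry $\Omega_{23}$ into itself; the element used to unwind the definition of $\widetilde{\phi}_{\lax{\vec{v}_{45}}}$ must send $\lax{\vec{v}_{23}}$ to $\lax{\vec{v}_{45}}$, hence does \emph{not} lie in that stabilizer, and Proposition~\ref{proposition:spaction}(3) says nothing about its action on $\Omega_{23}$.

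The repair, and what the paper does, is to realize the conjugating element as the image of a braid and use Proposition~\ref{proposition:spaction}(2) instead of~(3). Since $\Braid_{2g+1}$ acts transitively on ordered pairs of disjoint curves each surrounding two marked points, there is $b \in \Braid_{2g+1}$ with $b(c_{23}) = c_{45}$ and $b(c_{45}) = c_{23}$; set $\beta = \widehat{\rho}(b)$ and take $Z_0 = \widehat{\pi}(\beta)$ in the definition of $\widetilde{\phi}_{\lax{\vec{v}_{45}}}$ (allowed by Lemma~\ref{lemma:phiv wd}). By the first surjection of Lemma~\ref{lemma:sp pb stab} together with Theorem~\ref{theorem:reducibletotwists}, one has $\widetilde{\phi}_{\lax{\vec{v}_{23}}}(Z_0^{-1} Y Z_0) = \rho(a)$ for some $a \in (\PureBraid_{2g+1})_{\{c_{23},c_{45}\}}$. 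Then Proposition~\ref{proposition:spaction}(2) gives $Z_0 \cdot \rho(a) = \beta\rho(a)\beta^{-1} = \rho(bab^{-1})$, and $bab^{-1}$ still stabilizes $\{c_{23},c_{45}\}$ because $b$ merely swaps the two curves; hence $\widetilde{\phi}_{\lax{\vec{v}_{45}}}(Y) \in \Omega_{23}$. With that containment in hand, your concluding appeal to injectivity of $\pi|_{\Omega_{23}}$ and Lemma~\ref{lemma:piphiv} goes through verbatim, and the intersection-type base edge $\{\lax{\vec{v}_{23}},\lax{\vec{v}_{12}}\}$ is handled identically with $c_{12}$ in place of $c_{45}$.
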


\begin{proof}

Recall that all simplices of additive type have dimension at least 2, and so we only need to consider standard edges and edges of intersection type.  Let $e_1 = \{ \lax{\vec{v}_{23}}, \lax{\vec{v}_{45}} \}$ and $e_2 = \{ \lax{\vec{v}_{23}}, \lax{\vec{v}_{12}} \}$.  The edge $e_1$ is a standard edge in $X_g$ and $e_2$ is an edge of intersection type.  

The keys to this lemma are the following two facts, the first of which can be proved in the same way as Cases 1 and 2 of Corollary~\ref{corollary:movesimplices} and the second of which is a consequence of the classification of surfaces, cf. \cite[Section 1.3]{FarbMargalitPrimer}:
\begin{enumerate}
\item the group $\Sp_{2g}(\Z)$ acts transitively on the set of standard edges and on the set of edges of intersection type in $X_g$, and
\item for $k \in \{0,2\}$, the group $\Braid_{2g+1}$ acts transitively on the set of ordered pairs of distinct homotopy classes of simple closed curves that intersect $k$ times and surround two marked points each.
\end{enumerate}
Interchanging $\lax{\vec{v}}$ and $\lax{\vec{w}}$ if necessary, the first fact provides a $Z \in \Sp_{2g}(\Z)$ such that $Z(\lax{\vec{v}_{23}}) = \lax{\vec{v}}$ and such that $Z(e_i) = e$ for some $i \in \{1,2\}$.  The second fact gives a $b \in \Braid_{2g+1}$ that interchanges the curves from Figure~\ref{figure:mainproof} corresponding to the endpoints of $e_i$.

Let $\beta = \widehat{\rho}(b) \in \QuotientEx_{g}$.  We have that  $Z \widehat{\pi}(\beta)(\lax{\vec{v}_{23}}) = \lax{\vec{w}}$.  Finally, since $Y$ stabilizes $e$ and $Z(e_i)=e$, it follows that $W = Z^{-1}Y Z \in (\Sp_{2g}(\Z)[2])_{e_i}$.
We claim that
\[ 
\beta^{-1} \widetilde{\phi}_{\lax{\vec{v}_{23}}}(W) \beta = \widetilde{\phi}_{\lax{\vec{v}_{23}}}\left(\widehat{\pi}(\beta)^{-1} W \widehat{\pi}(\beta)\right).
\]
By Lemma~\ref{lemma:sp pb stab} and Theorem~\ref{theorem:reducibletotwists}, $\pi$ restricts to isomorphisms
\begin{align*}
\rho\left((\PureBraid_{2g+1})_{\{c_{23},c_{45}\}}\right)  \to (\Sp_{2g}(\Z)[2])_{e_1} \ \ \ \ \text{ and}\ \ \ \ 
\rho\left((\PureBraid_{2g+1})_{\{c_{23},c_{12}\}}\right) \to (\Sp_{2g}(\Z)[2])_{e_2},
\end{align*}
and hence $\widetilde{\phi}_{\lax{\vec{v}_{23}}}(W)=\rho(a)$ where $a$ lies in $(\PureBraid_{2g+1})_{\{c_{23}, c_{45}\}}$ or $(\PureBraid_{2g+1})_{\{c_{23},c_{12}\}}$.  Because $b$ swaps $c_{23}$ with either $c_{45}$ or $c_{12}$, the braid $b^{-1}ab$ lies in $(\PureBraid_{2g+1})_{\{c_{23}\}}$, and so $\beta^{-1} \widetilde{\phi}_{\lax{\vec{v}_{23}}}(W) \beta$ lies in $\Omega_{23}$.  Using the fact that $\pi \circ \widetilde{\phi}_{\lax{\vec{v}_{23}}} = \textrm{id}$,  we have
\begin{align*}
\pi\left(\beta^{-1} \widetilde{\phi}_{\lax{\vec{v}_{23}}}(W) \beta\right) = \widehat\pi\left(\beta^{-1} \widetilde{\phi}_{\lax{\vec{v}_{23}}}(W) \beta\right) &= \widehat\pi\left(\beta^{-1}\right)  \pi\left(\widetilde{\phi}_{\lax{\vec{v}_{23}}}(W)\right) \widehat\pi (\beta) \\ 
&\ \ \ = \widehat{\pi}(\beta)^{-1} W \widehat{\pi}(\beta). 
\end{align*}
As $\widetilde{\phi}_{\lax{\vec{v}_{23}}}$ and $\pi|_{\Omega_{23}}$ are inverses and $\beta^{-1} \widetilde{\phi}_{\lax{\vec{v}_{23}}}(W) \beta$ lies in $\Omega_{23}$, the claim follows.
The lemma follows easily from the claim and Proposition~\ref{proposition:spaction}(2).  
\end{proof}

The follow lemma states that $\widetilde \phi$ respects the conjugation relations of $\Sp_{2g}(\Z)[2]$.  It follows immediately from Proposition~\ref{proposition:spaction}(2) and Lemma~\ref{lemma:piphiv}.

\begin{lemma}
\label{lemma:conjugation}
For any $Y \in (\Sp_{2g}(\Z)[2])_{\lax{\vec{v}}}$ and $U \in (\Sp_{2g}(\Z)[2])_{\lax{\vec{w}}}$, we have
\[ 
\widetilde{\phi}_{\lax{\vec{w}}}(U) 
\widetilde{\phi}_{\lax{\vec{v}}}(Y) 
\widetilde{\phi}_{\lax{\vec{w}}}(U)^{-1} 
= 
 \widetilde{\phi}_{U(\lax{\vec{v}})}\left((U Y U^{-1})\right). \]
\end{lemma}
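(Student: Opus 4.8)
The plan is to reduce the conjugation relation to the single identity
\[
U \cdot \widetilde{\phi}_{\lax{\vec{v}}}(Y) = \widetilde{\phi}_{U(\lax{\vec{v}})}\left(U Y U^{-1}\right),
\]
valid for $U \in \Sp_{2g}(\Z)$ and $Y \in (\Sp_{2g}(\Z)[2])_{\lax{\vec{v}}}$, and then to read off the left-hand side of the lemma from Proposition~\ref{proposition:spaction}(2).

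First I would establish the displayed identity using only the definition of $\widetilde{\phi}_{\lax{\vec{u}}}$ and its independence of auxiliary choices (Lemma~\ref{lemma:phiv wd}). Pick $Z \in \Sp_{2g}(\Z)$ with $Z(\lax{\vec{v}_{23}}) = \lax{\vec{v}}$, so that by definition $\widetilde{\phi}_{\lax{\vec{v}}}(Y) = Z \cdot \widetilde{\phi}_{\lax{\vec{v}_{23}}}(Z^{-1} Y Z)$, where $Z^{-1} Y Z \in (\Sp_{2g}(\Z)[2])_{\lax{\vec{v}_{23}}}$ since $\Sp_{2g}(\Z)[2]$ is normal in $\Sp_{2g}(\Z)$ and $Z^{-1}$ carries $\lax{\vec{v}}$ to $\lax{\vec{v}_{23}}$. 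Now $U Y U^{-1}$ stabilizes $U(\lax{\vec{v}})$, and $(UZ)(\lax{\vec{v}_{23}}) = U(\lax{\vec{v}})$, so Lemma~\ref{lemma:phiv wd} lets me compute $\widetilde{\phi}_{U(\lax{\vec{v}})}$ using the element $UZ$:
\[
\widetilde{\phi}_{U(\lax{\vec{v}})}\left(U Y U^{-1}\right) = (UZ) \cdot \widetilde{\phi}_{\lax{\vec{v}_{23}}}\left((UZ)^{-1}(U Y U^{-1})(UZ)\right) = (UZ) \cdot \widetilde{\phi}_{\lax{\vec{v}_{23}}}\left(Z^{-1} Y Z\right) = U \cdot \widetilde{\phi}_{\lax{\vec{v}}}(Y),
\]
the last step using that $\Sp_{2g}(\Z)$ acts on $\Quotient_g$, so $(UZ) \cdot x = U \cdot (Z \cdot x)$. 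This proves the displayed identity.

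Next I would identify the left-hand side of the lemma with $U \cdot \widetilde{\phi}_{\lax{\vec{v}}}(Y)$. Put $\nu = \widetilde{\phi}_{\lax{\vec{w}}}(U)$, viewed as an element of $\QuotientEx_g$ via the natural inclusion $\Quotient_g \hookrightarrow \QuotientEx_g$; under this inclusion $\widehat{\pi}$ restricts to $\pi$, so Lemma~\ref{lemma:piphiv} gives $\widehat{\pi}(\nu) = \pi(\widetilde{\phi}_{\lax{\vec{w}}}(U)) = U$. Applying Proposition~\ref{proposition:spaction}(2) with $\eta = \widetilde{\phi}_{\lax{\vec{v}}}(Y)$ then yields
\[
\widetilde{\phi}_{\lax{\vec{w}}}(U)\, \widetilde{\phi}_{\lax{\vec{v}}}(Y)\, \widetilde{\phi}_{\lax{\vec{w}}}(U)^{-1} = \nu\, \eta\, \nu^{-1} = \widehat{\pi}(\nu) \cdot \eta = U \cdot \widetilde{\phi}_{\lax{\vec{v}}}(Y).
\]
Combining this with the identity from the previous paragraph (applied with this $U$, which lies in $(\Sp_{2g}(\Z)[2])_{\lax{\vec{w}}} \subseteq \Sp_{2g}(\Z)$) gives exactly the statement of the lemma.

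I do not expect a genuine obstacle here: every ingredient — the definition of $\widetilde{\phi}$, Lemmas~\ref{lemma:phiv wd} and~\ref{lemma:piphiv}, and Proposition~\ref{proposition:spaction}(2) — is already available. The only point that needs a word of care is the bookkeeping in the last paragraph, namely that $\widetilde{\phi}_{\lax{\vec{w}}}(U)$ must be regarded as an element of $\QuotientEx_g$ so that Proposition~\ref{proposition:spaction}(2) applies, and that its image under $\widehat{\pi}$ is indeed $U$ rather than merely something mapping to $U$.
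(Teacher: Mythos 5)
Your proof is correct and matches what the paper intends: the paper declares the lemma an immediate consequence of Proposition~\ref{proposition:spaction}(2) and Lemma~\ref{lemma:piphiv}, and your two steps — (i) equivariance of the family $\widetilde{\phi}_{\lax{\vec{v}}}$ under the $\Sp_{2g}(\Z)$-action, which you derive from the definition plus Lemma~\ref{lemma:phiv wd}, and (ii) identifying conjugation by $\widetilde{\phi}_{\lax{\vec{w}}}(U)$ with the action of $U$ via Proposition~\ref{proposition:spaction}(2) and Lemma~\ref{lemma:piphiv} — are exactly the content implicit in that remark. The only thing you invoke that the paper does not cite explicitly is Lemma~\ref{lemma:phiv wd}, but that is already baked into the well-definedness of $\widetilde{\phi}_{\lax{\vec{v}}}$, so this is not a genuine departure.
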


\p{Completing the proof of Proposition~\ref{proposition:main}} Since $\widetilde\phi$ respects the edge and conjugation relations, it induces a map $\phi : \Sp_{2g}(\Z)[2] \to \Quotient_g$.  It remains to check that $\phi$ is a left inverse of the projection $\pi : \Quotient_g \to \Sp_{2g}(\Z)[2]$.

\begin{lemma}
\label{lemma:inverse}
We have $\phi \circ \pi = \mathrm{id}$.
\end{lemma}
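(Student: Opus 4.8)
The plan is to verify that the endomorphism $\phi \circ \pi$ of $\Quotient_g$ fixes a generating set. Recall from Section~\ref{section:presentations} that $\Quotient_g$ is generated by the elements $\overline{s}_{c_{ij}} = \rho(T_{c_{ij}})$, so it suffices to show $\phi(\pi(\rho(T_{c_{ij}}))) = \rho(T_{c_{ij}})$ for every $c_{ij} \in \C(S_{\Quotient})$. For each pair $i,j$ let $\lax{\vec{v}_{ij}}$ denote the lax homology class in $H_1(\Sigma_g^1;\Z)$ of a connected component of the preimage of $c_{ij}$, so that $\lax{\vec{v}_{ij}}$ is a vertex of $X_g$ and, by Lemma~\ref{lemma:s vs pi}, $\pi(\rho(T_{c_{ij}})) = \tau_{\vec{v}_{ij}}^2$. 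Since $\tau_{\vec{v}_{ij}}$ fixes $\vec{v}_{ij}$, the element $\tau_{\vec{v}_{ij}}^2$ lies in $(\Sp_{2g}(\Z)[2])_{\lax{\vec{v}_{ij}}}$, so $\phi(\tau_{\vec{v}_{ij}}^2) = \widetilde{\phi}_{\lax{\vec{v}_{ij}}}(\tau_{\vec{v}_{ij}}^2)$.

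First I would dispose of the special vertex $\lax{\vec{v}_{23}}$. By construction $\widetilde{\phi}_{\lax{\vec{v}_{23}}} = (\pi|_{\Omega_{23}})^{-1}$, and $\rho(T_{c_{23}})$ lies in $\Omega_{23}$ since $T_{c_{23}}$ is a pure braid fixing $c_{23}$; as $\pi(\rho(T_{c_{23}})) = \tau_{\vec{v}_{23}}^2$ and $\pi|_{\Omega_{23}}$ is injective, this gives $\widetilde{\phi}_{\lax{\vec{v}_{23}}}(\tau_{\vec{v}_{23}}^2) = \rho(T_{c_{23}})$, i.e.\ $\phi(\pi(\rho(T_{c_{23}}))) = \rho(T_{c_{23}})$. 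For a general $c_{ij}$ I would invoke the change of coordinates principle to choose $b \in \Braid_{2g+1}$ with $b(c_{23}) = c_{ij}$ and set $Z = \widehat{\pi}(\widehat{\rho}(b)) \in \Sp_{2g}(\Z)$. Since the Birman--Hilden lift $L(b)$ carries a preimage component of $c_{23}$ to one of $c_{ij}$, we have $Z(\lax{\vec{v}_{23}}) = \lax{\vec{v}_{ij}}$, and hence $Z^{-1}\tau_{\vec{v}_{ij}}^2 Z = \tau_{Z^{-1}(\vec{v}_{ij})}^2 = \tau_{\vec{v}_{23}}^2$, the sign ambiguity in $Z^{-1}(\vec{v}_{ij}) = \pm\vec{v}_{23}$ being irrelevant for transvections. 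Using Lemma~\ref{lemma:phiv wd} to evaluate $\widetilde{\phi}_{\lax{\vec{v}_{ij}}}$ with this particular $Z$, then the base case, then Proposition~\ref{proposition:spaction}(2), I would compute
\[
\phi(\pi(\rho(T_{c_{ij}}))) = \widetilde{\phi}_{\lax{\vec{v}_{ij}}}(\tau_{\vec{v}_{ij}}^2) = Z \cdot \widetilde{\phi}_{\lax{\vec{v}_{23}}}(\tau_{\vec{v}_{23}}^2) = Z \cdot \rho(T_{c_{23}}) = \widehat{\rho}(b)\, \rho(T_{c_{23}})\, \widehat{\rho}(b)^{-1} = \rho(T_{c_{ij}}),
\]
the last equality because $b T_{c_{23}} b^{-1} = T_{c_{ij}}$ in $\Braid_{2g+1}$ and $\Theta_{2g+1}$ is normal in $\Braid_{2g+1}$, so $\widehat{\rho}(T_{c_{ij}})$ is identified with $\rho(T_{c_{ij}}) \in \Quotient_g \subseteq \QuotientEx_g$. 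This yields $\phi \circ \pi = \mathrm{id}$.

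I do not anticipate a genuine obstacle: the substantive work is already in hand, namely the compatible $\Sp_{2g}(\Z)$-action of Proposition~\ref{proposition:spaction}, the fact that $\pi|_{\Omega_{23}}$ is an isomorphism (Theorem~\ref{theorem:reducibletotwists} and Lemma~\ref{lemma:sp pb stab}), and the well-definedness of $\widetilde{\phi}$ and $\phi$ (Lemmas~\ref{lemma:phiv wd}, \ref{lemma:edge}, and~\ref{lemma:conjugation}). The only points needing care are the identity $Z\tau_{\vec{v}}^2 Z^{-1} = \tau_{Z(\vec{v})}^2$ used above, the systematic passage between vectors and lax vectors (keeping track of the harmless signs), and the verification that $\tau_{\vec{v}_{ij}}^2$ really lies in the vertex stabilizer $(\Sp_{2g}(\Z)[2])_{\lax{\vec{v}_{ij}}}$ so that $\widetilde{\phi}_{\lax{\vec{v}_{ij}}}$ can be applied to it.
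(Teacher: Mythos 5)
Your proposal is correct and takes essentially the same route as the paper's proof. The paper phrases the generating set as $\{\widehat\rho(b)\,\rho(T_{c_{23}})\,\widehat\rho(b)^{-1} \mid b \in \Braid_{2g+1}\}$ rather than the Artin generators $\rho(T_{c_{ij}})$, but since $b T_{c_{23}} b^{-1} = T_{b(c_{23})}$ these coincide, and the chain of equalities you carry out --- reduce to $\Omega_{23}$ via the base case $\widetilde{\phi}_{\lax{\vec{v}_{23}}} = (\pi|_{\Omega_{23}})^{-1}$, transport by $Z = \widehat\pi(\widehat\rho(b))$ using the definition of $\widetilde{\phi}_{\lax{\vec{v}}}$ and Lemma~\ref{lemma:phiv wd}, then convert $Z \cdot {}$ into conjugation by $\widehat\rho(b)$ via Proposition~\ref{proposition:spaction}(2) --- is exactly what the paper's terse proof is invoking.
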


\begin{proof}

The group $\PureBraid_{2g+1}$ is generated by the conjugates of $T_{c_{23}}$ in $\Braid_{2g+1}$; see Section~\ref{section:presentations}.  Therefore, $\Quotient_g$ is generated by elements of the form $\eta_b = \widehat\rho(b) \rho(T_{c_{23}}) \widehat\rho(b)^{-1}$ 
with $b \in \Braid_{2g+1}$.  Thus, it suffices to check that $\phi ( \pi(\eta_b)) = \eta_b$, where $b$ is an arbitrary element of $\Braid_{2g+1}$.  This follows from Proposition~\ref{proposition:spaction}(1), Proposition~\ref{proposition:spaction}(2), Lemma~\ref{lemma:piphiv}, and the fact that $\rho(T_{c_{23}}) \in \Omega_{23}$.
\end{proof}


\begin{thebibliography}{WW}
\begin{footnotesize}
 \setlength{\itemsep}{1pt}

\bibitem{BrownAbramenko}
P. Abramenko\ and\ K. S. Brown, {\it Buildings}, Graduate Texts in Mathematics, 248, Springer, New York, 2008.

\bibitem{ACampo}
N. A'Campo, Tresses, monodromie et le groupe symplectique, Comment. Math. Helv. {\bf 54} (1979), no.~2, 318--327. 

\bibitem{ArnoldHyperelliptic}
V. I. Arnol'd, A remark on the branching of hyperelliptic integrals as functions of the parameters, Funkcional. Anal. i Prilo\v zen. {\bf 2} (1968), no.~3, 1--3.

\bibitem{ArtinBraids}
E. Artin, Theory of braids, Ann. of Math. (2) {\bf 48} (1947), 101--126. 

\bibitem{BandBoyland}
G. Band\ and\ P. Boyland, The Burau estimate for the entropy of a braid, Algebr. Geom. Topol. {\bf 7} (2007), 1345--1378.

\bibitem{BirmanSiegel}
J. S. Birman, On Siegel's modular group, Math. Ann. {\bf 191} (1971), 59--68. 

\bibitem{BirmanBraids}
J. S. Birman, {\it Braids, links, and mapping class groups}, Princeton Univ. Press, Princeton, NJ, 1974. 

\bibitem{BirmanHilden}
J. S. Birman\ and\ H. M. Hilden, On the mapping class groups of closed surfaces as covering spaces, in {\it Advances in the theory of Riemann surfaces (Proc. Conf., Stony Brook, N.Y., 1969)}, 81--115. Ann. of Math. Studies, 66, Princeton Univ. Press, Princeton, NJ.

\bibitem{WajnrybErrata}
J. S. Birman\ and\ B. Wajnryb, Presentations of the mapping class group. Errata: ``$3$-fold branched coverings and the mapping class group of a surface'' [in {\it Geometry and topology (College Park, MD, 1983/84)}, 24--46, Lecture Notes in Math., 1167, Springer, Berlin, 1985] and ``A simple presentation of the mapping class group of an orientable surface'' [Israel J. Math. {\bf 45} (1983), no.\ 2-3, 157--174] by Wajnryb, Israel J. Math. {\bf 88} (1994), no.~1-3, 425--427. 

\bibitem{BjornerNerves}
A. Bj\"orner, Nerves, fibers and homotopy groups, J. Combin. Theory Ser. A {\bf 102} (2003), no.~1, 88--93.

\bibitem{BrendleMargalitPoint}
T. Brendle\ and\ D. Margalit, Point pushing, homology, and the hyperelliptic involution, Michigan Math. J. {\bf 62}, 451--473, no. 3, 2013.

\bibitem{BrendleMargalitFactor}
T. Brendle\ and\ D. Margalit, Factoring in the hyperelliptic Torelli group, {\tt arXiv:1202.2365}.

\bibitem{BrendleMargalitChilders}
T. Brendle, L. Childers, and D. Margalit, Cohomology of the hyperelliptic Torelli group, Israel J. Math.  {\bf 195} (2013), 613--630.

\bibitem{CharneyVogtmann}
R. Charney, A generalization of a theorem of Vogtmann, J. Pure Appl. Algebra {\bf 44} (1987), no.~1--3, 107--125. 

\bibitem{ChildersThesis}
L. Childers, Subgroups of the Torelli group, thesis, Louisiana State University, 2010.

\bibitem{DayPutmanComplex}
M. Day\ and\ A. Putman, The complex of partial bases for $F_n$ and finite generation of the Torelli subgroup of $\Aut(F_n)$, to appear in Geom. Dedicata.

\bibitem{FarbMargalitPrimer}
B. Farb\ and\ D. Margalit, {\it A primer on mapping class groups}, Princeton Mathematical Series, 49, Princeton Univ. Press, Princeton, NJ, 2012. 

\bibitem{FunarKohno}
L. Funar\ and\ T. Kohno, On Burau representations at roots of unity,  Geom. Dedicata {\bf 169} (2014), 145--163.

\bibitem{GambaudoGhys}
J.-M. Gambaudo\ and\ \'E. Ghys, Braids and signatures, Bull. Soc. Math. France {\bf 133} (2005), no.~4, 541--579. 

\bibitem{HainConjecture}
R. Hain, Finiteness and Torelli spaces, in {\it Problems on mapping class groups and related topics}, 57--70, Proc. Sympos. Pure Math., 74 Amer. Math. Soc., Providence, RI.

\bibitem{HatcherMargalit}
A. Hatcher\ and\ D. Margalit, Generating the Torelli group, Enseign. Math. {\bf 58} (2012), 165--188.

\bibitem{JohnsonEliminateSep}
D. L. Johnson, Homeomorphisms of a surface which act trivially on homology, Proc. Amer. Math. Soc. {\bf 75} (1979), no.~1, 119--125.

\bibitem{JohnsonHomo}
D. Johnson, An abelian quotient of the mapping class group ${\cal I}\sb{g}$, Math. Ann. {\bf 249} (1980), no.~3, 225--242.

\bibitem{JohnsonSecond}
D. Johnson, The structure of the Torelli group. II. A characterization of the group generated by twists on bounding curves, Topology {\bf 24} (1985), no.~2, 113--126. 

\bibitem{KhovanovSeidel}
M. Khovanov\ and\ P. Seidel, Quivers, Floer cohomology, and braid group actions, J. Amer. Math. Soc. {\bf 15} (2002), no.~1, 203--271 

\bibitem{MaazenThesis}
H. Maazen, Homology Stability for the General Linear Group, thesis, University of Utrecht, 1979.

\bibitem{MagnusPeluso}
W. Magnus\ and\ A. Peluso, On a theorem of V. I. Arnol'd, Comm. Pure Appl. Math. {\bf 22} (1969), 683--692.

\bibitem{MargalitMcCammond}
D. Margalit\ and\ J. McCammond, Geometric presentations for the pure braid group, J. Knot Theory Ramifications {\bf 18} (2009), no.~1, 1--20. 

\bibitem{MargalitPutman}
D. Margalit\ and\ A. Putman, Natural presentations for congruence subgroups, unpublished.

\bibitem{McCulloughMiller}
D. McCullough\ and\ A. Miller, The genus $2$ Torelli group is not finitely generated, Topology Appl. {\bf 22} (1986), no.~1, 43--49. 

\bibitem{McMullenHodge}
C. McMullen, Braid groups and Hodge theory, Math. Ann. {\bf 355} (2013), no. 3, 893--946.

\bibitem{Mess}
G. Mess, The Torelli groups for genus $2$ and $3$ surfaces, Topology {\bf 31} (1992), no.~4, 775--790.

\bibitem{Morifuji}
T. Morifuji, On Meyer's function of hyperelliptic mapping class groups, J. Math. Soc. Japan {\bf 55} (2003), no.~1, 117--129. 

\bibitem{NewmanSmart}
M. Newman\ and\ J. R. Smart, Symplectic modulary groups, Acta Arith {\bf 9} (1964), 83--89. 

\bibitem{PowellTorelli}
J. Powell, Two theorems on the mapping class group of a surface, Proc. Amer. Math. Soc. {\bf 68} (1978), no.~3, 347--350.

\bibitem{PutmanCutPaste}
A. Putman, Cutting and pasting in the Torelli group, Geom. Topol. {\bf 11} (2007), 829--865. 

\bibitem{PutmanInfinite}
A. Putman, An infinite presentation of the Torelli group, Geom. Funct. Anal. {\bf 19} (2009), no.~2, 591--643. 

\bibitem{PutmanPresentations}
A. Putman, Obtaining presentations from group actions without making choices, Algebr. Geom. Topol. {\bf 11} (2011), no.~3, 1737--1766. 

\bibitem{QuillenPoset}
D. Quillen, Homotopy properties of the poset of nontrivial $p$ subgroups of a group, Adv. in Math. {\bf 28} (1978), no.~2, 101--128.

\bibitem{Smythe}
N. F. Smythe, The Burau representation of the braid group is pairwise free, Arch. Math. (Basel) {\bf 32} (1979), no.~4, 309--317.

\bibitem{Venkataramana}
T. N. Venkataramana, Image of the Burau representation at $d$-th roots of unity, Ann. of Math. (2) {\bf 179} (2014), no. 3, 1041--1083. 

\bibitem{WajnrybPresentation}
B. Wajnryb, A simple presentation for the mapping class group of an orientable surface, Israel J. Math. {\bf 45} (1983), no.~2-3, 157--174. 

\bibitem{YuCohenLenstra}
J-K Yu, Toward a proof of the Cohen-Lenstra conjecture in the function field case, preprint.

\end{footnotesize}
\end{thebibliography}
\end{document}